\tikzstyle{densely dotted}=[dash pattern=on \pgflinewidth off 0.5pt]
\tikzset{anchorbase/.style={baseline={([yshift=-0.5ex]current bounding box.center)}},
tinynodes/.style={font=\tiny,text height=0.25ex,text depth=0.05ex},
smallnodes/.style={font=\scriptsize,text height=0.75ex,text depth=0.15ex},
usual/.style={line width=0.9,color=black},
dusual/.style={line width=0.9,color=spinach,densely dashed},
pole/.style={line width=3.0,color=specialgray},
crossline/.style={preaction={draw=white,line width=8.0pt,-},preaction={draw=black,line width=0.9pt,-}},
crosslinedashed/.style={preaction={draw=white,line width=3.0pt,-},preaction={draw=white,line width=0.6pt,-}},
crosspole/.style={preaction={draw=white,line width=6.0pt,-},preaction={draw=specialgray,line width=3.0pt,-}},
mor/.style={line width=0.75,color=black,fill=cream},
blob/.style={circle,fill,minimum size=5.0pt,inner sep=0pt,outer sep=0pt},
blobed/.style n args={3}{decoration={markings,post length=0.5mm,pre length=0.5mm,
mark=at position #1 with {\node[blob,#3,label=left:$#2\!$]at (0,0){};}
},postaction={decorate}},
rblobed/.style n args={3}{decoration={markings,post length=0.5mm,pre length=0.5mm,
mark=at position #1 with {\node[blob,#3,label=right:$\!#2$]at (0,0){};}
},postaction={decorate}},
cutline/.style={line width=2.25,color=black!50,densely dotted},
}
\tikzstyle directed=[postaction={decorate,decoration={markings,mark=at position #1 with {\arrow[line width=0.25mm, black]{>}}}}]
\tikzstyle rdirected=[postaction={decorate,decoration={markings,mark=at position #1 with {\arrow[line width=0.25mm, black]{<}}}}]
\newcommand{\tikzdiagh}[2][]{\tikz[#1,thick,baseline={([yshift=1ex+#2]current bounding box.center)}]}
\newcommand{\tikzdiagc}[1][]{\tikzdiagh[#1]{-1ex}}
\tikzstyle{tikzdot}=[fill, circle, inner sep=2pt]
\tikzstyle{smoltikzdot}=[fill=white, draw=black, circle, inner sep=1pt]
\tikzset{
    partial ellipse/.style args={#1:#2:#3}{
        insert path={+ (#1:#3) arc (#1:#2:#3)}
    }
}
\title{Induction for extended affine type $A$ Soergel bimodules: first steps}
\author{Marco Mackaay}
\address{M.M.: Departamento de Matemática, FCT, Universidade do Algarve, Campus de Gambelas,
8005-139 Faro, Portugal \&
Center for Research and Development in Mathematics and Applications (CIDMA), 
Department of Mathematics, University of Aveiro, 3810-193 Aveiro, Portugal,  
\newline \href{https://fct.ualg.pt/bio/mmackaay}{https://fct.ualg.pt/bio/mmackaay}, \href{https://orcid.org/0000-0001-9807-6991}{ORCID 0000-0001-9807-6991}}
\email{mmackaay@ualg.pt}
\author{Vanessa Miemietz}
\address{V.M.: School of Mathematics, University of East Anglia, Norwich NR4 7TJ, United Kingdom,  \newline \href{https://archive.uea.ac.uk/~byr09xgu/}{https://archive.uea.ac.uk/~byr09xgu/}}
\email{v.miemietz@uea.ac.uk}
\author{Pedro Vaz}
\address{P.V.: Institut de Recherche en Math{\'e}matique et Physique, 
Universit{\'e} catholique de Louvain, Chemin du Cyclotron 2,  
1348 Louvain-la-Neuve, Belgium, \newline \href{https://perso.uclouvain.be/pedro.vaz}{https://perso.uclouvain.be/pedro.vaz}, \href{https://orcid.org/0000-0001-9422-4707}{ORCID 0000-0001-9422-4707}}
\email{pedro.vaz@uclouvain.be}
\newcommand{\bfM}{\mathbf{M}}
\newcommand{\ti}{\mathtt{i}}
\newcommand{\tj}{\mathtt{j}}
\newcommand{\tk}{\mathtt{k}}
\newcommand{\tl}{\mathtt{l}}
\newcommand{\rA}{\mathrm{A}}
\newcommand{\rB}{\mathrm{B}}
\newcommand{\rC}{\mathrm{C}}
\newcommand{\rF}{\mathrm{F}}
\newcommand{\rG}{\mathrm{G}}
\newcommand{\rI}{\mathrm{I}}
\newcommand{\rT}{\mathrm{T}}
\newcommand{\rX}{\mathrm{X}}
\newcommand{\rY}{\mathrm{Y}}
\DeclareMathOperator{\ind}{ind}
\newcommand{\circh}{\circ_{\mathsf{h}}}
\newcommand{\circv}{\circ_{\mathsf{v}}}
\newcommand{\rmod}{\operatorname{mod}}
\newcommand{\brmod}{\boldsymbol{\operatorname{mod}}}
\newcommand{\bfm}{\mathbf{m}}
\newcommand{\bfu}{\mathbf{u}}
\newcommand{\cpl}[1]{{#1}^\circ}
\newcommand{\copr}[1]{{#1}^\sqcup}
\newcommand{\Vect}{\operatorname{Vect}}
\newcommand{\one}{\mathbbm{1}}
\newcommand{\Hom}{\mathrm{Hom}}
\newcommand{\End}{\mathrm{End}}
\newcommand{\add}{\operatorname{add}}
\definecolor{myblue}{rgb}{0,.5,1}
\definecolor{myred}{rgb}{0.9,0,0}
\definecolor{mygreen}{rgb}{0,0.7,0}
\newcommand{\cop}[1]{{#1}^\diamond}
\newcommand{\Bi}{\textcolor{blue}{\rB_i}}
\newcommand{\Brhopm}{\rB_{\rho}^{\pm 1}}
\newcommand{\Ti}{\textcolor{blue}{\rT_i}}
\newcommand{\Tim}{\textcolor{blue}{\rT_i^{-1}}}
\newcommand{\Sext}{\widehat{\eS}^{\mathrm{ext}}}
\newcommand{\Sextdiamond}{\widehat{\eS}^{\mathrm{ext}, \diamond}}
\newcommand{\Sextstar}{\widehat{\eS}^{\mathrm{ext},*}}
\newcommand{\BSext}{\widehat{\eBS}^{\mathrm{ext}}}
\newcommand{\BSextstar}{\widehat{\eBS}^{\mathrm{ext},*}}
\newcommand{\fSext}{\overline{\eS}^{\,\mathrm{ext}}}
\newcommand{\rhoL}{{\rho_{\color{blue}L}}}
\newcommand{\rhoR}{{\rho_{\textcolor{purple}R}}}
\newcommand{\affh}{\widehat{H}_n}
\newcommand{\eaffh}{\widehat{H}^{ext}_n}
\newcommand{\Sy}{\mathfrak{S}}
\newcommand{\Syaff}{\widehat{\mathfrak{S}}}
\newcommand{\ybox}{
\xy (0,0)*{
\tikzdiagc[scale=.375]{
\draw[very thick] (0,0) to (1,0) to (1,1) to (0,1) -- cycle;
\node at (.5,.5) {$y$};
}}\endxy
}
\newcommand{\eah}[1]{\widehat{H}^{\mathrm{ext}}_{#1}}
\newcommand{\eahCq}[1]{\widehat{H}^{\mathrm{ext},\mathbb{C}(q)}_{#1}}
\newcommand{\feah}[1]{\overline{H}^{\,\mathrm{ext}}_{#1}}
\newcommand{\ib}{\textcolor{blue}{i}}
\newcommand{\jr}{\textcolor{myred}{j}}
\newcommand{\ko}{\textcolor{orange}{k}}
\newcommand{\gi}{\raisebox{-4.5pt}{$i$}}
\newcommand{\vastl}[1]{\left(\rule{#1 cm}{.9cm}\right.}
\newcommand{\vastr}[1]{\left.\rule{#1 cm}{.9cm}\right)}
\newcommand{\xra}[1]{\xrightarrow{#1}}
\newcommand{\raltseq}[2]{\widehat{#1}_{\,#2}}
\newcommand{\laltseq}[2]{{}_{#2}\widehat{#1}}
\newcommand{\lraltseq}[3]{{}_{#2}\widehat{#1}_{\,#3}}
\newcommand\F{{\sf{F}}}
\newcommand\K{{\sf{K}}}
\DeclareMathOperator{\id}{Id}
\newtheorem{thm}{Theorem}[section]
\newtheorem{lem}[thm]{Lemma}
\newtheorem{cor}[thm]{Corollary}
\newtheorem{prop}[thm]{Proposition}
\newtheorem{exe}[thm]{Example}
\newtheorem{conj}[thm]{Conjecture}
\theoremstyle{definition}
\newtheorem{defn}[thm]{Definition}
\newtheorem{rem}[thm]{Remark}
\def\F{{\sf{F}}}
\newcommand{\bR}{\mathbb{R}}
\newcommand{\eS}{\EuScript{S}}
\newcommand{\eBS}{\EuScript{BS}}
\newcommand{\cA}{\mathcal{A}}
\newcommand{\cB}{\mathcal{B}}
\newcommand{\cC}{\mathcal{C}}
\newcommand{\cD}{\mathcal{D}}
\newcommand{\cE}{\mathcal{E}}
\long\def\@makecaption#1#2{%
    \vskip 10pt
    \setbox\@tempboxa\hbox{%
\small{#1: }\ignorespaces #2}%
    \ifdim \wd\@tempboxa >\captionwidth {%
        \rightskip=\@captionmargin\leftskip=\@captionmargin
        \unhbox\@tempboxa\par}%
      \else
        \hbox to\hsize{\hfil\box\@tempboxa\hfil}%
    \fi}
\newdimen\@captionmargin\@captionmargin=2\parindent
\newdimen\captionwidth\captionwidth=\hsize
\let\fullref\autoref
\def\makeautorefname#1#2{\expandafter\def\csname#1autorefname\endcsname{#2}}
\begin{document}
%
%
%
\begin{abstract}
In this paper we take the first steps towards the categorification of the Zelevinsky tensor product of finite dimensional representations of 
extended affine type $A$ Hecke algebras. 
\end{abstract}
\maketitle

{\hypersetup{hidelinks}
\tableofcontents 
}
%
%
\pagestyle{myheadings}
\markboth{\em\small M.~Mackaay, V.~Miemietz, P.~Vaz}{\em\small  Parabolic induction}
%
%

\section{Introduction}\label{sec:intro}

This paper is part of our ongoing study of the birepresentation theory of extended affine type $A_{n-1}$ Soergel bimodules in characteristic zero, which 
we started in~\cite{mmv-evalfunctor}. The monoidal category of these Soergel bimodules, denoted $\Sext_n$ in this paper, was studied both algebraically and diagrammatically in~\cite{mackaay-thiel} and~\cite{elias2018}, and categorifies the extended affine type $A$ Hecke algebra $\eah{n}$. 
This algebra is infinite dimensional, which on the categorical level corresponds to the fact that 
$\Sext_n$ is not finitary but {\em wide finitary}, in the terminology of~\cite{macpherson22a}. 
Whereas finitary birepresentation theory of finitary monoidal categories/bicategories, such as Soergel bimodules of finite Coxeter type, is fairly well developed, see e.g. \cite{mmmtz2019} and 
\cite{mmmtz2020} and references therein, birepresentation theory of wide finitary monoidal categories/bicategories is still in its infancy. Some fundamental results in finitary birepresentation theory 
were generalized to the wide finitary setting by Macpherson in~\cite{macpherson22a}, but his framework does not cover triangulated birepresentations, which play a prominent role in the birepresentation 
theory of $\Sext_n$. For example, the evaluation birepresentations of $\Sext_n$ in~\cite{mmv-evalfunctor} are triangulated. In this paper, we continue our study of the birepresentation theory of $\Sext_n$, taking a first step towards the categorification of parabolic induction. As the reader will see, this also involves triangulated birepresentations. 

Parabolic induction and restriction play an important role in the finite dimensional representation theory of $\eah{n}$. 
In particular, for any integers $1\leq k< n$, there is a well-known embedding of algebras  
\begin{equation}\label{eq:embedding}
\psi_{k,n-k}\colon \eah{k}\otimes \eah{n-k}\to \eah{n},
\end{equation}
which is the analog of the embedding of finite symmetric groups $\Sy_k\times \Sy_{n-k}\to \Sy_{n}$. 
Given two finite dimensional representations $M_1$ and $M_2$ of $\eah{k}$ and $\eah{n-k}$, respectively, 
one can use $\psi_{k,n-k}$ to define their {\em Zelevinsky tensor product} 
\begin{equation}\label{eq:Zelevinskytensor}
M_1\odot M_2:=\mathrm{Ind}_{\eah{k}\otimes \eah{n-k}}^{\eah{n}} M_1\otimes M_2,
\end{equation}
which is a finite dimensional $\eah{n}$-representation. Zelevinsky~\cite{zelevinsky} classified the 
finite dimensional irreducible representations of the extended affine type $A$ Hecke algebra in terms of combinatorial 
objects called multisegments. Leclerc, Nazarov and Thibon~\cite{LNTh} gave a necessary and 
sufficient condition for irreducibility of $M_1\odot M_2$, based on the multisegments corresponding  to two finite dimensional irreducible representations $M_1$ and $M_2$. 

Now, let $k_1,k_2,k_3\in \mathbb{Z}_{\geq 1}$ such that $k_1+k_2+k_3=n$ (assuming that $n\geq 3$, of course). Then one can show that 
\begin{equation}\label{eq:associativity1}
\psi_{k_1+k_2, k_3}(\psi_{k_1, k_2}\otimes \mathrm{id}_{\eah{k_3}})=
\psi_{k_1, k_2+k_3}(\mathrm{id}_{\eah{k_1}}\otimes \psi_{k_2,k_3}), 
\end{equation}
which implies that there is a canonical isomorphism 
\begin{equation}\label{eq:associativity2}
\left(M_1\odot M_2\right) \odot M_3 \cong M_1\odot \left(M_2 \odot M_3\right),
\end{equation}
with $M_i$ being a finite dimensional representation of $\eah{k_i}$ for $i=1,2,3$.
If we define the embedding of algebras 
\[
\psi_{k_1,k_2,k_3}\colon \eah{k_1}\otimes \eah{k_2}\otimes\eah{k_3} \to \eah{n}
\]
as either one of the two composite maps in~\eqref{eq:associativity1}, then $M_1\odot M_2 \odot M_3$ can also be 
defined directly as 
\[
\mathrm{Ind}_{\eah{k_1}\otimes \eah{k_2}\otimes \eah{k_3}}^{\eah{n}} M_1\otimes M_2\otimes M_3,
\]
which by the above is canonically isomorphic to either one of the two isomorphic representations in~\eqref{eq:associativity2}.

More generally, let $k_1, \ldots, k_m\in \mathbb{Z}_{\geq 1}$ such that $1\leq m\leq n$ and $k_1+\ldots +k_m=n$. Then there is an 
embedding of algebras 
\begin{equation}\label{eq:embedding2}
\psi_{k_1,\ldots, k_m}\colon \eah{k_1}\otimes \dotsb \otimes\eah{k_m} \to \eah{n},
\end{equation}
which can be used to define the corresponding Zelevinsky tensor product
\[
M_1 \odot \dotsb \odot M_m,
\]
with $M_i$ being a finite dimensional representation of $\eah{k_i}$ for $i=1,\ldots, m$, 
and this tensor product is associative up to canonical isomorphism. For more information on the Zelevinsky tensor product 
and its role in representation theory, see~\cite{LNTh} and references therein. 

In this paper, we initiate the categorification of the Zelevinsky tensor product. Specifically, we define a linear, monoidal functor 
\begin{equation}\label{eq:catembedding}
\Psi_{k,n-k}\colon \Sext_k\boxtimes \Sext_{n-k} \to K^b(\Sext_n)
\end{equation}
which categorifies the embedding $\psi_{k,n-k}$ in~\eqref{eq:embedding}. Here $K^b(\Sext_n)$ denotes the homotopy category of bounded 
complexes in $\Sext_n$. Before continuing, we should note that $\psi_{k,n-k}$ is 
usually defined in terms of the Bernstein presentation of the extended affine type $A$ Hecke algebra. Unfortunately, there is currently 
no categorification of $\eah{n}$ based on that presentation. The decategorification of $\Sext_n$ is naturally associated to the Kazhdan-Lusztig 
presentation of $\eah{n}$, so we define $\psi_{k,n-k}$ in terms of that presentation and use it as the starting point for the definition of $\Psi_{k,n-k}$ in this paper. 
Note that in \cite{StWe2024}, the authors use the categorification of parabolic induction for finite type $A$ Soergel bimodules, where the analog of $\Psi_{k,n-k}$ is much easier to define and does not involve Rouquier complexes.

The way to categorify the Zelevinsky tensor product using $\Psi_{k,n-k}$ is a bit roundabout, because we do not know how to make sense of 
something like 
\[
\Sext_n \boxtimes_{\,\Sext_k\,\boxtimes\, \Sext_{n-k}} \mathbf{M}_1\boxtimes \mathbf{M}_2,
\]
where $\mathbf{M}_1$ and $\mathbf{M}_2$ are finitary birepresentations of $\Sext_k$ and $\Sext_{n-k}$, respectively. This would require an 
analog of the balanced box tensor product for module categories over finite tensor categories, defined in~\cite[Section 2.7]{DaNi}, which does not (yet) exist in our setting. Therefore, we follow a different approach, explained below. We first sketch the general idea and then point out some technical hurdles. Recall that, for a fiat bicategory $\mathcal{C}$ (e.g. a linear additive pivotal category satisfying certain finiteness conditions), there is a correspondence between transitive finitary birepresentations of $\mathcal{C}$ and algebra $1$-morphisms in (the abelianization of) $\mathcal{C}$, see~\cite[Corollary 4.8]{mmmt2019}. Note that in that same paper, and subsequent ones like~\cite{mmmtz2020}, the correspondence was first formulated 
with coalgebra $1$-morphisms, instead of algebra $1$-morphisms. For technical reasons, 
we prefer algebra $1$-morphisms in this paper, see the comments below. Moreover, the bicategories in this paper have only one object and are hence monoidal categories, thus we speak about algebra objects. Any monoidal functor $\mathcal{C}\to \mathcal{D}$ between monoidal categories maps algebra objects to algebra objects and can, therefore, be used to induce birepresentations of $\mathcal{C}$ to birepresentations of $\mathcal{D}$, which is how we would like to go about categorifying the Zelevinsky tensor product. Unfortunately, $\Sext_n$ is not 
finitary but wide finitary, and $\Psi_{k,n-k}$ does not take values in $\Sext_n$ but in $K^b(\Sext_n)$. 

In~\cite{macpherson22a}, a correspondence between wide finitary birepresentations and (co)algebra 
objects in some completion is given, but there is currently no analog for triangulated birepresentations. In the examples of finitary or triangulated birepresentations of $\Sext_n$ that we fully understand, the algebra objects are typically given by countable coproducts of tensor products of Soergel bimodules and Rouquier complexes. Hence we introduce certain cocompletions of additive and triangulated monoidal categories/bicategories containing these 
kinds of algebra objects/1-morphisms in \autoref{sec:completions}, which are smaller than 
the completions considered in~\cite{macpherson22a}. To explain our preference for algebra objects over 
coalgebra objects in this paper, note that algebra objects typically belong to cocompletions  
whereas coalgebra objects typically belong to completions. As can be seen from our calculations in \autoref{sec:excatstory}, cocompletions are easier to work with in practice. In \autoref{sec:excatstory} we work out the explicit example of the categorified Zelevinsky tensor product $\mathbf{W}:=\mathbf{V}\boxdot\mathbf{V}$ of the trivial finitary birepresentation $\mathbf{V}$ of $\Sext_1$ with itself. Its construction is technically quite involved. We first define and analyse a wide finitary $\Sext_2$-birepresentation $\mathbf{U}$ corresponding to the algebra object $\rY:=\Psi_{1,1}(\rX\boxtimes \rX))$ in the cocompletion of $K^b(\Sext_2)$, where $\rX$ is the algebra object in the cocompletion of $\Sext_1$ corresponding to $\mathbf{V}$. Following the terminology introduced in \cite{mmv-evalfunctor}, we call $\mathbf{U}$ a 
{\em wide finitary cover} of $\mathbf{W}$. To obtain a triangulated birepresentation, 
we then observe that $\rY$ induces a right action of $\mathbb{Z}^2$ on $K^b(\Sext_2)$ which commutes with the left action of $\Sext_2$, so the 
orbit category $\Omega$ inherits the structure of a $\Sext_2$ birepresentation. We prove that $\mathbf{U}$ is equivalent to the full subcategory 
of $\Omega$ whose objects can be identified with those of $\Sext_2$ under the usual embedding into $K^b(\Sext_2)$. In general, 
the orbit category of a triangulated category under a group action does not inherit a natural triangulated structure, but in this case we can relate it (via a technical detour which we will explain in \autoref{sec:orbitcats}) to a triangulated orbit category using a dg-enhancement as 
in \cite[Theorem/Definition 1.1]{FKQ}. By construction, this triangulated orbit category is a $\Sext_2$ birepresentation and we conjecture that its triangulated Grothendieck group is isomorphic to $W$, see \autoref{conj:example}. It seems likely that this approach can be generalized, but we do not develop a general theory of these kinds of birepresentations and the corresponding algebra objects/1-morphisms in this paper, leaving that for future work.

Something else beyond the scope of this paper is the generalization of $\Psi_{k,n-k}$ to a linear monoidal functor  
\[
\Psi_{k_1,\ldots, k_m}\colon \Sext_{k_1}\boxtimes \dotsb \boxtimes \Sext_{k_m} \to K^b(\Sext_n)
\]
for $k_1, \ldots, k_m\in \mathbb{Z}_{\geq 1}$ such that $1\leq m\leq n$ and $k_1+\ldots +k_m=n$, categorifying \eqref{eq:embedding2}. Conceptually it is clear how to define such a generalization, but we postpone the lengthy technical details to a future paper. Finally, to prove the categorical analog of~\eqref{eq:associativity1}, i.e., the existence of a natural isomorphism 
\[
\Psi_{k_1+k_2, k_3}(\Psi_{k_1, k_2}\boxtimes \mathrm{Id}_{\Sext_{k_3}}) \cong 
\Psi_{k_1, k_2+k_3}(\mathrm{Id}_{\Sext_{k_1}}\boxtimes \Psi_{k_2,k_3}) 
\]
for $1\leq k_1, k_2, k_3\leq n$ such that $k_1+k_2+k_3=n$, we would first need to extend 
$\Psi_{k,n-k}$ to a monoidal functor (with the same notation)
\[
\Psi_{k,n-k}\colon K^b(\Sext_k) \boxtimes K^b(\Sext_{n-k}) \to K^b(\Sext_n).
\]
Proving the existence of such an extension is a non-trivial problem and is related to similar extension problems in other contexts, see 
\cite[Conjecture 1.2]{allr2024}, \cite[Section 1.6]{elias2018}, \cite{ElHo} and~\cite[Section 1]{mmv-evalfunctor}. The solution of this problem is also a topic for future research.


\subsubsection*{Acknowledgements} 
M.M. is partially supported by CIDMA under the FCT (Portuguese Foundation for Science and Technology) grant UID/04106. 
V.M. is supported by EPSRC grant EP/Z533750/1.
P.V. is supported by the Fonds de la Recherche Scientifique-FNRS (Belgium) under Grant no. J.0189.23.

%
%

\section{Parabolic embedding: decategorified story}\label{sec:decatreminders}

Throughout the paper, let $n\in\mathbb{Z}_{\geq 1}$. For $n=1$, the {\em affine Weyl group}  $\widehat{\Sy}_1$ of type $\widehat{A}_0$ is the trivial group. Similarly, the finite Weyl group $\Sy_1$ of type $A_0$ is the trivial group. 

For $n\geq 2$, let $\widehat{I}:=\mathbb{Z}/n\mathbb{Z}$ and $I:=\{1,\ldots, n-1\}$. By a slight abuse of notation, we will often identify $\widehat{I}$ with the set of representatives $\{0,1,\ldots, n-1\}$ and consider $I$ as a subset of $\widehat{I}$. 

For $n=2$, the {\em affine Weyl group $\widehat{\Sy}_2$} of type $\widehat{A}_1$ is generated by the simple reflections $s_0, s_1,$ subject to the relations
\begin{equation*}
  s_i^2 = 1, 
\end{equation*}
for $i\in \widehat{I}$.

For $n>2$, the {\em affine Weyl group} $\widehat{\Sy}_n$ of type $\widehat{A}_{n-1}$ is generated by the simple reflections $s_i, \; i\in \widehat{I},$ subject to the relations
\begin{equation*}
  s_i^2 = 1, \mspace{40mu} s_is_j =s_js_i \mspace{10mu}\text{if}\mspace{10mu}
  \vert i-j\vert>1, \mspace{40mu} s_is_{i+1}s_i = s_{i+1}s_is_{i+1}, 
\end{equation*}
for $i\in \widehat{I}$. 

A {\em reduced expression (rex)} for an element $w\in \widehat{\Sy}_n$ is a finite word 
$(s_{i_1}, \ldots, s_{i_m})$ of simple reflections, for some $m\in \mathbb{Z}_{\geq 0}$, such that $w=s_{i_1}\cdots s_{i_m}$ and there is no word consisting of fewer simple reflections with that property. All rexes for $w$ contain the same number of simple reflections, which is called the length of $w$ and denoted by $\ell(w)$. By definition, the rex for the neutral element is the empty word of length zero (i.e. $m=0$). 

The {\em extended} affine Weyl group $\widehat{\Sy}_n^{\mathrm{ext}}$, using the weight lattice of $\mathrm{GL}_n$, is the semidirect product 
\[
\langle \rho \rangle \ltimes \widehat{\Sy}_n,
\]
where $\langle \rho \rangle$ is an infinite cyclic group generated by $\rho$ and 
\[
\rho s_i \rho^{-1} =s_{i+1},
\]
for $i\in \widehat{I}$. In particular, when $n=1$, the extended affine Weyl group $\widehat{\Sy}_1^{\mathrm{ext}}$ is just the infinite cyclic group $\langle \rho \rangle$. 

The finite Weyl group of type $A_{n-1}$ is the symmetric group on $n$ letters, $\Sy_n$, corresponding to the subgroup of $\widehat{\Sy}_n$ generated by $s_i,\; i\in I$.

\subsection{Hecke algebras}\label{sec:hecke}
Let $q$ be a formal parameter. For $n=1$, the {\em extended affine Hecke algebra} $\eah{1}$ is simply the group algebra of $\langle \rho\rangle$ over $\mathbb{Z}[q,q^{-1}]$. 

For $n=2$, the \emph{extended affine Hecke algebra} $\eaffh$ is the $\mathbb{Z}[q,q^{-1}]$-algebra generated by $T_0, T_1$, and $\rho^{\pm 1}$, subject to the relations
\begin{gather}
  (T_i+q)(T_i-q^{-1}) = 0, \mspace{40mu} \rho\rho^{-1}=1=\rho^{-1}\rho,\mspace{40mu} \rho T_i \rho^{-1} = T_{i+1},  
    \label{eq:affHeckeRrelsn2} 
    \end{gather}
for $i\in \widehat{I}$.

For $n > 2$, the \emph{extended affine Hecke algebra} $\eaffh$ is the $\mathbb{Z}[q,q^{-1}]$-algebra generated by $T_i,\; i\in \widehat{I}$, and $\rho^{\pm 1}$, subject to the relations
\begin{gather}
  (T_i+q)(T_i-q^{-1}) = 0, \mspace{40mu} T_iT_j =T_jT_i \mspace{10mu}\text{if}\mspace{10mu}
  \vert i-j\vert>1, \mspace{40mu} T_iT_{i+1}T_i = T_{i+1}T_iT_{i+1}, 
 \label{eq:affHeckeSnrels}   \\
\rho\rho^{-1}=1=\rho^{-1}\rho,\mspace{40mu} \rho T_i \rho^{-1} = T_{i+1},  
    \label{eq:affHeckeRrels} 
    \end{gather}
for $i,j\in \widehat{I}$. Note that $T_i$ is invertible 
for every $i\in \widehat{I}$: 
\[
T_i^{-1}=T_i+q-q^{-1}.
\] 
As is well-known, $\eaffh$ is a $q$-deformation of the group 
algebra $\mathbb{Z}[\widehat{\Sy}_n^{\mathrm{ext}}]$ with the {\em standard basis} given by 
$\{\rho^m T_w\mid m\in \mathbb{Z}, w\in \widehat{\Sy}_n\}$, where $T_w:=T_{i_1} \cdots T_{i_{\ell}}$ 
for any {\em reduced expression} (rex) $s_{i_1}\cdots s_{i_{\ell}}$ of $w$.

\smallskip

Another presentation of $\eaffh$ is given in terms of the \emph{Kazhdan--Lusztig generators} $b_i:=T_i+q$, for $i\in \widehat{I}$, 
and $\rho^{\pm 1}$, subject to the relations
\begin{gather}
  b_i^{2}= [2] b_i,\mspace{40mu} \rho\rho^{-1}=1=\rho^{-1}\rho,\mspace{40mu} \rho b_i \rho^{-1} = b_{i+1}, 
    \label{eq:affHeckebrelsnistwo} 
    \end{gather}
for $n=2$, and     
\begin{gather}
  b_i^{2}= [2] b_i, \mspace{40mu} b_ib_j =b_jb_i \mspace{10mu}\text{if}\mspace{10mu}
  \vert i-j\vert>1, \mspace{40mu} b_ib_{i+1}b_i + b_{i+1} = b_{i+1}b_ib_{i+1} + b_{i}, 
 \label{eq:fdHeckebrels}   \\
\rho\rho^{-1}=1=\rho^{-1}\rho,\mspace{40mu} \rho b_i \rho^{-1} = b_{i+1}, 
    \label{eq:affHeckebrels} 
    \end{gather}
for $n>2$, where $i\in \widehat{I}$ and $[2]:=q+q^{-1}$. Note that $T_i=b_i-q$ and $T_i^{-1}=b_i-q^{-1}$, for every 
$i\in \widehat{I}$. The {\em Kazhdan--Lusztig basis} is given by $\{\rho^m b_w \mid 
m\in \mathbb{Z}, w \in \widehat{\Sy}_n \}$, where the definition of $b_w$ requires the choice of a rex for $w$ but is independent of that choice. 
\smallskip 

For $n=1$, the (non-extended) \emph{affine Hecke algebra} $\widehat{H}_1$ is the trivial one-dimensional algebra isomorphic to $\mathbb{Z}[q,q^{-1}]$. For $n\geq 2$, the (non-extended) \emph{affine Hecke algebra} $\affh$ is the subalgebra of $\eaffh$ generated by either $T_i$ or $b_i,$ for $ i\in \widehat{I}$. 

For $n=1$, the {\em finite Hecke algebra} $H_1$ is also the trivial one-dimensional algebra isomorphic to $\mathbb{Z}[q,q^{-1}]$. For $n\geq 2$, 
the \emph{finite Hecke algebra} $H_n$ is the $\mathbb{Z}[q,q^{-1}]$-subalgebra of $\affh$ generated by either $T_i$ or $b_i$, for $i\in I$.


\vspace{0.1in}

There is a third presentation of $\eaffh$, called the \emph{Bernstein presentation}. In that presentation, $\eaffh$ is defined as a twisted tensor product of $H_n$ and the algebra of Laurent polynomials in $n$ indeterminates with coefficients in $\mathbb{Z}[q,q^{-1}]$. There are several possible choices for defining the commutation relations. The one we use here has indeterminates $y_1,\dotsc , y_n$, with relations given by 
\begin{equation}
T_i^{-1}y_iT_i^{-1} = y_{i+1}, 
\end{equation}  
for $i\in I$. 

The relation between this Bernstein presentation and our first presentation of $\eaffh$ is 
given by 
\begin{align}
y_1 &= \rho T_{n-1}\dotsm T_{2}T_1 , \label{E:ybernstein}\\
y_i &= T_{i-1}^{-1}\dotsm T_2^{-1}T_1^{-1}\rho T_{n-1}\dotsm T_{i+1}T_i, \quad i=2,
\ldots, n-1. 
\end{align}

\begin{rem}
Some remarks about the various conventions in the literature are in order.
We try to follow conventions close to those in~\cite{elias2018}. 
Our presentation of the extended affine Hecke algebra in~\fullref{sec:hecke}
agrees with~\cite{elias2018}, as does the relation between the standard generators and the 
Kazhdan--Lusztig generators.  
\end{rem}

The {\em standard trace} $\epsilon\colon \eah{n}\to \mathbb{Z}[q,q^{-1}]$, which also plays an important role in this paper, is the 
$\mathbb{Z}[q,q^{-1}]$-linear map defined by 
\begin{equation}\label{eq:standardtrace}
\epsilon(\rho^r T_w)=\delta_{r,0} \delta_{w,e}, 
\end{equation}
for $r\in \mathbb{Z}$ and $w\in \widehat{\Sy}_n$, where $\delta_{-,-}$ is the Kronecker delta. The trace induces a $q$-sesquilinear form $(-,-)$ on $\eah{n}$ defined by 
\begin{equation}\label{eq:sesquilinform}
(x,y):=\epsilon(\omega(x)y),
\end{equation}
where $\omega$ is the $q$-antilinear antiinvolution on  $\eah{n}$ defined by $\omega(\rho)=\rho^{-1}$ and $\omega(T_w)=T_{w}^{-1}$, for 
$w\in \widehat{\Sy}_n$. By definition, $q$-sesquilinear means that $(-,-)$ is $\mathbb{Z}$-bilinear and satisfies $(qx,y)=q^{-1}(x,y)=(x,q^{-1}y)$, for all $x,y\in \eah{n}$. The above definitions imply 
that $\omega(b_w)=b_{w^{-1}}$ and that the Kazhdan-Lusztig basis is {\em asymptotically orthonormal} w.r.t. $(-,-)$, see 
e.g. \cite[Theorem 3.21]{e-m-t-w} for non-extended affine type A. 
\begin{thm}\label{thm:asymportho}
For all $k,l\in \mathbb{Z}$ and $u,v\in \widehat{\Sy}_n$, we have $\left(\rho^k b_u,\rho^l b_v \right)=\delta_{k,l}\left(b_u,b_v \right)$ and 
\[
\left(b_u,b_v \right) \in 
\begin{cases}
1+q\mathbb{Z}[q], & \text{if}\; u=v;\\
q\mathbb{Z}[q], &\text{else}.  
\end{cases}
\]
\end{thm}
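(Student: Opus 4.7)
The plan is to reduce the statement to the analogous result for the non-extended affine Hecke algebra $\widehat{H}_n$, namely \cite[Theorem 3.21]{e-m-t-w}. The reduction exploits the semidirect product structure $\eah{n} = \mathbb{Z}[q,q^{-1}]\langle \rho^{\pm 1}\rangle \ltimes \widehat{H}_n$, in which conjugation by $\rho$ acts on $\widehat{H}_n$ via the algebra automorphism $\sigma$ sending $T_i \mapsto T_{i+1}$ (equivalently $b_i \mapsto b_{i+1}$). Since $\sigma$ permutes both the standard and the Kazhdan--Lusztig bases of $\widehat{H}_n$, one has $\rho^m b_w \rho^{-m} = b_{\sigma^m(w)}$ and $\rho^m T_w \rho^{-m} = T_{\sigma^m(w)}$ for all $m\in\mathbb{Z}$ and $w\in\widehat{\Sy}_n$.

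For the first identity, I would expand using the definitions of $(-,-)$, $\omega$, and $\epsilon$. Since $\omega(\rho) = \rho^{-1}$ and $\omega(b_u) = b_{u^{-1}}$, one gets
\[
(\rho^k b_u, \rho^l b_v) = \epsilon\bigl(b_{u^{-1}}\rho^{l-k}b_v\bigr) = \epsilon\bigl(b_{u^{-1}}\, b_{\sigma^{l-k}(v)}\,\rho^{l-k}\bigr).
\]
Writing $b_{u^{-1}} b_{\sigma^{l-k}(v)} = \sum_w a_w(q)\, T_w$ in the standard basis of $\widehat{H}_n$ and using $T_w \rho^{l-k} = \rho^{l-k} T_{\sigma^{k-l}(w)}$, each summand already sits in the standard basis of $\eah{n}$ as $a_w(q)\,\rho^{l-k} T_{\sigma^{k-l}(w)}$. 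By \eqref{eq:standardtrace}, the trace vanishes on all such terms when $l \neq k$, while for $l=k$ only the summand $w=e$ contributes, yielding $a_e(q) = \epsilon(b_{u^{-1}}b_v) = (b_u,b_v)$.

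The second claim then concerns $(b_u,b_v) = \epsilon(b_{u^{-1}}b_v)$ for $u,v \in \widehat{\Sy}_n$, a quantity that lives entirely inside the non-extended subalgebra $\widehat{H}_n$. The restriction of $\epsilon$ to $\widehat{H}_n$ is the standard trace of the non-extended affine Hecke algebra and the restriction of $\omega$ is the usual $q$-antilinear antiinvolution, so the induced bilinear form agrees with the one considered in \cite{e-m-t-w}; applying their Theorem 3.21 then gives the required membership in $1+q\mathbb{Z}[q]$ or $q\mathbb{Z}[q]$. I do not expect any genuine obstacle: every step is a direct manipulation of the defining relations, and the only careful bookkeeping is tracking the shift automorphism $\sigma$ and verifying that the convention for the standard trace in \cite{e-m-t-w} matches \eqref{eq:standardtrace} on $\widehat{H}_n$.
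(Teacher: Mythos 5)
The paper does not actually give a proof of this theorem: it simply records the identity $\omega(b_w) = b_{w^{-1}}$, asserts asymptotic orthonormality, and cites \cite[Theorem 3.21]{e-m-t-w} for the non-extended affine case, leaving the reduction from $\eah{n}$ to $\widehat{H}_n$ implicit. Your argument is correct and supplies exactly that missing reduction. In the first part, the manipulation $\epsilon(\omega(\rho^k b_u)\rho^l b_v)=\epsilon(b_{u^{-1}}\rho^{l-k}b_v)=\epsilon(b_{u^{-1}}b_{\sigma^{l-k}(v)}\rho^{l-k})$ is clean, and moving $\rho^{l-k}$ past the standard-basis expansion of $b_{u^{-1}}b_{\sigma^{l-k}(v)}$ while tracking the shift $\sigma^{k-l}$ yields $\delta_{k,l}\,a_e(q)$ as claimed, which equals $(b_u,b_v)$ when $k=l$. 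For the second part, once $k=l$ the quantity $\epsilon(b_{u^{-1}}b_v)$ does indeed live inside $\widehat{H}_n$, and the restrictions of $\epsilon$ and $\omega$ recover the standard setup, so the cited result applies; the convention check you flag is the one genuinely nontrivial bookkeeping step, and it comes out fine given the paper's stated alignment with Elias's conventions. This is the same route the paper intends, just spelled out.
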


\subsection{The embedding}\label{sec:par-hecke}
Let $1\leq k\leq n-1$. There are two unital embeddings of $\mathbb{Z}[q,q^{-1}]$-algebras $\psi_L\colon\eah{k}\to \eah{n}$ (the left embedding) and 
$\psi_R\colon \eah{n-k} \to \eah{n}$ (the right embedding), which are easily defined in terms of the Bernstein presentation:
\begin{alignat*}{5}
\psi_L \colon 
\begin{dcases}
T_i \mapsto T_i,  \quad i=1,\dotsc, k-1 ,
\\
y_i \mapsto y_i , \quad i=1,\dotsc, k,
\end{dcases}
& 
\qquad & 
\psi_R \colon  
\begin{dcases}
T_j \mapsto T_{k+j} , \quad j=1,\dotsc, n-k-1 ,
\\
y_j \mapsto y_{k+j} ,\quad j=1,\dotsc , n-k .
\end{dcases}
\end{alignat*}
The two embeddings 
give rise to a $\mathbb{Z}[q,q^{-1}]$-linear map  
\[
\psi_{k,n-k}\colon\eah{k}\otimes \eah{n-k}\to \eah{n},
\]
defined by 
\[
\psi_{k,n-k}(a \otimes b):=\psi_L(a)\psi_R(b).
\]
By definition, $\psi_{k,n-k}(a \otimes 1)=\psi_L(a)$ and $\psi_{k,n-k}(1 \otimes b)=\psi_R(b)$, where we denote the identity element of the various Hecke algebras by the same symbol $1$.

It is easy to see that this map is a homomorphism of algebras. Note that $\psi_L$ and $\psi_R$ form a {\em commuting pair of 
algebra homomorphisms}, in the sense that 
\[
\psi_L(a)\psi_R(b)=\psi_R(b)\psi_L(a)
\]
holds for all $a\in \eah{k}, b\in \eah{n-k}$, because the $y_m$ for $1\leq m\leq n$ all commute with each other and we have $T_i T_j=T_j T_i$, $T_iy_j=y_jT_i$ and $T_jy_i=y_iT_j$ for all $1\leq i\leq k-1$ and $k+1\leq j\leq n-1$.
Therefore, for all $a_1,a_2\in \eah{k}$ and $b_1,b_2\in \eah{n-k}$, we have 
\[
\psi_L(a_1)\psi_L(a_2)\psi_R(b_1)\psi_R(b_2)=\psi_L(a_1)\psi_R(b_1)\psi_L(a_2)\psi_R(b_2), 
\]
which means that  
\[
\psi_{k,n-k}(a_1a_2,b_1b_2)=\psi_{k,n-k}(a_1,b_1)\psi_{k,n-k}(a_2,b_2).
\] 

Using~\eqref{E:ybernstein} to translate to the presentation in terms of the standard generators and $\rho$ given in~\eqref{eq:affHeckeSnrels} and~\eqref{eq:affHeckeRrels}, yields 
\begin{align}
    \psi_L(\rhoL) &= y_1 T_1^{-1}T_2^{-1}\dotsm T_{k-1}^{-1}  = \rho T_{n-1}\dotsm T_k ,\label{eq:psiLrhoL}\\
     \psi_L(T_0) &= \psi_L(\rhoL^{\!\!\!\!-1}T_1\rhoL) = T_k^{-1}\dotsm T_{n-1}^{-1}T_0 T_{n-1}\dotsm T_k ,\label{eq:psiLT0}
  \intertext{and} 
    \psi_R(\rhoR) &= y_{k+1} T_{k+1}^{-1}T_{k+2}^{-1}\dotsm T_{n-k-1}^{-1} = T_k^{-1}\dotsm T_1^{-1}\rho,\label{eq:psiRrhoR} \\
    \psi_R(T_0) &= \psi_R(\rhoR^{\!\!\!\!-1}T_1\rhoR) = T_0 \dotsm T_{k-1}T_k T_{k-1}^{-1}\dotsm T_0^{-1}, \label{eq:psiRT0}
\end{align}
where we use the notation $\rhoL$ and $\rhoR$ for the twist generators of $\eah{k}$ and $\eah{n-k}$, with $\rho$ being used for the generator of $\eah{n}$. The underscript letters $\color{blue}L$ and $\color{purple}R$ are meant to signify \say{{\color{blue}left}} and \say{{\color{purple}right}} to match their position in the tensor product $\eah{k}\otimes \eah{n-k}$. 

Since $\psi_L(\rhoL)$ and $\psi_R(\rhoR)$ commute, we have 
\begin{equation}\label{eq:psirhoLrhoR}
\psi_{k,n-k}(\rhoL,\rhoR) = T_k^{-1}\dotsm T_1^{-1}\rho \rho T_{n-1}\dotsm T_k = 
\rho T_{n-1}\dotsm T_{k+1} T_{k-1}^{-1}\dotsm T_1^{-1}\rho.
\end{equation}

The image of the Kazhdan--Lusztig generators under $\psi_L$ and $\psi_R$ is 
\begin{align*}
\psi_L(b_i) &=b_i ,\quad  i = 1 , \dotsc ,k-1 ,
  \\
\psi_R(b_j) & =b_{k+j} , \quad j = 1 , \dotsc ,n-k-1 .
\end{align*}
Using~\eqref{eq:psiLT0} and \eqref{eq:psiRT0} it follows at once that  
\begin{align*}
\psi_L(b_0) &= T_k^{-1}\dotsm T_{n-1}^{-1}b_0 T_{n-1}\dotsm T_k,
\\
\psi_R(b_0) &= T_0 \dotsm T_{k-1}b_k T_{k-1}^{-1}\dotsm T_0^{-1}.
\end{align*}

\subsection{Diagrammatics}\label{sec:diagrams}
As is well known, the various Hecke algebras described above also have a diagrammatic incarnation, since all of them can be defined as a quotient of the group algebra of the corresponding braid group. In this incarnation, the standard generators correspond to crossings between two neighboring strands. In the affine case, we have to use braid diagrams on a cylinder, which we depict as a rectangle where the two vertical boundary components are identified. In this diagrammatic presentation for $\eah{n}$ we have
\begin{align*}
  T_i &=\ 
\xy (0,-2)*{
\begin{tikzpicture}[scale=.6]
\draw[thick,densely dotted] (0, 2) to (5, 2);
\draw[thick,densely dotted] (0,-2) to (5,-2);
\draw[black,ultra thick] (.5,-2) to (.5,2);
\draw[black,crossline,ultra thick] (3,-1.95) to[out=90,in=-90] (2,1.95);
\draw[black,crossline,ultra thick] (2,-1.95) to[out=90,in=-90] (3,1.95);
\draw[black,ultra thick] (4.5,-2) to (4.5,2); 
\draw[black,fill=black] (.5,2) circle (2.0pt);
\draw[black,fill=black] (2,2) circle (2.0pt);
\draw[black,fill=black] (3,2) circle (2.0pt);
\draw[black,fill=black] (4.5,2) circle (2.0pt);
\node at (1.3,0) {$\dotsm$};
\node at (3.7,0) {$\dotsm$};
\draw[black,fill=black] (.5,-2) circle (2.0pt)node[black,below] {\tiny  $1$};
\draw[black,fill=black] (2,-2) circle (2.0pt);
\draw[black,fill=black] (3,-2) circle (2.0pt);
\draw[black,fill=black] (4.5,-2) circle (2.0pt)node[black,below] {\tiny  $n$};
\draw[cutline] (0,-2) to (0,2);
\draw[cutline] (5,-2) to (5,2);
\end{tikzpicture}
}\endxy \mspace{40mu}
&
%
  T_0 &=\ 
\xy (0,-2)*{
\begin{tikzpicture}[scale=.6]
\draw[thick,densely dotted] (0, 2) to (5, 2);
\draw[thick,densely dotted] (0,-2) to (5,-2);
\draw[black,ultra thick] (1.8,-2) to (1.8,2);
\draw[black,ultra thick] (3.5,-2) to (3.5,2);
\draw[black,ultra thick] (4.5,-2) to[out=90,in=-130] (5,-.85);
\draw[black,ultra thick] (5,.85) to[out=130,in=-90] (4.5,2);
\draw[black,crossline,ultra thick] (.8,-1.95) to[out=90,in=-60] (0,.85);
\draw[black,crossline,ultra thick] (0,-.85) to[out=60,in=-90] (.8,1.95);
\draw[black,fill=black] (0.8,2) circle (2.0pt);
\draw[black,fill=black] (1.8,2) circle (2.0pt);
\draw[black,fill=black] (3.5,2) circle (2.0pt);
\draw[black,fill=black] (4.5,2) circle (2.0pt);
\node at (2.7,0) {$\dotsm$};
\draw[black,fill=black] (.8,-2) circle (2.0pt)node[black,below] {\tiny  $1$};
\draw[black,fill=black] (1.8,-2) circle (2.0pt);
\draw[black,fill=black] (3.5,-2) circle (2.0pt);
\draw[black,fill=black] (4.5,-2) circle (2.0pt)node[black,below] {\tiny  $n$};
\draw[cutline] (0,-2) to (0,2);
\draw[cutline] (5,-2) to (5,2);
\end{tikzpicture}
}\endxy \mspace{40mu}
&
  \rho &=\ 
\xy (0,-2)*{
\begin{tikzpicture}[scale=.6]
\draw[thick,densely dotted] (0, 2) to (5, 2);
\draw[thick,densely dotted] (0,-2) to (5,-2);
\draw[black,fill=black] (0.5,2) circle (2.0pt);
\draw[black,fill=black] (1.5,2) circle (2.0pt);
\draw[black,fill=black] (4.5,2) circle (2.0pt);
\node at (2.5,0) {$\dotsm$};
\draw[black,fill=black] (.5,-2) circle (2.0pt)node[black,below] {\tiny  $1$};
\draw[black,fill=black] (3.5,-2) circle (2.0pt);
\draw[black,fill=black] (4.5,-2) circle (2.0pt)node[black,below] {\tiny  $n$};
\draw[black,ultra thick] (.5,-2) to[out=90,in=-90] (1.5,2);
\draw[black,ultra thick] (3.5,-2) to[out=90,in=-90] (4.5,2);
\draw[black,ultra thick] (4.5,-2) to[out=90,in=-110] (5,-.1);
\draw[black,ultra thick] (0,.1) to[out=70,in=-90] (.5,2);
\draw[cutline] (0,-2) to (0,2);
\draw[cutline] (5,-2) to (5,2);
\end{tikzpicture}
}\endxy
\\
  T_i^{-1} &=\ 
\xy (0,-2)*{
\begin{tikzpicture}[scale=.6]
\draw[thick,densely dotted] (0, 2) to (5, 2);
\draw[thick,densely dotted] (0,-2) to (5,-2);
\draw[black,ultra thick] (.5,-2) to (.5,2);
\draw[black,crossline,ultra thick] (2,-1.95) to[out=90,in=-90] (3,1.95);
\draw[black,crossline,ultra thick] (3,-1.95) to[out=90,in=-90] (2,1.95);
\draw[black,ultra thick] (4.5,-2) to (4.5,2); 
\draw[black,fill=black] (.5,2) circle (2.0pt);
\draw[black,fill=black] (2,2) circle (2.0pt);
\draw[black,fill=black] (3,2) circle (2.0pt);
\draw[black,fill=black] (4.5,2) circle (2.0pt);
\node at (1.3,0) {$\dotsm$};
\node at (3.7,0) {$\dotsm$};
\draw[black,fill=black] (.5,-2) circle (2.0pt)node[black,below] {\tiny  $1$};
\draw[black,fill=black] (2,-2) circle (2.0pt);
\draw[black,fill=black] (3,-2) circle (2.0pt);
\draw[black,fill=black] (4.5,-2) circle (2.0pt)node[black,below] {\tiny  $n$};
\draw[cutline] (0,-2) to (0,2);
\draw[cutline] (5,-2) to (5,2);
\end{tikzpicture}
}\endxy
&
%
  T_0^{-1} &=\ 
\xy (0,-2)*{
\begin{tikzpicture}[scale=.6]
\draw[thick,densely dotted] (0, 2) to (5, 2);
\draw[thick,densely dotted] (0,-2) to (5,-2);
\draw[black,ultra thick] (1.8,-2) to (1.8,2);
\draw[black,ultra thick] (3.5,-2) to (3.5,2);
\draw[black,ultra thick] (4.5,-2) to[out=90,in=-130] (5,-.85);
\draw[black,ultra thick] (5,.85) to[out=130,in=-90] (4.5,2);
\draw[black,crossline,ultra thick] (0,-.85) to[out=60,in=-90] (.8,1.95);
\draw[black,crossline,ultra thick] (.8,-1.95) to[out=90,in=-60] (0,.85);
\draw[black,fill=black] (0.8,2) circle (2.0pt);
\draw[black,fill=black] (1.8,2) circle (2.0pt);
\draw[black,fill=black] (3.5,2) circle (2.0pt);
\draw[black,fill=black] (4.5,2) circle (2.0pt);
\node at (2.7,0) {$\dotsm$};
\draw[black,fill=black] (.8,-2) circle (2.0pt)node[black,below] {\tiny  $1$};
\draw[black,fill=black] (1.8,-2) circle (2.0pt);
\draw[black,fill=black] (3.5,-2) circle (2.0pt);
\draw[black,fill=black] (4.5,-2) circle (2.0pt)node[black,below] {\tiny  $n$};
\draw[cutline] (0,-2) to (0,2);
\draw[cutline] (5,-2) to (5,2);
\end{tikzpicture}
}\endxy
&
  \rho^{-1} &=\ 
\xy (0,-2)*{
\begin{tikzpicture}[scale=.6,xscale=-1]
\draw[thick,densely dotted] (0, 2) to (5, 2);
\draw[thick,densely dotted] (0,-2) to (5,-2);
\draw[black,fill=black] (0.5,2) circle (2.0pt);
\draw[black,fill=black] (1.5,2) circle (2.0pt);
\draw[black,fill=black] (4.5,2) circle (2.0pt);
\node at (2.5,0) {$\dotsm$};
\draw[black,fill=black] (.5,-2) circle (2.0pt)node[black,below] {\tiny  $n$};
\draw[black,fill=black] (3.5,-2) circle (2.0pt);
\draw[black,fill=black] (4.5,-2) circle (2.0pt)node[black,below] {\tiny  $1$};
\draw[black,ultra thick] (.5,-2) to[out=90,in=-90] (1.5,2);
\draw[black,ultra thick] (3.5,-2) to[out=90,in=-90] (4.5,2);
\draw[black,ultra thick] (4.5,-2) to[out=90,in=-110] (5,-.1);
\draw[black,ultra thick] (0,.1) to[out=70,in=-90] (.5,2);
\draw[cutline] (0,-2) to (0,2);
\draw[cutline] (5,-2) to (5,2);
\end{tikzpicture}
}\endxy
\end{align*}
where $i=1,\dotsc,n-1$. Note that we don't draw all the vertical lines and that the boundary points at the top, resp. at the bottom, of the braid diagram are cyclically ordered.
In our planar presentation we always order them from left to right starting with $1$ and ending with $n$. In our conventions, 
the product $XY$ consists of gluing the diagram of $X$ atop the one of $Y$.

\vspace{0.1in}

The embedding $\psi_{k,n-k}$ can also be described diagrammatically in a natural way. The diagram for $\psi_{k,n-k}(X\otimes Y)$ consists of placing the cyclinder with the diagram for $Y\in\eah{n-k}$ inside the cylinder with the diagram for $X\in\eah{k}$ and then projecting the outer cylinder over the inner in such a way that the boundary points of the diagram for $X$ are sent to themselves, while the boundary points of the diagram for $Y$ are sent to the points labelled $k+1,\dotsc , n$.
This is easily visualized using colored diagrams: we use \textcolor{blue}{blue} for the diagrams of the elements of $\eah{k}$ and \textcolor{purple}{purple} for the ones of $\eah{n-k}$. In this convention, the cylinder with a purple diagram goes inside the one with blue diagram.

\begin{equation*}
\begin{tikzpicture}[anchorbase,scale=1.25]
\draw[blue!55,fill=blue!55] (-0.5,-0.25) to (-0.5,0.25) to (1.5,0.25) to (1.5,-0.25) to (-0.5,-0.25);
\draw[thick,densely dotted] (-0.5, 1) to (1.5, 1);
\draw[thick,densely dotted] (-0.5,-1) to (1.5,-1);
\draw[ultra thick,blue] (0,-1) to (0,-0.25);
\draw[ultra thick,blue] (0,0.25) to (0,1);
\draw[ultra thick,blue] (1,-1) to (1,-0.25);
\draw[ultra thick,blue] (1,0.25) to (1,1);
\draw[blue,fill=blue] (0,-1) circle (1.25pt);
\draw[blue,fill=blue] (0, 1) circle (1.25pt);
\draw[blue,fill=blue] (1,-1) circle (1.25pt);
\draw[blue,fill=blue] (1, 1) circle (1.25pt);
\node at (0.5,-0.625) {$\dots$};
\node at (0.5,0.625) {$\dots$};
\draw[cutline] (-0.5,-1) to (-0.5,1);
\draw[cutline] (1.5,-1) to (1.5,1);
\node at (0.5,0) {$X$};
\end{tikzpicture}	
\mspace{10mu}\otimes\mspace{10mu}
\begin{tikzpicture}[anchorbase,scale=1.25]
\draw[purple!55,fill=purple!55] (-0.5,-0.25) to (-0.5,0.25) to (1.5,0.25) to (1.5,-0.25) to (-0.5,-0.25);
\draw[thick,densely dotted] (-0.5, 1) to (1.5, 1);
\draw[thick,densely dotted] (-0.5,-1) to (1.5,-1);
\draw[ultra thick,purple] (0,-1) to (0,-0.25);
\draw[ultra thick,purple] (0,0.25) to (0,1);
\draw[ultra thick,purple] (1,-1) to (1,-0.25);
\draw[ultra thick,purple] (1,0.25) to (1,1);
\draw[purple,fill=purple] (0,-1) circle (1.25pt);
\draw[purple,fill=purple] (0, 1) circle (1.25pt);
\draw[purple,fill=purple] (1,-1) circle (1.25pt);
\draw[purple,fill=purple] (1, 1) circle (1.25pt);
\node at (0.5,-0.625) {$\dots$};
\node at (0.5,0.625) {$\dots$};
\draw[cutline] (-0.5,-1) to (-0.5,1);
\draw[cutline] (1.5,-1) to (1.5,1);
\node at (0.5,0) {$Y$};
\end{tikzpicture}
\mspace{10mu}\longmapsto\mspace{10mu}
\begin{tikzpicture}[anchorbase,scale=1.25]
\draw[blue!55,fill=blue!55] (-0.5,0.25) to (3.5,0.25) to (3.5,0.75) to (-0.5,0.75) to (-0.5,0.25);
\draw[purple!55,fill=purple!55] (-0.5,-0.25) to (3.5,-0.25) to (3.5,-0.75) to (-0.5,-0.75) to (-0.5,-0.25);
\draw[thick,densely dotted] (-0.5, 1) to (3.5, 1);
\draw[thick,densely dotted] (-0.5,-1) to (3.5,-1);
\draw[ultra thick,blue,crossline] (0,-0.96) to (0,0);
\draw[ultra thick,blue] (0,0) to (0,0.25);
\draw[ultra thick,blue] (0,0.75) to (0,1);
\draw[ultra thick,blue,crossline] (1,-0.96) to (1,0);
\draw[ultra thick,blue] (1,0) to (1,0.25);
\draw[ultra thick,blue] (1,0.75) to (1,1);
\draw[ultra thick,purple] (2,-1) to (2,-.75);
\draw[ultra thick,purple] (2,-0.25) to (2,.15);
\draw[ultra thick,purple] (2,0.85) to (2,1);
\draw[ultra thick,purple] (3,-1) to (3,-0.75);
\draw[ultra thick,purple] (3,-0.25) to (3,.15);
\draw[ultra thick,purple] (3,0.85) to (3,1);
\draw[blue,fill=blue] (0,-1) circle (1.25pt);
\draw[blue,fill=blue] (0, 1) circle (1.25pt);
\draw[blue,fill=blue] (1,-1) circle (1.25pt);
\draw[blue,fill=blue] (1, 1) circle (1.25pt);
\draw[purple,fill=purple] (2,-1) circle (1.25pt);
\draw[purple,fill=purple] (2, 1) circle (1.25pt);
\draw[purple,fill=purple] (3,-1) circle (1.25pt);
\draw[purple,fill=purple] (3, 1) circle (1.25pt);
\node at (0.5,-0.875) {$\dots$};
\node at (0.5,0.875) {$\dots$};
\node at (2.5,-0.875) {$\dots$};
\node at (2.5,0.875) {$\dots$};
\draw[cutline] (-0.5,-1) to (-0.5,1);
\draw[cutline] (3.5,-1) to (3.5,1);
\node at (0.5, 0.5) {$X$};
\node at (2.5,-0.5) {$Y$};
\end{tikzpicture}
\end{equation*}

\begin{exe}\label{ex:rhobrhor}
The diagrams for $\psi_L(\rhoL)=\psi_{k,n-k}(\rhoL\otimes 1)$ and $\psi_R(\rhoR)=\psi_{k,n-k}(1\otimes\rhoR)$:   
\begin{gather*}
\xy (0,-2.5)*{
\begin{tikzpicture}[scale=.6]
\draw[thick,densely dotted] (0, 2) to (4, 2);
\draw[thick,densely dotted] (0,-2) to (4,-2);
\draw[blue,ultra thick] (.5,-2) to[out=90,in=-90] (1.5,2);
\draw[blue,ultra thick] (2.5,-2) to[out=90,in=-90] (3.5,2);
\draw[blue,ultra thick] (3.5,-2) to[out=90,in=-110] (4,-.1);
\draw[blue,ultra thick] (0,.1) to[out=70,in=-90] (.5,2);
\draw[blue,fill=blue] (0.5,2) circle (2.0pt);
\draw[blue,fill=blue] (1.5,2) circle (2.0pt);
\draw[blue,fill=blue] (3.5,2) circle (2.0pt);
\node at (2.1,0) {$\dotsm$};
\draw[blue,fill=blue] (.5,-2) circle (2.0pt);
\draw[blue,fill=blue] (2.5,-2) circle (2.0pt);
\draw[blue,fill=blue] (3.5,-2) circle (2.0pt);
\draw[cutline] (0,-2) to (0,2);
\draw[cutline] (4,-2) to (4,2);
\node at (2,-2.75) {$\rhoL$};
\end{tikzpicture}
}\endxy
\mspace{10mu}\otimes\mspace{10mu}
\xy (0,-3.5)*{
\begin{tikzpicture}[scale=.6]
\draw[thick,densely dotted] (0, 2) to (4, 2);
\draw[thick,densely dotted] (0,-2) to (4,-2);
\draw[purple,ultra thick] ( .5,-2) to ( .5,2);
\draw[purple,ultra thick] (3.5,-2) to (3.5,2);
\draw[purple,fill=purple] (0.5,2) circle (2.0pt);
\draw[purple,fill=purple] (3.5,2) circle (2.0pt);
\node at (2.1,0) {$\dotsm$};
\draw[purple,fill=purple] (.5,-2) circle (2.0pt);
\draw[purple,fill=purple] (3.5,-2) circle (2.0pt);
\draw[cutline] (0,-2) to (0,2);
\draw[cutline] (4,-2) to (4,2);
\node at (2,-2.75) {$1$};
\end{tikzpicture}
}\endxy
\mspace{10mu}\xmapsto{\psi_{k,n-k}}\mspace{10mu}
\xy (0,-3.5)*{
\begin{tikzpicture}[scale=.6]
\draw[thick,densely dotted] (0, 2) to (8, 2);
\draw[thick,densely dotted] (0,-2) to (8,-2);
\draw[purple,crossline,ultra thick] (4.5,-1.95) to (4.5,1.95);
\draw[purple,crossline,ultra thick] (7.5,-1.95) to (7.5,1.95);
\node at (6.1,-1) {$\dotsm$};
\node at (6.1,1) {$\dotsm$};
\draw[purple,fill=purple] (4.5,2) circle (2.0pt);
\draw[purple,fill=purple] (7.5,2) circle (2.0pt);
\draw[purple,fill=purple] (4.5,-2) circle (2.0pt);
\draw[purple,fill=purple] (7.5,-2) circle (2.0pt);
\draw[blue,ultra thick] (.5,-2) to[out=90,in=-90] (1.5,2);
\draw[blue,ultra thick] (2.5,-2) to[out=90,in=-90] (3.5,2);
\draw[blue,crossline,ultra thick] (3.5,-2) to[out=80,in=-135] (8,.9);
\draw[blue,ultra thick] (0,1.1) to[out=45,in=-90] (.5,2);
\node at (2.1,0) {$\dotsm$};
\draw[blue,fill=blue] (0.5,2) circle (2.0pt);
\draw[blue,fill=blue] (1.5,2) circle (2.0pt);
\draw[blue,fill=blue] (3.5,2) circle (2.0pt);
\draw[blue,fill=blue] (.5,-2) circle (2.0pt);
\draw[blue,fill=blue] (2.5,-2) circle (2.0pt);
\draw[blue,fill=blue] (3.5,-2) circle (2.0pt);
\draw[cutline] (0,-2) to (0,2);
\draw[cutline] (8,-2) to (8,2);
\node at (4,-2.75) {$\rho T_{n-1}\dotsm T_k$}; 
\end{tikzpicture}
}\endxy
\end{gather*}

\begin{gather*}
\xy (0,-2.5)*{
\begin{tikzpicture}[scale=.6]
\draw[thick,densely dotted] (0, 2) to (4, 2);
\draw[thick,densely dotted] (0,-2) to (4,-2);
\draw[blue,ultra thick] ( .5,-2) to ( .5,2);
\draw[blue,ultra thick] (3.5,-2) to (3.5,2);
\draw[blue,fill=blue] (0.5,2) circle (2.0pt);
\draw[blue,fill=blue] (3.5,2) circle (2.0pt);
\node at (2.1,0) {$\dotsm$};
\draw[blue,fill=blue] (.5,-2) circle (2.0pt);
\draw[blue,fill=blue] (3.5,-2) circle (2.0pt);
\draw[cutline] (0,-2) to (0,2);
\draw[cutline] (4,-2) to (4,2);
\node at (2,-2.75) {$1$};
\end{tikzpicture}
}\endxy
\mspace{10mu}\otimes\mspace{10mu}
\xy (0,-3.5)*{
\begin{tikzpicture}[scale=.6]
\draw[thick,densely dotted] (0, 2) to (4, 2);
\draw[thick,densely dotted] (0,-2) to (4,-2);
\draw[purple,ultra thick] (.5,-2) to[out=90,in=-90] (1.5,2);
\draw[purple,ultra thick] (2.5,-2) to[out=90,in=-90] (3.5,2);
\draw[purple,ultra thick] (3.5,-2) to[out=90,in=-110] (4,-.1);
\draw[purple,ultra thick] (0,.1) to[out=70,in=-90] (.5,2);
\draw[purple,fill=purple] (0.5,2) circle (2.0pt);
\draw[purple,fill=purple] (1.5,2) circle (2.0pt);
\draw[purple,fill=purple] (3.5,2) circle (2.0pt);
\node at (2.1,0) {$\dotsm$};
\draw[purple,fill=purple] (.5,-2) circle (2.0pt);
\draw[purple,fill=purple] (2.5,-2) circle (2.0pt);
\draw[purple,fill=purple] (3.5,-2) circle (2.0pt);
\draw[cutline] (0,-2) to (0,2);
\draw[cutline] (4,-2) to (4,2);
\node at (2,-2.75) {$\rhoR$};
\end{tikzpicture}
}\endxy
\mspace{10mu}\xmapsto{\psi_{k,n-k}}\mspace{10mu}
\xy (0,-3.5)*{
\begin{tikzpicture}[scale=.6,xscale=-1,yscale=-1]
\draw[thick,densely dotted] (0, 2) to (8, 2);
\draw[thick,densely dotted] (0,-2) to (8,-2);
\draw[purple,ultra thick] (.5,-2) to[out=90,in=-90] (1.5,2);
\draw[purple,ultra thick] (2.5,-2) to[out=90,in=-90] (3.5,2);
\draw[purple,crossline,ultra thick] (3.5,-2) to[out=80,in=-135] (8,.9);
\draw[purple,ultra thick] (0,1.1) to[out=45,in=-90] (.5,2);
\node at (2.1,0) {$\dotsm$};
\draw[purple,fill=purple] (0.5,2) circle (2.0pt);
\draw[purple,fill=purple] (1.5,2) circle (2.0pt);
\draw[purple,fill=purple] (3.5,2) circle (2.0pt);
\draw[purple,fill=purple] (.5,-2) circle (2.0pt);
\draw[purple,fill=purple] (2.5,-2) circle (2.0pt);
\draw[purple,fill=purple] (3.5,-2) circle (2.0pt);
\draw[blue,crossline,ultra thick] (4.5,-1.95) to (4.5,1.95);
\draw[blue,crossline,ultra thick] (7.5,-1.95) to (7.5,1.95);
\node at (6.1,-1) {$\dotsm$};
\node at (6.1,1) {$\dotsm$};
\draw[blue,fill=blue] (4.5,2) circle (2.0pt);
\draw[blue,fill=blue] (7.5,2) circle (2.0pt);
\draw[blue,fill=blue] (4.5,-2) circle (2.0pt);
\draw[blue,fill=blue] (7.5,-2) circle (2.0pt);
\draw[cutline] (0,-2) to (0,2);
\draw[cutline] (8,-2) to (8,2);
\node at (4,2.75) {$T_k^{-1}\dotsm T_1^{-1}\rho$}; 
\end{tikzpicture}
}\endxy
\end{gather*}
\end{exe}

\begin{exe}[Diagrams on a cylinder]
To visualize this product, it might help to draw the inclusions from~\autoref{ex:rhobrhor} using diagrams on a cylinder.
For the curious reader we have also drawn the diagram for the Bernstein generator $y_1$.
\begingroup\allowdisplaybreaks
\begin{align*}
\xy (0,-1)*{
\begin{tikzpicture}[scale=2.5]
\draw [very thick]   (180:5mm) coordinate (a) -- ++(0,-12.5mm) coordinate (b) arc (180:360:5mm and 1.75mm) coordinate (d)  -- (a -| d) coordinate (c) arc (0:180:5mm and 1.75mm);
\draw[very thick, densely dashed] (d) arc (0:180:5mm and 1.75mm);
\draw[crosslinedashed,blue,densely dashed,ultra thick,opacity=0.5]  (0.5,-0.925) to[out=120,in=-80] (-0.5,-0.5);
\draw[crossline,blue,ultra thick] (-0.38,-1.37)node[below]{\tiny $1$} to[out=90,in=-90] (-0.17,-0.16);
\draw[crossline,blue,ultra thick] (0.12,-1.42) to[out=90,in=-90] (0.35,-0.12);
\draw[crossline,blue,ultra thick] (-0.5,-0.5) to[out=90,in=-90] (-0.38,-0.117);
\draw[crossline,blue,ultra thick] (0.35,-1.375)node[below]{\tiny $k$} to[out=90,in=-90] (0.5,-0.95);
\node at (-0.02,-1.51) {\color{blue}\rotatebox{-02}{\tiny $\dotsm$}};
\node[blue] at (0.05,-0.53) {$\dotsm$};
  \draw [very thick]  (0,0) coordinate (t) circle (5mm and 1.75mm);
    \draw [very thick]   (180:5mm) coordinate (a) -- ++(0,-12.5mm) coordinate (b) arc (180:360:5mm and 1.75mm) coordinate (d)  -- (a -| d) coordinate (c) arc (0:180:5mm and 1.75mm);
 \end{tikzpicture}
}\endxy
\mspace{5mu}\otimes\mspace{11mu}
  \xy (0,-2)*{
\begin{tikzpicture}[scale=2.5]
\draw [very thick]   (180:5mm) coordinate (a) -- ++(0,-12.5mm) coordinate (b) arc (180:360:5mm and 1.75mm) coordinate (d)  -- (a -| d) coordinate (c) arc (0:180:5mm and 1.75mm);
\draw[very thick, densely dashed] (d) arc (0:180:5mm and 1.75mm);
\draw[crossline,purple,ultra thick] (-0.38,-1.37)node[below]{\tiny $1$} to[out=90,in=-90] (-0.38,-0.117);
\draw[crossline,purple,ultra thick] (0.12,-1.42)node[below]{\tiny $n-k$} to[out=90,in=-90] (0.12,-0.17);
\node at (-0.17,-1.53) {\color{purple}\rotatebox{-07}{\tiny $\dotsm$}};
\node[purple] at (-0.12,-0.53) {$\dotsm$};
\draw [very thick]  (0,0) coordinate (t) circle (5mm and 1.75mm);
\draw [very thick]   (180:5mm) coordinate (a) -- ++(0,-12.5mm) coordinate (b) arc (180:360:5mm and 1.75mm) coordinate (d)  -- (a -| d) coordinate (c) arc (0:180:5mm and 1.75mm);
 \end{tikzpicture}
}\endxy
\mspace{10mu}&\longmapsto\mspace{6mu}
  \xy (0,0)*{
\begin{tikzpicture}[scale=2.5]
\draw[crosslinedashed,blue,densely dashed,ultra thick,opacity=0.5]  (0.74,-0.925) to[out=120,in=-80] (-0.74,-0.5);
\draw[crossline,blue,ultra thick] (-0.745,-0.5) to[out=90,in=-90] (-0.58,-0.16);
\draw [very thick]  (180:7.5mm) coordinate (A)  -- ++(0,-12.5mm) coordinate (B)  arc (180:360:7.5mm and 2.625mm) coordinate (D) -- (A -| D) coordinate (C) arc (0:180:7.5mm and 2.625mm);
\draw[crossline,purple,ultra thick] (0.13,-1.42) to[out=90,in=-90] (0.13,-0.176);
\draw[crossline,purple,ultra thick] (0.41,-1.35) to[out=90,in=-90] (0.41,-0.10);
\node[purple] at (0.28,-0.53) {\rotatebox{10}{$\dotsm$}};  
\draw[crossline,blue,ultra thick] (-0.58,-1.42) to[out=90,in=-90] (-0.38,-0.23);  
\draw[crossline,blue,ultra thick] (-0.21,-1.51) to[out=90,in=-90] (-0.01,-0.27);  
\draw[crossline,blue,ultra thick] (0,-1.51) to[out=90,in=-95] (0.747,-0.95);
\node[blue] at (-0.21,-0.53) {\rotatebox{-07}{$\dotsm$}};  
 \draw [very thick,fill=gray, fill opacity=.2]  (180:5mm) coordinate (a) -- ++(0,-12.5mm) coordinate (b)  arc (180:360:5mm and 1.75mm) coordinate (d)  -- (a -| d) coordinate (c) arc (0:180:5mm and 1.75mm);
 \draw [very thick,densely dashed] (d) arc (0:180:5mm and 1.75mm);
\draw [very thick, fill=gray, fill opacity=.2]  (0,0) coordinate (t) circle (5mm and 1.75mm);
\draw [very thick] (0,0) coordinate (T) circle (7.5mm and 2.625mm);
\draw [very thick, densely dashed] (D) arc (0:180:7.5mm and 2.625mm);
\draw [very thick] (D) arc (0:-180:7.5mm and 2.625mm);
\end{tikzpicture}
}\endxy
\mspace{10mu}\longmapsto\mspace{10mu}
\xy (0,-2)*{
\begin{tikzpicture}[scale=2.5]
\draw [very thick]   (180:5mm) coordinate (a) -- ++(0,-12.5mm) coordinate (b) arc (180:360:5mm and 1.75mm) coordinate (d)  -- (a -| d) coordinate (c) arc (0:180:5mm and 1.75mm);
\draw[very thick, densely dashed] (d) arc (0:180:5mm and 1.75mm);
\draw[crosslinedashed,blue,densely dashed,ultra thick,opacity=0.5]  (0.5,-0.925) to[out=120,in=-80] (-0.5,-0.5);
\draw[crossline,purple,ultra thick] (0.17,-1.42) to[out=90,in=-90] (0.17,-0.175);
\draw[crossline,purple,ultra thick] (0.41,-1.35)node[below]{\tiny $n$} to[out=90,in=-90] (0.41,-0.10);
\draw[crossline,blue,ultra thick] (-0.43,-1.34)node[below]{\tiny $1$} to[out=90,in=-90] (-0.29,-0.15);
\draw[crossline,blue,ultra thick] (-0.12,-1.42) to[out=90,in=-90] (0,-0.17);
\draw[crossline,blue,ultra thick] (-0.5,-0.5) to[out=90,in=-90] (-0.43,-0.09);
\draw[crossline,blue,ultra thick] (0.0,-1.43)node[below]{\tiny $k$} to[out=90,in=-90] (0.5,-0.95);
\node[blue] at (-0.19,-1.51) {\rotatebox{-09}{\tiny $\dotsm$}};
\node[blue] at (-0.15,-0.53) {\rotatebox{-03}{$\dotsm$}};
\node[purple] at (0.3,-0.49) {\rotatebox{13}{$\dotsm$}};
\node[purple] at (0.21,-1.51) {\rotatebox{09}{\tiny $\dotsm$}};
  \draw [very thick]  (0,0) coordinate (t) circle (5mm and 1.75mm);
    \draw [very thick]   (180:5mm) coordinate (a) -- ++(0,-12.5mm) coordinate (b) arc (180:360:5mm and 1.75mm) coordinate (d)  -- (a -| d) coordinate (c) arc (0:180:5mm and 1.75mm);
 \end{tikzpicture}
}\endxy
  \\ 
\xy (0,-1)*{
\begin{tikzpicture}[scale=2.5]
\draw [very thick]   (180:5mm) coordinate (a) -- ++(0,-12.5mm) coordinate (b) arc (180:360:5mm and 1.75mm) coordinate (d)  -- (a -| d) coordinate (c) arc (0:180:5mm and 1.75mm);
\draw[very thick, densely dashed] (d) arc (0:180:5mm and 1.75mm);
\draw[crossline,blue,ultra thick] (-0.38,-1.37)node[below]{\tiny $1$} to[out=90,in=-90] (-0.38,-0.117);
\draw[crossline,blue,ultra thick] (0.12,-1.42)node[below]{\tiny $k$} to[out=90,in=-90] (0.12,-0.17);
\node at (-0.17,-1.53) {\color{blue}\rotatebox{-07}{\tiny $\dotsm$}};
\node[blue] at (-0.12,-0.53) {$\dotsm$};
\draw [very thick]  (0,0) coordinate (t) circle (5mm and 1.75mm);
\draw [very thick]   (180:5mm) coordinate (a) -- ++(0,-12.5mm) coordinate (b) arc (180:360:5mm and 1.75mm) coordinate (d)  -- (a -| d) coordinate (c) arc (0:180:5mm and 1.75mm);
 \end{tikzpicture}
}\endxy
\mspace{10mu}\otimes\mspace{6mu}
\xy (0,-2)*{
\begin{tikzpicture}[scale=2.5]
\draw [very thick]   (180:5mm) coordinate (a) -- ++(0,-12.5mm) coordinate (b) arc (180:360:5mm and 1.75mm) coordinate (d)  -- (a -| d) coordinate (c) arc (0:180:5mm and 1.75mm);
\draw[very thick, densely dashed] (d) arc (0:180:5mm and 1.75mm);
\draw[crosslinedashed,purple,densely dashed,ultra thick,opacity=0.5]  (0.5,-0.925) to[out=120,in=-80] (-0.5,-0.5);
\draw[crossline,purple,ultra thick] (-0.38,-1.37)node[below]{\tiny $1$} to[out=90,in=-90] (-0.17,-0.16);
\draw[crossline,purple,ultra thick] (0.12,-1.42) to[out=90,in=-90] (0.35,-0.12);
\draw[crossline,purple,ultra thick] (-0.5,-0.5) to[out=90,in=-90] (-0.38,-0.117);
\draw[crossline,purple,ultra thick] (0.35,-1.375)node[below]{\tiny $n-k$} to[out=90,in=-90] (0.5,-0.95);
\node at (-0.02,-1.51) {\color{purple}\rotatebox{-02}{\tiny $\dotsm$}};
\node[purple] at (0.05,-0.53) {$\dotsm$};
  \draw [very thick]  (0,0) coordinate (t) circle (5mm and 1.75mm);
    \draw [very thick]   (180:5mm) coordinate (a) -- ++(0,-12.5mm) coordinate (b) arc (180:360:5mm and 1.75mm) coordinate (d)  -- (a -| d) coordinate (c) arc (0:180:5mm and 1.75mm);
 \end{tikzpicture}
}\endxy
\mspace{6mu}&\longmapsto\mspace{10mu}
  \xy (0,0)*{
\begin{tikzpicture}[scale=2.5]
  \draw [very thick]  (180:7.5mm) coordinate (A)  -- ++(0,-12.5mm) coordinate (B)  arc (180:360:7.5mm and 2.625mm) coordinate (D) -- (A -| D) coordinate (C) arc (0:180:7.5mm and 2.625mm);
\draw[crosslinedashed,purple,densely dashed,ultra thick,opacity=0.5]  (0.5,-0.925) to[out=120,in=-80] (-0.5,-0.5);
\draw[crossline,purple,ultra thick] (-0.03,-1.43) to[out=90,in=-90] (0.13,-0.176);
\draw[crossline,purple,ultra thick] (0.26,-1.40) to[out=90,in=-90] (0.41,-0.10);
\draw[crossline,purple,ultra thick] (-0.5,-0.5) to[out=90,in=-90] (-0.03,-0.18);
\draw[crossline,purple,ultra thick] (0.41,-1.35) to[out=90,in=-100] (0.50,-0.95);
\node[purple] at (0.25,-0.53) {$\dotsm$};  
\draw[crossline,blue,ultra thick] (-0.63,-1.39) to[out=90,in=-90] (-0.63,-0.14);  
\draw[crossline,blue,ultra thick] (-0.28,-1.49) to[out=90,in=-90] (-0.28,-0.24);  
 \draw [very thick,fill=gray, fill opacity=.2]  (180:5mm) coordinate (a) -- ++(0,-12.5mm) coordinate (b)  arc (180:360:5mm and 1.75mm) coordinate (d)  -- (a -| d) coordinate (c) arc (0:180:5mm and 1.75mm);
 \draw [very thick,densely dashed] (d) arc (0:180:5mm and 1.75mm);
\draw [very thick, fill=gray, fill opacity=.2]  (0,0) coordinate (t) circle (5mm and 1.75mm);
\draw [very thick] (0,0) coordinate (T) circle (7.5mm and 2.625mm);
\draw [very thick, densely dashed] (D) arc (0:180:7.5mm and 2.625mm);
\draw [very thick] (D) arc (0:-180:7.5mm and 2.625mm);
\end{tikzpicture}
}\endxy
\mspace{14mu}\longmapsto\mspace{11mu}
\xy (0,-2)*{
\begin{tikzpicture}[scale=2.5]
\draw [very thick]   (180:5mm) coordinate (a) -- ++(0,-12.5mm) coordinate (b) arc (180:360:5mm and 1.75mm) coordinate (d)  -- (a -| d) coordinate (c) arc (0:180:5mm and 1.75mm);
\draw[very thick, densely dashed] (d) arc (0:180:5mm and 1.75mm);
\draw[crosslinedashed,purple,densely dashed,ultra thick,opacity=0.5]  (0.5,-0.925) to[out=120,in=-80] (-0.5,-0.5);
\draw[crossline,purple,ultra thick] (0.01,-1.43) to[out=90,in=-90] (0.17,-0.176);
\draw[crossline,purple,ultra thick] (0.29,-1.40) to[out=90,in=-90] (0.44,-0.08);
\draw[crossline,purple,ultra thick] (-0.5,-0.5) to[out=90,in=-90] (0.01,-0.18);
\draw[crossline,purple,ultra thick] (0.41,-1.35)node[below] {\tiny $n$} to[out=90,in=-100] (0.50,-0.95);
\node[purple] at (0.28,-0.53) {$\dotsm$};  
\draw[crossline,blue,ultra thick] (-0.40,-1.35)node[below] {\tiny $1$} to[out=90,in=-90] (-0.40,-0.10);  
\draw[crossline,blue,ultra thick] (-0.13,-1.42)node[below] {\tiny $k$} to[out=90,in=-90] (-0.13,-0.17);  
\node[blue] at (-0.26,-1.49) {\rotatebox{-13}{\tiny $\dotsm$}};
\node[purple] at (0.21,-1.51) {\rotatebox{09}{\tiny $\dotsm$}};
\draw [very thick]  (0,0) coordinate (t) circle (5mm and 1.75mm);
\draw [very thick]   (180:5mm) coordinate (a) -- ++(0,-12.5mm) coordinate (b) arc (180:360:5mm and 1.75mm) coordinate (d)  -- (a -| d) coordinate (c) arc (0:180:5mm and 1.75mm);
 \end{tikzpicture}
}\endxy
\end{align*}
\endgroup
The Bernstein generator $y_1$ can be drawn as 
\[
y_1 =
\xy (0,-1)*{
\begin{tikzpicture}[scale=2.5]
  \draw [very thick]   (180:5mm) coordinate (a) -- ++(0,-12.5mm) coordinate (b) arc (180:360:5mm and 1.75mm) coordinate (d)  -- (a -| d) coordinate (c) arc (0:180:5mm and 1.75mm);
\draw [crosslinedashed,very thick, densely dashed] (d) arc (0:180:5mm and 1.75mm);
\draw[crosslinedashed,mygreen,densely dashed,ultra thick,opacity=0.5]  (0.5,-0.925) to[out=120,in=-80] (-0.5,-0.5);
\draw[crossline,mygreen,ultra thick] (-0.38,-1.37)node[below]{\tiny $1$} to
(-0.38,-0.117);
\draw[crossline,mygreen,ultra thick] (0.12,-1.42)node[below]{\tiny $n-1$} to
(0.12,-0.162);
\draw[crossline,mygreen,ultra thick] (-0.5,-0.5) to[out=60,in=-90] (0.35,-0.12);
\draw[crossline,mygreen,ultra thick] (0.35,-1.375)node[below]{\tiny $n$} to[out=90,in=-80] (0.5,-0.95);
\node at (-0.17,-1.53) {\color{mygreen}\rotatebox{-07}{\tiny $\dotsm$}};
\node at (-0.12,-0.53) {$\dotsm$};
\draw [very thick]  (0,0) coordinate (t) circle (5mm and 1.75mm);
\draw [very thick]   (180:5mm) coordinate (a) -- ++(0,-12.5mm) coordinate (b) arc (180:360:5mm and 1.75mm) coordinate (d)  -- (a -| d) coordinate (c) arc (0:180:5mm and 1.75mm);
 \end{tikzpicture}
}\endxy
\]
\end{exe}

Note that the equation $\psi_{k,n-k}(\rhoL\otimes 1)\psi_{k,n-k}(1\otimes\rhoR)=\psi_{k,n-k}(1\otimes\rhoR)\psi_{k,n-k}(\rhoL\otimes 1)$ 
translates to \say{\textcolor{blue}{blue} and \textcolor{purple}{purple} sliding past each other} in the corresponding compositions of the diagrams above.

%
%

\section{Rouquier-Soergel calculus}\label{s:rouquier-Soergel}

\subsection{Soergel calculus in extended affine type \texorpdfstring{$A$}{A} and Rouquier complexes}\label{sec:catreminders}

In this section, we initially revisit the definition of the diagrammatic Soergel category for the extended affine type $A$ as presented in \cite{mackaay-thiel} (see also \cite{mmv-evalfunctor} and \cite{elias2018}). 

Let $\Bbbk$ be an arbitrary field. Our conventions for graded ($\Bbbk$-linear additive) categories and shifts are as in~\cite[\S 3.1]{mmv-evalfunctor}, which match those of \cite[Section 4.1]{e-m-t-w}. Since those conventions are important, let us briefly recall them. 
Throughout this paper, graded will always mean $\mathbb{Z}$-graded. For any $t\in \mathbb{Z}$, the grading shift $\langle t\rangle$ of a graded vector space $M\cong \oplus_{a\in \mathbb{Z}} M_a$ is defined by $(M\langle t\rangle)_a:=M_{a+t}$ for all $a\in \mathbb{Z}$. This implies that
$\mathrm{hom}(M\langle r\rangle, N\langle t\rangle)$ consists of all homogeneous linear maps between $M$ and $N$ of degree $t-r$. The graded
morphism space of all morphisms from $M$ to $N$ is defined as $\mathrm{Hom}(M,N):=\oplus_{t\in \mathbb{Z}} \mathrm{hom}(M,N\langle t\rangle)$.
In the categories of this paper, the objects need not be vector spaces, in which case the grading shifts are formal, but the morphism spaces are 
always vector spaces consisting of (equivalence classes of) linear combinations of diagrams. Each diagram has some degree $s\in \mathbb{Z}$ and can therefore be seen as a morphism between two objects with shifts $M\langle r\rangle$ and $N\langle t\rangle$, such that $s=t-r$, or as a homogeneous morphism between $M$ and $N$ of degree $s=t-r$. To distinguish a non-graded category $\cC$ with 
shift and lower case morphism spaces of the form $\mathrm{hom}(M\langle r \rangle,N\langle t\rangle)$ from the associated graded category with morphism spaces of the form $\mathrm{Hom}(M,N)$, we 
denote the latter by $\cC^*$. 

Recall that the \emph{additive envelope} of a $\Bbbk$-linear category $\cC$ is the 
additive, linear category $\cC_{\oplus}$ whose objects are formal direct sums of objects in $\cC$ and whose morphisms are 
matrices of morphisms in $\cC$, such that composition is given by matrix multiplication. The 
{\em Karoubi envelope} of an additive, linear category is a formal enhancement which results in an idempotent complete category we denote $\cC_{\oplus, e}$. 
For more information on these formal constructions, see e.g.~\cite[Sections 11.2.2 -- 11.2.4]{e-m-t-w}.

\subsubsection{Soergel calculus in extended affine type A}\label{sec:soergeldiagrammatics-eaff}

The \emph{diagrammatic Bott-Samelson category} of extended type $\widehat{A}_{n-1}$, denoted 
$\BSext_n$, is the $\mathbb{Z}$-graded, $\mathbb{R}$-linear, pivotal category whose objects are grading shifts of finite words in the alphabet $\widehat{I}\cup \{\pm\}$, and whose vector spaces of morphisms are defined below in terms of generating diagrams and relations. 
Recall that a pivotal category is a monoidal category with duals satisfying certain conditions, see \cite[Definition 4.7.8]{EGNO}.
We will 
often identify that alphabet with the corresponding elementary Soergel bimodules $\{\mathrm{B}_i,i\in \widehat{I}\}\cup\{\Brhopm \}$, which is justified by~\cite[Theorem 2.10]{mackaay-thiel}. These are bimodules over a polynomial algebra $R$, defined below \eqref{eq:orslide6vertex}. Under this identification, a word in the alphabet corresponds to a tensor product of $\rB_i$ and $\rB_\rho$, which is called a {\em Bott-Samelson bimodule}. The empty word is the identity object, which is identified with $R$. The tensor product is taken over $R$, but we will often suppress $\otimes_R$ in our notation, e.g., we will simply write $\rB_1 \rB_2$ for $\rB_1\otimes_R \rB_2$. The diagrams then correspond to $R$-$R$-bimodule maps, but we will not use those maps explicitly in this paper. As explained above, the diagrams can be seen as morphisms between degree-shifted objects or as homogeneous morphisms between unshifted objects. In this definition, we take the former point of view. As explained above, the graded version of $\BSext_n$ will be denoted by $\BSextstar_n$.

The identity morphism on $i\in \widehat{I}$ is given by a vertical non-oriented strand colored by $i$, whereas the identity morphism on $+/-$ is given by a vertical upward/downward oriented black strand. As usual, we will color the unoriented strands to facilitate the reading of the diagrams. When there are too many different colors in a diagram, the colors are sometimes indicated by labels next to the strands. We say that two colors $i,j\in \widehat{I}$ are {\em adjacent} if $i \equiv j\pm 1\bmod n$ and that they are {\em distant} otherwise. 

The number of different types of diagram increases with $n$, so below we first give 
the generating diagrams for $n\geq 1$, then the additional ones for $n\geq 2$ and, finally, 
the additional ones that only show up for $n\geq 3$.

\begin{itemize}
\item For $n\geq 1$, 
\begin{equation}\label{eq:gensn1}
\xy (0,.6)*{
\tikzdiagc[yscale=0.9]{
\draw[ultra thick,black,to-] (1,.4) .. controls (1.2,-.4) and (1.8,-.4) .. (2,.4);
\node at (-1,-.75) {Degree};
\node at (1.5,-.75) {$0$};
}}\endxy
\mspace{40mu}
  \xy (0,.6)*{
\tikzdiagc[yscale=0.9]{
\draw[ultra thick,black,-to] (1,.4) .. controls (1.2,-.4) and (1.8,-.4) .. (2,.4);
\node at (1.5,-.75) {$0$};
    }}\endxy
\end{equation}
and the corresponding oriented black caps. 
\item For $n\geq 2$, with $i\in \widehat{I}$, 
\begin{equation}\label{eq:coloredgensn2}
\xy (0,0)*{
\tikzdiagc[scale=1]{
\begin{scope}[yscale=-.5,xscale=.5,shift={(5,-2)}] 
  \draw[ultra thick,blue] (-1,0)node[above]{\tiny $i$} -- (-1, 1)node[pos=0, tikzdot]{};
\end{scope}
\begin{scope}[yscale=.5,xscale=.5,shift={(8,2)}] 
   \draw[ultra thick,blue] (0,0)-- (0, 1)node[above]{\tiny $i$}; \draw[ultra thick,blue] (-1,-1) -- (0,0); \draw[ultra thick,blue] (1,-1) -- (0,0);
\end{scope}
\node at (0,0) {Degree};
\node at (2,0) {$1$};
\node at (4,0) {$-1$};
}}\endxy
\mspace{60mu}
  \xy (0,0)*{
\tikzdiagc[scale=1]{
\draw[ultra thick,myred] (.5,-.5)node[below] {\tiny $i-1$} -- (0,0);
\draw[ultra thick,blue] (-.5,.5) node[above] {\tiny $i$} -- (0,0);
\draw[ultra thick,black,-to] (-.5,-.5) -- (.5,.5);
\node at (0,-1) {$0$};
  }}\endxy  
\mspace{60mu}  
  \xy (0,0)*{
\tikzdiagc[scale=1,xscale=-1]{
\draw[ultra thick,blue] (.5,-.5)node[below] {\tiny $i$} -- (0,0);
\draw[ultra thick,myred] (-.5,.5) node[above] {\tiny $i-1$}-- (0,0);
\draw[ultra thick,black,-to] (-.5,-.5) -- (.5,.5);
\node at (0,-1) {$0$};
  }}\endxy
\end{equation}  
and the diagrams obtained from these by a rotation of $180$ degrees (which have the 
same degrees).
\item For $n\geq 3$, with $i,j,k\in \widehat{I}$ such that $\vert i-j\vert =1$ and $\vert i-k\vert >1$, 
\begin{equation}\label{eq:gensngeq3}
\xy (0,0)*{
\tikzdiagc[scale=1]{
\begin{scope}[scale=1, shift={(2,1)}] 
\draw[ultra thick,blue] (.5,-.5)  -- (-.5,.5)node[above]{\tiny $i$};
\draw[ultra thick,mygreen] (-.5,-.5) -- (.5,.5)node[above]{\tiny $k$};
\end{scope}
\begin{scope}[scale=.5,shift={(0,2)}] 
  \draw[ultra thick,myred] (0,-1) -- (0,0);\draw[ultra thick,myred] (0,0) -- (-1, 1)node[above]{\tiny $j$};\draw[ultra thick,myred] (0,0) -- (1, 1)node[above]{\tiny $j$};
  \draw[ultra thick,blue] (0,0)-- (0, 1)node[above]{\tiny $i$}; \draw[ultra thick,blue] (-1,-1) -- (0,0); \draw[ultra thick,blue] (1,-1) -- (0,0);
\end{scope}
\node at (-2,0) {Degree};
\node at (0,0) {$0$};
\node at (2,0) {$0$};
}}\endxy
\end{equation}
and the diagrams obtained from these by a rotation of $180$ degrees (which have the 
same degrees).
\end{itemize}

We read diagrams from bottom to top as morphisms, i.e., their source is at the bottom and their 
target at the top.

Diagrams can be stacked vertically (composition of morphisms) and juxtaposed horizontally (monoidal product of morphisms), while adding the degrees, and are subject to the relations 
below. The list contains all relations for all $n\geq 1$, but for $n=1$ and $n=2$ only 
the relations involving the respective generators should be considered. For $n=2$, there is an additional subtlety in \eqref{eq:forcingdumbel-i-iminus}, as indicated.  
We also assume isotopy invariance and cyclicity, meaning that closed parts of the 
diagrams can be moved around freely in the plane as long as they do not cross any other strands 
and the boundary is fixed, and all diagrams can be bent and rotated, and the bent and rotated 
versions of the relations also hold. 
\begin{itemize}
\item Relations involving only one color:

\begingroup\allowdisplaybreaks
\begin{gather}\label{eq:onecolorfirst}
\xy (0,0)*{
\tikzdiagc[scale=.4,yscale=.7]{
  \draw[ultra thick,blue] (0,-1.75)node[below]{\tiny $i$} -- (0,1.75)node[above]{\phantom{\tiny $i$}};
  \draw[ultra thick,blue] (0,0) -- (1,0)node[pos=1, tikzdot]{};  
}}\endxy
=\ 
\xy (0,0)*{
\tikzdiagc[scale=.4,yscale=.7]{
  \draw[ultra thick,blue] (0,-1.75)node[below]{\tiny $i$} -- (0,1.75)node[above]{\phantom{\tiny $i$}};
}}\endxy
\\[1ex] \label{eq:onecolorsecond}
  \xy (0,.05)*{
\tikzdiagc[scale=.4,yscale=.7]{
  \draw[ultra thick,blue] (0,-1) -- (0,1);
  \draw[ultra thick,blue] (0,1) -- (-1,2);  \draw[ultra thick,blue] (0,1) -- (1,2)node[above]{\phantom{\tiny $i$}};
  \draw[ultra thick,blue] (0,-1) -- (-1,-2)node[below]{\tiny $i$};  \draw[ultra thick,blue] (0,-1) -- (1,-2);
}}\endxy
=
  \xy (0,.05)*{
\tikzdiagc[yscale=.4,xscale=.3]{
  \draw[ultra thick,blue] (-1,0) -- (1,0);
  \draw[ultra thick,blue] (-2,-1)node[below]{\tiny $i$} -- (-1,0);  \draw[ultra thick,blue] (-2,1)node[above]{\phantom{\tiny $i$}} -- (-1,0);
  \draw[ultra thick,blue] ( 2,-1) -- ( 1,0);  \draw[ultra thick,blue] ( 2,1) -- ( 1,0);
}}\endxy
\\[1ex] \label{eq:onecolorthird}
  \xy (0,0)*{
\tikzdiagc[scale=0.9]{
\draw[ultra thick,blue] (0,0) circle  (.3);
\draw[ultra thick,blue] (0,-.8)node[below]{\tiny $i$} --  (0,-.3);
\node at (0,.3){\phantom{\tiny $i$}};
}}\endxy
  \ = 0
\\[1ex] \label{eq:onecolorfourth}
\xy (0,0)*{
  \tikzdiagc[yscale=0.5,xscale=.5]{
    \draw[ultra thick,blue] (0,-.35) -- (0,.35)node[pos=0, tikzdot]{} node[pos=1, tikzdot]{};
  \draw[ultra thick,blue] (.6,-1)node[below]{\tiny $i$}-- (.6, 1)node[above]{\phantom{\tiny $i$}}; 
}}\endxy
\ +\ 
\xy (0,0)*{
  \tikzdiagc[yscale=0.5,xscale=-.5]{
    \draw[ultra thick,blue] (0,-.35) -- (0,.35)node[pos=0, tikzdot]{} node[pos=1, tikzdot]{};
  \draw[ultra thick,blue] (.6,-1)node[below]{\tiny $i$}-- (.6, 1)node[above]{\phantom{\tiny $i$}}; 
}}\endxy\
=
2\,\  
\xy (0,0)*{
  \tikzdiagc[yscale=0.5,xscale=-.5]{
\draw[ultra thick,blue] (0,-1)node[below]{\tiny $i$} -- (0,-.4)node[pos=1, tikzdot]{};
\draw[ultra thick,blue] (0,.4) -- (0,1)node[above]{\phantom{\tiny $i$}}node[pos=0, tikzdot]{};
}}\endxy
\end{gather}
\endgroup

\item Relations involving two distant colors:
\begingroup\allowdisplaybreaks  
\begin{gather}
\label{eq:Scat-Rtwo}
  \xy (0,0)*{
  \tikzdiagc[yscale=1.7,xscale=1.1]{
    \draw[ultra thick,blue] (0,0)node[below]{\tiny $i$} ..controls (0,.25) and (.65,.25) .. (.65,.5) ..controls (.65,.75) and (0,.75) .. (0,1)node[above]{\phantom{\tiny $i$}};
\begin{scope}[shift={(.65,0)}]
    \draw[ultra thick,mygreen] (0,0)node[below]{\tiny $k$} ..controls (0,.25) and (-.65,.25) .. (-.65,.5) ..controls (-.65,.75) and (0,.75) .. (0,1)node[above]{\phantom{\tiny $i$}};
\end{scope}
}}\endxy
= \ 
\xy (0,0)*{
  \tikzdiagc[yscale=1.7,xscale=-1.1]{
\draw[ultra thick, blue] (.65,0)node[below]{\tiny $i$} -- (.65,1)node[above]{\phantom{\tiny $i$}};
\draw[ultra thick,mygreen] (0,0)node[below]{\tiny $k$} -- (0,1)node[above]{\phantom{\tiny $k$}};
  }}\endxy
\\[1ex]\label{eq:Scat-dotslide}
\xy (0,0)*{
\tikzdiagc[scale=1]{
\draw[ultra thick,blue] (.3,-.3)node[below]{\tiny $i$} -- (-.5,.5)node[pos=0, tikzdot]{};
\draw[ultra thick,mygreen] (-.5,-.5)node[below]{\tiny $k$} -- (.5,.5)node[above]{\phantom{\tiny $k$}};
}}\endxy
=
\xy (0,0)*{
\tikzdiagc[scale=1]{
\draw[ultra thick,blue] (-.5,.5) -- (-.2,.2)node[pos=1, tikzdot]{};
\draw[ultra thick,mygreen] (-.5,-.5)node[below]{\tiny $k$} -- (.5,.5)node[above]{\phantom{\tiny $k$}};
}}\endxy
\\[1ex]\label{eq:Scat-trivslide}
\xy (0,0)*{
  \tikzdiagc[yscale=0.5,xscale=.5]{
  \draw[ultra thick,blue] (0,0)-- (0, 1)node[above]{\tiny $i$}; \draw[ultra thick,blue] (-1,-1)node[below]{\tiny $i$} -- (0,0); \draw[ultra thick,blue] (1,-1) -- (0,0);
\draw[ultra thick,mygreen] (-1,0)node[left]{\tiny $k$} ..controls (-.25,.75) and (.25,.75) .. (1,0);
}}\endxy
=
\xy (0,0)*{
  \tikzdiagc[yscale=0.5,xscale=.5]{
  \draw[ultra thick,blue] (0,0)-- (0, 1)node[above]{\tiny $i$}; \draw[ultra thick,blue] (-1,-1)node[below]{\tiny $i$} -- (0,0); \draw[ultra thick,blue] (1,-1) -- (0,0);
\draw[ultra thick,mygreen] (-1,0)node[left]{\tiny $k$} ..controls (-.25,-.75) and (.25,-.75) .. (1,0);
}}\endxy
\\[1ex]\label{eq:distantdumbbells}
\xy (0,0)*{
  \tikzdiagc[yscale=0.5,xscale=.5]{
    \draw[ultra thick,mygreen] (0,-.35)node[below]{\tiny $k$} -- (0,.35)node[pos=0, tikzdot]{} node[pos=1, tikzdot]{};
  \draw[ultra thick,blue] (.6,-1)node[below]{\tiny $i$}-- (.6, 1)node[above]{\phantom{\tiny $i$}}; 
}}\endxy
-
\xy (0,0)*{
  \tikzdiagc[yscale=0.5,xscale=-.5]{
    \draw[ultra thick,mygreen] (0,-.35)node[below]{\tiny $k$} -- (0,.35)node[pos=0, tikzdot]{} node[pos=1, tikzdot]{};
  \draw[ultra thick,blue] (.6,-1)node[below]{\tiny $i$}-- (.6, 1)node[above]{\phantom{\tiny $i$}}; 
}}\endxy
\ = 0
\end{gather}
\endgroup
Note that \eqref{eq:distantdumbbells} follows from \eqref{eq:Scat-dotslide}.

\item Relations involving two adjacent colors:

\begingroup\allowdisplaybreaks  
\begin{gather}\label{eq:6vertexdot}
\xy (0,0)*{
  \tikzdiagc[yscale=0.5,xscale=.5]{
  \draw[ultra thick,myred] (0,-.75) -- (0,0)node[pos=0, tikzdot]{};\draw[ultra thick,myred] (0,0) -- (-1, 1)node[above]{\tiny $j$};\draw[ultra thick,myred] (0,0) -- (1, 1);
  \draw[ultra thick,blue] (0,0)-- (0, 1); \draw[ultra thick,blue] (-1,-1)node[below]{\tiny $i$} -- (0,0); \draw[ultra thick,blue] (1,-1) -- (0,0);
}}\endxy
\ =\ \ 
\xy (0,0)*{
  \tikzdiagc[yscale=0.5,xscale=.5]{
  \draw[ultra thick,myred] (-1,0) -- (-1, 1)node[above]{\tiny $j$}node[pos=0, tikzdot]{};\draw[ultra thick,myred] (1,0) -- (1, 1)node[pos=0, tikzdot]{};
  \draw[ultra thick,blue] (0,0)-- (0, 1); \draw[ultra thick,blue] (-1,-1)node[below]{\tiny $i$} -- (0,0); \draw[ultra thick,blue] (1,-1) -- (0,0);
}}\endxy
\ \ + \
\xy (0,0)*{
\tikzdiagc[yscale=0.6,xscale=.8]{
 \draw[ultra thick,blue] (1.5,0.1) -- (1.5, 0.4)node[pos=0, tikzdot]{};
 \draw[ultra thick,myred] (.9,.4)node[above]{\tiny $j$} .. controls (1.2,-.45) and (1.8,-.45) .. (2.1,.4);
 \draw[ultra thick,blue] ( .9,-1.2)node[below]{\tiny $i$} .. controls (1.2,-.35) and (1.8,-.35) .. (2.1,-1.2);
}}\endxy
\\[1ex]\label{eq:braidmoveB}
\xy (0,.05)*{
\tikzdiagc[scale=0.4,yscale=1]{
  \draw[ultra thick,myred] (-1,-2)node[below]{\tiny $j$} -- (-1,2)node[above]{\phantom{\tiny $j$}};
  \draw[ultra thick,myred] ( 1,-2)node[below]{\tiny $j$} -- ( 1,2);
  \draw[ultra thick,blue] (0,-2)node[below]{\tiny $i$}  --  (0,2)node[above]{\phantom{\tiny $i$}}; 
}}\endxy
=
\xy (0,.05)*{
\tikzdiagc[scale=0.4,yscale=1]{
  \draw[ultra thick,myred] (0,-1) -- (0,1);
  \draw[ultra thick,myred] (0,1) -- (-1,2);  \draw[ultra thick,myred] (0,1) -- (1,2);
  \draw[ultra thick,myred] (0,-1) -- (-1,-2)node[below]{\tiny $j$};  \draw[ultra thick,myred] (0,-1) -- (1,-2)node[below]{\tiny $j$};
  \draw[ultra thick,blue] (0,1)  --  (0,2)node[above]{\phantom{\tiny $j$}};  \draw[ultra thick,blue] (0,-1)  --  (0,-2)node[below]{\tiny $i$};
  \draw[ultra thick,blue] (0,1)  ..controls (-.95,.25) and (-.95,-.25) ..  (0,-1);
  \draw[ultra thick,blue] (0,1)  ..controls ( .95,.25) and ( .95,-.25) ..  (0,-1);
}}\endxy
-
\xy (0,.05)*{
\tikzdiagc[scale=0.4,yscale=1]{
  \draw[ultra thick,myred] (0,-.6) -- (0,.6);
  \draw[ultra thick,myred] (0,.6) .. controls (-.25,.6) and (-1,1).. (-1,2);
  \draw[ultra thick,myred] (0,.6) .. controls (.25,.6) and (1,1) .. (1,2)node[above]{\phantom{\tiny $j$}};
  \draw[ultra thick,myred] (0,-.6) .. controls (-.25,-.6) and (-1,-1).. (-1,-2)node[below]{\tiny $j$};
  \draw[ultra thick,myred] (0,-.6) .. controls (.25,-.6) and (1,-1) .. (1,-2);
  \draw[ultra thick,blue] (0,1.25)  --  (0,2)node[pos=0, tikzdot]{};  \draw[ultra thick,blue] (0,-1.25)  --  (0,-2)node[below]{\tiny $i$}node[pos=0, tikzdot]{};
 }}\endxy
\\[1ex]
\label{eq:stroman}
\xy (0,.05)*{
\tikzdiagc[scale=0.4,yscale=1]{
  \draw[ultra thick,myred] (0,-1) -- (0,1);
  \draw[ultra thick,myred] (0,1) -- (-1,2);  \draw[ultra thick,myred] (0,1) -- (1,2)node[above]{\phantom{\tiny $j$}};
  \draw[ultra thick,myred] (0,-1) -- (-1,-2)node[below]{\tiny $j$};  \draw[ultra thick,myred] (0,-1) -- (1,-2)node[below]{\tiny $j$};
  \draw[ultra thick,blue] (0,1)  --  (0,2)node[above]{\phantom{\tiny $i$}};  \draw[ultra thick,blue] (0,-1)  --  (0,-2)node[below]{\tiny $i$};
  \draw[ultra thick,blue] (0,1)  ..controls (-.95,.25) and (-.95,-.25) ..  (0,-1);
  \draw[ultra thick,blue] (0,1)  ..controls ( .95,.25) and ( .95,-.25) ..  (0,-1);
  \draw[ultra thick,blue] (-2,0)node[left]{\tiny $i$}  --  (-.75,0);  \draw[ultra thick,blue] (2,0)node[right]{\tiny $i$}  --  (.75,0);
}}\endxy
=
  \xy (0,.05)*{
\tikzdiagc[scale=0.4,yscale=1]{
  \draw[ultra thick,myred] (-1,0) -- (1,0);
  \draw[ultra thick,myred] (-2,-1)node[below]{\tiny $j$} -- (-1,0);  \draw[ultra thick,myred] (-2,1) -- (-1,0);
  \draw[ultra thick,myred] ( 2,-1)node[below]{\tiny $j$} -- ( 1,0);  \draw[ultra thick,myred] ( 2,1) -- ( 1,0);
  \draw[ultra thick,blue] (-1,0)  ..controls (-.5,-.8) and (.5,-.8) ..  (1,0);
  \draw[ultra thick,blue] (-1,0)  ..controls (-.5, .8) and (.5, .8) ..  (1,0);
  \draw[ultra thick,blue] (0,.65)  --  (0,1.8)node[above]{\phantom{\tiny $i$}};  \draw[ultra thick,blue] (0,-.65)  --  (0,-1.8)node[below]{\tiny $i$};
  \draw[ultra thick,blue] (-2,0)node[left]{\tiny $i$}  --  (-1,0);  \draw[ultra thick,blue] (2,0)node[right]{\tiny $i$}  --  (1,0);
}}\endxy
\\[1ex]\label{eq:forcingdumbel-i-iminus}
\xy (0,0)*{
  \tikzdiagc[yscale=0.5,xscale=.5]{
    \draw[ultra thick,myred] (0,-.35)node[below]{\tiny $j$} -- (0,.35)node[pos=0, tikzdot]{} node[pos=1, tikzdot]{};
  \draw[ultra thick,blue] (.6,-1)node[below]{\tiny $i$}-- (.6, 1)node[above]{\phantom{\tiny $i$}}; 
}}\endxy
-
\xy (0,0)*{
  \tikzdiagc[yscale=0.5,xscale=-.5]{
    \draw[ultra thick,myred] (0,-.35)node[below]{\tiny $j$} -- (0,.35)node[pos=0, tikzdot]{} node[pos=1, tikzdot]{};
  \draw[ultra thick,blue] (.6,-1)node[below]{\tiny $i$}-- (.6, 1)node[above]{\phantom{\tiny $i$}}; 
}}\endxy
\ =
\begin{cases}
\xy (0,.8)*{
  \tikzdiagc[yscale=0.5,xscale=-.5]{
    \draw[ultra thick,blue] (0,-.35)node[below]{\tiny $i$} -- (0,.35)node[pos=0, tikzdot]{} node[pos=1, tikzdot]{};
  \draw[ultra thick,blue] (.6,-1)node[below]{\tiny $i$}-- (.6, 1)node[above]{\phantom{\tiny $i$}}; 
}}\endxy
-
\xy (0,0)*{
  \tikzdiagc[yscale=0.5,xscale=.5]{
    \draw[ultra thick,blue] (0,-.35)node[below]{\tiny $i$} -- (0,.35)node[pos=0, tikzdot]{} node[pos=1, tikzdot]{};
  \draw[ultra thick,blue] (.6,-1)node[below]{\tiny $i$}-- (.6, 1)node[above]{\phantom{\tiny $i$}}; 
}}\endxy & \text{for}\;n=2
\\
\frac{1}{2}
\biggl(\ 
\xy (0,0)*{
  \tikzdiagc[yscale=0.5,xscale=-.5]{
    \draw[ultra thick,blue] (0,-.35)node[below]{\tiny $i$} -- (0,.35)node[pos=0, tikzdot]{} node[pos=1, tikzdot]{};
  \draw[ultra thick,blue] (.6,-1)node[below]{\tiny $i$}-- (.6, 1)node[above]{\phantom{\tiny $i$}}; 
}}\endxy
-
\xy (0,0)*{
  \tikzdiagc[yscale=0.5,xscale=.5]{
    \draw[ultra thick,blue] (0,-.35)node[below]{\tiny $i$} -- (0,.35)node[pos=0, tikzdot]{} node[pos=1, tikzdot]{};
  \draw[ultra thick,blue] (.6,-1)node[below]{\tiny $i$}-- (.6, 1)node[above]{\phantom{\tiny $i$}}; 
}}\endxy\ 
\biggr)& \text{for}\;n>2
\end{cases}
\end{gather}
\endgroup

\item Relation involving three distant colors:
\begin{equation}\label{eq:crossingslidek}
\xy (0,0)*{
  \tikzdiagc[yscale=0.5,xscale=.5]{
 \draw[ultra thick,blue] (-1,-1)node[below]{\tiny $i$} -- (1,1)node[above]{\phantom{\tiny $i$}};
 \draw[ultra thick,mygreen] (1,-1)node[below]{\tiny $j$} -- (-1,1)node[above]{\phantom{\tiny $j$}};
 \draw[ultra thick,orange] (-1,0)node[left]{\tiny $k$} ..controls (-.25,.75) and (.25,.75) .. (1,0);
}}\endxy
=
\xy (0,0)*{
  \tikzdiagc[yscale=0.5,xscale=.5]{
 \draw[ultra thick,blue] (-1,-1)node[below]{\tiny $i$} -- (1,1)node[above]{\phantom{\tiny $i$}};
 \draw[ultra thick,mygreen] (1,-1)node[below]{\tiny $j$} -- (-1,1)node[above]{\phantom{\tiny $j$}};
\draw[ultra thick,orange] (-1,0)node[left]{\tiny $k$} ..controls (-.25,-.75) and (.25,-.75) .. (1,0);
}}\endxy
\end{equation}

\item Relation involving two adjacent colors and one distant from the other two:
\begin{equation}\label{eq:sixv-dist}
\xy (0,0)*{
  \tikzdiagc[yscale=0.5,xscale=.5]{
  \draw[ultra thick,myred] (0,-1)node[below]{\tiny $j$} -- (0,0);\draw[ultra thick,myred] (0,0) -- (-1, 1)node[above]{\phantom{\tiny $j$}};\draw[ultra thick,myred] (0,0) -- (1, 1);
  \draw[ultra thick,blue] (0,0)-- (0, 1)node[above]{\phantom{\tiny $i$}}; \draw[ultra thick,blue] (-1,-1)node[below]{\tiny $i$} -- (0,0); \draw[ultra thick,blue] (1,-1) -- (0,0);
\draw[ultra thick,mygreen] (-1,0)node[left]{\tiny $k$} ..controls (-.25,.75) and (.25,.75) .. (1,0);  
}}\endxy
\ =\ \ 
\xy (0,0)*{
  \tikzdiagc[yscale=0.5,xscale=.5]{
  \draw[ultra thick,myred] (0,-1)node[below]{\tiny $j$} -- (0,0);\draw[ultra thick,myred] (0,0) -- (-1, 1)node[above]{\phantom{\tiny $j$}};\draw[ultra thick,myred] (0,0) -- (1, 1);
  \draw[ultra thick,blue] (0,0)-- (0, 1)node[above]{\phantom{\tiny $i$}}; \draw[ultra thick,blue] (-1,-1)node[below]{\tiny $i$} -- (0,0); \draw[ultra thick,blue] (1,-1) -- (0,0);
\draw[ultra thick,mygreen] (-1,0)node[left]{\tiny $k$} ..controls (-.25,-.75) and (.25,-.75) .. (1,0);  
}}\endxy
\end{equation}

 \item Relation involving three colors such that $i$ is adjacent to $j$, and $j$ is adjacent to $k$:
\begin{equation}\label{eq:relhatSlast}
\xy (0,.05)*{
\tikzdiagc[scale=0.5,yscale=1]{
  \draw[ultra thick,myred] (0,-1) -- (0,1);
  \draw[ultra thick,myred] (0,1) -- (-1,2)node[above]{\tiny $i$};
  \draw[ultra thick,myred] (0,1) -- (2,2)node[above]{\tiny $i$};
  \draw[ultra thick,myred] (0,-1) -- (-2,-2)node[below]{\tiny $i$};  
  \draw[ultra thick,myred] (0,-1) -- (1,-2)node[below]{\tiny $i$};
  \draw[ultra thick,blue] (0,1)  --  (0,2)node[above]{\tiny $j$}; 
  \draw[ultra thick,blue] (0,-1)  --  (0,-2)node[below]{\tiny $j$};
  \draw[ultra thick,blue] (0,1) -- (1,0); \draw[ultra thick,blue] (0,1) -- (-1,0);
  \draw[ultra thick,blue] (0,-1) -- (1,0); \draw[ultra thick,blue] (0,-1) -- (-1,0);
  \draw[ultra thick,blue] (-2,0)node[left]{\tiny $j$} -- (-1,0);\draw[ultra thick,blue] (2,0)node[right]{\tiny $j$} -- (1,0);
\draw[ultra thick,mygreen] (-1,0) -- (1,0);
\draw[ultra thick,mygreen] (-1,0) -- (-1,-2)node[below]{\tiny $k$};
\draw[ultra thick,mygreen] (-1,0) -- (-2,2)node[above]{\tiny $k$};   
\draw[ultra thick,mygreen] (1,0) -- (2,-2)node[below]{\tiny $k$};
\draw[ultra thick,mygreen] (1,0) -- (1,2)node[above]{\tiny $k$};   
}}\endxy
=
\xy (0,.05)*{
\tikzdiagc[scale=0.5,yscale=1]{
  \draw[ultra thick,mygreen] (0,-1) -- (0,1);
  \draw[ultra thick,mygreen] (0,1) -- (1,2)node[above]{\tiny $k$};
  \draw[ultra thick,mygreen] (0,1) -- (-2,2)node[above]{\tiny $k$};
  \draw[ultra thick,mygreen] (0,-1) -- (2,-2)node[below]{\tiny $k$};
  \draw[ultra thick,mygreen] (0,-1) -- (-1,-2)node[below]{\tiny $k$};
  \draw[ultra thick,blue] (0,1)  --  (0,2)node[above]{\tiny $j$};
  \draw[ultra thick,blue] (0,-1)  --  (0,-2)node[below]{\tiny $j$};
  \draw[ultra thick,blue] (0,1) -- (1,0);
  \draw[ultra thick,blue] (0,1) -- (-1,0);
  \draw[ultra thick,blue] (0,-1) -- (1,0);
  \draw[ultra thick,blue] (0,-1) -- (-1,0);
  \draw[ultra thick,blue] (-2,0)node[left]{\tiny $j$} -- (-1,0);
  \draw[ultra thick,blue] (2,0)node[right]{\tiny $j$} -- (1,0);
\draw[ultra thick,myred] (1,0) -- (-1,0);
\draw[ultra thick,myred] (1,0) -- (1,-2)node[below]{\tiny $i$};
\draw[ultra thick,myred] (1,0) -- (2,2)node[above]{\tiny $i$};   
\draw[ultra thick,myred] (-1,0) -- (-2,-2)node[below]{\tiny $i$};
\draw[ultra thick,myred] (-1,0) -- (-1,2)node[above]{\tiny $i$};   
}}\endxy
\end{equation}

\end{itemize}

\begin{itemize}
\item Relations involving only oriented strands:
\begingroup\allowdisplaybreaks
  \begin{gather}\label{eq:orloop}
  \xy (0,0)*{
\tikzdiagc[yscale=-0.9]{
\draw[ultra thick,black] (0,0) circle  (.65);\draw [ultra thick,black,-to] (.65,0) --(.65,0);
  }}\endxy
  \ = 1 =\ 
    \xy (0,0)*{
\tikzdiagc[yscale=-0.9]{
\draw[ultra thick,black] (0,0) circle  (.65);\draw [ultra thick,black,to-] (-.65,0) --(-.65,0);
  }}\endxy 
\\ \label{eq:orinv}
\xy (0,0)*{
\tikzdiagc[yscale=0.8]{
\draw[ultra thick,black,-to] (.5,-.75) -- (.5,.75);
\draw[ultra thick,black,to-] (-.5,-.75) -- (-.5,.75);
}}\endxy\ 
=
\xy (0,0)*{
  \tikzdiagc[yscale=0.8]{
\draw[ultra thick,black,-to] (1,.75) .. controls (1.2,-.05) and (1.8,-.05) .. (2,.75);
\draw[ultra thick,black,to-] (1,-.75) .. controls (1.2,.05) and (1.8,.05) .. (2,.-.75);
            }}\endxy 
          \mspace{80mu}
\xy (0,0)*{
\tikzdiagc[yscale=-0.8]{
\draw[ultra thick,black,-to] (.5,-.75) -- (.5,.75);
\draw[ultra thick,black,to-] (-.5,-.75) -- (-.5,.75);
}}\endxy\ 
=
\xy (0,0)*{
  \tikzdiagc[yscale=-0.8]{
\draw[ultra thick,black,-to] (1,.75) .. controls (1.2,-.05) and (1.8,-.05) .. (2,.75);
\draw[ultra thick,black,to-] (1,-.75) .. controls (1.2,.05) and (1.8,.05) .. (2,.-.75);
}}\endxy
\end{gather}
\endgroup

\item Relation involving oriented strands and two distant colored strands:
\begin{equation}\label{eq:orthru4vertex}
\xy (0,0)*{
  \tikzdiagc[yscale=0.5,xscale=.5]{
    \draw[ultra thick,blue] (-1,-1)node[below]{\tiny $i$} -- (.45,.45);
    \draw[ultra thick,myred] (.45,.45) -- (1,1)node[above]{\tiny $i-1$};
    \draw[ultra thick,mygreen] (1,-1)node[below]{\tiny $j$} -- (-.45,.45);
    \draw[ultra thick,olive] (-.45,.45) -- (-1,1)node[above]{\tiny $j-1$};
 \draw[ultra thick,black,to-] (-1,0) ..controls (-.25,.75) and (.25,.75) .. (1,0);
}}\endxy
=
\xy (0,0)*{
  \tikzdiagc[yscale=0.5,xscale=.5]{
    \draw[ultra thick,blue] (-1,-1)node[below]{\tiny $i$} -- (-.45,-.45);
    \draw[ultra thick,myred] (-.45,-.45) -- (1,1)node[above]{\tiny $i-1$};
    \draw[ultra thick,mygreen] (1,-1)node[below]{\tiny $j$} -- (.45,-.45);
    \draw[ultra thick,olive] (.45,-.45) -- (-1,1)node[above]{\tiny $j-1$};
 \draw[ultra thick,black,to-] (-1,0) ..controls (-.25,-.75) and (.25,-.75) .. (1,0);
}}\endxy
\end{equation}

 \item Relations involving oriented strands and two adjacent colored strands:
\begingroup\allowdisplaybreaks
\begin{gather}\label{eq:orReidII}
\xy (0,0)*{
  \tikzdiagc[yscale=2.1,xscale=1.1]{
\draw[ultra thick, blue] (1,0)node[below]{\tiny $i$} .. controls (1,.15) and  (.7,.24) .. (.5,.29);
\draw[ultra thick,myred] (.5,.29) .. controls (-.1,.4) and (-.1,.6) .. (.5,.71);
\draw[ultra thick, blue] (1,1)node[above]{\tiny $i$} .. controls (1,.85) and  (.7,.74) .. (.5,.71) ;
\node[myred] at (-.35,.5) {\tiny $i-1$};
\draw[ultra thick,black,to-] (0,0) ..controls (0,.35) and (1,.25) .. (1,.5) ..controls (1,.75) and (0,.65) .. (0,1);
  }}\endxy
= \ 
\xy (0,0)*{
  \tikzdiagc[yscale=2.1,xscale=1.1]{
\draw[ultra thick, blue] (1,0)node[below]{\tiny $i$} -- (1,1)node[above]{\phantom{\tiny $i$}};
    \draw[ultra thick,black,to-] (0,0) -- (0,1);
  }}\endxy
\mspace{80mu}
\xy (0,0)*{
  \tikzdiagc[yscale=2.1,xscale=-1.1]{
\draw[ultra thick,myred] (1,0)node[below]{\tiny $i-1$} .. controls (1,.15) and  (.7,.24) .. (.5,.29);
\draw[ultra thick,blue] (.5,.29) .. controls (-.1,.4) and (-.1,.6) .. (.5,.71);
\draw[ultra thick,myred] (1,1)node[above]{\tiny $i-1$} .. controls (1,.85) and  (.7,.74) .. (.5,.71) ;
\node[blue] at (-.15,.5) {\tiny $i$};
\draw[ultra thick,black,to-] (0,0) ..controls (0,.35) and (1,.25) .. (1,.5) ..controls (1,.75) and (0,.65) .. (0,1);
  }}\endxy
= \ 
\xy (0,0)*{
  \tikzdiagc[yscale=2.1,xscale=-1.1]{
\draw[ultra thick,myred] (1,0)node[below]{\tiny $i-1$} -- (1,1)node[above]{\phantom{\tiny $i-1$}};
    \draw[ultra thick,black,to-] (0,0) -- (0,1);
  }}\endxy
\\[1ex] \label{eq:dotrhuor}
\xy (0,1.2)*{
\tikzdiagc[scale=1]{
\draw[ultra thick,blue] (.3,-.3)node[below]{\tiny $i$} -- (0,0)node[pos=0, tikzdot]{};
\draw[ultra thick,myred] (-.5,.5)node[above]{\tiny $i-1$} -- (0,0);
\draw[ultra thick,black,to-] (-.5,-.5) -- (.5,.5);
}}\endxy
=
\xy (0,2.5)*{
\tikzdiagc[scale=1]{
\draw[ultra thick,myred] (-.5,.5)node[above]{\tiny $i-1$} -- (-.2,.2)node[pos=1, tikzdot]{};
\draw[ultra thick,black,to-] (-.5,-.5) -- (.5,.5);
}}\endxy
\mspace{100mu}
\xy (0,-.85)*{
\tikzdiagc[xscale=-1,yscale=-1]{
\draw[ultra thick,myred] (.3,-.3)node[above]{\tiny $i-1$} -- (0,0)node[pos=0, tikzdot]{};
\draw[ultra thick,blue] (-.5,.5)node[below]{\tiny $i$} -- (0,0);
\draw[ultra thick,black,-to] (-.5,-.5) -- (.5,.5);
}}\endxy
=
\xy (0,-2.3)*{
\tikzdiagc[xscale=-1,yscale=-1]{
\draw[ultra thick,blue] (-.5,.5)node[below]{\tiny $i$} -- (-.2,.2)node[pos=1, tikzdot]{};
\draw[ultra thick,black,-to] (-.5,-.5) -- (.5,.5);
}}\endxy
\\[1ex] \label{eq:orpitchfork}
\xy (0,0)*{
  \tikzdiagc[yscale=-0.5,xscale=.5]{
    \draw[ultra thick,myred] (0,0)-- (0,.5);
    \draw[ultra thick,myred] (-1,-1)node[above]{\tiny $i-1$} -- (0,0);
    \draw[ultra thick,myred] (1,-1) -- (0,0);
\draw[ultra thick,blue] (0,.5) -- (0,1)node[below]{\tiny $i$};
  \draw[ultra thick,black,to-] (-1,0) ..controls (-.25,.7) and (.25,.7) .. (1,0);
}}\endxy
\ \ =
\xy (0,0)*{
  \tikzdiagc[yscale=-0.5,xscale=.5]{
    \draw[ultra thick,myred] (-1,-1)node[above]{\tiny $i-1$} -- (-.45,-.45);
    \draw[ultra thick,myred] (1,-1) -- (.45,-.45);
    \draw[ultra thick,blue] (-.45,-.45)-- (0,0);
    \draw[ultra thick,blue] (.45,-.45)-- (0,0);
  \draw[ultra thick,blue] (0,0)-- (0,1)node[below]{\tiny $i$};
 \draw[ultra thick,black,to-] (-1,0) ..controls (-.25,-.75) and (.25,-.75) .. (1,0);
}}\endxy
\end{gather}
\endgroup

\item Relations involving oriented strands and three adjacent colored strands:
\begin{equation} \label{eq:orslide6vertex}
\xy (0,1)*{
\tikzdiagc[yscale=0.5,xscale=.5]{
  \draw[ultra thick,mygreen] (0,0)-- (0,.6);
  \draw[ultra thick,mygreen] (-1,-1)node[below]{\tiny $i+1$} -- (0,0);
  \draw[ultra thick,mygreen] (1,-1) -- (0,0);
  \draw[ultra thick,blue] (0,.6) -- (0,1);
  \draw[ultra thick,blue] (0,-1)node[below]{\tiny $i$} -- (0,0);
  \draw[ultra thick,blue] (0,0) -- (-.45, .45);
  \draw[ultra thick,myred] (-.45,.45) -- (-1,1)node[above]{\tiny $i-1$};
  \draw[ultra thick,blue] (0,0) -- (.45,.45);
  \draw[ultra thick,myred] (.45,.45) -- (1, 1);
\draw[ultra thick,black,to-] (-1,0) ..controls (-.25,.75) and (.25,.75) .. (1,0);  
}}\endxy
\ = 
\xy (0,0)*{
  \tikzdiagc[yscale=0.5,xscale=.5]{
    \draw[ultra thick,mygreen] (-1,-1)node[below]{\tiny $i+1$} -- (-.45,-.45);
    \draw[ultra thick,mygreen] (1,-1) -- (.45,-.45);
    \draw[ultra thick,blue] (0,-1)node[below]{\tiny $i$} -- (0,-.6);
    \draw[ultra thick,myred] (0,-.6) -- (0,0);
  \draw[ultra thick,myred] (0,0) -- (-1,1)node[above]{\tiny $i-1$};
  \draw[ultra thick,myred] (0,0) -- (1, 1);
   \draw[ultra thick,blue] (0,0) -- (0,1);
  \draw[ultra thick,blue] (-.45,-.45) -- (0,0); \draw[ultra thick,blue] (.45,-.45) -- (0,0);
\draw[ultra thick,black,to-] (-1,0) ..controls (-.25,-.75) and (.25,-.75) .. (1,0);  
}}\endxy
\mspace{120mu}
\xy (0,1)*{
  \tikzdiagc[yscale=0.5,xscale=.5]{
    \draw[ultra thick,blue] (0,0)-- (0,.6);
    \draw[ultra thick,blue] (-1,-1)node[below]{\tiny $i$} -- (0,0);
    \draw[ultra thick,blue] (1,-1) -- (0,0);
    \draw[ultra thick,myred] (0,.6) -- (0,1)node[above]{\tiny $i-1$};
  \draw[ultra thick,mygreen] (0,-1)node[below]{\tiny $i+1$} -- (0,0);
  \draw[ultra thick,mygreen] (0,0) -- (-.45, .45);  \draw[ultra thick,blue] (-.45,.45) -- (-1,1);
  \draw[ultra thick,mygreen] (0,0) -- (.45,.45);  \draw[ultra thick,blue] (.45,.45) -- (1, 1);
\draw[ultra thick,black,to-] (-1,0) ..controls (-.25,.75) and (.25,.75) .. (1,0);  
}}\endxy
\ = 
\xy (0,0)*{
  \tikzdiagc[yscale=0.5,xscale=.5]{
    \draw[ultra thick,blue] (-1,-1)node[below]{\tiny $i$} -- (-.45,-.45);
    \draw[ultra thick,blue] (1,-1) -- (.45,-.45);
  \draw[ultra thick,mygreen] (0,-1)node[below]{\tiny $i+1$} -- (0,-.6); \draw[ultra thick,blue] (0,-.6) -- (0,0);
  \draw[ultra thick,blue] (0,0) -- (-1,1);
  \draw[ultra thick,blue] (0,0) -- (1, 1);
   \draw[ultra thick,myred] (0,0) -- (0,1)node[above]{\tiny $i-1$};
  \draw[ultra thick,myred] (-.45,-.45) -- (0,0); \draw[ultra thick,myred] (.45,-.45) -- (0,0);
\draw[ultra thick,black,to-] (-1,0) ..controls (-.25,-.75) and (.25,-.75) .. (1,0);  
}}\endxy
\end{equation}
\end{itemize}

\begin{rem} Further relations that are consequences of \eqref{eq:forcingdumbel-i-iminus} 
usually have two cases too, e.g. 
\[
\xy (0,0)*{
\tikzdiagc[yscale=0.5,xscale=.5]{
\draw[ultra thick,violet] (0,-.35) -- (0,.35)node[pos=0, tikzdot]{} node[pos=1, tikzdot]{};
\draw[ultra thick,blue] (0,-1) to[out=155,in=-155] (0, 1); 
\draw[ultra thick,blue] (0,-1) to[out=25,in=-25] (0, 1); 
\draw[ultra thick,blue] (0,-1.5) to (0,-1); 
\draw[ultra thick,blue] (0,1) to (0,1.5); 
}}\endxy
= 
\begin{cases}
- 2\;\;
\xy (0,0)*{
\tikzdiagc[yscale=0.5,xscale=.5]{
\draw[ultra thick,blue] (0,-1.5) to (0,1.5); 
}}\endxy & \text{for}\; n=2\\[4ex]
\;\;\;-\;
\xy (0,0)*{
\tikzdiagc[yscale=0.5,xscale=.5]{
\draw[ultra thick,blue] (0,-1.5) to (0,1.5); 
}}\endxy & \text{for}\; n>2
\end{cases}
\]
and similarly with the two colors switched.
\end{rem}

Note that the empty word is the identity object in $\BSext_n$ and its endomorphisms are 
the closed diagrams, which by the relations above are equal to polynomials 
in the colored dumbbells 
\begin{equation*}
\xy (0,0)*{
\tikzdiagc[scale=1]{
\begin{scope}[yscale=-.5,xscale=.5] 
  \draw[ultra thick,blue] (-1,-.4) -- (-1, .4)node[pos=0, tikzdot]{} node[pos=1, tikzdot]{};
\end{scope}
}}\endxy
\end{equation*}
As each dumbbell has degree $2$, the degree of any polynomial in these dumbbells,  
as a morphism in $\BSext_n$, is twice its polynomial degree. From now on, 
we denote this polynomial algebra by $R$.  

Note further that, by relations~\eqref{eq:onecolorfourth},~\eqref{eq:forcingdumbel-i-iminus} 
and~\eqref{eq:distantdumbbells}, the morphism $\fbox{y}$, defined in~\cite{mackaay-thiel} by
\begin{equation}\label{eq:dumbbellsum}
\fbox{y} := \sum_{i=0}^{n-1} \,\xy (0,0)*{
\tikzdiagc[scale=1]{
\begin{scope}[yscale=-.5,xscale=.5] 
  \draw[ultra thick,blue] (-1,-.4) -- (-1, .4)node[pos=0, tikzdot]{} node[pos=1, tikzdot]{} 
node at (-0.5,0) {\small $i$} ;
\end{scope}
}}\endxy,
\end{equation}
is central, in the sense that it can be slid through all diagrams 
(i.e. it commutes horizontally with all morphisms), because it is  
equal to the sum of all simple roots.

\begin{defn}
The {\em extended Soergel category} $\Sext_n$ is the Karoubi envelope of the additive envelope of $\BSext_n$. 
\end{defn}

\noindent Note that in our definition shifts are already part of $\BSext_n$ and recall that the idempotent complete category 
$\Sext_n$ is Krull-Schmidt, see e.g.~\cite[Section 11.2.3]{e-m-t-w}. It therefore makes sense to consider the split Grothendieck group $[\Sext_n]_{\oplus}$, which has a natural $\mathbb{Z}[q,q^{-1}]$-module structure defined by $[\mathrm{X}\langle 1\rangle]=:q[\mathrm{X}]$, for every object $\mathrm{X}\in \Sext_n$. It also has a natural algebra structure, inherited from the monoidal structure on $\Sext_n$. 

Finally, let us recall the {\em Soergel categorification theorem}, see e.g. \cite[Theorem 2.5]{mackaay-thiel} for extended affine type $A$ (which was based on previous results by H\"arterich) 
and \cite[Theorem 11.1]{e-m-t-w} for general Coxeter type. For every $w\in \widehat{\Sy}_n$, choose a rex  
$\underline{w}=(s_{i_1},\ldots, s_{i_l})$ and define the Bott-Samelson bimodule 
\[
\mathrm{BS}(\underline{w}):=\rB_{i_1}\cdots \rB_{i_l}\in \BSext_n.
\]
There is an essentially unique indecomposable summand $\rB_w\in \Sext_n$ of $\mathrm{BS}(\underline{w})$ that is not a summand of 
$\mathrm{BS}(\underline{u})$ for any $u\prec w$ in $\widehat{\Sy}_n$, where $\prec$ is the Bruhat order. The isomorphism class of $\rB_w$ 
does not depend on the choice of rex for $w$. More generally, for any $k\in \mathbb{Z}$ 
and any $w\in \widehat{\Sy}_n$, the object $\rB_\rho^k \rB_w$ is indecomposable in $\Sext_n$ and the set of these indecomposables is complete and irredundant, meaning that they are all mutually non-isomorphic and every indecomposable in $\Sext_n$ is isomorphic to one of them up to grading shift. 
Recall further that $\eah{n}$ has a $q$-sesquilinear form $(-,-)$, see~\eqref{eq:sesquilinform}. Similarly, $[\Sext_n]_{\oplus}$ has a 
$q$-sesquilinear form $\langle -,-\rangle$, called the {\em graded Euler form}. To define it, one has to recall that the morphism spaces in $\Sextstar_n$ 
are free as left and as right graded $R$-modules and that their graded rank is finite and does not depend on whether we consider them as left or as right $R$-modules. By definition, 
the value of $\langle [X], [Y] \rangle$, for $X,Y\in \Sext_n$, is equal to the graded rank of $\mathrm{Hom}_{\Sextstar_n}(X,Y)$.
The following theorem combines Soergel's categorification theorem and Soergel's conjecture (which has been proved, see~\cite{mackaay-thiel} and references therein).
\begin{thm}\label{thm:extcategorification}
The $\mathbb{Z}[q,q^{-1}]$-linear map 
\begin{gather*}
\gamma\colon \eah{n} \to [\Sext_n]_{\oplus}\\
\gamma(\rho^k b_w)=[\rB_{\rho^k} \mathrm{B}_w] \quad (k\in \mathbb{Z}, w\in \widehat{\Sy}_n)
\end{gather*}
is an isomorphism of $\mathbb{Z}[q,q^{-1}]$-algebras intertwining the two $q$-sesquilinear forms. 
\end{thm}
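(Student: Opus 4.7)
The plan is to verify in turn that $\gamma$ is (i) a well-defined $\mathbb{Z}[q,q^{-1}]$-linear map, (ii) an algebra homomorphism, (iii) surjective, and (iv) injective with the form-intertwining property, where (iv) will be the heart of the argument. That $\gamma$ is well-defined is automatic once one recalls that $\{\rho^k b_w : k\in \mathbb{Z}, w\in \widehat{\Sy}_n\}$ is a $\mathbb{Z}[q,q^{-1}]$-basis of $\eah{n}$.

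For (ii), I would check that the classes $[\rB_i]$ and $[\rB_\rho^{\pm 1}]$ in $[\Sext_n]_{\oplus}$ satisfy the defining relations \eqref{eq:affHeckebrelsnistwo}, \eqref{eq:fdHeckebrels}, \eqref{eq:affHeckebrels} of the Kazhdan--Lusztig presentation. The quadratic relation $b_i^2 = [2] b_i$ is categorified by a decomposition $\rB_i \otimes_R \rB_i \cong \rB_i\langle 1\rangle \oplus \rB_i\langle -1\rangle$, whose defining idempotents are built out of \eqref{eq:onecolorfourth}. Distant commutativity is supplied by the $4$-valent vertex in \eqref{eq:Scat-Rtwo}. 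The three-color relation $b_ib_{i+1}b_i + b_{i+1} = b_{i+1}b_ib_{i+1} + b_i$ is categorified by the direct-sum decomposition encoded by the braid move \eqref{eq:braidmoveB}. Finally, invertibility of $\rB_\rho$ and the twist relations $\rho b_i \rho^{-1} = b_{i+1}$ follow from the mixed oriented/colored vertices \eqref{eq:coloredgensn2} and their cyclic rotations, which provide the required explicit isomorphisms together with \eqref{eq:orinv}.

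For (iii), since $\Sext_n$ is Krull--Schmidt, $[\Sext_n]_{\oplus}$ is generated as a $\mathbb{Z}[q,q^{-1}]$-module by classes of graded shifts of indecomposables, and the classification stated just before the theorem identifies a complete irredundant list of indecomposables with the $\rB_\rho^k \rB_w$, so $\gamma$ is surjective. For (iv), the central input is Soergel's Hom formula: the morphism spaces in $\Sextstar_n$ are free graded $R$-modules and their graded $R$-ranks assemble into the sesquilinear form on $\eah{n}$. Applied to $X=\rB_\rho^k\rB_u$ and $Y=\rB_\rho^l\rB_v$ this yields $\langle [X],[Y]\rangle = (\rho^k b_u,\rho^l b_v)$, which is exactly the form-intertwining property. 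Injectivity then drops out from \autoref{thm:asymportho}: the Gram matrix of $(-,-)$ in the Kazhdan--Lusztig basis is block-diagonal in the exponent of $\rho$ and unitriangular modulo $q\mathbb{Z}[q]$ within each block, hence invertible over $\mathbb{Z}[q,q^{-1}]$, so the $[\rB_\rho^k \rB_w]$ are $\mathbb{Z}[q,q^{-1}]$-linearly independent.

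The main obstacle is step (iv), namely establishing Soergel's Hom formula in the extended affine setting. This rests on producing a double-leaves basis for $\mathrm{Hom}_{\Sextstar_n}$ out of the diagrammatic calculus from \eqref{eq:onecolorfirst}--\eqref{eq:orslide6vertex}, which simultaneously yields the existence and uniqueness (up to shift) of the indecomposables $\rB_w$ used in step (iii) via the usual top-degree-morphism argument. All of this is carried out in \cite{mackaay-thiel} and \cite{elias2018}; once the double-leaves basis is in hand, the remaining bookkeeping is formal.
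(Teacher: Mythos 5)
The paper does not prove this theorem itself --- it only recalls it, citing \cite[Theorem 2.5]{mackaay-thiel} and \cite[Theorem 11.1]{e-m-t-w} --- so there is no in-paper argument to compare with. Your sketch is a faithful outline of the standard proof carried out in those references, and you correctly locate the real content in the construction of a double-leaves (light-leaves) basis for the Bott--Samelson morphism spaces, from which the Hom formula, the classification of indecomposables, and bijectivity all flow.

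Two presentational points, neither a gap. As laid out, step (iii) tacitly uses the output of step (iv): the ``complete irredundant list of indecomposables'' is itself deduced from the Hom formula, so the double-leaves basis, the Hom formula, and the classification must all come first, with surjectivity and injectivity both consequences --- you acknowledge this in the last sentence, but the reader has to get there. Second, \autoref{thm:asymportho} gives that each $\rho$-block of the Gram matrix is the identity modulo $q\mathbb{Z}[q]$, not merely unitriangular; this is what makes every finite Gram submatrix have determinant in $1+q\mathbb{Z}[q]$ and hence nonzero, and one should also thread the $q$-sesquilinearity through carefully, since the form is only $\mathbb{Z}$-bilinear with $(qx,y)=q^{-1}(x,y)$, which conjugates the scalars on one side of the pairing. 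Neither affects the correctness of the argument.
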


\noindent This intertwining property goes under the name of {\em Soergel's hom formula} 
and will be used several times in Section~\ref{sec:excatstory}, so let us state it explicitly here: 
\begin{equation}\label{eq:Soergelhomformula}
(\rho^k b_u, \rho^l b_v )=\langle [\rB_{\rho^k} \rB_u], [\rB_{\rho^l} \rB_v]\rangle = \underline{\mathrm{rk}}\left(
\mathrm{Hom}_{\Sextstar_n}(\rB_{\rho^k} \rB_u, \rB_{\rho^l} \rB_v)\right)
\end{equation}
for all $k,l\in \mathbb{Z}$ and all $u,v\in \widehat{\Sy}_n$, where $\underline{\mathrm{rk}}$ denotes the graded rank.

\subsubsection{Rouquier complexes}\label{sec:diagrouquier-old}
We give a summary of the material in~\cite[\S 4.1]{mmv-evalfunctor} that is needed in the sequel. In particular, we introduced a diagrammatic calculus for certain morphisms between tensor products of Rouquier complexes and Bott--Samelson bimodules in finite type $A$. In the summary 
below, we recall the basic features of that calculus, generalizing it to extended affine type $A$ as well. Note that it only describes part of the morphism spaces in $K^b(\Sext_n)$, which are not yet understood in full generality. Describing the homotopy categories of Rouquier complexes in terms of generators and relations is a hard problem, even in finite type $A$, see for example~\cite{libedinsky-williamson}.

Throughout the rest of the paper, if $\cC$ a $\Bbbk$-linear, additive category, for any field $\Bbbk$, we write $C^b(\cC)$ for the category of bounded complexes in $\cC$ and $K^b(\cC)$ for its homotopy category. If $\cC$ is also monoidal, then the usual monoidal product of chain complexes equips 
$K^b(\cC)$ with a monoidal structure as well. If $\cC$ is also graded, then $K^b(\cC)$ is bigraded 
and we denote the shift inherited from $\cC$ by $\langle \cdot \rangle$ and 
the homological shift by $[\cdot ]$. 

For $n\geq 2$, the Rouquier complexes $\rT_i^{\pm 1}\in K^b(\Sext_n)$ for $i\in\widehat{I}$ are defined by 
\begin{equation}
  \Ti := \Bi\xra{
\mspace{15mu}
    \xy (0,0)*{
  \tikzdiagc[yscale=-.3,xscale=.25]{
  \draw[ultra thick,blue] (-1,0) -- (-1, 1)node[pos=0, tikzdot]{};
}}\endxy
\mspace{15mu}
} R\langle 1\rangle ,
\mspace{60mu}
\Tim  := R \langle -1 \rangle \xra{
\mspace{15mu}
    \xy (0,0)*{
  \tikzdiagc[yscale=.3,xscale=.25]{
  \draw[ultra thick,blue] (-1,0) -- (-1, 1)node[pos=0, tikzdot]{};
}}\endxy
\mspace{15mu}
}\Bi, 
\end{equation}
with $\rB_i$ placed in homological degree zero in both complexes. 

The identity morphisms of $\rT_i^{\pm 1}$ are depicted by
\[ 
  \id_{\Ti} :=
  \xy (0,-2.25)*{
\tikzdiagc[scale=.9]{
\draw[very thick,densely dotted,blue,-to] (.9,-.5)node[below]{\tiny $i$} -- (.9,.5);
            }}\endxy
\mspace{60mu}\text{and}\mspace{60mu}
\id_{\Tim} :=
\xy (0,-2.25)*{
\tikzdiagc[scale=.9]{
\draw[very thick,densely dotted,blue,to-] (.9,-.5)node[below]{\tiny $i$} -- (.9,.5);
            }}\endxy
\]

All generators in~\eqref{eq:rgens1},~\eqref{eq:rgens2} and~\eqref{eq:rgens3} below have degree zero with respect to both gradings.
\begin{itemize}[wide,labelindent=0pt]
\item The \emph{1-color generators} are the cups and caps:
\begin{equation}\label{eq:rgens1}
\xy (0,1.1)*{
\tikzdiagc[scale=.8]{
\draw[very thick,densely dotted,blue,to-] (-.5,0) to [out=270,in=180] (0,-.7) to [out=0,in=-90] (.5,0)node[above]{\tiny $i$};
}}\endxy
\mspace{35mu}
\xy (0,1.1)*{
\tikzdiagc[scale=.8,xscale=-1]{
\draw[very thick,densely dotted,blue,to-] (-.5,0) to [out=270,in=180] (0,-.7) to [out=0,in=-90] (.5,0)node[above]{\tiny $i$};
}}\endxy
\mspace{50mu}
\xy (0,-2.5)*{
\tikzdiagc[scale=.8,yscale=-1]{
\draw[very thick,densely dotted,blue,to-] (-.5,0) to [out=270,in=180] (0,-.7) to [out=0,in=-90] (.5,0)node[below]{\tiny $i$};
}}\endxy
\mspace{35mu}
\xy (0,-2.5)*{
\tikzdiagc[scale=.8,xscale=-1,yscale=-1]{
\draw[very thick,densely dotted,blue,to-] (-.5,0) to [out=270,in=180] (0,-.7) to [out=0,in=-90] (.5,0)node[below]{\tiny $i$};
}}\endxy
\end{equation}

The following lemma recalls the content of~\cite[Lemma 4.4]{mmv-evalfunctor}.
\begin{lem}\label{lem:diagsbraid}
For any $\ib\in \widehat{I}$, we have the following relations between morphisms in $K^b(\Sext_n)$:
\begingroup\allowdisplaybreaks
\begin{gather}
  \xy (0,0)*{
\tikzdiagc[scale=.8]{
\draw[very thick,densely dotted,blue] (0,0) circle  (.65);\draw [very thick,densely dotted,blue,-to] (.65,0) --(.65,0);
\node[blue] at (.65,-.65) {\tiny $i$};
}}\endxy
  \ = 1 =\ 
 \xy (0,-1)*{
\tikzdiagc[scale=.8,xscale=-1]{
\draw[very thick,densely dotted,blue] (0,0) circle  (.65);\draw [very thick,densely dotted,blue,-to] (.65,0) --(.65,0);
\node[blue] at (-.65,-.65) {\tiny $i$};
}}\endxy
\\[1ex]
\xy
(0,-2)*{
\begin{tikzpicture}[scale=1.5]
\draw[very thick,densely dotted,blue] (0,-.55)node[below]{\tiny $i$} -- (0,0);    
\draw[very thick,densely dotted,blue,-to] (0,0) to [out=90,in=180] (.25,.5) to [out=0,in=90] (.5,0);
\draw[very thick,densely dotted,blue] (.5,0) to [out=270,in=180] (.75,-.5) to [out=0,in=270] (1,0);
\draw[very thick,densely dotted,blue] (1,0) -- (1,0.75);
\end{tikzpicture}
}\endxy
=
\xy (
0,-2.5)*{\begin{tikzpicture}[scale=1.5]
	\draw[very thick,densely dotted,blue,-to] (0,-.55)node[below]{\tiny $i$} to (0,.75);
      \end{tikzpicture}
    }\endxy 
    =
\xy
(0,-2.5)*{
\begin{tikzpicture}[scale=1.5,xscale=-1]
\draw[very thick,densely dotted,blue] (0,-.55)node[below]{\tiny $i$} -- (0,0);    
\draw[very thick,densely dotted,blue,-to] (0,0) to [out=90,in=180] (.25,.5) to [out=0,in=90] (.5,0);
\draw[very thick,densely dotted,blue] (.5,0) to [out=270,in=180] (.75,-.5) to [out=0,in=270] (1,0);
\draw[very thick,densely dotted,blue] (1,0) -- (1,0.75);
\end{tikzpicture}
}\endxy
\mspace{60mu}
\xy
(0,-2.5)*{
\begin{tikzpicture}[scale=1.5]
\draw[very thick,densely dotted,blue] (0,-.55)node[below]{\tiny $i$} -- (0,0);    
\draw[very thick,densely dotted,blue,to-] (0,0) to [out=90,in=180] (.25,.5) to [out=0,in=90] (.5,0);
\draw[very thick,densely dotted,blue] (.5,0) to [out=270,in=180] (.75,-.5) to [out=0,in=270] (1,0);
\draw[very thick,densely dotted,blue] (1,0) -- (1,0.75);
\end{tikzpicture}
}\endxy
=
\xy (
0,-2.5)*{\begin{tikzpicture}[scale=1.5]
	\draw[very thick,densely dotted,blue,to-] (0,-.55)node[below]{\tiny $i$} to (0,.75);
      \end{tikzpicture}
    }\endxy 
    =
\xy
(0,-2.5)*{
\begin{tikzpicture}[scale=1.5,xscale=-1]
\draw[very thick,densely dotted,blue] (0,-.55)node[below]{\tiny $i$} -- (0,0);    
\draw[very thick,densely dotted,blue,to-] (0,0) to [out=90,in=180] (.25,.5) to [out=0,in=90] (.5,0);
\draw[very thick,densely dotted,blue] (.5,0) to [out=270,in=180] (.75,-.5) to [out=0,in=270] (1,0);
\draw[very thick,densely dotted,blue] (1,0) -- (1,0.75);
\end{tikzpicture}
}\endxy
\\[1ex]
\label{eq:invertidt}
\xy (0,-1)*{
\tikzdiagc[yscale=0.9]{
\draw[very thick,densely dotted,blue,-to] (.5,-.75)node[below]{\tiny $i$} -- (.5,.75);
\draw[very thick,densely dotted,blue,to-] (-.5,-.75)node[below]{\tiny $i$} -- (-.5,.75);
}}\endxy\ 
=
\xy (0,-2)*{
  \tikzdiagc[yscale=0.9]{
\draw[very thick,densely dotted,blue,-to] (1,.75) .. controls (1.2,-.05) and (1.8,-.05) .. (2,.75);
\draw[very thick,densely dotted,blue,to-] (1,-.75)node[below]{\tiny $i$} .. controls (1.2,.05) and (1.8,.05) .. (2,.-.75);
            }}\endxy 
  \mspace{90mu}
\xy (0,-2)*{
\tikzdiagc[yscale=.9,xscale=-1]{
\draw[very thick,densely dotted,blue,-to] (.5,-.75)node[below]{\tiny $i$} -- (.5,.75);
\draw[very thick,densely dotted,blue,to-] (-.5,-.75)node[below]{\tiny $i$} -- (-.5,.75);
}}\endxy\ 
=
\xy (0,-2)*{
  \tikzdiagc[yscale=.9,xscale=-1]{
\draw[very thick,densely dotted,blue,-to] (1,.75) .. controls (1.2,-.05) and (1.8,-.05) .. (2,.75);
\draw[very thick,densely dotted,blue,to-] (1,-.75)node[below]{\tiny $i$} .. controls (1.2,.05) and (1.8,.05) .. (2,.-.75);
}}\endxy
\end{gather}
\endgroup
\end{lem}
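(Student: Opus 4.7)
The plan is to prove each relation by unfolding the Rouquier complexes explicitly and checking the asserted identities either on the nose or up to an explicit chain homotopy. Recall that $\Ti$ is the two-term complex $\Bi \to R\shift{1}$ with differential given by the ``startdot'' morphism, and $\Tim$ is $R\shift{-1}\to \Bi$ with the same dot. The cups and caps in \eqref{eq:rgens1} are chain maps whose components are built from the elementary Soergel generators (unit, counit, dots and the identity on $\rB_i$), and they all have bidegree $(0,0)$. The identity morphism of $\Ti$ (resp.\ $\Tim$) is the identity chain map on these two-term complexes, while the crossing of an up strand and a down strand is a chain map between the tensor product complexes $\Ti \Tim$ and the identity $R$, and vice versa.

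First, for the circle relations I would compute $\mathrm{cap}\circ\mathrm{cup}\colon R\to R$ directly: in both orientations this is a scalar and after the explicit substitution of the Soergel unit and counit (together with the defining differential of $\Ti$) collapses to $1\in R$, giving the first identity. Next, for the two zigzag relations I would expand the composition $(\mathrm{id}_{\Ti}\otimes \mathrm{cap})\circ (\mathrm{cup}\otimes \mathrm{id}_{\Ti})\colon \Ti \to \Ti \Tim \Ti \to \Ti$ (and its three variants with different orientations) as a chain map on the explicit two-term complex representing $\Ti$, and then show that the result differs from $\mathrm{id}_{\Ti}$ by an explicit chain homotopy obtained from the one-color Soergel relations \eqref{eq:onecolorfirst}–\eqref{eq:onecolorfourth}. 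The Gaussian-elimination/homotopy contraction that kills contractible summands in $\Ti\Tim\Ti$ is the standard trick: the summand $R\shift{1}\Tim\Ti$ contracts against $\Bi\Tim\Ti\langle 1\rangle$ via the dot differential, leaving exactly the identity chain map.

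For relation \eqref{eq:invertidt}, the composite on the left-hand side is the identity map on $\Ti\Tim$ (or $\Tim\Ti$), while the right-hand side is the composition $\mathrm{cup}\circ\mathrm{cap}$ factoring through $R$. Writing $\Ti\Tim$ as the four-term totalized complex with corners $\Bi\otimes R\shift{-1}$, $\Bi\Bi$, $R\shift{1}\otimes R\shift{-1}=R$, $R\shift{1}\otimes \Bi$, I would apply Gaussian elimination to the contractible summands produced by \eqref{eq:onecolorfirst}, leaving a complex homotopy equivalent to $R$ in a single homological degree. The same contraction identifies the two sides of \eqref{eq:invertidt} by reading off the induced map.

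The computations in finite type $A$ were carried out in \cite[\S 4.1, Lemma 4.4]{mmv-evalfunctor} and the only new ingredient for extended affine type $A$ is that $i\in \widehat{I}$ is now allowed to take the value $0$ as well; however, every relation in the statement is purely local in a single color $i$, and the Soergel relations among $\rB_i,\rB_{i-1},\rB_{i+1}$ used in the argument hold verbatim (modulo the mild subtlety in \eqref{eq:forcingdumbel-i-iminus} for $n=2$, which does not enter since no two distinct colors appear here). Thus the main obstacle is merely bookkeeping: writing out the tensor products of the two-term complexes and carefully tracking the shifts and signs, which is what makes the diagrammatic notation so convenient once the relations are established.
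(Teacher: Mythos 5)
Your proposal matches the paper's approach: the paper simply states, immediately before the lemma, ``The following lemma recalls the content of~\cite[Lemma 4.4]{mmv-evalfunctor}'' and gives no proof, and your observation that the extension to $i\in\widehat{I}$ (in particular allowing $i=0$) is cost-free because only one-color Soergel relations enter is exactly why no new argument is required. One caution about your supplementary chain-level sketch of the zigzag identity: the proposed contraction of $R\langle 1\rangle\rT_i^{-1}\rT_i$ against $\rB_i\rT_i^{-1}\rT_i\langle 1\rangle$ ``via the dot differential'' is not a valid Gaussian elimination, since a dot $\rB_i\to R\langle 1\rangle$ is not an isomorphism. The contraction establishing $\rT_i\rT_i^{-1}\simeq R$ actually goes through the splitting $\rB_i\rB_i\cong\rB_i\langle 1\rangle\oplus\rB_i\langle -1\rangle$ of the degree-$0$ term: the component of the incoming differential landing in $\rB_i\langle -1\rangle$ and the component of the outgoing differential restricted to $\rB_i\langle 1\rangle$ are isomorphisms by \eqref{eq:onecolorfirst} and \eqref{eq:onecolorsecond}, and cancelling those leaves $R$ in degree $0$. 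Correcting this detail does not affect your overall argument or the conclusion.
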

For the diagrams and relations given in the rest of this section there are also similar diagrams and relations involving inverses of Rouquier complexes. 

For $n>2$, there are additional diagrams and relations.  
\item The \emph{six-valent vertices} for $\ib$ and $\jr$ adjacent: 
\begin{equation}\label{eq:rgens2}
\xy (0,0)*{
  \tikzdiagc[scale=.55]{
\draw[very thick,densely dotted,blue,-to] (-1,-1)node[below]{\tiny \gi} -- (-.5,-.5);\draw[very thick,densely dotted,blue] (-.5,-.5) -- (0,0);
\draw[very thick,densely dotted,blue,-to] ( 1,-1)node[below]{\tiny \gi} -- ( .5,-.5);\draw[very thick,densely dotted,blue] ( .5,-.5) -- (0,0);
\draw[very thick,densely dotted,myred,-to] (0,-1)node[below]{\tiny $j$} -- (0,-.5);\draw[very thick,densely dotted,myred] (0,-.5) -- (0,0);
\draw[very thick,densely dotted,myred,-to] (0,0) -- (-1,1);
\draw[very thick,densely dotted,myred,-to] (0,0) -- ( 1,1);
\draw[very thick,densely dotted,blue,-to] (0,0) -- (0,1);
}}\endxy 
\mspace{60mu}
\xy (0,0)*{
  \tikzdiagc[scale=.55,xscale=1]{
\draw[ultra thick,myred] (0,-1)node[below]{\tiny $j$} -- (0,0);
\draw[very thick,densely dotted,myred,to-] (-.5,.5) -- (-1,1);
\draw[very thick,densely dotted,myred] (0,0) -- (-.5,.5);
\draw[very thick,densely dotted,myred,-to] (0,0) -- ( 1,1);
\draw[ultra thick,blue] (0,0) -- (0,1);
\draw[very thick,densely dotted,blue,-to] (-1,-1)node[below]{\tiny \gi} -- (-.5,-.5);\draw[very thick,densely dotted,blue] (-.5,-.5) -- (0,0);
\draw[very thick,densely dotted,blue,to-] ( 1,-1)node[below]{\tiny \gi} -- (0,0);
}}\endxy
\mspace{60mu}
\xy (0,0)*{
  \tikzdiagc[scale=.6,yscale=1,xscale=-1]{
\draw[very thick,densely dotted,red,to-] (0,-1)node[below]{\tiny $j$} -- (0,0);
\draw[ultra thick,red] (0,0) -- (-1,1);
\draw[very thick,densely dotted,red] (0,0) -- (.5,.5);
\draw[very thick,densely dotted,red,-to] (1,1) -- (.5,.5);
\draw[very thick,densely dotted,blue] (0,0) -- (0,.5);
\draw[very thick,densely dotted,blue,to-] (0,.5) -- (0,1);
\draw[very thick,densely dotted,blue,to-] (-1,-1)node[below]{\tiny \gi} -- (0,0);
\draw[ultra thick,blue] ( 1,-1)node[below]{\tiny \gi} -- (0,0);
}}\endxy 
\end{equation}
Note that a solid strand corresponds to the identity on a Soergel bimodule, whereas a dashed strand corresponds to the identify morphism on a Rouquier complex, and all morphisms are defined in the homotopy category. 
These satisfy
\[
\xy (0,.05)*{
\tikzdiagc[scale=.45,yscale=1]{
  \draw[very thick,densely dotted,myred,-to] (-1,-2)node[below]{\tiny $j$} -- (-1,2)node[above]{\phantom{\tiny $i+1$}};
  \draw[very thick,densely dotted,myred,-to] ( 1,-2)node[below]{\tiny $j$} -- ( 1,2);
  \draw[very thick,densely dotted,blue,-to] (0,-2)node[below]{\tiny \gi}  --  (0,2); 
}}\endxy
=
\xy (0,.05)*{
\tikzdiagc[scale=.45,yscale=1]{
  \draw[very thick,densely dotted,myred] (0,-1) -- (0,1);
  \draw[very thick,densely dotted,myred,-to] (0,1) -- (-1,2)node[above]{\phantom{\tiny $j$}};
  \draw[very thick,densely dotted,myred,-to] (0,1) -- (1,2);
  \draw[very thick,densely dotted,myred] (0,-1) -- (-1,-2)node[below]{\tiny $j$};
  \draw[very thick,densely dotted,myred] (0,-1) -- (1,-2)node[below]{\tiny $j$}  ;
  \draw[very thick,densely dotted,blue,-to] (0,1)  --  (0,2);  \draw[very thick,densely dotted,blue] (0,-1)  --  (0,-2)node[below]{\tiny \gi};
  \draw[very thick,densely dotted,blue] (0,1)  ..controls (-.95,.25) and (-.95,-.25) ..  (0,-1);
  \draw[very thick,densely dotted,blue] (0,1)  ..controls ( .95,.25) and ( .95,-.25) ..  (0,-1);
}}\endxy
 \mspace{45mu}
\xy (0,-2.5)*{
\tikzdiagc[scale=.45,yscale=1,xscale=-1]{
  \draw[very thick,densely dotted,myred,-to] (-1,-2)node[below]{\tiny $j$} -- (-1,2);
  \draw[very thick,densely dotted,myred,to-] ( 1,-2)node[below]{\tiny $j$} -- ( 1,2);
  \draw[ultra thick,blue] (0,-2)node[below]{\tiny \gi}  --  (0,2); 
}}\endxy
\!\!=\,
\xy (0,-2.5)*{
\tikzdiagc[scale=0.45,xscale=-1]{
  \draw[ultra thick,myred] (0,-1) -- (0,1);
  \draw[very thick,densely dotted,myred,-to] (0,1) -- (-1,2);
  \draw[very thick,densely dotted,myred] (0,1) -- (1,2);
  \draw[very thick,densely dotted,myred] (0,-1) -- (-1,-2)node[below]{\tiny $j$};
  \draw[very thick,densely dotted,myred,-to] (0,-1) -- (1,-2)node[below]{\tiny $j$};
  \draw[ultra thick,blue] (0,1)  --  (0,2);
  \draw[ultra thick,blue] (0,-1)  --  (0,-2)node[below]{\tiny \gi};
  \draw[very thick,densely dotted,blue] (0,1)  ..controls (-.95,.25) and (-.95,-.25) ..  (0,-1);
  \draw[very thick,densely dotted,blue] (0,1)  ..controls ( .95,.25) and ( .95,-.25) ..  (0,-1);
}}\endxy
\mspace{45mu}
\xy (0,-2.5)*{
\tikzdiagc[scale=.45,yscale=-1,xscale=-1]{
  \draw[ultra thick,red] (-1,-2) -- (-1,2)node[below]{\tiny $j$};
  \draw[very thick,densely dotted,red,-to] ( 1,-2) -- ( 1,2)node[below]{\tiny $j$};
  \draw[very thick,densely dotted,blue,-to] (0,-2)  --  (0,2)node[below]{\tiny \gi}; 
}}\endxy
=
\xy (0,-2.5)*{
\tikzdiagc[scale=0.45,yscale=-1,xscale=-1]{
  \draw[very thick,densely dotted,red,-to] (0,-1) -- (0,.2);
  \draw[very thick,densely dotted,red] (0,.2) -- (0,1);
  \draw[ultra thick,red] (0,1) -- (-1,2)node[below]{\tiny $j$};
  \draw[very thick,densely dotted,red,-to] (0,1) -- (1,2)node[below]{\tiny $j$};
  \draw[ultra thick,red] (0,-1) -- (-1,-2);
  \draw[very thick,densely dotted,red] (0,-1) -- (.45,-1.45);
  \draw[very thick,densely dotted,red,to-] (.45,-1.45) -- (1,-2);
  \draw[very thick,densely dotted,blue,-to] (0,1)  --  (0,2)node[below]{\tiny \gi};
  \draw[very thick,densely dotted,blue] (0,-1)  --  (0,-1.5);
  \draw[very thick,densely dotted,blue,-to] (0,-2) --  (0,-1.5);
  \draw[very thick,densely dotted,blue] (0,1)  ..controls (-.95,.25) and (-.95,-.25) ..  (0,-1);
  \draw[very thick,densely dotted,blue,-to] (-.71,.15) -- (-.71,.18);
  \draw[ultra thick,blue] (0,1)  ..controls ( .95,.25) and ( .95,-.25) ..  (0,-1);
}}\endxy
\mspace{45mu}
\xy (0,-2.0)*{
  \tikzdiagc[scale=.5,yscale=-1,xscale=-1]{
\draw[ultra thick,myred] (0,-1) -- (0,0);
\draw[very thick,densely dotted,myred,to-] (-.5,.5) -- (-1,1)node[below]{\tiny $j$};
\draw[very thick,densely dotted,myred] (0,0) -- (-.5,.5);
\draw[very thick,densely dotted,myred,-to] (0,0) -- ( 1,1)node[below]{\tiny $j$};
\draw[ultra thick,blue] (0,0) -- (0,.75)node[pos=1, tikzdot]{}node[below]{\tiny \gi};
\draw[very thick,densely dotted,blue,-to] (-1,-1) -- (-.5,-.5);\draw[very thick,densely dotted,blue] (-.5,-.5) -- (0,0);
\draw[very thick,densely dotted,blue,to-] ( 1,-1) -- (0,0);
}}\endxy 
\!\! =\
\xy (0,-2.5)*{
  \tikzdiagc[scale=.5,yscale=-1,xscale=-1]{
\draw[ultra thick,myred] (0,-.5) -- (0,-1)node[pos=0, tikzdot]{};
\draw[very thick,densely dotted,myred,-to] (-1,1)node[below]{\tiny $t$}  to [out=-90,in=-180] (0,.12) to [out=0,in=-90] (1,1);
\draw[very thick,densely dotted,blue,-to] (-1,-1) to [out=90,in=180] (0,-.12) to  [out=0,in=90] (1,-1);
}}\endxy 
\]

\item The \emph{crossings} for $\ib$ and $\ko$ distant:
\begin{equation}\label{eq:rgens3}
\xy (0,0)*{
  \tikzdiagc[scale=.45]{
\draw[very thick,densely dotted,blue,-to] (-1,-1)node[below]{\tiny \gi} -- (1,1);
\draw[very thick,densely dotted,orange,-to] (1,-1)node[below]{\tiny $k$} -- (-1,1);
}}\endxy
\mspace{60mu}
\xy (0,0)*{
  \tikzdiagc[scale=.5,yscale=1,xscale=-1]{
\draw[very thick,densely dotted,orange,-to] (-1,-1)node[below]{\tiny $k$} -- (1,1);
\draw[ultra thick,blue] ( 1,-1)node[below]{\tiny \gi} -- (-1,1);
}}\endxy 
\end{equation}
These satisfy
\[
\xy (0,-2)*{
\tikzdiagc[yscale=1.7,xscale=1.1]{
 \draw[very thick,densely dotted,blue,-to] (0,0)node[below]{\tiny \gi} ..controls (0,.25) and (.65,.25) .. (.65,.5) ..controls (.65,.75) and (0,.75) .. (0,1);
\begin{scope}[shift={(.65,0)}]
\draw[very thick,densely dotted,orange,-to] (0,0)node[below]{\tiny $k$} ..controls (0,.25) and (-.65,.25) .. (-.65,.5) ..controls (-.65,.75) and (0,.75) .. (0,1);
\end{scope}
}}\endxy
= \ 
\xy (0,-2)*{
  \tikzdiagc[yscale=1.7,xscale=1.1]{
\draw[very thick,densely dotted,orange,-to] (.65,0)node[below]{\tiny $k$} -- (.65,1);
\draw[very thick,densely dotted,blue,-to] (0,0)node[below]{\tiny \gi} -- (0,1);
  }}\endxy
\mspace{60mu}
  \xy (0,-2)*{
  \tikzdiagc[yscale=1.7,xscale=1.1]{
    \draw[ultra thick,blue] (0,0)node[below]{\tiny \gi} ..controls (0,.25) and (.65,.25) .. (.65,.5) ..controls (.65,.75) and (0,.75) .. (0,1);
\begin{scope}[shift={(.65,0)}]
    \draw[very thick,densely dotted,orange,-to] (0,0)node[below]{\tiny $k$} ..controls (0,.25) and (-.65,.25) .. (-.65,.5) ..controls (-.65,.75) and (0,.75) .. (0,1);
\end{scope}
}}\endxy
= \ 
\xy (0,-2)*{
  \tikzdiagc[yscale=1.7,xscale=-1.1]{
\draw[ultra thick, blue] (.65,0)node[below]{\tiny \gi} -- (.65,1);
\draw[very thick,densely dotted,orange,-to] (0,0)node[below]{\tiny $k$} -- (0,1);
  }}\endxy
\]

\end{itemize}

\subsection{More diagrammatic shortcuts for Rouquier complexes}\label{sec:rouquier-new}
We now develop some new diagrammatics to handle morphisms between tensor products of Soergel bimodules and Rouquier complexes. 

For any $a,b\in \mathbb{Z}$ define 
\[
\rT_{[a,b]}   
 :=
 \begin{cases}
\rT_{a}\rT_{a+1}\rT_{a+2}\dots\rT_{b}
& \text{if $a<b$},
 \\[1ex]
\rT_{a}     
& \text{if $a=b$},
 \\[1ex]
\rT_{a}\rT_{a-1}\rT_{a-2}\dots\rT_{b}
& \text{if $a>b$},
 \end{cases}
\]
where the indices on the right-hand side can be shifted modulo $n$ so that they belong to $\widehat{I}=\{0,1,\ldots, n-1\}$. For example, we have $\rT_{[-2,2]}=\rT_{n-2}\rT_{n-1}\rT_0\rT_1\rT_2$. Of course, the above notation is not unique, since $\rT_{[a,b]}=\rT_{[a+kn,b+kn]}$ for any $a,b,k\in \mathbb{Z}$, but we trust that this does not cause any confusion. Note that in general $\rT_{[a,b]}\ne \rT_{[a+kn,b+ln]}$ if $k\ne l$, e.g., $\rT_{[0,1]}=\rT_0\rT_1$ while $\rT_{[n,1]}=\rT_0 \rT_{n-1}\cdots \rT_1$, which are different when $n>2$. Observe that it is important that the indices $a,b$ in $\rT_{[a,b]}$ be integers and not residue classes modulo $n$, because that allows for products of $\rT_i$ whose length is greater than $n$, e.g., $\rT_{[0,2n-1]}=\rT_0\rT_1\cdots \rT_{n-1}\rT_0\rT_1\cdots \rT_{n-1}$. 

Using the same conventions, we introduce the identity morphism on $\rT_{[a,b]}$ by  
\[
 \xy (0,-1.5)*{
\tikzdiagc[yscale=0.8,baseline={([yshift=-.8ex]current bounding box.center)}]{
\draw[thick,double,densely dotted,OliveGreen,-to] (-1.15,-.5)node[below] {\tiny $[a,b]$} -- (-1.15, .5);
 }}\endxy     
 :=
 \begin{cases}
\xy (0,-1)*{
\tikzdiagc[yscale=0.8,baseline={([yshift=-.8ex]current bounding box.center)}]{
\draw[thick,densely dotted,mygreen,-to] (0,-.5)node[below]{\tiny $\textcolor{white}{1\!}a\textcolor{white}{\!1}$} -- (0,.5);
\draw[thick,densely dotted,orange,-to] (.7,-.5)node[below]{\tiny $a\!+\!1$} -- (.7,.5);
\draw[thick,densely dotted,myred,-to] (1.4,-.5)node[below]{\tiny $a\!+\!2$} -- (1.4,.5);
\node[black] at (2.2,-0.05) {\small $\dots$};
\draw[thick,densely dotted,OliveGreen,-to] (3.0,-.5)node[below]{\tiny $b$} -- (3.0,.5);
 }}\endxy     
& \text{if $a<b$},
 \\[1ex]
 \xy (0,-1)*{
\tikzdiagc[yscale=0.8,baseline={([yshift=-.8ex]current bounding box.center)}]{
\draw[thick,densely dotted,mygreen,-to] (0,-.5)node[below]{\tiny $\textcolor{white}{1\!}a\textcolor{white}{\!1}$} -- (0,.5);
 }}\endxy     
& \text{if $a=b$},
\\[1ex]
\xy (0,-1)*{
\tikzdiagc[yscale=0.8,baseline={([yshift=-.8ex]current bounding box.center)}]{
\draw[thick,densely dotted,mygreen,-to] (0,-.5)node[below]{\tiny $\textcolor{white}{1\!}a\textcolor{white}{\!1}$} -- (0,.5);
\draw[thick,densely dotted,orange,-to] (.7,-.5)node[below]{\tiny $a\!-\!1$} -- (.7,.5);
\draw[thick,densely dotted,myred,-to] (1.4,-.5)node[below]{\tiny $a\!-\!2$} -- (1.4,.5);
\node[black] at (2.2,-0.05) {\small $\dots$};
\draw[thick,densely dotted,OliveGreen,-to] (3.0,-.5)node[below]{\tiny $b$} -- (3.0,.5);
 }}\endxy     
& \text{if $a>b$},
 \end{cases}
\]

The unit and counit of the adjunction between $\rT_{[a,b]}$ and its inverse are drawn as a cup and a cap, respectively, and these satisfy the usual isotopy and invertibility 
relations (cf.~\cite[Lemma 4.4, Lemma 4.15]{mmv-evalfunctor}).
Cups and caps have degree zero, as do the generators in \eqref{eq:rnew1}, \eqref{eq:rnew2}, \eqref{eq:rnew3}, \eqref{eq:rnew4}, \eqref{eq:rnew5}, \eqref{eq:sixv-comm} and~\eqref{eq:sixv-comm2}.

\begin{lem}\label{l:rouquierbasic}
For any $a,b\in \mathbb{Z}$, we have the following relations between morphisms in $K^b(\eS_n)$: 
\begingroup\allowdisplaybreaks
\begin{gather} 
\label{eq:orcableloop}
  \xy (0,-2)*{
\tikzdiagc[scale=.8]{
\draw[thick,double,densely dotted,OliveGreen] (0,0) circle  (.65);\draw[thick,double,densely dotted,OliveGreen,-to] (.65,0) --(.65,0);
\node[OliveGreen] at (.90,-.75) {\tiny $[a,b]$};
}}\endxy
  \! = 1 =\ 
 \xy (0,-2)*{
\tikzdiagc[scale=.8,xscale=-1]{
\draw[thick,double,densely dotted,OliveGreen] (0,0) circle  (.65);\draw[thick,double,densely dotted,OliveGreen,-to] (.65,0) --(.65,0);
\node[OliveGreen] at (-.90,-.75) {\tiny $[a,b]$};
}}\endxy
\\[1ex] 
\xy
(0,-2)*{
\begin{tikzpicture}[scale=1.5]
\draw[thick,double,densely dotted,OliveGreen] (0,-.55)node[below]{\tiny $[a,b]$} -- (0,0);    
\draw[thick,double,densely dotted,OliveGreen,-to] (0,0) to [out=90,in=180] (.25,.5) to [out=0,in=90] (.5,0);
\draw[thick,double,densely dotted,OliveGreen] (.5,0) to [out=270,in=180] (.75,-.5) to [out=0,in=270] (1,0);
\draw[thick,double,densely dotted,OliveGreen] (1,0) -- (1,0.75);
\end{tikzpicture}
}\endxy
=
\xy (
0,-2.5)*{\begin{tikzpicture}[scale=1.5]
	\draw[thick,double,densely dotted,OliveGreen,-to] (0,-.55)node[below]{\tiny $[a,b]$} to (0,.75);
      \end{tikzpicture}
    }\endxy 
    =
 \xy
(0,-2.5)*{
\begin{tikzpicture}[scale=1.5,xscale=-1]
\draw[thick,double,densely dotted,OliveGreen] (0,-.55)node[below]{\tiny $[a,b]$} -- (0,0);    
\draw[thick,double,densely dotted,OliveGreen,-to] (0,0) to [out=90,in=180] (.25,.5) to [out=0,in=90] (.5,0);
\draw[thick,double,densely dotted,OliveGreen] (.5,0) to [out=270,in=180] (.75,-.5) to [out=0,in=270] (1,0);
\draw[thick,double,densely dotted,OliveGreen] (1,0) -- (1,0.75);
\end{tikzpicture}
}\endxy
\mspace{60mu}
\xy
(0,-2.5)*{
\begin{tikzpicture}[scale=1.5]
\draw[thick,double,densely dotted,OliveGreen] (0,-.55)node[below]{\tiny $[a,b]$} -- (0,0);    
\draw[thick,double,densely dotted,OliveGreen,to-] (0,0) to [out=90,in=180] (.25,.5) to [out=0,in=90] (.5,0);
\draw[thick,double,densely dotted,OliveGreen] (.5,0) to [out=270,in=180] (.75,-.5) to [out=0,in=270] (1,0);
\draw[thick,double,densely dotted,OliveGreen] (1,0) -- (1,0.75);
\end{tikzpicture}
}\endxy
=
\xy (
0,-2.5)*{\begin{tikzpicture}[scale=1.5]
	\draw[thick,double,densely dotted,OliveGreen,to-] (0,-.55)node[below]{\tiny $[a,b]$} to (0,.75);
      \end{tikzpicture}
    }\endxy 
    =
\xy
(0,-2.5)*{
\begin{tikzpicture}[scale=1.5,xscale=-1]
\draw[thick,double,densely dotted,OliveGreen] (0,-.55)node[below]{\tiny $[a,b]$} -- (0,0);    
\draw[thick,double,densely dotted,OliveGreen,to-] (0,0) to [out=90,in=180] (.25,.5) to [out=0,in=90] (.5,0);
\draw[thick,double,densely dotted,OliveGreen] (.5,0) to [out=270,in=180] (.75,-.5) to [out=0,in=270] (1,0);
\draw[thick,double,densely dotted,OliveGreen] (1,0) -- (1,0.75);
\end{tikzpicture}
}\endxy
\\[1ex] \label{eq:invcomm}
\xy (0,-1)*{
\tikzdiagc[yscale=0.9]{
\draw[thick,double,densely dotted,OliveGreen,-to] (.5,-.75)node[below]{\tiny $[a,b]$} -- (.5,.75)node[above]{\phantom{\tiny $[a,b]$}};
\draw[thick,double,densely dotted,OliveGreen,to-] (-.5,-.75)node[below]{\tiny $[a,b]$} -- (-.5,.75);
}}\endxy  
=
\xy (0,-2)*{
  \tikzdiagc[yscale=0.9]{
\draw[thick,double,densely dotted,OliveGreen,-to] (1,.75) .. controls (1.2,-.05) and (1.8,-.05) .. (2,.75)node[above]{\phantom{\tiny $[a,b]$}};
\draw[thick,double,densely dotted,OliveGreen,to-] (1,-.75)node[below]{\tiny $[a,b]$} .. controls (1.2,.05) and (1.8,.05) .. (2,.-.75);
}}\endxy 
\mspace{120mu}
\xy (0,-2)*{
\tikzdiagc[yscale=.9,xscale=-1]{
\draw[thick,double,densely dotted,OliveGreen,-to] (.5,-.75)node[below]{\tiny $[a,b]$} -- (.5,.75);
\draw[thick,double,densely dotted,OliveGreen,to-] (-.5,-.75)node[below]{\tiny $[a,b]$} -- (-.5,.75);
}}\endxy 
=
\xy (0,-2)*{
  \tikzdiagc[yscale=.9,xscale=-1]{
\draw[thick,double,densely dotted,OliveGreen,-to] (1,.75) .. controls (1.2,-.05) and (1.8,-.05) .. (2,.75);
\draw[thick,double,densely dotted,OliveGreen,to-] (1,-.75)node[below]{\tiny $[a,b]$} .. controls (1.2,.05) and (1.8,.05) .. (2,.-.75);
}}\endxy
\end{gather}
\endgroup
\end{lem}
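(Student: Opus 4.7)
The plan is to prove all four relations by induction on the length $\ell := |b-a|+1$ of the cable $\rT_{[a,b]}$, reducing everything to the corresponding single-strand relations supplied by \autoref{lem:diagsbraid}. The base case $\ell = 1$ (i.e.\ $a=b$) is exactly \autoref{lem:diagsbraid}, since $\rT_{[a,a]} = \rT_a$ and the double-strand cable notation collapses to the single-strand notation.

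For the inductive step, factor $\rT_{[a,b]} = \rT_a \otimes \rT_{[a',b]}$, where $a' = a+1$ if $a < b$ and $a' = a-1$ if $a > b$, so that $\rT_{[a',b]}$ is a cable of length $\ell - 1$. The cup and cap for the cable $\rT_{[a,b]}$ are defined by nesting: the cable-cup has the cup for $\rT_a$ drawn on the outside and the cup for $\rT_{[a',b]}$ nested strictly inside it, and likewise for the cable-cap. Under this nested convention, each of the four stated relations decomposes into an inner piece (on $\rT_{[a',b]}$) and an outer piece (on $\rT_a$) which can be treated independently.

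With this setup, the loop relation \eqref{eq:orcableloop} follows by first peeling off the outer $\rT_a$-loop using the single-strand loop relation from \autoref{lem:diagsbraid}, leaving the loop on $\rT_{[a',b]}$, which equals $1$ by the inductive hypothesis. The two snake relations follow by a two-step zigzag elimination: first straightening the inner $\rT_{[a',b]}$-strand by induction, then straightening the remaining outer $\rT_a$-strand via the single-strand snake relation. The invertibility relations \eqref{eq:invcomm} follow analogously, using that the inverse of $\rT_{[a,b]}$ is $\rT_{[a',b]}^{-1} \otimes \rT_a^{-1}$ (with reversed order): the nested pair of cable-crossings resolves into a pair of single-strand crossings on the outer $\rT_a$-strands and a pair of cable-crossings on the inner $\rT_{[a',b]}$-strands, each of which collapses to the identity by \autoref{lem:diagsbraid} and the inductive hypothesis respectively.

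The main technical point to get right is the choice of nesting convention for the cable cups and caps, since an inconsistent choice would produce spurious extra crossings between the outer $\rT_a$-strand and the inner $\rT_{[a',b]}$-cable when one tries to straighten one half of a snake. Once the convention is fixed so that the outer cups and caps genuinely sit outside the inner ones on both sides of each diagram, every relation reduces mechanically to iterated applications of the corresponding single-strand identity from \autoref{lem:diagsbraid}, and no new homotopy-category computation is required.
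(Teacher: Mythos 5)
The paper does not actually supply a proof of this lemma: the relations are asserted immediately after the remark that the cup and cap for a cable are the unit and counit of the adjunction between $\rT_{[a,b]}$ and its inverse, with a citation to [mmv-evalfunctor, Lemmas 4.4 and 4.15]. In other words, the paper's justification is the formal fact that a product of invertible objects in a monoidal category is invertible, with unit and counit obtained by nesting, and that the loop/snake/straightening identities for the product follow formally from those for the factors. Your inductive argument makes exactly this reasoning explicit, so it is a correct and legitimate proof taking essentially the same route.

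One imprecision worth flagging: you describe the nested structure of the cable cup and cap as having $\rT_a$ on the outside and $\rT_{[a',b]}$ strictly inside for both. That is true for the loop relation, where the cup $1\to\rT_{[a,b]}\rT_{[a,b]}^{-1}$ and the cap $\rT_{[a,b]}\rT_{[a,b]}^{-1}\to 1$ do indeed share the same nesting (with $\rT_a$ outer, since $\rT_{[a,b]}^{-1} = \rT_{[a',b]}^{-1}\rT_a^{-1}$). But in the snake relations one pairs a cup of type $1\to\rT_{[a,b]}\rT_{[a,b]}^{-1}$ (where $\rT_a$ is outer) with a cap of type $\rT_{[a,b]}^{-1}\rT_{[a,b]}\to 1$ (where $\rT_{[a',b]}$ is outer), so the nestings are opposite. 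This is not an obstacle to the argument --- the thin strands still trace disjoint, crossing-free S-shapes, and the cable snake factors as $(\mathrm{id}_{\rT_a}\otimes s_{\rT_{[a',b]}})\circ(s_{\rT_a}\otimes\mathrm{id}_{\rT_{[a',b]}})$ after an application of the interchange law --- but the phrase "likewise for the cable-cap" should be replaced by a case distinction on which cup/cap pair is involved. You do acknowledge the need for a consistent convention, so this is a matter of wording rather than a gap.
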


It is useful to introduce {\em mergers and splitters}
\begin{equation}\label{eq:rnew1}
\xy (0,0)*{
\tikzdiagc[yscale=0.9]{
\draw[thick,double,densely dotted,PineGreen,-to] (0,0) -- (0,.75)node[above]{\tiny $[a,c]$};
\draw[thick,double,densely dotted,OliveGreen] (-.5,-.75)node[below]{\tiny $[a,b]$} -- (0,0);
\draw[thick,double,densely dotted,OliveGreen,-to] (-.5,-.75) -- (-.2,-.3);
\draw[thick,double,densely dotted,mygreen] (.5,-.75)node[below]{\tiny $[b\!+\!1,c]$} -- (0,0);
\draw[thick,double,densely dotted,mygreen,-to] (.5,-.75) -- (.2,-.3);
}}\endxy  
\mspace{70mu}
\xy (0,0)*{
\tikzdiagc[yscale=0.9,yscale=-1]{
\draw[thick,double,densely dotted,PineGreen] (0,0) -- (0,.75)node[below]{\tiny $[a,c]$};
\draw[thick,double,densely dotted,PineGreen,to-] (0,.25) -- (0,.5);
\draw[thick,double,densely dotted,OliveGreen,to-] (-.5,-.75)node[above]{\tiny $[a,b]$} -- (0,0);
\draw[thick,double,densely dotted,mygreen,to-] (.5,-.75)node[above]{\tiny $[b\!+\!1,c]$} -- (0,0);
}}\endxy  
\end{equation}
for $c>b\geq a$, realizing the equalities of chain complexes $\rT_{[a,b]}\rT_{[b+1,c]}=\rT_{[a,c]}$ (and analogously for their inverses). By definition, they satisfy 
\begin{equation}\label{eq:dumbcomm}
\xy (0,0)*{
\tikzdiagc[yscale=0.9]{
\draw[thick,double,densely dotted,PineGreen] (0,0) -- (0,.75);
\draw[thick,double,densely dotted,PineGreen,-to] (0,0) -- (0,.45);
\node[PineGreen] at (-.75,.375) {\tiny $[a,c]$};
\draw[thick,double,densely dotted,OliveGreen] (-.5,-.75)node[below]{\tiny $[a,b]$} -- (0,0);
\draw[thick,double,densely dotted,OliveGreen,-to] (-.5,-.75) -- (-.2,-.3);
\draw[thick,double,densely dotted,mygreen] (.5,-.75)node[below]{\tiny $[b\!+\!1,c]$} -- (0,0);
\draw[thick,double,densely dotted,mygreen,-to] (.5,-.75) -- (.2,-.3);
\draw[thick,double,densely dotted,OliveGreen,-to] (0,.75) -- (-.5,1.5)node[above]{\tiny $[a,b]$};
\draw[thick,double,densely dotted,mygreen,-to] (0,.75) -- (.5,1.5)node[above]{\tiny $[b\!+\!1,c]$};
}}\endxy  
\mspace{-10mu}=\mspace{-5mu}
\xy (0,0)*{
\tikzdiagc[yscale=0.9]{
\draw[thick,double,densely dotted,OliveGreen,-to] (-.5,-.75)node[below]{\tiny $[a,b]$} -- (-.5,1.5)node[above]{\phantom{\tiny $[a,b]$}};
\draw[thick,double,densely dotted,mygreen,-to] (.5,-.75)node[below]{\tiny $[b\!+\!1,c]$} -- (.5,1.5)node[above]{\phantom{\tiny $[b\!+\!1,c]$}};
}}\endxy  
\mspace{60mu}
\xy (0,0)*{
\tikzdiagc[yscale=0.9]{
\draw[thick,double,densely dotted,PineGreen] (0,-.75)node[below]{\tiny $[a,c]$} -- (0,0);
\draw[thick,double,densely dotted,PineGreen,-to] (0,-.75) -- (0,-.3);
\draw[thick,double,densely dotted,PineGreen,-to] (0,.75) -- (0,1.5)node[above]{\tiny $[a,c]$};
\draw[thick,double,densely dotted,mygreen] (0,0) to[out=0,in=0] (0,.75);
\draw[thick,double,densely dotted,mygreen,-to] (.22,.44) -- (.22,.46);
\draw[thick,double,densely dotted,OliveGreen] (0,0) to[out=180,in=180] (0,.75);
\draw[thick,double,densely dotted,OliveGreen,-to] (-.22,.44) -- (-.22,.46);
\node[OliveGreen] at (-.7,.6) {\tiny $[a,b]$};
\node[mygreen] at (.8,.6) {\tiny $[b\!+\!1,c]$};
}}\endxy  
=\mspace{-5mu}
\xy (0,0)*{
\tikzdiagc[yscale=0.9]{
\draw[thick,double,densely dotted,PineGreen,-to] (-.5,-.75)node[below]{\tiny $[a,c]$} -- (-.5,1.5)node[above]{\phantom{\tiny $[a,c]$}};
}}\endxy  
\end{equation}
Of course, similar mergers and splitters can be defined for $a\geq b > c$, where $[a,c]$ is subdivided into $[a,b]$ and $[b-1,c]$. 
Further, one can use cups and caps to define generators like
\[
\xy (0,0)*{
\tikzdiagc[yscale=0.9,yscale=-1]{
\draw[thick,double,densely dotted,mygreen] (0,0) -- (0,.75)node[below]{\tiny $[b\!+\!1,c]$};
\draw[thick,double,densely dotted,PineGreen,to-] (0,.25) -- (0,.5);
\draw[thick,double,densely dotted,OliveGreen] (-.5,-.75)node[above]{\tiny $[a,b]$} -- (0,0);
\draw[thick,double,densely dotted,OliveGreen,-to] (-.5,-.75) -- (-.2,-.3);
\draw[thick,double,densely dotted,PineGreen,to-] (.5,-.75)node[above]{\tiny $[a,c]$} -- (0,0);
}}\endxy  
:=
\xy (0,0)*{
\tikzdiagc[yscale=0.9]{
\draw[thick,double,densely dotted,PineGreen,-to] (0,0) -- (0,.75)node[above]{\tiny $[a,c]$};
\draw[thick,double,densely dotted,OliveGreen] (-1,.75)node[above]{\tiny $[a,b]$} to[out=-90,in=180] (-.55,-.5) to[out=0,in=-135] (0,0);
\draw[thick,double,densely dotted,OliveGreen,-to] (-.24,-.32) -- (-.2,-.26);
\draw[thick,double,densely dotted,mygreen] (.5,-.75)node[below]{\tiny $[b\!+\!1,c]$} -- (0,0);
\draw[thick,double,densely dotted,mygreen,-to] (.5,-.75) -- (.2,-.3);
}}\endxy  
\]
satisfying the obvious relations in $K^b(\eS_n)$.


\begin{lem}\label{l:dumbelinsidebubbles}
The following equations hold in $K^b(\Sext_n)$: 
\begin{equation*}
\xy (0,0)*{
\tikzdiagc[scale=.8]{
\draw[thick,double,densely dotted,OliveGreen] (0,0) circle  (.65);\draw [thick,double,densely dotted,OliveGreen,-to] (.65,.09) --(.65,.1);
\node[OliveGreen] at (1.2,-.7) {\tiny $[b\!-\!1,a]$};
\node[OliveGreen] at (1.2, .7) {\phantom{\tiny $[b\!-\!1,a]$}};
\draw[ultra thick,BrickRed] (.02,-.25) -- (.02, 0.25)node[pos=0, tikzdot]{}node[pos=1, tikzdot]{};
\node[BrickRed] at (-.25,-.2) {\tiny $b$};
}}\endxy\mspace{-10mu}
=\ 
\sum\limits_{i=a}^{b}
\xy (0,0)*{
\tikzdiagc[scale=.8]{
\draw[ultra thick,mygreen] (0,-.25)node[below]{\tiny $i$} -- (0, 0.25)node[above]{\phantom{\tiny $i$}}node[pos=0, tikzdot]{}node[pos=1, tikzdot]{};
}}\endxy
\mspace{100mu}
\xy (0,0)*{
\tikzdiagc[scale=.8,yscale=1,xscale=1]{
\draw[thick,double,densely dotted,OliveGreen] (0,0) circle  (.65);\draw[thick,double,densely dotted,OliveGreen,-to] (.65,-.09) --(.65,-.1);
\node[OliveGreen] at (1.2,-.7) {\tiny $[a,b\!-\!1]$};
\node[OliveGreen] at (1.2, .7) {\phantom{\tiny $[a,b\!-\!1]$}};
\draw[ultra thick,BrickRed] (.02,-.25) -- (.02, 0.25)node[pos=0, tikzdot]{}node[pos=1, tikzdot]{};
\node[BrickRed] at (-.25,-.2) {\tiny $b$};
}}\endxy\mspace{-10mu}
=\ 
\sum\limits_{i=a}^{b}
\xy (0,0)*{
\tikzdiagc[scale=.8]{
\draw[ultra thick,mygreen] (0,-.25)node[below]{\tiny $i$} -- (0, 0.25)node[above]{\phantom{\tiny $i$}}node[pos=0, tikzdot]{}node[pos=1, tikzdot]{};
}}\endxy
\end{equation*}
\end{lem}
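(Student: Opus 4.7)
Both identities can be recognised as instances of the fact that a closed bubble of a Rouquier cable around a polynomial in the dumbbells computes the Weyl group action of the corresponding word on that polynomial. Concretely, for both bubbles in the statement the underlying Weyl word is (up to the reading direction encoded in the bubble's arrow) the reduced expression $w = s_a s_{a+1}\cdots s_{b-1}$, and a direct calculation shows
\[
w(\alpha_b) \;=\; s_a s_{a+1}\cdots s_{b-2}(\alpha_b + \alpha_{b-1}) \;=\; \cdots \;=\; \sum_{i=a}^{b}\alpha_i,
\]
using only $s_j(\alpha_k) = \alpha_k + \alpha_j$ for $j = k\pm 1$ and $s_j(\alpha_k) = \alpha_k$ for $j$ distant from $k$. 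This is precisely the claimed right-hand side, so the lemma follows once the bubble-equals-action claim is established diagrammatically.

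I would prove the bubble-equals-action claim by induction on the cable length $\ell = b - a$. The base case $\ell = 1$ reduces to showing that the bubble of a single $\rT_{b-1}$ around the $b$-colored dumbbell equals $\alpha_{b-1} + \alpha_b$. Unpacking $\rT_{b-1}$ as the two-term complex $\rB_{b-1} \to R\langle 1\rangle$ and using the adjacent-color forcing relation \eqref{eq:forcingdumbel-i-iminus} (combined with isotopy through the Rouquier cups and caps provided by \eqref{eq:invertidt} and \eqref{eq:orcableloop}, together with the basic Soergel relations \eqref{eq:onecolorfirst}--\eqref{eq:onecolorfourth}), the resulting morphism in $K^b(\Sext_n)$ collapses to the polynomial endomorphism $\alpha_{b-1} + \alpha_b = s_{b-1}(\alpha_b)$.

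For the inductive step I split $\rT_{[b-1,a]} = \rT_{b-1}\cdot\rT_{[b-2,a]}$ via the merger \eqref{eq:dumbcomm}, and then use a cup-cap pair (invertibility \eqref{eq:invcomm}) to isotope the $\rT_{b-1}$-part of the cable outside the bubble. Applying the base case locally, this converts the bubble of $\rT_{b-1}\cdot\rT_{[b-2,a]}$ around $\alpha_b$ into a bubble of $\rT_{[b-2,a]}$ around $\alpha_b + \alpha_{b-1}$. By linearity this splits as a sum of two simpler bubbles: the bubble of $\rT_{[b-2,a]}$ around $\alpha_b$, which, since every color appearing in the cable is distant from $b$, collapses via repeated applications of \eqref{eq:Scat-dotslide} to the product of a free $\alpha_b$ with an empty bubble (equal to $1$ by \eqref{eq:orcableloop}); and the bubble of $\rT_{[b-2,a]}$ around $\alpha_{b-1}$, which is precisely the induction hypothesis and equals $\sum_{i=a}^{b-1}\alpha_i$. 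Summing yields $\sum_{i=a}^{b}\alpha_i$, as required. The right-hand identity is proved by the mirror image of the same argument.

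The principal obstacle is the explicit verification of the base case, which requires a careful diagrammatic computation inside $K^b(\Sext_n)$ and involves both branches of relation \eqref{eq:forcingdumbel-i-iminus} (the $n=2$ and $n>2$ cases). Once the base case is in hand, the inductive argument is essentially formal, relying only on isotopy, the cabling/merger relations, and the distant-sliding of dumbbells through Rouquier strands.
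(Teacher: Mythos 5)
Your route is essentially the same as the paper's: the paper's proof of this lemma is a one-line citation of \cite[Lemma 4.6]{mmv-evalfunctor}, which is precisely your single-strand base case (a $\rT_j$-bubble around the $k$-colored dumbbell evaluates to $s_j(\alpha_k)$), and ``an easy computation'' is exactly the peel-one-strand-off-the-cable induction you describe, using the merger \eqref{eq:dumbcomm}, the distant dot slide \eqref{eq:Scat-dotslide}, and the loop normalization \eqref{eq:orcableloop}.

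There is, however, a subtle point you should not gloss over. In the inductive step you assert that ``every color appearing in the cable $\rT_{[b-2,a]}$ is distant from $b$.'' In $\widehat{A}_{n-1}$ the indices live in $\widehat{I}=\mathbb{Z}/n\mathbb{Z}$, so the color $a$ can be adjacent to $b$ on the \emph{other} side of the Dynkin cycle: this happens exactly when $a\equiv b+1\pmod n$, i.e.\ when $b-a\equiv n-1\pmod n$. For such $a$ the dot-slide argument for the $\alpha_b$-bubble is no longer free, and the Weyl-action formula you quote in fact yields $w(\alpha_b)=2\alpha_a+\sum_{i=a+1}^{b}\alpha_i$ rather than $\sum_{i=a}^{b}\alpha_i$, so the stated identity would fail (try $n=3$, $a=1$, $b=3$: $s_1s_2(\alpha_0)=\alpha_0+2\alpha_1+\alpha_2$). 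The lemma therefore carries the implicit restriction $b-a\leq n-2$; this holds in every explicit application the paper makes (e.g.\ the cable $[n-1,k]$ around the $0$-colored dumbbell with $2\leq k<n$, so $b-a=n-k\leq n-2$), but you should state the restriction and check that it propagates through the induction hypothesis (replacing $b$ by $b-1$ preserves it since the cable also shortens by one strand).
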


\begin{proof}
An easy computation using~\cite[Lemma 4.6]{mmv-evalfunctor}.
\end{proof}

\vspace{0.1in}
 
The chain complexes  $B_\rho^{-1}\rT_{[a,b]}$ and $\rT_{[a-1,b-1]}B_\rho^{-1}$ are isomorphic. This is an easy consequence of the isomorphism of the chain complexes $B_\rho^{-1}\rT_{k}\cong \rT_{k-1}B_\rho^{-1}$, which follows from~\eqref{eq:orReidII}. 
This homotopy equivalence and its variants are represented by the diagrams  
\begin{equation}\label{eq:rnew5}
\xy (0,0)*{
  \tikzdiagc[yscale=1,xscale=1]{
\draw[thick,double,densely dotted, myblue,-to] (1,0)node[below]{\tiny $[a,b]$} -- (.4,0);
\draw[thick,double,densely dotted, myblue] (.4,0) -- (0,0);
\draw[thick,double,densely dotted, myred] (-1,0)node[below]{\tiny $[a\!-\!1,b\!-\!1]\mspace{30mu}$} -- (-.6,0);
\draw[thick,double,densely dotted, myred,to-] (-.6,0) -- (0,0);
\draw[ultra thick,black,to-] (0,-1) -- (0,1);
  }}\endxy
\mspace{60mu}
\xy (0,0)*{
  \tikzdiagc[yscale=1,xscale=1]{
\draw[thick,double,densely dotted, myblue] (1,0)node[below]{\tiny $[a,b]$} -- (.65,0);
\draw[thick,double,densely dotted, myblue,to-] (.65,0) -- (0,0);
\draw[thick,double,densely dotted, myred,-to] (-1,0)node[below]{\tiny $[a\!-\!1,b\!-\!1]\mspace{30mu}$} -- (-.35,0);
\draw[thick,double,densely dotted, myred] (-.35,0) -- (0,0);
\draw[ultra thick,black,to-] (0,-1) -- (0,1);
  }}\endxy
\end{equation}
satisfying the relations (which are easy consequences of~\eqref{eq:orReidII})
\begin{equation}\label{eq:orcomm}
\xy (0,0)*{
\tikzdiagc[yscale=2.1,xscale=1.1]{
\draw[thick,double,densely dotted,myblue] (1,0)node[below]{\tiny $[a,b]$} .. controls (1,.15) and  (.7,.24) .. (.5,.29);
\draw[thick,double,densely dotted,myblue,-to] (0.85,.17) --  (.75,.21);
\draw[thick,double,densely dotted,myred] (.5,.29) .. controls (-.1,.4) and (-.1,.6) .. (.5,.71);
\draw[thick,double,densely dotted,myred,-to] (.05,.50) -- (.05,.52);
\draw[thick,double,densely dotted,myblue,to-] (1,1)node[above]{\tiny $[a,b]$} .. controls (1,.85) and  (.7,.74) .. (.5,.71) ;
\node[myred] at (-.8,.5) {\tiny $[a\!-\!1,b\!-\!1]$};
\draw[ultra thick,black,to-] (0,0) ..controls (0,.35) and (1,.25) .. (1,.5) ..controls (1,.75) and (0,.65) .. (0,1);
  }}\endxy
= \ \ 
\xy (0,0)*{
  \tikzdiagc[yscale=2.1,xscale=1.1]{
\draw[thick,double,densely dotted,myblue,-to] (1,0)node[below]{\tiny $[a,b]$} -- (1,1)node[above]{\phantom{\tiny $[a,b]$}};
    \draw[ultra thick,black,to-] (0,0) -- (0,1);
}}\endxy
\mspace{60mu}
\xy (0,0)*{
\tikzdiagc[yscale=-2.1,xscale=1.1]{
\draw[thick,double,densely dotted,myblue] (1,0)node[above]{\tiny $[a,b]$} .. controls (1,.15) and  (.7,.24) .. (.5,.29);
\draw[thick,double,densely dotted,myblue,-to] (0.85,.17) --  (.75,.21);
\draw[thick,double,densely dotted,myred] (.5,.29) .. controls (-.1,.4) and (-.1,.6) .. (.5,.71);
\draw[thick,double,densely dotted,myred,-to] (.05,.50) -- (.05,.52);
\draw[thick,double,densely dotted,myblue,to-] (1,1)node[below]{\tiny $[a,b]$} .. controls (1,.85) and  (.7,.74) .. (.5,.71) ;
\node[myred] at (-.8,.5) {\tiny $[a\!-\!1,b\!-\!1]$};
\draw[ultra thick,black,-to] (0,0) ..controls (0,.35) and (1,.25) .. (1,.5) ..controls (1,.75) and (0,.65) .. (0,1);
  }}\endxy
= \ \ 
\xy (0,0)*{
  \tikzdiagc[yscale=-2.1,xscale=1.1]{
\draw[thick,double,densely dotted,myblue,-to] (1,0)node[above]{\tiny $[a,b]$} -- (1,1)node[below]{\phantom{\tiny $[a,b]$}};
    \draw[ultra thick,black,-to] (0,0) -- (0,1);
}}\endxy
\end{equation}

\smallskip

We assume that $n>2$ until the end of this section. We say that $[a,b]$ and $[c,d]$ are \emph{distant} if every index appearing in $\rT_{[a,b]}$ is distant from every index in $\rT_{[c,d]}$. In this case the chain complexes $\rT_{[a,b]}\rT_{[c,d]}^{\pm 1}$ and $\rT_{[c,d]}^{\pm 1}\rT_{[a,b]}$ are isomorphic, with the isomorphism being given by 
diagrams like
\begin{equation}\label{eq:rnew2}
\xy (0,-3)*{
\tikzdiagc[yscale=1,xscale=1]{
\draw[thick,double,densely dotted,BrickRed,-to] (1,0)node[below]{\tiny $[c,d]$} -- (-1,0);
\draw[thick,double,densely dotted,OliveGreen,-to] (0,-1)node[below]{\tiny $[a,b]$} -- (0,1);
}}\endxy
\end{equation}
which satisfy the relations in $K^b(\Sext)$ given below,
for mutually distant $[a,b]$, $[c,d]$ and $[e,f]$ and with any of the possible orientations 

\begin{equation}\label{eq:Rtwo-commcomm}
\xy (0,0)*{
\tikzdiagc[yscale=2.1,xscale=1.1]{
\draw[thick,double,densely dotted,OliveGreen] (0,0)node[below]{\tiny $[a,b]$} ..controls (0,.35) and (1,.25) .. (1,.5) ..controls (1,.75) and (0,.65) .. (0,1);
\draw[thick,double,densely dotted,BrickRed] (1,0)node[below]{\tiny $[c,d]$} ..controls (1,.35) and (0,.25) .. (0,.5) ..controls (0,.75) and (1,.65) .. (1,1)node[above]{\phantom{\tiny $[c,d]$}};
  }}\endxy
= 
\xy (0,0)*{
  \tikzdiagc[yscale=2.1,xscale=1.1]{
\draw[thick,double,densely dotted,BrickRed] (1,0)node[below]{\tiny $[c,d]$} -- (1,1)node[above]{\phantom{\tiny $[c,d]$}};
\draw[thick,double,densely dotted,OliveGreen] (0,0)node[below]{\tiny $[a,b]$} -- (0,1);
}}\endxy
\mspace{90mu}
\xy (0,0)*{
\tikzdiagc[yscale=2.1,xscale=1.1]{
\draw[thick,double,densely dotted,BrickRed] (1,0)node[below]{\tiny $[c,d]$} ..controls (1,.35) and (0,.25) .. (0,.5) ..controls (0,.75) and (1,.65) .. (1,1)node[above]{\phantom{\tiny $[c,d]$}};
\draw[thick,double,densely dotted,OliveGreen] (0,0)node[below]{\tiny $[a,b]$} to[out=45,in=-135] (2,1);
\draw[thick,double,densely dotted,NavyBlue] (2,0)node[below]{\tiny $[e,f]$} to[out=135,in=-45] (0,1);
}}\endxy
=
\xy (0,0)*{
\tikzdiagc[yscale=2.1,xscale=-1.1]{
\draw[thick,double,densely dotted,BrickRed] (1,0)node[below]{\tiny $[c,d]$} ..controls (1,.35) and (0,.25) .. (0,.5) ..controls (0,.75) and (1,.65) .. (1,1)node[above]{\phantom{\tiny $[c,d]$}};
\draw[thick,double,densely dotted,NavyBlue] (0,0)node[below]{\tiny $[e,f]$} to[out=45,in=-135] (2,1);
\draw[thick,double,densely dotted,OliveGreen] (2,0)node[below]{\tiny $[a,b]$} to[out=135,in=-45] (0,1);
}}\endxy
\end{equation}
and (together with its variants)
\begin{equation}\label{eq:cablepitchfork}
\xy (0,0)*{
\tikzdiagc[yscale=2.1,xscale=1.1]{
\draw[thick,densely dotted,double,NavyBlue] (.85,0)node[below]{\tiny $[a,b]$} to (.25,.5);
\draw[thick,densely dotted,double,purple] (.25,.5) to (.85,1)node[above]{\tiny $[g\!+\!1,b]$};
\draw[thick,densely dotted,double,Brown,-to] (.25,.5) to (-.75,.5)node[below]{\tiny $[a,g]$};
\draw[thick,double,densely dotted,OliveGreen] (0,0)node[below]{\tiny $[c,d]$} ..controls (0,.35) and (1,.25) .. (1,.5) ..controls (1,.75) and (0,.65) .. (0,1);
  }}\endxy
=
\xy (0,0)*{
\tikzdiagc[yscale=2.1,xscale=1.1]{
\draw[thick,densely dotted,double,NavyBlue] (.85,0)node[below]{\tiny $[a,b]$} to (.25,.5);
\draw[thick,densely dotted,double,purple] (.25,.5) to (.85,1)node[above]{\tiny $[g\!+\!1,b]$};
\draw[thick,densely dotted,double,Brown,-to] (.25,.5) to (-.75,.5)node[below]{\tiny $[a,g]$};
\draw[thick,double,densely dotted,OliveGreen] (0,0)node[below]{\tiny $[c,d]$} to[out=90,in=-90] (-.10,.5) to[out=90,in=-90] (0,1);
  }}\endxy
\end{equation}

\vspace{0.1in}

When $k=[k,k]$ is distant from $[a,b]$, the chain complexes $\rB_k\rT_{[a,b]}$ and $\rT_{[a,b]}\rB_k$ are homotopy equivalent and the 
homotopy equivalence can be represented by the diagram 
by the diagram 
\begin{equation}\label{eq:rnew3}
\xy (0,-3)*{
  \tikzdiagc[yscale=1,xscale=1]{
\draw[ultra thick,NavyBlue] (1,0)node[below]{\tiny $k$} -- (-1,0);
\draw[thick,double,densely dotted,OliveGreen,-to] (0,-1)node[below]{\tiny $[a,b]$} -- (0,1);
  }}\endxy
\end{equation}
satisfying the relations
\begin{equation}\label{eq:distcomm}
\xy (0,-3)*{
\tikzdiagc[yscale=2.1,xscale=1.1]{
  \draw[ultra thick,NavyBlue] (1,0)node[below]{\tiny $k$} ..controls (1,.35) and (0,.25) .. (0,.5) ..controls (0,.75) and (1,.65) .. (1,1);
\draw[thick,double,densely dotted,OliveGreen,-to] (0,0)node[below]{\tiny $[a,b]$} ..controls (0,.35) and (1,.25) .. (1,.5) ..controls (1,.75) and (0,.65) .. (0,1);
  }}\endxy
= 
\xy (0,0)*{
  \tikzdiagc[yscale=2.1,xscale=1.1]{
\draw[ultra thick,NavyBlue] (1,0)node[below]{\tiny $k$} -- (1,1)node[above]{\phantom{\tiny $j$}};
\draw[thick,double,densely dotted,OliveGreen,-to] (0,0)node[below]{\tiny $[a,b]$} -- (0,1);
}}\endxy
\mspace{55mu}
\xy (0,-3)*{
\tikzdiagc[yscale=2.1,xscale=1.1]{
\draw[ultra thick,NavyBlue] (.85,0)node[below]{\tiny $k$} to (.25,.5) to (.85,1);
\draw[ultra thick,NavyBlue] (.25,.5) to (-.5,.5);
\draw[thick,double,densely dotted,OliveGreen,-to] (0,0)node[below]{\tiny $[a,b]$} ..controls (0,.35) and (1,.25) .. (1,.5) ..controls (1,.75) and (0,.65) .. (0,1);
  }}\endxy
=
\xy (0,-3)*{
\tikzdiagc[yscale=2.1,xscale=1.1]{
\draw[ultra thick,NavyBlue] (.85,0)node[below]{\tiny $k$} to (.25,.5) to (.85,1);
\draw[ultra thick,NavyBlue] (.25,.5) to (-.5,.5);
\draw[thick,double,densely dotted,OliveGreen,-to] (0,0)node[below]{\tiny $[a,b]$} to  (0,1);
  }}\endxy
\mspace{55mu}  
\xy (0,-3)*{
\tikzdiagc[yscale=1,xscale=1]{
\draw[ultra thick,NavyBlue] (1,0)node[below]{\tiny $k$} -- (-.75,0)node[pos=1, tikzdot]{};
\draw[thick,double,densely dotted,OliveGreen,-to] (0,-1)node[below]{\tiny $[a,b]$} -- (0,1);
  }}\endxy
  =
\xy (0,-3)*{
\tikzdiagc[yscale=1,xscale=1]{
\draw[ultra thick,NavyBlue] (1,0)node[below]{\tiny $k$} -- (.4,0)node[pos=1, tikzdot]{};
\draw[thick,double,densely dotted,OliveGreen,-to] (0,-1)node[below]{\tiny $[a,b]$} -- (0,1);
}}\endxy
\end{equation}
and its variants. 
If $[a,b]$ is distant from $i$, $j$ and $k$, and $j$ is adjacent to $i$ but distant from $k$, we also have
\begin{equation}\label{eq:cablecrossingsixv}
\xy (0,0)*{
  \tikzdiagc[scale=.7]{
 \draw[ultra thick,myred] (-1,-1)node[below]{\tiny $j$} -- (1,1)node[above]{\phantom{\tiny $i$}};
 \draw[ultra thick,NavyBlue] (1,-1)node[below]{\tiny $k$} -- (-1,1)node[above]{\phantom{\tiny $k$}};
 \draw[thick,double,densely dotted,OliveGreen,-to] (-1,0)node[left]{\tiny $[a\!,\!b]$} ..controls (-.25,.75) and (.25,.75) .. (1,0);
}}\endxy
=
\xy (0,0)*{
  \tikzdiagc[scale=.7]{
 \draw[ultra thick,myred] (-1,-1)node[below]{\tiny $j$} -- (1,1)node[above]{\phantom{\tiny $i$}};
 \draw[ultra thick,NavyBlue] (1,-1)node[below]{\tiny $k$} -- (-1,1)node[above]{\phantom{\tiny $k$}};
\draw[thick,double,densely dotted,OliveGreen,-to] (-1,0)node[left]{\tiny $[a\!,\!b]$} ..controls (-.25,-.75) and (.25,-.75) .. (1,0);
}}\endxy
\mspace{60mu}
\xy (0,0)*{
  \tikzdiagc[scale=.7]{
  \draw[ultra thick,myred] (0,-1)node[below]{\tiny $j$} -- (0,0);\draw[ultra thick,myred] (0,0) -- (-1, 1)node[above]{\phantom{\tiny $j$}};\draw[ultra thick,myred] (0,0) -- (1, 1);
  \draw[ultra thick,blue] (0,0)-- (0, 1)node[above]{\phantom{\tiny $i$}}; \draw[ultra thick,blue] (-1,-1)node[below]{\tiny $i$} -- (0,0); \draw[ultra thick,blue] (1,-1) -- (0,0);
\draw[thick,double,densely dotted,OliveGreen,-to] (-1,0)node[left]{\tiny $[a\!,\!b]$} ..controls (-.25,.75) and (.25,.75) .. (1,0);  
}}\endxy
\ =\ \ 
\xy (0,0)*{
  \tikzdiagc[scale=.7]{
  \draw[ultra thick,myred] (0,-1)node[below]{\tiny $j$} -- (0,0);\draw[ultra thick,myred] (0,0) -- (-1, 1)node[above]{\phantom{\tiny $j$}};\draw[ultra thick,myred] (0,0) -- (1, 1);
  \draw[ultra thick,blue] (0,0)-- (0, 1)node[above]{\phantom{\tiny $i$}}; \draw[ultra thick,blue] (-1,-1)node[below]{\tiny $i$} -- (0,0); \draw[ultra thick,blue] (1,-1) -- (0,0);
\draw[thick,double,densely dotted,OliveGreen,-to] (-1,0)node[left]{\tiny $[a\!,\!b]$} ..controls (-.25,-.75) and (.25,-.75) .. (1,0);  
}}\endxy
\end{equation}

\vspace{0.1in}

For $0<a,b<n$ the chain complex $\rT_{[a,b]}B_k$ is homotopy equivalent to 
$B_{k-1}\rT_{[a,b]}$ when $b<k\leq a$, and homotopy equivalent to 
$B_{k+1}\rT_{[a,b]}$ when $a\leq k<b$ (note that $a\neq b$). This is proved in the same way as~\cite[Lemma 4.11]{mmv-evalfunctor}.
The homotopy equivalences are realized by the following family of diagrams 
\begin{equation}\label{eq:rnew4}
\xy (0,-3)*{
  \tikzdiagc[yscale=1,xscale=1]{
\draw[ultra thick,myblue] (1,0)node[below]{\tiny $k$} -- (0,0);
\draw[ultra thick,myred] (-1,0)node[below]{\tiny $k+\varepsilon_{ab}$} -- (0,0);
\draw[thick,double,densely dotted,OliveGreen,-to] (0,-1)node[below]{\tiny $[a,b]$} -- (0,1);
  }}\endxy,
  \; \text{where $\varepsilon_{ab}=\frac{b-a}{\vert b-a\vert}$ and $k$ between  $a$ and $b-\varepsilon_{ab}$.}
\end{equation}
satisfying the relations 
\begingroup\allowdisplaybreaks
\begin{gather}
\label{eq:Rtwo-BTcomm}
\xy (0,0)*{
\tikzdiagc[yscale=2.1,xscale=1.1]{
\draw[ultra thick,myblue] (1,0)node[below]{\tiny $k$} .. controls (1,.15) and  (.7,.24) .. (.5,.29);
\draw[ultra thick,myred] (.5,.29) .. controls (-.1,.4) and (-.1,.6) .. (.5,.71);
\draw[ultra thick,myblue] (1,1)node[above]{\tiny $k$} .. controls (1,.85) and  (.7,.74) .. (.5,.71) ;
\node[myred] at (-.55,.5) {\tiny $k+\varepsilon_{ab}$};
\draw[thick,double,densely dotted,OliveGreen,-to] (0,0)node[below]{\tiny $[a,b]$} ..controls (0,.35) and (1,.25) .. (1,.5) ..controls (1,.75) and (0,.65) .. (0,1);
  }}\endxy
= 
\xy (0,0)*{
  \tikzdiagc[yscale=2.1,xscale=1.1]{
\draw[ultra thick,myblue] (1,0)node[below]{\tiny $k$} -- (1,1)node[above]{\phantom{\tiny $k$}};
\draw[thick,double,densely dotted,OliveGreen,-to] (0,0)node[below]{\tiny $[a,b]$} -- (0,1);
}}\endxy
\mspace{70mu}
\xy (0,0)*{
\tikzdiagc[yscale=2.1,xscale=1.1,xscale=-1]{
\draw[ultra thick,myred] (1,0)node[below]{\tiny $k+\varepsilon_{ab}$} .. controls (1,.15) and  (.7,.24) .. (.5,.29);
\draw[ultra thick,myblue] (.5,.29) .. controls (-.1,.4) and (-.1,.6) .. (.5,.71);
\draw[ultra thick,myred] (1,1)node[above]{\tiny $k+\varepsilon_{ab}$} .. controls (1,.85) and  (.7,.74) .. (.5,.71) ;
\node[myblue] at (-.15,.5) {\tiny $k$};
\draw[thick,double,densely dotted,OliveGreen,-to] (0,0)node[below]{\tiny $[a,b]$} ..controls (0,.35) and (1,.25) .. (1,.5) ..controls (1,.75) and (0,.65) .. (0,1);
  }}\endxy
=  
\xy (0,0)*{
  \tikzdiagc[yscale=2.1,xscale=1.1,xscale=-1]{
\draw[ultra thick,myred] (1,0)node[below]{\tiny $k+\varepsilon_{ab}$} -- (1,1)node[above]{\phantom{\tiny $k$}};
\draw[thick,double,densely dotted,OliveGreen,-to] (0,0)node[below]{\tiny $[a,b]$} -- (0,1);
}}\endxy
\\[1ex] 
\xy (0,0)*{
\tikzdiagc[yscale=1,xscale=1]{
  \draw[ultra thick,myred] (0,0) -- (-.75,0)node[pos=1, tikzdot]{}node[below]{\tiny $k+\varepsilon_{ab}$};
  \draw[ultra thick,myblue] (1,0)node[below]{\tiny $k$} -- (0,0);
\draw[thick,double,densely dotted,OliveGreen,-to] (0,-1)node[below]{\tiny $[a,b]$} -- (0,1)node[above]{\phantom{\tiny $[a,b]$}};
  }}\endxy
  =
\xy (0,0)*{
\tikzdiagc[yscale=1,xscale=1]{
\draw[ultra thick,myblue] (1,0)node[below]{\tiny $k$} -- (.4,0)node[pos=1, tikzdot]{};
\draw[thick,double,densely dotted,OliveGreen,-to] (0,-1)node[below]{\tiny $[a,b]$} -- (0,1)node[above]{\phantom{\tiny $[a,b]$}};
}}\endxy
\mspace{65mu}
\xy (0,0)*{
\tikzdiagc[yscale=1,xscale=-1]{
  \draw[ultra thick,myblue] (0,0) -- (-.75,0)node[pos=1, tikzdot]{}node[below]{\tiny $k$};
  \draw[ultra thick,myred] (1,0)node[below]{\tiny $k+\varepsilon_{ab}$} -- (0,0);
\draw[thick,double,densely dotted,OliveGreen,-to] (0,-1)node[below]{\tiny $[a,b]$} -- (0,1)node[above]{\phantom{\tiny $[a,b]$}};
  }}\endxy
  =
\xy (0,0)*{
\tikzdiagc[yscale=1,xscale=-1]{
\draw[ultra thick,myred] (1,0)node[below]{\tiny $k+\varepsilon_{ab}$} -- (.4,0)node[pos=1, tikzdot]{};
\draw[thick,double,densely dotted,OliveGreen,-to] (0,-1)node[below]{\tiny $[a,b]$} -- (0,1)node[above]{\phantom{\tiny $[a,b]$}};
}}\endxy
\\[1ex]
\xy (0,0)*{
\tikzdiagc[yscale=2.1,xscale=1.1]{
\draw[ultra thick,myblue] (.85,0)node[below]{\tiny $k$} to (.49,.295);
\draw[ultra thick,myblue] (.85,1) to (.49,.70);    
\draw[ultra thick,myred] (.49,.30) to (.25,.5) to (.49,.705);
\draw[ultra thick,myred] (.25,.5) to (-.5,.5)node[below]{\tiny $k+\varepsilon_{ab}$};
\draw[thick,double,densely dotted,OliveGreen,-to] (0,0)node[below]{\tiny $[a,b]$} ..controls (0,.35) and (1,.25) .. (1,.5) ..controls (1,.75) and (0,.65) .. (0,1)node[above]{\phantom{\tiny $[a,b]$}};
  }}\endxy
=
\xy (0,0)*{
\tikzdiagc[yscale=2.1,xscale=1.1]{
  \draw[ultra thick,myblue] (.85,0)node[below]{\tiny $k$} to (.25,.5) to (.85,1);
\draw[ultra thick,myblue] (.25,.5) to (0,.5);
\draw[ultra thick,myred] (0,.5) to (-.85,.5)node[below]{\tiny $k+\varepsilon_{ab}$};
\draw[thick,double,densely dotted,OliveGreen,-to] (0,0)node[below]{\tiny $[a,b]$} to  (0,1)node[above]{\phantom{\tiny $[a,b]$}};
}}\endxy
\mspace{65mu}  
\xy (0,0)*{
\tikzdiagc[yscale=2.1,xscale=-1.1]{
\draw[ultra thick,myred] (.85,0)node[below]{\tiny $k+\varepsilon_{ab}$} to (.49,.295);
\draw[ultra thick,myred] (.85,1) to (.49,.70);    
\draw[ultra thick,myblue] (.49,.30) to (.25,.5) to (.49,.705);
\draw[ultra thick,myblue] (.25,.5) to (-.5,.5)node[below]{\tiny $k$};
\draw[thick,double,densely dotted,OliveGreen,-to] (0,0)node[below]{\tiny $[a,b]$} ..controls (0,.35) and (1,.25) .. (1,.5) ..controls (1,.75) and (0,.65) .. (0,1)node[above]{\phantom{\tiny $[a,b]$}};
  }}\endxy
=
\xy (0,0)*{
\tikzdiagc[yscale=2.1,xscale=-1.1]{
  \draw[ultra thick,myred] (.85,0)node[below]{\tiny $k+\varepsilon_{ab}$} to (.25,.5) to (.85,1);
\draw[ultra thick,myred] (.25,.5) to (0,.5);
\draw[ultra thick,myblue] (0,.5) to (-.85,.5)node[below]{\tiny $k$};
\draw[thick,double,densely dotted,OliveGreen,-to] (0,0)node[below]{\tiny $[a,b]$} to (0,1)node[above]{\phantom{\tiny $[a,b]$}};
}}\endxy
\end{gather} 

and their variants. Relations~\eqref{eq:Rtwo-BTcomm} are immediate, while the relations in the remaining two lines are proved 
as in~\cite[Lemmas 4.18 and 4.21]{mmv-evalfunctor}, respectively.

To prove the next lemma we will use the \emph{hom \& dot trick}. Due to its extensive use in the next section, we will briefly explain it in the following remark. 

\begin{rem}[The hom \& dot trick]\label{r:dottrick}
Let $A$ and $B$ be two non-zero diagrams in a given morphism space. 
If the latter space is one-dimensional, then $A=\lambda B$, where $\lambda$ is a non-zero scalar. 
To prove that $\lambda=1$ we will often attach dots to certain endpoints of $A$ and $B$ and simplify the resulting diagrams so that they can be easily compared. 
\end{rem}

\begin{lem}\label{l:commstr}
For $a,b,c\in \mathbb{Z}$ such that $c$ is distant from $[a,b]$, the following relations hold in $K^b(\Sext_n)$: 
\begin{equation}\label{eq:commstr2}
\xy (0,0)*{
\tikzdiagc[scale=.75]{
\draw[thick,double,densely dotted,myred,-to] (-.89,-.6) to[out=80,in=-110] (-.89,-.58);
\draw[thick,double,densely dotted,myblue,-to] (0,0) to[out=25,in=-90] (1,1)node[above]{\tiny $[a,\!b]$};
\draw[thick,double,densely dotted,myred] (-1,-1) to[out=90,in=-155] (0,0);
\draw[ultra thick,black,-to] (1,-1) to[out=90,in=-90] (-1,1);
\draw[thick,double,densely dotted,myred] (-1,-1.75)node[below]{\tiny $[a\!+\!1,b\!+\!1]$} to (-1,-1);
\draw[ultra thick,black] (1,-1.75) to (1,-1);
\draw[ultra thick,NavyBlue] (-1.65,-1)node[left]{\tiny $c\!+\!1$} to (1,-1);
\draw[ultra thick,Purple] (1,-1) to (1.65,-1)node[right]{\tiny $c$};
  }}\endxy
=
\xy (0,0)*{
\tikzdiagc[scale=.75]{
\draw[thick,double,densely dotted,myred,-to] (-.89,-.6) to[out=80,in=-110] (-.89,-.58);
\draw[thick,double,densely dotted,myblue] (0,0) to[out=25,in=-90] (1,1);
\draw[thick,double,densely dotted,myred] (-1,-1)node[below]{\tiny $[a\!+\!1,b\!+\!1]$} to[out=90,in=-155] (0,0);
\draw[ultra thick,black] (1,-1) to[out=90,in=-90] (-1,1);
\draw[thick,double,densely dotted,myblue,-to] (1,1) to (1,1.75)node[above]{\tiny $[a,\!b]$};
\draw[ultra thick,black,-to] (-1,1) to (-1,1.75);
\draw[ultra thick,NavyBlue] (-1.65,1)node[left]{\tiny $c\!+\!1$} to (-1,1);
\draw[ultra thick,Purple] (-1,1) to (1.65,1)node[right]{\tiny $c$};
  }}\endxy
\end{equation}
\end{lem}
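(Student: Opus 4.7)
The two sides of \eqref{eq:commstr2} are morphisms in $K^b(\Sext_n)$ between the same source and target, obtained by composing the same three basic invertible slides in two different orders: (i) the cable--$\rho$ slide from \eqref{eq:rnew5}, realizing $\rT_{[a+1,b+1]}\rT_\rho\xrightarrow{\sim}\rT_\rho\rT_{[a,b]}$; (ii) the color--$\rho$ slide obtained by Reidemeister II, i.e.\ \eqref{eq:orReidII}/\eqref{eq:orpitchfork}, realizing $\rB_{c+1}\rT_\rho\xrightarrow{\sim}\rT_\rho\rB_c$; and (iii) the color--cable commutator from \eqref{eq:distcomm}, which is available because the distance hypothesis on $c$ and $[a,b]$ also gives distance between $c+1$ and $[a+1,b+1]$. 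The LHS performs them in the order (iii)--(ii)--(i) (bottom up), while the RHS performs them in the order (i)--(ii)--(iii). My task is to show these two composites coincide.

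\textbf{Step 1 (single-strand case via hom-and-dot).} I first treat the case $a=b=k$, where the cable is a single Rouquier strand $\rT_k$. Both sides then lie in the same graded hom space in $K^b(\Sext_n)$ between the source $\rB_{c+1}\rT_k\rT_\rho$ and the target $\rT_\rho\rT_{k-1}\rB_c$. Using the Soergel hom formula \eqref{eq:Soergelhomformula}, together with the fact that after sliding every $\rT_\rho$ and $\rT_k$ to one side the source and target decategorify to the same element of $\eah{n}$, I verify that the bidegree-zero part of this hom space is one-dimensional. Hence the two sides agree up to a scalar $\lambda$. To pin down $\lambda$ I apply the hom-and-dot trick of Remark~\ref{r:dottrick}: close the $\rT_\rho$ boundary with a cup and cap using \eqref{eq:orinv}/\eqref{eq:orcableloop} and simplify with repeated applications of \eqref{eq:orReidII}, \eqref{eq:distcomm} and \eqref{eq:orcomm}. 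Both sides collapse to the same diagram, namely the identity on the remaining cable tensored with the sliding color strand, forcing $\lambda=1$.

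\textbf{Step 2 (reduction from cables to single strands).} For general $[a,b]$, I decompose $\rT_{[a+1,b+1]}=\rT_{a+1}\rT_{a+2}\cdots\rT_{b+1}$ using the cable mergers/splitters of \eqref{eq:rnew1}, and correspondingly $\rT_{[a,b]}=\rT_a\rT_{a+1}\cdots\rT_b$. Under this decomposition, the cable--$\rho$ slide \eqref{eq:rnew5} breaks up (by \eqref{eq:orcomm} iterated) into a sequence of single-strand $\rho$-crossings, while the color--cable commutator \eqref{eq:distcomm} breaks up via \eqref{eq:cablepitchfork} and \eqref{eq:dumbcomm} into a product of single-strand color-crossings. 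Both the LHS and RHS of \eqref{eq:commstr2} then become compositions of single-strand versions of the same relation, which are known from Step~1. Iterating over the $b-a+1$ strands in the cable and invoking the coherence of the merger/splitter with the slides of \eqref{eq:rnew5} yields the full statement.

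\textbf{Main obstacle.} The essential technical point is the coherence in Step~2: the $\rho$-cable slide \eqref{eq:rnew5} is not an equality of complexes but a homotopy equivalence, whose concrete form is read off from the single-strand slides \eqref{eq:orReidII}, and one has to check that this realization commutes with both the splitter \eqref{eq:rnew1} and the color-cable slide \eqref{eq:distcomm}. Once this bookkeeping is set up carefully---using \eqref{eq:cablepitchfork} and \eqref{eq:dumbcomm} to transport dots and mergers across crossings---the inductive step of Step~2 is formal, and the whole proof reduces to the one-dimensional hom space computation of Step~1.
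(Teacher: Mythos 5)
Your Step 1 is essentially the paper's entire proof, except that you restrict it to $a=b=k$ when there is no need to. The paper applies the hom-and-dot argument directly for a general cable: closing the side boundaries with cups and caps realizes both diagrams as degree-zero morphisms from $\rB_{c+1}$ to $\rB_\rho \rT_{[a,b]}\rB_c\rB_\rho^{-1}\rT_{[a+1,b+1]}^{-1}$, and the chain of homotopy equivalences
\[
\rB_\rho \rT_{[a,b]}\rB_c\rB_\rho^{-1}\rT_{[a+1,b+1]}^{-1}
\simeq \rT_{[a+1,b+1]}\rB_\rho \rB_c\rB_\rho^{-1}\rT_{[a+1,b+1]}^{-1}
\simeq \rT_{[a+1,b+1]}\rB_{c+1}\rT_{[a+1,b+1]}^{-1}
\simeq \rB_{c+1},
\]
using \eqref{eq:rnew5}, \eqref{eq:orReidII} and \eqref{eq:distcomm}, shows the relevant hom space is $\mathrm{end}(\rB_{c+1})$, which is one-dimensional by Soergel's hom formula; the scalar is pinned to $1$ by a single dot. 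Your phrasing in Step 1 — "the source and target decategorify to the same element" — by itself does not give one-dimensionality; you genuinely need the explicit identification of the hom space with $\mathrm{end}(\rB_{c+1})$ via the homotopy equivalences (or equivalently an inner-product computation), so make sure that is what you intend.

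Your Step 2 is where the proposal departs from the paper, and it is also where it falls short. The reduction from cables to single strands via mergers/splitters and the relations \eqref{eq:rnew1}, \eqref{eq:cablepitchfork}, \eqref{eq:dumbcomm}, \eqref{eq:orcomm} is not a triviality: you must check that the generator in \eqref{eq:rnew5} really decomposes into a composite of single-strand $\rho$-slides in a way compatible with both the splitter and with the color-cable crossing, and you explicitly leave this coherence verification open ("once this bookkeeping is set up carefully ... the inductive step is formal"). As stated this is a gap, since it is exactly the kind of statement that in this diagrammatic calculus requires a proof, and you have not supplied one. The simplest fix is to delete Step 2 entirely: the one-dimensionality argument of Step 1 applies verbatim with $\rT_k$ replaced by the cable $\rT_{[a,b]}$, so the whole lemma follows from a single application of the hom-and-dot trick and no induction on the number of strands is needed.
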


\begin{proof}
We use the hom \& dot trick.
By using cups and caps, both diagrams can be deformed to morphisms of degree zero from $\rB_{c+1}$ to $\rB_\rho \rT_{[a,b]}\rB_c\rB_\rho^{-1}\rT_{[a\!+\!1,b\!+\!1]}^{-1}$, which is homotopy equivalent to $\rB_{c+1}$ because  
\[
\rB_\rho \rT_{[a,b]}\rB_c\rB_\rho^{-1}\rT_{[a\!+\!1,b\!+\!1]}^{-1}\simeq \rT_{[a+1,b+1]}\rB_\rho \rB_c\rB_\rho^{-1}\rT_{[a\!+\!1,b\!+\!1]}^{-1} 
\simeq \rT_{[a+1,b+1]}\rB_{c+1}\rT_{[a\!+\!1,b\!+\!1]}^{-1}\simeq \rB_{c+1}.
\] 
Soergel's hom formula in \eqref{eq:Soergelhomformula} implies that the space of degree-preserving endomorphisms of $\rB_{c+1}$  
(either in $\Sext_n$ or in $K^b(\Sext_n)$) is one-dimensional, so the two deformed diagrams are non-zero multiples of each other, which implies that the same holds for the two diagrams in~\eqref{eq:commstr2}. Attaching a dot to one of the free ends of the non-oriented strand we see that these morphisms are equal.
\end{proof}
Of course, there are other variants of~\eqref{eq:commstr2} with, for example, other orientations, or involving one oriented black strand and two thick oriented strands.   
Note that we can even relax the condition that $c$ is distant from $[a,b]$. 
We leave the details to the reader. 

\smallskip

Until the end of this subsection, assume that $a,b\in \mathbb{Z}$ such that $a<b$. By applying~\cite[Lemma 4.8]{mmv-evalfunctor} recursively, one obtains a homotopy equivalence  
\begin{equation}\label{eq:homotopeq1}
\rT_{[b\!-\!1,a]}^{-1}\rB_{b}\rT_{[b\!-\!1,a]}\simeq \rT_{[b,a\!+\!1]}\rB_{a}\rT_{[b,a\!+\!1]}^{-1}.
\end{equation}
The corresponding isomorphism in $K^b(\Sext_n)$ and its inverse are represented by the diagrams 
\begin{equation}\label{eq:sixv-comm}
  \xy (0,0)*{
  \tikzdiagc[scale=0.6,xscale=-1]{
\draw[ultra thick,BrickRed] (0,-1)node[below]{\tiny $b$} to  (0,0);
\draw[ultra thick,orange] (0,0) to  (0,1)node[above]{\tiny $a$};
\draw[thick,double,densely dotted,myblue] (-1,-1) -- (0,0);
\draw[thick,double,densely dotted,myblue,to-] ( 1,-1) -- (0,0);
\draw[thick,double,densely dotted,mygreen] (0,0) to (-1,1);
\draw[thick,double,densely dotted,mygreen,-to] (0,0) to ( 1,1);
\node[myblue] at (-1.30,-1.47) {\tiny $[b\!-\!1,a]$};
\node[myblue] at ( 1.30,-1.47) {\tiny $[b\!-\!1,a]$};
\node[mygreen] at (-1.30, 1.39) {\tiny $[b,a\!+\!1]$};
\node[mygreen] at ( 1.30, 1.39) {\tiny $[b,a\!+\!1]$};
  }}\endxy
\quad \text{and} \quad 
 \xy (0,0)*{
  \tikzdiagc[scale=0.6,xscale=-1]{
\draw[ultra thick,orange] (0,-1)node[below]{\tiny $a$} to  (0,0);
\draw[ultra thick,BrickRed] (0,0) to  (0,1)node[above]{\tiny $b$};
\draw[thick,double,densely dotted,mygreen,to-] (-1,-1) -- (0,0);
\draw[thick,double,densely dotted,mygreen] ( 1,-1) -- (0,0);
\draw[thick,double,densely dotted,myblue,-to] (0,0) to (-1,1);
\draw[thick,double,densely dotted,myblue] (0,0) to ( 1,1);
\node[mygreen] at (-1.30,-1.47) {\tiny $[b,a\!+\!1]$};
\node[mygreen] at ( 1.30,-1.47) {\tiny $[b,a\!+\!1]$};
\node[myblue] at (-1.30, 1.39) {\tiny $[b\!-\!1,a]$};
\node[myblue] at ( 1.30, 1.39) {\tiny $[b\!-\!1,a]$};
  }}\endxy
\end{equation}
satisfying the relations
\begin{align}\label{eq:sixvertcommzero-ab}
\xy (0,0)*{
\tikzdiagc[yscale=2.1,xscale=1.35]{
\draw[ultra thick,BrickRed] (.5,0)node[below]{\tiny $b$} -- (.5,.3);
\draw[ultra thick,orange] (.5,.3) -- (.5,.7);
\draw[ultra thick,BrickRed] (.5,.7) -- (.5,1)node[above]{\tiny $b$};
\node[orange] at (.62,.5) {\tiny $a$};
\draw[thick,double,densely dotted,myblue] (1,0)node[below]{\tiny $[b\!-\!1,\! a]$} .. controls (1,.15) and  (.7,.24) .. (.5,.29);
\draw[thick,double,densely dotted,myblue,-to] (0.85,.17) --  (.75,.21);
\draw[thick,double,densely dotted,mygreen] (.5,.29) .. controls (-.1,.4) and (-.1,.6) .. (.5,.71);
\draw[thick,double,densely dotted,mygreen,-to] (.05,.52) -- (.05,.54);
\draw[thick,double,densely dotted,myblue,to-] (1,1)node[above]{\tiny $[b\!-\!1,\!a]$} .. controls (1,.85) and  (.7,.74) .. (.5,.71) ;
\node[mygreen] at (-.5,.5) {\tiny $[b,\!a\!+\!1]$};
\node[mygreen] at (1.5,.5) {\tiny $[b,\!a\!+\!1]$};
\draw[thick,double,densely dotted,myblue,to-] (0,0)node[below]{\tiny $[b\!-\!1,\! a]$} .. controls (0,.15) and  (.3,.24) .. (.5,.29);
\draw[thick,double,densely dotted,mygreen] (.5,.29) .. controls (1.1,.4) and (1.1,.6) .. (.5,.71);
\draw[thick,double,densely dotted,myblue] (0,1)node[above]{\tiny $[b\!-\!1,\!a]$} .. controls (0,.85) and  (.3,.74) .. (.5,.71) ;
\draw[thick,double,densely dotted,mygreen,to-] (.95,.46) -- (.95,.48);
\draw[thick,double,densely dotted,myblue,-to] (0.15,.83) --  (.25,.79);
  }}\endxy
&= \!\!
\xy (0,0)*{
  \tikzdiagc[yscale=2.1,xscale=1.35]{
\draw[thick,double,densely dotted,myblue,to-] (0,0)node[below]{\tiny $[b\!-\!1,\! a]$} -- (0,1)node[above]{\phantom{\tiny $[b\!-\!1,\! a]$}};
\draw[ultra thick,BrickRed] (.5,0)node[below] {\tiny $b$} -- (.5,1);
\draw[thick,double,densely dotted,myblue,-to] (1,0)node[below]{\tiny $[b\!-\!1,\! a]$} -- (1,1);
}}\endxy
&
\xy (0,0)*{
\tikzdiagc[yscale=2.1,xscale=-1.35]{
\draw[ultra thick,orange] (.5,0)node[below]{\tiny $a$} -- (.5,.3);
\draw[ultra thick,BrickRed] (.5,.3) -- (.5,.7);
\draw[ultra thick,orange] (.5,.7) -- (.5,1)node[above]{\tiny $a$};
\node[BrickRed] at (.38,.5) {\tiny $b$};
\draw[thick,double,densely dotted,mygreen] (1,0)node[below]{\tiny $[b,\!a\!+\!1]$} .. controls (1,.15) and  (.7,.24) .. (.5,.29);
\draw[thick,double,densely dotted,mygreen,-to] (0.85,.17) --  (.75,.21);
\draw[thick,double,densely dotted,myblue] (.5,.29) .. controls (-.1,.4) and (-.1,.6) .. (.5,.71);
\draw[thick,double,densely dotted,myblue,-to] (.05,.52) -- (.05,.54);
\draw[thick,double,densely dotted,mygreen,to-] (1,1)node[above]{\tiny $[b,\!a\!+\!1]$} .. controls (1,.85) and  (.7,.74) .. (.5,.71) ;
\node[myblue] at (-.5,.5) {\tiny $[b\!-\!1,\! a]$};
\node[myblue] at (1.5,.5) {\tiny $[b\!-\!1,\! a]$};
\draw[thick,double,densely dotted,mygreen,to-] (0,0)node[below]{\tiny $[b,a\!+\!1]$} .. controls (0,.15) and  (.3,.24) .. (.5,.29);
\draw[thick,double,densely dotted,myblue] (.5,.29) .. controls (1.1,.4) and (1.1,.6) .. (.5,.71);
\draw[thick,double,densely dotted,mygreen] (0,1)node[above]{\tiny $[b,\!a\!+\!1]$} .. controls (0,.85) and  (.3,.74) .. (.5,.71) ;
\draw[thick,double,densely dotted,myblue,to-] (.95,.46) -- (.95,.48);
\draw[thick,double,densely dotted,mygreen,-to] (0.15,.83) --  (.25,.79);
  }}\endxy
&=  \!\!
\xy (0,0)*{
  \tikzdiagc[yscale=2.1,xscale=1.35]{
\draw[thick,double,densely dotted,mygreen,-to] (0,0)node[below]{\tiny $[b,\!a\!+\!1]$} -- (0,1)node[above]{\phantom{\tiny $[b,\!a\!+\!1]$}};
\draw[ultra thick,orange] (.5,0)node[below] {\tiny $a$} -- (.5,1);
\draw[thick,double,densely dotted,mygreen,to-] (1,0)node[below]{\tiny $[b,\!a\!+\!1]$} -- (1,1);
}}\endxy
\end{align}
in $K^b(\Sext)$. There is a similar homotopy equivalence 
\begin{equation}\label{eq:homotopeq2}
\rT_{[a,b\!-\!1]}\rB_{b}\rT_{[a,b\!-\!1]}^{-1}\simeq \rT_{[a\!+\!1,b]}^{-1}\rB_{a}\rT_{[a\!+\!1,b]},
\end{equation}
and the corresponding isomorphism in $K^b(\Sext_n)$ and its inverse are represented by the diagrams 
\begin{equation}\label{eq:sixv-comm2}
 \xy (0,0)*{
  \tikzdiagc[scale=0.6,xscale=-1]{
\draw[ultra thick,BrickRed] (0,-1)node[below]{\tiny $b$} to  (0,0);
\draw[ultra thick,orange] (0,0) to  (0,1)node[above]{\tiny $a$};
\draw[thick,double,densely dotted,mygreen,to-] (-1,-1) -- (0,0);
\draw[thick,double,densely dotted,mygreen] ( 1,-1) -- (0,0);
\draw[thick,double,densely dotted,myblue,-to] (0,0) to (-1,1);
\draw[thick,double,densely dotted,myblue] (0,0) to ( 1,1);
\node[mygreen] at (-1.30,-1.47) {\tiny $[a,b\!-\!1]$};
\node[mygreen] at ( 1.30,-1.47) {\tiny $[a,b\!-\!1]$};
\node[myblue] at (-1.30, 1.39) {\tiny $[a\!+\!1,b]$};
\node[myblue] at ( 1.30, 1.39) {\tiny $[a\!+\!1,b]$};
  }}\endxy
\quad \text{and} \quad 
\xy (0,0)*{
  \tikzdiagc[scale=0.6,xscale=-1]{
\draw[ultra thick,orange] (0,-1)node[below]{\tiny $a$} to  (0,0);
\draw[ultra thick,BrickRed] (0,0) to  (0,1)node[above]{\tiny $b$};
\draw[thick,double,densely dotted,myblue] (-1,-1) -- (0,0);
\draw[thick,double,densely dotted,myblue,to-] ( 1,-1) -- (0,0);
\draw[thick,double,densely dotted,mygreen] (0,0) to (-1,1);
\draw[thick,double,densely dotted,mygreen,-to] (0,0) to ( 1,1);
\node[myblue] at (-1.30,-1.47) {\tiny $[a\!+\!1,b]$};
\node[myblue] at ( 1.30,-1.47) {\tiny $[a\!+\!1,b]$};
\node[mygreen] at (-1.30, 1.39) {\tiny $[a,b\!-\!1]$};
\node[mygreen] at ( 1.30, 1.39) {\tiny $[a,b\!-\!1]$};
  }}\endxy
\end{equation}
satisfying relations analogous to those in~\eqref{eq:sixvertcommzero-ab}. 

\begin{rem}
One needs to be careful with the use of the colors of the strands in the diagrams in~\eqref{eq:sixv-comm} and~\eqref{eq:sixv-comm2}. For example, 
the definition of $\Psi_R$ in \autoref{sec:thefunctor} contains the diagram  
\[
\xy (0,0)*{
\tikzdiagc[scale=1.6]{
\draw[thick,double,densely dotted,mygreen] (.20,-.5) to (-.15,-.15);
\draw[thick,double,densely dotted,mygreen,to-] (.8,-.5) to (.15,.15);
\draw[thick,double,densely dotted,mygreen,to-] (.05,-.35) to (.10,-.40);
\draw[thick,double,densely dotted,orange] (-.15,-.15) to[out=135,in=-125] (-.3,.3);
\draw[thick,double,densely dotted,orange] (.15,.15) to[out=135,in=35] (-.3,.3);
\draw[thick,double,densely dotted,orange,to-] (-.30,.03) to (-.285,.01);
\draw[thick,double,densely dotted,orange,-to] (0.06,.225) to (.08,.210);
\draw[thick,double,densely dotted,myblue] (-.31,.31) to[out=-135,in=135] (-.3,-.3);
\draw[thick,double,densely dotted,myblue] (-.31,.31) to[out=40,in=135] (.3,.3);
\draw[thick,double,densely dotted,myblue,-to] (0.21,.375) to (.24,.355);
\draw[thick,double,densely dotted,myblue,to-] (-.40,-.159) to (-.35,-.230);
\draw[thick,double,densely dotted,BrickRed] (-.3,-.3) to[out=-45,in=45] (-.3,-.5);
\draw[thick,double,densely dotted,BrickRed,-to] (.3,.3) to[out=-45,in=-135] (.7,.5);
\node[myblue] at (0.10,.665) {\tiny $[0,k\!-\!1]$};
\node[BrickRed] at (-.25,-.68) {\tiny $[1,k]$};
\node[mygreen] at (1,-.68) {\tiny $[0,k\!-\!1]$};
\node[BrickRed] at (0.75, .68) {\tiny $[1,k]$};
\draw[ultra thick,violet] (.5,-.5)node[below] {\tiny $k$} -- (0,0);
\draw[ultra thick,brown] (-.5,.5)node[above] {\tiny $n\!-\!1$} -- (-.3,.3);
\draw[ultra thick,blue] (-.3,.3) -- (-.0,.0);
\draw[ultra thick,black,to-] (-.5,-.5) -- (.5,.5);
  }}\endxy
\] 
for $0<k<n$. To compare the labels of this diagram with the conventions in~\eqref{eq:sixv-comm2}, one needs to use the equality
\[
 \xy (0,0)*{
  \tikzdiagc[scale=0.6,xscale=-1]{
\draw[ultra thick,blue] (0,-1)node[below]{\tiny $k\!-\!1$} to  (0,0);
\draw[ultra thick,brown] (0,0) to  (0,1)node[above]{\tiny $-\!1$};
\draw[thick,double,densely dotted,orange,to-] (-1,-1) -- (0,0);
\draw[thick,double,densely dotted,orange] ( 1,-1) -- (0,0);
\draw[thick,double,densely dotted,myblue,-to] (0,0) to (-1,1);
\draw[thick,double,densely dotted,myblue] (0,0) to ( 1,1);
\node[orange] at (-1.7,-1.47) {\tiny $[-\!1,k\!-\!2]$};
\node[orange] at ( 1.7,-1.47) {\tiny $[-\!1,k\!-\!2]$};
\node[myblue] at (-1.7, 1.39) {\tiny $[0,k\!-\!1]$};
\node[myblue] at ( 1.7, 1.39) {\tiny $[0,k\!-\!1]$};
  }}\endxy
=
 \xy (0,0)*{
  \tikzdiagc[scale=0.6,xscale=-1]{
\draw[ultra thick,blue] (0,-1)node[below]{\tiny $k\!-\!1$} to  (0,0);
\draw[ultra thick,brown] (0,0) to  (0,1)node[above]{\tiny $n\!-\!1$};
\draw[thick,double,densely dotted,orange,to-] (-1,-1) -- (0,0);
\draw[thick,double,densely dotted,orange] ( 1,-1) -- (0,0);
\draw[thick,double,densely dotted,myblue,-to] (0,0) to (-1,1);
\draw[thick,double,densely dotted,myblue] (0,0) to ( 1,1);
\node[orange] at (-1.7,-1.47) {\tiny $[-\!1,k\!-\!2]$};
\node[orange] at ( 1.7,-1.47) {\tiny $[-\!1,k\!-\!2]$};
\node[myblue] at (-1.7, 1.39) {\tiny $[0,k\!-\!1]$};
\node[myblue] at ( 1.7, 1.39) {\tiny $[0,k\!-\!1]$};
  }}\endxy
\]
However, there is no homotopy equivalence given by a diagram of the form 
\[
 \xy (0,0)*{
  \tikzdiagc[scale=0.6,xscale=-1]{
\draw[ultra thick,blue] (0,-1)node[below]{\tiny $k\!-\!1$} to  (0,0);
\draw[ultra thick,brown] (0,0) to  (0,1)node[above]{\tiny $n\!-\!1$};
\draw[thick,double,densely dotted,orange,to-] (-1,-1) -- (0,0);
\draw[thick,double,densely dotted,orange] ( 1,-1) -- (0,0);
\draw[thick,double,densely dotted,myblue,-to] (0,0) to (-1,1);
\draw[thick,double,densely dotted,myblue] (0,0) to ( 1,1);
\node[orange] at (-1.7,-1.47) {\tiny $[n\!-\!1,k\!-\!2]$};
\node[orange] at ( 1.7,-1.47) {\tiny $[n\!-\!1,k\!-\!2]$};
\node[myblue] at (-1.7, 1.39) {\tiny $[n,k\!-\!1]$};
\node[myblue] at ( 1.7, 1.39) {\tiny $[n,k\!-\!1]$};
  }}\endxy
\]
because that would imply that $a=n-1$ and $b=k-1$ in~\eqref{eq:sixv-comm2}, violating our assumption that $a<b$. 
\end{rem}

Proceeding as in the proof of~\cite[Lemma 4.25]{mmv-evalfunctor}, one can show that the following holds in $K^b(\Sext)$:
\begin{equation}\label{eq:sixcommdot}
\xy (0,0)*{
\tikzdiagc[scale=0.6,xscale=-1]{
\draw[ultra thick,BrickRed] (0,-1)node[below]{\tiny $b$} --  (0,0);
\node[orange] at (0,1.4) {\tiny $a$};
\draw[ultra thick,orange] (0,0) -- (0,.85)node[pos=1, tikzdot]{};
\draw[thick,double,densely dotted,myblue] (-1,-1) -- (0,0);
\draw[thick,double,densely dotted,myblue,to-] ( 1,-1) -- (0,0);
\draw[thick,double,densely dotted,mygreen] (0,0) to (-1,1);
\draw[thick,double,densely dotted,mygreen,-to] (0,0) to ( 1,1);
\node[myblue] at (-1.30,-1.47) {\tiny $[b\!-\!1,a]$};
\node[myblue] at ( 1.30,-1.47) {\tiny $[b\!-\!1,a]$};
\node[mygreen] at (-1.30, 1.39) {\tiny $[b,a\!+\!1]$};
\node[mygreen] at ( 1.30, 1.39) {\tiny $[b,a\!+\!1]$};
}}\endxy
\mspace{-10mu}=\mspace{-10mu}
\xy (0,0)*{
\tikzdiagc[scale=0.6,xscale=-1]{
\draw[ultra thick,BrickRed] (0,-.55) -- (0,-1)node[pos=0, tikzdot]{};
\node[BrickRed] at (0,-1.4) {\tiny $b$};
\draw[thick,double,densely dotted,myblue,-to] (-1,-1) to[out=60,in=180] (0,-.15) to[out=0,in=120] (1,-1);
\draw[thick,double,densely dotted,mygreen,-to] (-1,1) to[out=-60,in=180] (0,.15) to[out=0,in=-120] ( 1,1);
\node[myblue] at (-1.30,-1.47) {\tiny $[b\!-\!1,a]$};
\node[myblue] at ( 1.30,-1.47) {\tiny $[b\!-\!1,a]$};
\node[mygreen] at (-1.30, 1.39) {\tiny $[b,a\!+\!1]$};
\node[mygreen] at ( 1.30, 1.39) {\tiny $[b,a\!+\!1]$};
}}\endxy
\end{equation}
For $k$ distant from $[a,b]$, it is easy to see that 
\begin{equation}\label{eq:jslidecommab}
  \xy (0,0)*{
  \tikzdiagc[scale=0.75,xscale=-1]{
\draw[ultra thick,BrickRed] (0,-1)node[below]{\tiny $b$} to  (0,0);
\draw[ultra thick,orange] (0,0) to  (0,1)node[above]{\tiny $a$};
\draw[thick,double,densely dotted,myblue] (-1,-1) -- (0,0);
\draw[thick,double,densely dotted,myblue,to-] ( 1,-1) -- (0,0);
\draw[thick,double,densely dotted,mygreen] (0,0) to (-1,1);
\draw[thick,double,densely dotted,mygreen,-to] (0,0) to ( 1,1);
\node[myblue] at (-1.30,-1.47) {\tiny $[b\!-\!1,a]$};
\node[myblue] at ( 1.30,-1.47) {\tiny $[b\!-\!1,a]$};
\node[mygreen] at (-1.30, 1.39) {\tiny $[b,a\!+\!1]$};
\node[mygreen] at ( 1.30, 1.39) {\tiny $[b,a\!+\!1]$};
\draw[ultra thick,gray] (-1.1,0) to[out=45,in=135]  (1.1,0);
\node[gray] at (1.25,0) {\tiny $k$};
}}\endxy
=
  \xy (0,0)*{
  \tikzdiagc[scale=0.75,xscale=-1]{
\draw[ultra thick,BrickRed] (0,-1)node[below]{\tiny $b$} to  (0,0);
\draw[ultra thick,orange] (0,0) to  (0,1)node[above]{\tiny $a$};
\draw[thick,double,densely dotted,myblue] (-1,-1) -- (0,0);
\draw[thick,double,densely dotted,myblue,to-] ( 1,-1) -- (0,0);
\draw[thick,double,densely dotted,mygreen] (0,0) to (-1,1);
\draw[thick,double,densely dotted,mygreen,-to] (0,0) to ( 1,1);
\node[myblue] at (-1.30,-1.47) {\tiny $[b\!-\!1,a]$};
\node[myblue] at ( 1.30,-1.47) {\tiny $[b\!-\!1,a]$};
\node[mygreen] at (-1.30, 1.39) {\tiny $[b,a\!+\!1]$};
\node[mygreen] at ( 1.30, 1.39) {\tiny $[b,a\!+\!1]$};
\draw[ultra thick,gray] (-1.1,0) to[out=-45,in=-135]  (1.1,0);
\node[gray] at (1.25,0) {\tiny $k$};
}}\endxy
\end{equation}

\begin{lem}
 We have the following in $K^b(\Sext)$:
\begin{equation}\label{eq:sixvtriv}
\xy (0,0)*{
\tikzdiagc[scale=0.6]{
\draw[ultra thick,BrickRed] (0,-1)node[below]{\tiny $b$} --  (0,0);
\draw[ultra thick,BrickRed] (-1,1) to[out=-60] (0,0) to[in=-120] (1,1);
\draw[ultra thick,orange] (-1,1) to[out=100,in=-90]  (-1,2.5)node[above]{\tiny $a$};
\draw[ultra thick,orange] (1,1) to[out=80,in=-90]  (1,2.5)node[above]{\tiny $a$};
\draw[thick,double,densely dotted,myblue,to-] (-2,0) to (-1,1) to[out=-45,in=-135] (1,1) to (2,0)node[below]{\tiny $[b\!-\!1,a]$};
\draw[thick,double,densely dotted,mygreen,to-] (-2.2,1.5) to (-1,1) to[out=45,in=135] (1,1) to (2.2,1.5)node[above]{\tiny $[b,a\!+\!1]$}; 
}}\endxy
\mspace{-10mu}=\mspace{10mu}
\xy (0,0)*{
\tikzdiagc[scale=0.6]{
\draw[ultra thick,BrickRed] (0,-1)node[below]{\tiny $b$} --  (0,.5);
\draw[ultra thick,orange] (0,.5) to  (0,1.25);
\draw[ultra thick,orange] (0,1.25) to[out=135,in=-90]  (-1,2.5)node[above]{\tiny $a$};
\draw[ultra thick,orange] (0,1.25) to[out=45,in=-90]  (1,2.5)node[above]{\tiny $a$};
\draw[thick,double,densely dotted,myblue,to-] (-2,0) to (0,.5) to (2,0) node[below]{\tiny $[b\!-\!1,a]$};
\draw[thick,double,densely dotted,mygreen,to-] (-2.2,1.5) to (0,.5) to (2.2,1.5)node[above]{\tiny $[b,a\!+\!1]$}; 
}}\endxy
\end{equation}
\end{lem}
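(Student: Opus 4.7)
The strategy is the hom \& dot trick (Remark \ref{r:dottrick}). Both sides of \eqref{eq:sixvtriv} represent morphisms of degree $-1$ in $K^b(\Sext_n)$ with the same source (a conjugate of $\rB_b$ by the cables $\rT_{[b-1,a]}$) and the same target (a conjugate of $\rB_a\rB_a$ by the cables $\rT_{[b,a+1]}$, with orientations matching those in \eqref{eq:sixv-comm}). Indeed, each six-valent vertex contributes degree $0$ and each trivalent split contributes degree $-1$, giving total degree $-1$ on either side.

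To bound the dimension of this hom space, I would use the homotopy equivalence \eqref{eq:homotopeq1}, which reduces the source to a conjugate of $\rB_a$ by $\rT_{[b,a+1]}$. Combining this with adjunction for the outer Rouquier strands (using the cups and caps satisfying \eqref{eq:invcomm} and \eqref{eq:orcableloop}) yields an isomorphism
\[
\mathrm{Hom}_{K^b(\Sext_n)}(X,Y) \cong \mathrm{Hom}_{\Sext_n}(\rB_a,\rB_a\rB_a),
\]
where the right-hand side is Hom in the Soergel category because both objects are concentrated in homological degree $0$. By Soergel's hom formula \eqref{eq:Soergelhomformula}, the graded rank of the latter equals $(b_a,b_a^2)=[2](b_a,b_a)$, which by Theorem \ref{thm:asymportho} lies in $[2](1+q\bZ[q])=q^{-1}+q+q\bZ[q]$. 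In particular, the degree $-1$ component is one-dimensional, so both sides of \eqref{eq:sixvtriv} are proportional.

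To pin down the scalar, I would attach a dot to the end of one of the top $a$-strands on both sides. On the RHS, the trivalent split of $\rB_a$ with a dotted output leg collapses to the identity on $\rB_a$ by \eqref{eq:onecolorfirst}, reducing the RHS to a single six-valent vertex $\rB_b\to\rB_a$ surrounded by its cables. On the LHS, I would apply \eqref{eq:sixcommdot} to the left six-valent vertex, rewriting it as a dot on the incoming $b$-leg together with cup and cap arcs on the myblue and mygreen cables; the $b$-trivalent split with the newly dotted leg then collapses to the identity on $\rB_b$ by \eqref{eq:onecolorfirst}. The cup/cap arcs produced by \eqref{eq:sixcommdot} can then be absorbed, by isotopy and \eqref{eq:invcomm}, into the belt of cables connecting to the right six-valent vertex, yielding precisely the reduced RHS. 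This shows the proportionality scalar is $1$ and both sides are non-zero. The main difficulty lies in this last step: one must carefully isotope the arcs produced by \eqref{eq:sixcommdot} through the ``belt'' of cables originally shared by the two six-valent vertices, using \eqref{eq:Rtwo-commcomm} and \eqref{eq:cablepitchfork} as needed, to confirm that no extra scalar or residual crossings appear.
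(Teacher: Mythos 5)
Your proposal is correct and follows the same strategy as the paper's proof: both reduce to the one-dimensionality of $\hom_{\Sext_n}(\rB_a,\rB_a\rB_a\langle -1\rangle)$ via \eqref{eq:homotopeq1} and adjunction, and both pin down the scalar by attaching a dot to a top $a$-end and using \eqref{eq:sixcommdot} to push it through the six-valent vertex. One small inaccuracy: the identity $[2](1+q\bZ[q])=q^{-1}+q+q\bZ[q]$ does not hold as stated (the left side contains elements with nonzero constant term, coming from $(q^2+1)\bZ[q]$); the clean way to see the one-dimensionality is via $\rB_a\rB_a\cong\rB_a\langle 1\rangle\oplus\rB_a\langle -1\rangle$, as the paper does, but your conclusion about the degree $-1$ piece is nonetheless correct. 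You should also note explicitly that the diagram obtained after attaching the dot — the single six-valent vertex with its cables, i.e.\ the second diagram in \eqref{eq:sixv-comm} — is nonzero because it is invertible; this is what justifies deducing that the proportionality scalar is $1$.
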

\begin{proof}
We use the hom \& dot trick. By using cups and caps, both diagrams are isotopic to diagrams realizing morphisms from $\rT_{[b-1,a]}^{-1}\rB_b\rT_{[b-1,a]}$ to $\rT_{[b,a+1]}\rB_a\rB_a\rT_{[b,a+1]}^{-1}$. Gluing the second diagram in~\eqref{eq:sixv-comm} at the bottom results in two morphisms of degree $-1$ from $\rT_{[b,a+1]}\rB_a\rT_{[b,a+1]}^{-1}$ to $\rT_{[b,a+1]}\rB_a\rB_a\rT_{[b,a+1]}^{-1}$. 
By biadjointness and the invertibility of $\rT_{[b,a+1]}$ in $K^b(\Sext_n)$, there is a canonical isomorphism 
\begin{align*}
\hom_{K^b(\Sext_n)}\left(\rT_{[b,a+1]}\rB_a\rT_{[b,a+1]}^{-1}, \rT_{[b,a+1]}\rB_a\rB_a\rT_{[b,a+1]}^{-1}\langle -1\rangle\right) 
&\cong \hom_{K^b(\Sext_n)}\left(\rB_a, \rB_a\rB_a\langle -1\rangle\right)
\\
&\cong \hom_{\Sext_n}\left(\rB_a,\rB_a\rB_a\langle -1\rangle\right) 
\cong\bR .
\end{align*}
The latter isomorphism follows from Soergel's hom formula in \eqref{eq:Soergelhomformula} and the fact that $\rB_a \rB_a\cong \rB_a\langle 1\rangle\oplus \rB_a\langle -1\rangle$.
By attaching dots to the free top ends labeled $a$ in \eqref{eq:sixvtriv} and using~\eqref{eq:sixcommdot} and \eqref{eq:onecolorsecond}, one sees that they are equal. 
\end{proof}


\begin{lem}\label{l:sixv-reidthree}
For $c$ distant from $[b,a]$, we have  
\begin{equation}\label{eg:sixv-reidthree}
\xy (0,0)*{
\tikzdiagc[scale=0.6]{
\draw[ultra thick,blue] (-2,2) --  (2,-2)node[below]{\tiny $c$};
\draw[ultra thick,PineGreen] (-2,-2)node[below]{\tiny $a$} -- (1,1);
\draw[ultra thick,Orchid] (1,1) -- (2,2)node[above]{\tiny $b$};
\draw[thick,double,densely dotted,myblue] (-.5,2)node[above]{\phantom{\tiny $[b,\!a\!+\!1]$}} to[out=-90,in=150] (1,1);
\draw[thick,double,densely dotted,myblue,-to]  (1,1) to[out=-90,in=90] (-.5,-2)node[below,xshift=-7pt]{\tiny $[b,\!a\!+\!1]$};
\draw[thick,double,densely dotted,RawSienna] (.5,2)node[above]{\phantom{\tiny $[b,\!a\!+\!1]$}} to[out=-90,in=90] (1,1);
\draw[thick,double,densely dotted,RawSienna,-to]  (1,1) to[out=-50,in=90] (.5,-2)node[below, xshift=7pt]{\tiny $[b\!-\!1\!,\!a]$}; 
}}\endxy
=
\xy (0,0)*{
\tikzdiagc[scale=0.6,xscale=-1,yscale=-1]{
\draw[ultra thick,blue] (-2,2)node[below]{\tiny $c$} --  (2,-2);
\draw[ultra thick,Orchid] (-2,-2)node[above]{\tiny $b$} -- (1,1);
\draw[ultra thick,PineGreen] (1,1) -- (2,2)node[below]{\tiny $a$};
\draw[thick,double,densely dotted,RawSienna,to-] (-.5,2)node[below, xshift=7pt]{\tiny $[b\!-\!1\!,\!a]$} to[out=-90,in=150] (1,1);
\draw[thick,double,densely dotted,RawSienna]  (1,1) to[out=-90,in=90] (-.5,-2)node[above]{\phantom{\tiny $[a,b]$}};
\draw[thick,double,densely dotted,myblue,to-] (.5,2)node[below, xshift=-7pt]{\tiny $[b,\!a\!+\!1]$} to[out=-90,in=90] (1,1);
\draw[thick,double,densely dotted,myblue]  (1,1) to[out=-50,in=90] (.5,-2); 
}}\endxy
\end{equation}

\begin{equation}\label{eq:reidtwofourv}
\xy (0,0)*{
\tikzdiagc[yscale=2.1,xscale=1.1]{
\draw[ultra thick,mygreen] (-.5,.5)node[left]{\tiny $c\!+\!1$} to (1,.5);
\draw[ultra thick,orange]  (1,.5) to (1.5,.5)node[right]{\tiny $c$};
\draw[thick,double,densely dotted,myblue] (1,0)node[below]{\tiny $[b,a]$} .. controls (1,.15) and  (.7,.24) .. (.5,.29);
\draw[thick,double,densely dotted,myblue,-to] (0.85,.17) --  (.75,.21);
\draw[thick,double,densely dotted,myred] (.5,.29) .. controls (-.1,.4) and (-.1,.6) .. (.5,.71);
\draw[thick,double,densely dotted,myblue,to-] (1,1)node[above]{\tiny $[b,a]$} .. controls (1,.85) and  (.7,.74) .. (.5,.71) ;
\draw[ultra thick,black,-to] (0,0) ..controls (0,.35) and (1,.25) .. (1,.5) ..controls (1,.75) and (0,.65) .. (0,1);
  }}\endxy
= \ \ 
\xy (0,0)*{
  \tikzdiagc[yscale=2.1,xscale=1.1]{
\draw[ultra thick,mygreen] (-.5,.5)node[left]{\tiny $c\!+\!1$} to (0,.5);
\draw[ultra thick,orange]  (0,.5) to (1.5,.5)node[right]{\tiny $c$};
\draw[ultra thick,black,-to] (0,0) -- (0,1);
\draw[thick,double,densely dotted,myblue,-to] (1,0)node[below]{\tiny $[b,a]$} -- (1,1)node[above]{\phantom{\tiny $[b,a]$}};
}}\endxy
\end{equation}
in $K^b(\Sext)$. 
\end{lem}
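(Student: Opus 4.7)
Both equalities will be established by the hom \& dot trick of Remark~\ref{r:dottrick}, following the same template as the proof of Lemma~\ref{l:commstr}.

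For \eqref{eg:sixv-reidthree}, the first step is to use cups and caps for the three thick cables (via \eqref{eq:orcableloop}--\eqref{eq:invcomm}) and for the black oriented strand (via \eqref{eq:orloop}--\eqref{eq:orinv}) in order to re-interpret both diagrams as degree-zero morphisms between a common source and target, say as endomorphisms of $\rT_{[b-1,a]}^{-1}\rB_b\rT_{[b-1,a]}\rB_c$. Applying the homotopy equivalences \eqref{eq:homotopeq1}--\eqref{eq:homotopeq2} converts this to an endomorphism of $\rT_{[b,a+1]}\rB_a\rT_{[b,a+1]}^{-1}\rB_c$, and by the distance hypothesis on $c$, relation \eqref{eq:distcomm} lets $\rB_c$ slide freely past the thick cables. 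The invertibility of $\rT_{[b,a+1]}^{\pm 1}$ in $K^b(\Sext_n)$ then identifies the space of degree-zero endomorphisms with $\hom_{\Sextstar_n}(\rB_a\rB_c,\rB_a\rB_c)$ in internal degree~$0$. Since $b_a$ and $b_c$ commute and satisfy $(b_ab_c,b_ab_c)\in 1+q\bZ[q]$ by Theorem~\ref{thm:asymportho}, Soergel's hom formula \eqref{eq:Soergelhomformula} shows that this space is one-dimensional, so both sides of \eqref{eg:sixv-reidthree} are non-zero scalar multiples of each other. Attaching a dot to the free end labelled $b$ on each side and simplifying using \eqref{eq:sixcommdot} together with \eqref{eq:onecolorsecond} reduces both sides to the same elementary diagram, identifying the scalar as $1$.

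For \eqref{eq:reidtwofourv}, the same strategy applies. Cups and caps close the top $c+1$ and bottom $c$ endpoints (using \eqref{eq:orinv}) and close the top/bottom $[b,a]$ cables (using \eqref{eq:invcomm}), turning both diagrams into degree-zero endomorphisms of $\rB_{c+1}$ (after absorbing the invertible thick cable $\rT_{[b,a]}$ and using the trivial slide \eqref{eq:Rtwo-BTcomm} on the right-hand side to collapse one composition). Soergel's hom formula together with Theorem~\ref{thm:asymportho} guarantees that this endomorphism space is one-dimensional in degree~$0$, and the scalar is checked to be $1$ by attaching a dot to the $c+1$ strand and collapsing both diagrams using the adjunction relations for the black strand and the distance relation \eqref{eq:distcomm}.

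The main obstacle is purely organisational: one must arrange the cup/cap moves so that the two sides genuinely share a source and target, with matching orientations on every Rouquier cable and each homotopy equivalence applied in the correct direction. Once this bookkeeping is done, invertibility of the thick cables, Soergel's hom formula, and the dot trick take care of the rest with no additional input.
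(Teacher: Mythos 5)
Your proposal follows essentially the same approach as the paper's: the hom \& dot trick in the style of Lemma~\ref{l:commstr}, reducing both sides to a one-dimensional morphism space by cups/caps, the homotopy equivalences \eqref{eq:homotopeq1}--\eqref{eq:homotopeq2}, distance slides and invertibility of Rouquier complexes, then pinning down the scalar by attaching dots and simplifying via \eqref{eq:sixcommdot} and \eqref{eq:onecolorsecond}. The only cosmetic deviations are that you land in $\hom(\rB_a\rB_c,\rB_a\rB_c)$ where the paper lands in $\hom(\rB_b\rB_c,\rB_b\rB_c)$ (both one-dimensional since $c$ is distant from $a$ and $b$), and you attach a single dot to $b$ while the paper attaches dots to both $b$ and $c$; one dot does suffice since the resulting degree-$1$ hom space is still one-dimensional, though the two-dot version makes the $c$-strand bookkeeping more symmetric.
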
  
\begin{proof}
We use the hom \& dot trick again and only prove \eqref{eg:sixv-reidthree}, \eqref{eq:reidtwofourv} being similar. 
As in the proof of~\autoref{l:commstr}, first apply cups and caps to get two degree preserving morphisms from 
$\rT_{[b-1,a]} \rT_{[b,a+1]} \rB_c \rB_a \rT_{[b,a+1]}^{-1}\rT_{[b-1,a]}^{-1}$ to $\rB_b\rB_c$, then use the homotopy equivalences 
\begin{eqnarray*}
\rT_{[b-1,a]} \rT_{[b,a+1]} \rB_c \rB_a \rT_{[b,a+1]}^{-1}\rT_{[b-1,a]}^{-1} & \simeq &  
\rT_{[b-1,a]} \rT_{[b,a+1]} \rB_a \rT_{[b,a+1]}^{-1}\rT_{[b-1,a]}^{-1} \rB_c \\
&\simeq &\rT_{[b-1,a]} \rT_{[b-1,a]}^{-1} \rB_b \rT_{[b-1,a]}\rT_{[b-1,a]}^{-1} \rB_c \\
&\simeq & \rB_{b} \rB_{c}
\end{eqnarray*}
and the fact that $\rB_b \rB_c\cong \rB_{bc}$, which implies that $\hom_{\Sext_n}(\rB_{bc}, \rB_{bc})$ is one-dimensional by Soergel's hom formula in~\eqref{eq:Soergelhomformula}. 
We attach dots to the upper boundary points labeled $b$ and $c$ of the two diagrams in \eqref{eg:sixv-reidthree} and show that the resulting diagrams are equal using~\eqref{eq:sixcommdot} and \eqref{eq:onecolorsecond}.
\end{proof}

%
%

\input{sections/catembedding.tex}
\section{Parabolic induction: an example}
\subsection{The decategorified story}\label{sec:exdecatstory}
In this section, we will work out an explicit example of an induced triangulated birepresentation $\mathbf{W}$ of $\Sext_2$ and its wide finitary cover $\mathbf{U}$. Below, we will start with the decategorified story, before we move on to the categorification, after a general intermezzo on completions of additive categories. The categorified story is divided into two parts: one dedicated to $\mathbf{U}$ and another to $\mathbf{W}$. Both birepresentations are defined using a certain algebra object $\rY$ in a completion of $K^b(\Sext_2)$, but the construction of $\mathbf{W}$ is much less straightforward than that of $\mathbf{U}$, as the reader will see. 
\subsubsection{The Hecke algebra $\eah{2}$}
Recall that $\widehat{\Sy}_2$ is generated by two simple reflections $s_0$ and $s_1$ which are only subject to the quadratic relations 
$s_0^2=e=s_1^2$. Any element in $\widehat{\Sy}_2$ has therefore a unique rex, which can be identified with an alternating binary sequence. 
To work with those, we have to introduce some notation. For $m\in \mathbb{Z}_{>0}$ and $i\in\{0,1\}$, let $\raltseq{m}{i}$ and $\laltseq{m}{i}$ denote the alternating binary sequences of length $m$ ending with $i$ and starting with $i$, respectively.  If $\raltseq{m}{j}=\laltseq{m}{i}$ for certain $m\in \mathbb{Z}$ and $i,j\in \{0,1\}$ (we call this a {\em parity condition}), then we write $\lraltseq{m}{i}{j}$ for that 
alternating sequence to make the computations below easier to read. For example, 
$\raltseq{4}{0}=\laltseq{4}{1}=\lraltseq{4}{1}{0}=1010$. By convention, we put $\raltseq{0}{i}=\laltseq{0}{i}=\emptyset$, for $i\in \{0,1\}$, 
which of course corresponds to the neutral element $e\in \widehat{\Sy}_2$. 

The extended affine symmetric group $\widehat{\Sy}_2^{\mathrm{ext}}$ has an extra generator $\rho$ of infinite order, and two extra relations: $\rho s_0\rho^{-1}=s_1$ and $\rho s_1 \rho^{-1} =s_0$. 

The extended affine Hecke algebra $\eah{2}$ is generated by $T_0, T_1$ and $\rho^{\pm 1}$, subject to only the first relation in~\eqref{eq:affHeckeSnrels} and all relations in~\eqref{eq:affHeckeRrels}. 
In this case, it is easy to express the KL basis elements $b_w$ in terms of the regular basis elements $T_u$ explicitly:
\begin{equation}\label{eq:KL-regular}
b_w=\sum_{u\preceq w} q^{\ell(w)-\ell(u)}T_u,
\end{equation}
where $\preceq$ denotes the Bruhat order on $\widehat{\Sy}_2$. Note that $u \preceq w$ holds iff the rex for $u$ is an alternating binary subsequence of the rex for $w$ and it is easy to invert this change of basis:
\begin{equation}\label{eq:regular-KL}
T_w=\sum_{u\preceq w} (-q)^{\ell(w)-\ell(u)}b_u.
\end{equation}
The multiplication constants w.r.t. the KL basis are also easy to compute in this case and are all equal to 0, 1, 2 or $[2]=q+q^{-1}$. For $m,n\in \mathbb{Z}_{\geq 1}$ (assuming that the parity conditions are met in each line below), we have 
\begin{equation}\label{eq:multconst}
\begin{gathered}
b_{\lraltseq{m}{1}{1}} b_{\lraltseq{n}{1}{1}}= [2]\left(b_{\lraltseq{m+n-1}{1}{1}} + b_{\lraltseq{m+n-3}{1}{1}}+\ldots + b_{\lraltseq{\vert m-n\vert +3}{1}{1}} + b_{\lraltseq{\vert m-n\vert +1}{1}{1}}  \right);\\
b_{\lraltseq{m}{0}{1}} b_{\lraltseq{n}{1}{1}}= [2]\left(b_{\lraltseq{m+n-1}{0}{1}} + b_{\lraltseq{m+n-3}{0}{1}}+\ldots + b_{\lraltseq{\vert m-n\vert +3}{0}{1}} + b_{\lraltseq{\vert m-n\vert +1}{0}{1}}  \right);\\
b_{\lraltseq{m}{1}{1}} b_{\lraltseq{n}{1}{0}}= [2]\left(b_{\lraltseq{m+n-1}{1}{0}} + b_{\lraltseq{m+n-3}{1}{0}}+\ldots + b_{\lraltseq{\vert m-n\vert +3}{1}{0}} + b_{\lraltseq{\vert m-n\vert +1}{1}{0}}  \right);\\
b_{\lraltseq{m}{1}{0}} b_{\lraltseq{n}{0}{1}}= [2]\left(b_{\lraltseq{m+n-1}{1}{1}} + b_{\lraltseq{m+n-3}{1}{1}}+\ldots + b_{\lraltseq{\vert m-n\vert +3}{1}{1}} + b_{\lraltseq{\vert m-n\vert+1}{1}{1}}  \right);\\
b_{\lraltseq{m}{1}{1}} b_{\lraltseq{n}{0}{1}}= b_{\lraltseq{m+n}{1}{1}} + 2b_{\lraltseq{m+n-2}{1}{1}}+\ldots + 2b_{\lraltseq{\vert m-n\vert +2}{1}{1}}+ b_{\lraltseq{\vert m-n\vert}{1}{1}};\\ 
b_{\lraltseq{m}{1}{0}} b_{\lraltseq{n}{1}{1}}= b_{\lraltseq{m+n}{1}{1}} + 2b_{\lraltseq{m+n-2}{1}{1}}+\ldots + 2b_{\lraltseq{\vert m-n\vert +2}{1}{1}}+ b_{\lraltseq{\vert m-n\vert}{1}{1}};\\ 
b_{\lraltseq{m}{0}{1}} b_{\lraltseq{n}{0}{1}}= 
\begin{cases}
b_{\lraltseq{2m}{0}{1}} + 2b_{\lraltseq{2m-2}{0}{1}}+\ldots + 2b_{\lraltseq{2}{0}{1}},&\text{if}\; m=n \\
b_{\lraltseq{m+n}{0}{1}} + 2b_{\lraltseq{m+n-2}{0}{1}}+\ldots + 2b_{\lraltseq{\vert m-n\vert +2}{0}{1}}+ b_{\lraltseq{\vert m-n\vert}{0}{1}},&\text{if}\; m\ne n 
\end{cases}
;\\
b_{\lraltseq{m}{1}{1}} b_{\lraltseq{n}{0}{0}}= 
\begin{cases}
b_{\lraltseq{2m}{1}{0}} + 2b_{\lraltseq{2m-2}{1}{0}}+\ldots + 2b_{\lraltseq{2}{1}{0}},&\text{if}\; m=n \\
b_{\lraltseq{m+n}{1}{0}} + 2b_{\lraltseq{m+n-2}{1}{0}}+\ldots + 2b_{\lraltseq{\vert m-n\vert +2}{1}{0}}+ b_{\lraltseq{\vert m-n\vert}{1}{0}},&\text{if}\; m\ne n 
\end{cases};
\end{gathered}
\end{equation}
and the analogous equations with $0$ and $1$ swapped. These formulas are certainly known to experts, but we couldn't find an explicit reference for all of them in the literature (the first formula can be found in \cite[Proposition 7.7(a)]{Lusztig2003}). Let us, therefore, briefly explain how we obtained them. There is a close relation between 
$\eah{2}$ and a two-colored version of the Grothendieck ring of finite-dimensional representations (of type I) of quantum $\mathfrak{sl}_2$, 
see~\cite[Section 2.2.1]{elias2016}. In the first four cases above, when the last element of the left alternating binary sequence and 
the first element of the right alternating binary sequence are equal, this correspondence is given by the bijection 
\[ 
b_{\lraltseq{m}{i}{j}}\longleftrightarrow [2][V_{m-1}],
\]
where $[V_{m-1}]$ is the Grothendieck class of the (essentially unique) $m$-dimensional irreducible representation of quantum $\mathfrak{sl}_2$. Note that 
the Grothendieck ring over $\mathbb{Z}$ must be tensored with $\mathbb{Z}[q,q^{-1}]$ for multiplication by $[2]$ to make sense. In the first case, for example, we get 
\begin{gather*}
b_{\lraltseq{m}{1}{1}} b_{\lraltseq{n}{1}{1}} \longleftrightarrow \\
[2]^2 [V_{m-1}][V_{n-1}]= \\
[2]^2 \left([V_{m+n-2}] + [V_{m+n-4}] + 
\ldots + [V_{\vert m-n\vert +2}]+ [V_{\vert m-n\vert}]\right) \longleftrightarrow \\
[2]\left(b_{\lraltseq{m+n-1}{1}{1}} + b_{\lraltseq{m+n-3}{1}{1}}+\ldots + b_{\lraltseq{\vert m-n\vert +3}{1}{1}} + b_{\lraltseq{\vert m-n\vert +1}{1}{1}}  \right),
\end{gather*}
where we have used the well-known Clebsch-Gordan rule for the decomposition of the tensor product of two finite dimensional irreducible representations of quantum $\mathfrak{sl}_2$. It is well-known that $b_i b_{\lraltseq{m}{i}{j}}=b_{\lraltseq{m}{i}{j}} b_j = [2] b_{\lraltseq{m}{i}{j}}$, for $m\in \mathbb{Z}_{\geq 1}$ and $i,j\in \{0,1\}$ satisfying the parity condition. The remaining cases above can be reduced to the first four cases (sometimes with $0$ and $1$ swapped) by that basic multiplication rule and a simple trick, assuming (just for the sake of this trick) that we work over 
$\mathbb{Q}(q)$ for example. Suffice it to give an illustrative example:
\[
b_{\lraltseq{m}{1}{1}} b_{\lraltseq{n}{0}{1}}=\frac{1}{[2]} (b_{\lraltseq{m}{1}{1}} b_1) b_{\lraltseq{n}{0}{1}}= 
\frac{1}{[2]} b_{\lraltseq{m}{1}{1}} (b_1 b_{\lraltseq{n}{0}{1}})=
\frac{1}{[2]} b_{\lraltseq{m}{1}{1}} (b_{\lraltseq{n+1}{1}{1}}+ b_{\lraltseq{n-1}{1}{1}}),
\]
where we use the equality $b_1 b_{\lraltseq{n}{0}{1}}=b_{\lraltseq{n+1}{1}{1}}+ b_{\lraltseq{n-1}{1}{1}}$, which is the special case of the fifth equation in \eqref{eq:multconst} for $m=1$.
\smallskip

Recall the standard trace $\epsilon\colon \eah{2}\to \mathbb{Z}[q,q^{-1}]$ from \eqref{eq:standardtrace} and 
the associated $q$-sesquilinear form $(-,-)$ on $\eah{2}$ from~\eqref{eq:sesquilinform}. The formula in \eqref{eq:KL-regular} implies that 
\[
\epsilon(b_w)=q^{\ell(w)} 
\]
for any $w\in \widehat{\Sy}_n$. Note further that $\omega(b_{\lraltseq{m}{i}{j}})=b_{\lraltseq{m}{j}{i}}$, for $m\in \mathbb{Z}_{\geq 0}$ and $i,j\in \{0,1\}$. The following lemma, which is an immediate consequence of the above, is a more detailed version of \autoref{thm:asymportho} in 
this particular case and will be needed in~\autoref{sec:excatstory}.

\begin{lem}\label{lem:innerproduct}
For any $r,s\in \mathbb{Z}$ and $i,j,k,l\in \{0,1\}$, we have $(\rho^r b_{\lraltseq{m}{i}{j}}, \rho^s b_{\lraltseq{n}{k}{l}})=\delta_{r,s} (b_{\lraltseq{m}{i}{j}}, b_{\lraltseq{n}{k}{l}})$ and 
\[
(b_{\lraltseq{m}{i}{j}}, b_{\lraltseq{n}{k}{l}})=
\begin{cases}
q^{2}p(q), & \text{if}\;\, m=n\; \wedge\; i\ne k\; \wedge\; j\ne l;\\
q^{\vert m-n\vert}p'(q) & \text{else},  
\end{cases}
\]
where $p(q), p'(q)\in \mathbb{Z}_{\geq 0}[q]$ have constant terms equal to two and to one, respectively.  
\end{lem}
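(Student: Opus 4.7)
The first identity follows from the $\mathbb{Z}$-grading of $\eah{2}$ by $\rho$-degree. Since the Kazhdan--Lusztig generators $b_i$ lie in the non-extended subalgebra $\widehat{H}_2 \subset \eah{2}$, both $\omega(b_u)$ and $b_v$ do as well. Using $\omega(\rho^r) = \rho^{-r}$ and the conjugation relation $\rho b_i \rho^{-1} = b_{i+1}$, which sends $\widehat{H}_2$ to itself, I will move all powers of $\rho$ to the right and rewrite
\[
(\rho^r b_u,\, \rho^s b_v) \;=\; \epsilon\bigl(\omega(b_u)\,\rho^{s-r}\,b_v\bigr) \;=\; \epsilon\bigl(X \rho^{s-r}\bigr)
\]
for some $X \in \widehat{H}_2$. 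The defining formula \eqref{eq:standardtrace} shows that $\epsilon$ vanishes on any $T_w \rho^t$ with $t \neq 0$; hence the above vanishes unless $s = r$, in which case it reduces to $\epsilon(\omega(b_u) b_v) = (b_u, b_v)$.

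For the remainder I will compute $(b_u, b_v) = \epsilon(b_{\lraltseq{m}{j}{i}}\, b_{\lraltseq{n}{k}{l}})$ by expanding the product via \eqref{eq:multconst} (and its $0 \leftrightarrow 1$ analogues) and applying $\epsilon(b_w) = q^{\ell(w)}$ termwise. The key dichotomy is whether the ``contact indices'' $i$ (last letter of the reversed first word) and $k$ (first letter of the second) coincide. When $i = k$, every applicable formula in \eqref{eq:multconst} carries the prefactor $[2] = q + q^{-1}$, and the trace collapses to
\[
[2]\bigl(q^{m+n-1} + q^{m+n-3} + \cdots + q^{|m-n|+1}\bigr) \;=\; q^{|m-n|}(1+q^2)\bigl(1 + q^2 + \cdots + q^{m+n-|m-n|-2}\bigr),
\]
which has constant coefficient $1$, giving the ``else'' case.

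When $i \neq k$, the expansion has coefficients in $\{1, 2\}$, indexed by alternating sequences of decreasing length from $m+n$ down to $|m-n|$. If $m \neq n$ the shortest summand has length $|m-n|$ and coefficient $1$, so the trace equals $q^{|m-n|} p'(q)$ with $p'(0) = 1$. If $m = n$ there are two subcases: when $j = l$ the shortest term is $b_e = 1$ with coefficient $1$, again giving $q^0 p'(q)$ with $p'(0) = 1$; when $j \neq l$ the shortest term is the length-two element $b_{\lraltseq{2}{j}{l}}$ with coefficient $2$, producing $q^2 p(q)$ with $p(0) = 2$.

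A short parity check closes the argument: the combinations $i \neq k \wedge j = l$ and $i = k \wedge j \neq l$ both force $m$ and $n$ to have opposite parities (hence to be unequal), so the exceptional $q^2 p(q)$ behavior really appears only in the stated case $m = n \wedge i \neq k \wedge j \neq l$. There is no deep obstacle; the proof is essentially bookkeeping, matching each $(i,j,k,l)$-configuration with the appropriate line of \eqref{eq:multconst} and tracking parities to identify the shortest term of the expansion.
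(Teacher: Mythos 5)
Your proof is correct and matches the route the paper treats as routine: apply the anti-involution $\omega$ to reduce to $\epsilon(b_{\lraltseq{m}{j}{i}}b_{\lraltseq{n}{k}{l}})$, expand via the Kazhdan--Lusztig product formulas \eqref{eq:multconst}, evaluate $\epsilon(b_w)=q^{\ell(w)}$ termwise, and use a parity check to see that the exceptional $q^2p(q)$ value occurs exactly under the stated conditions. The one presentational wrinkle is that your subcase ``$m=n,\ i\neq k,\ j=l$'' is vacuous (as your own parity check then confirms), so the claim that its shortest term would be $b_e$ is not actually backed by any line of \eqref{eq:multconst} — but since the case cannot occur this does not affect the correctness of the argument.
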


\subsubsection{Two representations of $\eah{2}$}
Let $V=\mathrm{Span}\{v\}$ be the trivial one-dimensional $\eah{1}$-module, defined by 
\[
\rho(v)=v,
\]
and let  
\[
W:=V\odot V,
\]
the Zelevinsky tensor product of $V$ with itself. 
Recall from~\autoref{sec:intro} that  
\[
V\odot V=\mathrm{Ind}_{\eah{1}\otimes \eah{1}}^{\eah{2}}(V\otimes V)=
\eah{2} \otimes_{\eah{1}\otimes \eah{1}}(V\otimes V).
\]
Recall also that 
\[
\psi_{1,1}(\rho_L\otimes 1)=\rho T_1,\quad \psi_{1,1}(1\otimes \rho_R)=T_1^{-1}\rho, \quad 
\psi_{1,1}(\rho_L\otimes \rho_R)=\rho^2,
\]
where $\rho\in \eah{2}$.
Thus, for any $r,s\in \mathbb{Z}$, we have 
\[
\psi_{1,1}(\rho_L^r\otimes \rho_R^s)=(\rho T_1)^r(T_1^{-1}\rho)^s.
\]
This implies that $W$ is two-dimensional and that $\{w,w'\}$, where 
\begin{equation}\label{eq:basisW}
w:=1\otimes v \otimes v\quad\text{and}\quad w':=\rho \otimes v \otimes v,
\end{equation}
is a basis of $W$, on which the action of $\eah{2}$ is given by 
the matrices 
\begin{equation}\label{eq:example0}
[\rho]=
\begin{pmatrix}
0 & 1 \\
1 & 0
\end{pmatrix}, 
\quad 
[T_1]= 
\begin{pmatrix}
0 & 1 \\
1 & q^{-1}-q
\end{pmatrix},
\quad 
[T_0]=[\rho T_0 \rho^{-1}]= 
\begin{pmatrix}
q^{-1}-q & 1 \\
1 & 0
\end{pmatrix}.
\end{equation}
On the KL-basis, the action thus becomes 
\begin{equation}\label{eq:example1}
[b_1]=[T_1+q]=
\begin{pmatrix}
q & 1 \\
1 & q^{-1}
\end{pmatrix},
\quad 
[b_0]=[T_0+q]= 
\begin{pmatrix}
q^{-1} & 1 \\
1 & q
\end{pmatrix}.
\end{equation}
Using the rules for multiplication w.r.t. the KL-basis, we see that the action matrices satisfy:
\[
[b_{\raltseq{2r}{0}}]=r[b_1][b_0],\; [b_{\raltseq{2r+1}{1}}]=(2r+1)[b_1],\; [b_{\raltseq{2r}{1}}]=r[b_0][b_1],\;  
[b_{\raltseq{2r+1}{0}}]=(2r+1)[b_0], \; r\in \mathbb{Z}_{\geq 1}.
\]

Recall that $q$ is a formal invertible parameter and let ${-}^{\mathbb{C}(q)}:={-}\otimes_{\mathbb{Z}[q,q^{-1}]}\mathbb{C}(q)$. The following result is well-known, see e.g. \cite[Theorem 1]{LNTh}. 

\begin{prop}\label{prop:simplicityW}
The $\eahCq{2}$-module $W^{\mathbb{C}(q)}$ is simple. 
\end{prop}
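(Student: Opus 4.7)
The plan is to exploit the fact that $W^{\mathbb{C}(q)}$ has dimension two, so it fails to be simple precisely when it contains a one-dimensional submodule, i.e., a common eigenline for every element of $\eahCq{2}$. In particular, such a line would have to be invariant under $\rho$. From the explicit matrix
\[
[\rho]=\begin{pmatrix} 0 & 1 \\ 1 & 0 \end{pmatrix}
\]
in \eqref{eq:example0}, one sees that $\rho^2$ acts as the identity, and over $\mathbb{C}(q)$ the operator $[\rho]$ is diagonalizable with eigenvalues $+1$ and $-1$, the corresponding eigenlines being
\[
L_{+}=\mathbb{C}(q)\cdot(w+w') \quad\text{and}\quad L_{-}=\mathbb{C}(q)\cdot(w-w').
\]
Thus the only candidates for a proper nonzero submodule are $L_+$ and $L_-$.

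The next step is to rule out both candidates by checking that neither is stable under $T_1$. Using
\[
[T_1]=\begin{pmatrix} 0 & 1 \\ 1 & q^{-1}-q \end{pmatrix}
\]
from \eqref{eq:example0}, a direct calculation gives
\[
T_1(w+w')=w+(1+q^{-1}-q)w' \quad\text{and}\quad T_1(w-w')=-w+(1-q^{-1}+q)w'.
\]
For $L_{+}$ to be $T_1$-stable one would need $1=1+q^{-1}-q$, i.e.\ $q^2=1$ in $\mathbb{C}(q)$, which is impossible since $q$ is transcendental over $\mathbb{C}$. Similarly, the stability of $L_{-}$ under $T_1$ would force $-(1+q-q^{-1})=-1$, again giving $q^2=1$, a contradiction. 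Hence neither $\rho$-eigenline is $T_1$-invariant, so no one-dimensional $\eahCq{2}$-submodule exists and $W^{\mathbb{C}(q)}$ is simple.

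There is essentially no hard step here: the proof is a direct eigenvector computation combined with the observation that the putative common eigenvalue equations force polynomial identities in $q$ that fail in $\mathbb{C}(q)$. The only small subtlety is to make sure the argument is done after base change to $\mathbb{C}(q)$, since it is exactly the invertibility of $q-q^{-1}$ that prevents these eigenvector identities from holding; this is why simplicity is stated for $W^{\mathbb{C}(q)}$ rather than for $W$ over $\mathbb{Z}[q,q^{-1}]$. (One could alternatively argue via characters or by showing that the images of $\rho$ and $T_1$ generate the full matrix algebra $M_2(\mathbb{C}(q))$, but the eigenline argument above seems the most economical.)
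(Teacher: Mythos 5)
Your computation is correct, and the argument is complete: any one-dimensional $\eahCq{2}$-submodule would have to be $\rho$-stable, $[\rho]$ has distinct eigenvalues $\pm 1$ over $\mathbb{C}(q)$ so the only candidate lines are $L_\pm$, and you correctly check (using $[T_1]$ from \eqref{eq:example0}) that $T_1$-stability of either line forces $q^2=1$, impossible in $\mathbb{C}(q)$. Since $T_0=\rho T_1\rho^{-1}$ and $\rho^{-1}=\rho$ on $W$, stability under $\rho$ and $T_1$ is all that needs to be ruled out, so no tacit gap remains.

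Note that the paper does not actually prove this proposition: it simply cites \cite[Theorem 1]{LNTh}, which gives a general irreducibility criterion for Zelevinsky tensor products of simple modules in terms of their associated multisegments. Your argument is therefore a genuinely different (and in this two-dimensional case, considerably more economical) route: a direct eigenline computation from the explicit matrices in \eqref{eq:example0}, rather than an appeal to the Leclerc--Nazarov--Thibon machinery. The trade-off is the obvious one — the cited criterion handles $V_1\odot V_2$ for arbitrary simples $V_1,V_2$ in a uniform way, while your calculation is tailored to this specific $2\times 2$ example; but as a self-contained verification for the example at hand, your approach is preferable and the remark at the end (that over $\mathbb{Z}[q,q^{-1}]$ the invertibility of $q-q^{-1}$ is exactly what's missing) is a correct and useful observation.
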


There is also an infinite dimensional $\eah{2}$-module $U$ which is closely related to $W$ and whose categorification 
will be important below. By definition, 
\[
U:=\eah{2}/I,
\]
the quotient of the regular left $\eah{2}$-module by the left ideal $I$ generated by 
\begin{equation}\label{eq:I}
\rho^2- 1\quad \text{and}\quad b_1\rho - q^{-1} b_1. 
\end{equation}

\begin{rem}
We stress that $I$ is only a left ideal and not a two-sided ideal. For $\rho^2 -1$ this distinction is irrelevant, because $\rho^2$ and $1$ both belong to the center of $\eah{2}$, but for $b_1\rho - q^{-1}b_1$ it matters. For example, the element $(1-q^{-2})b_1$ belongs to the two-sided ideal generated by the elements in \eqref{eq:I} because 
\[
(1-q^{-2})b_1 = (b_1\rho -q^{-1} b_1)(\rho+ q^{-1}) - b_1(\rho^2 - 1),  
\]
but it does not belong to $I$.
\end{rem}

\begin{prop}\label{prop:basisofU}
Let $\pi_U\colon \eah{2}\to U$ be the natural projection and define $u_k:=\pi_U(b_{\raltseq{k}{1}})$ and $u'_k:=\pi_U(\rho b_{\raltseq{k}{1}})$, for 
all $k\in \mathbb{Z}_{\geq 0}$. Then $\{u_k, u'_k\mid k\in \mathbb{Z}_{\geq 0}\}$ is a basis of $U$ and the action of $\eah{2}$ on this basis is determined by 
\begin{equation}\label{eq:basisofU}
\begin{gathered}
\rho u_{k-1} = u'_{k-1}, \quad \rho u'_{k-1} = u_{k-1} \\
b_1 u_0= u_1,\quad b_1 u_{2k} = u_{2k+1} + u_{2k-1}, \quad b_1 u_{2k-1} = [2] u_{2k-1}, \\
b_0 u_0 = q^{-1} u'_1,\quad b_0 u_1 = u_2, \quad b_0 u_{2k} = [2] u_{2k},\quad 
b_0 u_{2k+1} = u_{2k+2} + u_{2k}, \\
b_1 u'_0 = q^{-1} u_1, \quad b_1 u'_1 = u'_2,\quad b_1 u'_{2k} = [2] u'_{2k}, \quad b_1 u'_{2k+1} = u'_{2k+2} + u'_{2k},\\
b_0 u'_0 = u'_1, \quad b_0 u'_{2k} = u'_{2k+1} + u'_{2k-1},\quad 
b_0 u'_{2k-1} = [2] u'_{2k-1},
\end{gathered}
\end{equation}
for $k\in \mathbb{Z}_{\geq 1}$.
\end{prop}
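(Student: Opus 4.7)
The plan is to prove this by a combination of direct calculation in the quotient and the construction of an explicit realization that forces linear independence.

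First I would verify the action formulas \eqref{eq:basisofU} directly. By definition, $u_k = \pi_U(b_{\raltseq{k}{1}})$, $u'_k = \pi_U(\rho\, b_{\raltseq{k}{1}})$ and $u_0 = \pi_U(1)$. Each formula reduces to computing an appropriate product in $\eah{2}$ and then simplifying modulo the defining relations $\rho^2 \equiv 1$ and $b_1\rho \equiv q^{-1}b_1$ of $I$. For example, since $\rho b_1 \rho^{-1} = b_0$ in $\eah{2}$ and $\rho^{-1} \equiv \rho$ in $U$, one obtains $b_0\cdot 1 \equiv \rho (b_1\rho) \equiv q^{-1}\rho b_1 = q^{-1}u'_1$, confirming $b_0 u_0 = q^{-1}u'_1$. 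The identities $\rho u_k = u'_k$ and $\rho u'_k = u_k$ follow at once from $\rho^2 \equiv 1$, and the remaining formulas follow from the Kazhdan--Lusztig multiplication rules \eqref{eq:multconst} applied to $b_i\cdot b_{\raltseq{k}{1}}$, combined with the reductions $b_0 \equiv q^{-1}\rho b_1$ on $u_0$ and the mirror identity $b_1 \equiv q^{-1}\rho b_0$ on $u'_0$ needed to handle the boundary cases.

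Next, for the spanning property, note that $U$ is cyclic over $\eah{2}$, generated by $u_0 = \pi_U(1)$. The formulas from the previous step show that the $\mathbb{Z}[q,q^{-1}]$-span $S$ of $\{u_k, u'_k : k\geq 0\}$ is stable under the action of the generators $b_0, b_1, \rho^{\pm 1}$, so $S$ is an $\eah{2}$-submodule containing $u_0$, and therefore $S = U$.

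For linear independence, I would construct a candidate module $U'$ explicitly. Let $U'$ be the free $\mathbb{Z}[q,q^{-1}]$-module on formal symbols $\tilde u_k, \tilde u'_k$ ($k\geq 0$), and define $\mathbb{Z}[q,q^{-1}]$-linear operators $\rho, b_0, b_1$ on $U'$ by the formulas \eqref{eq:basisofU}, so that $\{\tilde u_k, \tilde u'_k\}$ is a basis of $U'$ by construction. The key verification is that these operators satisfy the Kazhdan--Lusztig presentation \eqref{eq:affHeckebrelsnistwo} of $\eah{2}$, namely $b_i^2 = [2]b_i$, $\rho \rho^{-1} = \rho^{-1}\rho = 1$, and $\rho b_i \rho^{-1} = b_{i+1}$. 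Once this is done, $U'$ becomes a left $\eah{2}$-module, and a direct check shows $(\rho^2 - 1)\tilde u_0 = 0$ and $(b_1\rho - q^{-1}b_1)\tilde u_0 = b_1\tilde u'_0 - q^{-1}b_1\tilde u_0 = q^{-1}\tilde u_1 - q^{-1}\tilde u_1 = 0$. Hence $1\mapsto \tilde u_0$ extends to a surjective $\eah{2}$-module map $\Phi\colon U \twoheadrightarrow U'$ which, by the first step, sends $u_k \mapsto \tilde u_k$ and $u'_k \mapsto \tilde u'_k$; since these images form a basis of $U'$, the set $\{u_k, u'_k\}$ is linearly independent in $U$.

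The main obstacle will be the verification of the relation $b_i^2 = [2]b_i$ on $U'$: the prescribed action of $b_0$ and $b_1$ splits into several cases according to the parity of the index and the distinction between the boundary vectors $u_0, u'_0, u_1, u'_1$ and the generic vectors $u_k, u'_k$ for $k\geq 2$, so each case requires its own check. By contrast, the braid-type relation $\rho b_i \rho^{-1} = b_{i+1}$ is essentially built into the formulas, as they are manifestly invariant under the simultaneous swaps $b_0 \leftrightarrow b_1$ and $u_k \leftrightarrow u'_k$.
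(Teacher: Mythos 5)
Your proposal is correct, and it takes a genuinely different route to linear independence than the paper does. The paper argues intrinsically: a linear dependence among $\{u_k,u'_k\}$ lifts to an element of $I$, and the authors then analyze the shape of KL expansions of elements of $I$ (specifically, that a nonzero element of $I$ must involve a term $\rho b_{\raltseq{k}{0}}$ with $k>0$ or $\rho^2 b_{\raltseq{k}{i}}$, which a combination of the $b_{\raltseq{m}{1}}$ and $\rho b_{\raltseq{n}{1}}$ cannot produce) to conclude the dependence is trivial. You instead realize the candidate action extrinsically: you build the free $\mathbb{Z}[q,q^{-1}]$-module $U'$ on symbols $\tilde u_k,\tilde u'_k$, verify that the prescribed operators satisfy the KL presentation of $\eah{2}$, check that the two generators of $I$ annihilate $\tilde u_0$, and deduce a module map $\Phi\colon U\to U'$ sending $u_k\mapsto\tilde u_k$, $u'_k\mapsto\tilde u'_k$; independence of the targets forces independence of the sources. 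Your route trades a structural analysis of the ideal $I$ for a finite case-by-case verification of the quadratic relations $b_i^2=[2]b_i$ (the relations $\rho^2=1$ and $\rho b_i\rho^{-1}=b_{1-i}$ being trivial from the $u\leftrightarrow u'$, $0\leftrightarrow 1$ symmetry you note). Both are standard moves; yours is arguably the more transparent of the two since it avoids the somewhat terse "no term of such-and-such form can appear" argument in the paper, at the cost of the boundary-case bookkeeping. One small point worth making explicit when you write this up: the intermediate claim that $\Phi(u_k)=\tilde u_k$ requires a short induction showing $b_{\raltseq{k}{1}}\tilde u_0=\tilde u_k$ in $U'$ via the KL recursions (e.g., for $k$ even, $b_{\raltseq{k}{1}}=b_0b_{\raltseq{k-1}{1}}-b_{\raltseq{k-2}{1}}$, with the $k=2$ boundary case $b_0b_1=b_{01}$ having no correction term); "by the first step" is slightly loose since that step concerned $U$, not $U'$, even though the two verifications run in parallel.
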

\begin{proof}
To show that $\{u_k, u'_k\mid k\in \mathbb{Z}_{\geq 0}\}$ spans $U$ is easy. The KL-basis of $\eah{2}/\langle \rho^2 -1 \rangle$, 
where $\langle \rho^2 - 1\rangle$ is the (left) ideal generated by $\rho^2-1$, is given by 
$\{\rho^m b_{\raltseq{k}{1}}, \rho^m b_{\raltseq{k}{0}}\mid m\in \{0,1\}, k\in \mathbb{Z}_{\geq 0}\}$. The images of those elements under $\pi_U$, therefore,  
span $U$. By induction, it's easy to show that $b_{\raltseq{k}{1}}\rho - q^{-1} b_{\raltseq{k}{1}}\in I$ for all $k\in \mathbb{Z}_{>0}$. For $k=2$, we have 
\[
b_{01}\rho -q^{-1} b_{01}=b_0(b_1\rho - q^{-1} b_1)\in I
\]
For $k>2$, induction yields  
\[
b_{\raltseq{k}{1}}\rho - q^{-1} b_{\raltseq{k}{1}} = 
\begin{cases}
b_0 (b_{\raltseq{k-1}{1}}\rho - q^{-1} b_{\raltseq{k-1}{1}}) - (b_{\raltseq{k-2}{1}}\rho - q^{-1} b_{\raltseq{k-2}{1}})\in I, & \text{if $k$ is even};\\
b_1 (b_{\raltseq{k-1}{1}}\rho - q^{-1} b_{\raltseq{k-1}{1}}) - (b_{\raltseq{k-2}{1}}\rho - q^{-1} b_{\raltseq{k-2}{1}})\in I, & \text{if $k$ is odd}.
\end{cases}
\]
This shows that $\rho b_{\raltseq{k}{0}}= b_{\raltseq{k}{1}}\rho =q^{-1} b_{\raltseq{k}{1}}$ in $U$, for all $k\in \mathbb{Z}_{\geq 0}$. This in turn implies that $b_{\raltseq{k}{0}}=\rho^2 b_{\raltseq{k}{0}}=q^{-1} \rho b_{\raltseq{k}{1}}$, for all $k\in \mathbb{Z}_{\geq 0}$, which completes 
the proof that $\{u_k, u'_k\mid k\in \mathbb{Z}_{\geq 0}\}$ spans $U$.

It remains to prove that the elements $u_k, u'_k$, for $k\in \mathbb{Z}_{\geq 0}$, are all linearly independent in $U$. Suppose that 
\[
\lambda_{m_1} u_{m_1} + \ldots + \lambda_{m_r} u_{m_r} + \mu_{n_1} u'_{n_1} + \ldots + \mu_{n_s} u'_{n_s}=0\;\, \text{in $U$},
\]
for some $r,s\in \mathbb{Z}_{\geq 0}$ and some $\mu_{m_1},\ldots, 
\mu_{m_r},\nu_{n_1}\ldots, \nu_{n_s}\in \Bbbk$. Then 
\begin{equation}\label{eq:linindep}
\lambda_{m_1} b_{\raltseq{m_1}{1}} + \ldots + \lambda_{m_r}  b_{\raltseq{m_r}{1}} + \mu_{n_1} \rho  b_{\raltseq{n_1}{1}} + \ldots + \mu_{n_s} 
\rho  b_{\raltseq{n_s}{1}}\in I.
\end{equation}
Since $b_1\rho = \rho b_0$, any non-zero element in $I$ will be a linear 
combination of KL basis elements containing at least a non-zero multiple of 
$\rho b_{\raltseq{k}{0}}$, for some $k\in \mathbb{Z}_{> 0}$, or 
a non-zero multiple of $\rho^2 b_{\raltseq{k}{i}}$,  
for some $k\in \mathbb{Z}_{\geq 0}$ and $i\in \{0,1\}$. Either way, linear independence of the KL basis elements in $\eah{2}$ implies that \eqref{eq:linindep} can only hold if $\lambda_{m_1} b_{\raltseq{m_1}{1}} + \ldots + \lambda_{m_r}  b_{\raltseq{m_r}{1}} + \mu_{n_1} \rho  b_{\raltseq{n_1}{1}} + \ldots + \mu_{n_s} 
\rho  b_{\raltseq{n_s}{1}}=0$ and, therefore, 
that $\lambda_{m_i}=\mu_{n_j}=0$ for all $i=1,\ldots, r$ and $j=1,\ldots, s$.
This completes the proof that the elements $u_k, u'_k$, for $k\in \mathbb{Z}_{\geq 0}$, are all linearly independent in $U$.

Finally, it is easy to determine the action of $\rho, b_0$ and $b_1$ on this basis using the KL multiplication rules, because $\pi_U$ is a morphism of 
$\eah{2}$-modules. 
\end{proof}

Let $J\subset U$ be the $\eah{2}$-submodule generated by $T_1u_0-\rho u_0$ and let $\overline{u}\in U/J$ be the image of $u\in U$ 
under the natural projection $U\to U/J$. We have 
\begin{gather*}
\overline{T_1u_0}=\overline{\rho u_0}=\overline{u'_0}, \\
\overline{T_1 u'_0}= \overline{T_1^2 u_0}=\overline{(q^{-1}-q)T_1u_0 + u_0}=\overline{(q^{-1}-q)u'_0+ u_0}.
\end{gather*}
\begin{prop}\label{prop:projection} There is a surjective morphism of $\eah{2}$-modules 
\[
\pi^U_W\colon U\to W
\]
and $\ker(\pi^U_W)=J$.
\end{prop}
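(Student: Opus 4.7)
The plan is to construct $\pi^U_W$ as the unique $\eah{2}$-linear map sending $u_0 = \pi_U(1)$ to $w \in W$, then show that this map is well-defined, surjective, and has kernel exactly $J$.

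First, to define $\pi^U_W$, it suffices by the universal property of the quotient $U = \eah{2}/I$ (as a left module) to verify that both generators of the left ideal $I$ annihilate $w \in W$. From \eqref{eq:example0}, $\rho^2$ acts as the identity on $W$, so $(\rho^2 - 1)w = 0$. For the second generator, using \eqref{eq:example1} one computes $\rho w = w'$, $b_1 w = qw + w'$, and $b_1 w' = w + q^{-1} w'$, which gives
\begin{equation*}
(b_1\rho - q^{-1}b_1)w = b_1 w' - q^{-1}b_1 w = (w + q^{-1}w') - q^{-1}(qw + w') = 0.
\end{equation*}
Since $W$ is a left $\eah{2}$-module and both generators of $I$ annihilate $w$, the whole left ideal $I$ annihilates $w$. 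Hence $u_0 \mapsto w$ extends uniquely to an $\eah{2}$-linear map $\pi^U_W\colon U \to W$. Surjectivity is immediate because $\pi^U_W(u'_0) = \pi^U_W(\rho u_0) = \rho w = w'$, and $\{w, w'\}$ is a basis of $W$.

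Next, for the kernel, the inclusion $J \subseteq \ker(\pi^U_W)$ is a direct computation: using $T_1 w = w'$ and $\rho w = w'$,
\begin{equation*}
\pi^U_W(T_1 u_0 - \rho u_0) = T_1 w - \rho w = w' - w' = 0.
\end{equation*}
For the reverse inclusion, the strategy is to show that $U/J$ is spanned by $\overline{u_0}$ and $\overline{u'_0}$, hence has dimension at most two. Combined with the induced surjection $U/J \twoheadrightarrow W$ sending these elements to the basis $\{w, w'\}$ of $W$, this forces $U/J \cong W$ and therefore $J = \ker(\pi^U_W)$.

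The only substantive step is thus showing $U/J = \mathrm{Span}\{\overline{u_0}, \overline{u'_0}\}$, which I would prove by a simultaneous induction on $k$ using the basis $\{u_k, u'_k\}$ from \autoref{prop:basisofU}. The base case $k = 1$ is explicit: the generator $T_1 u_0 - \rho u_0 = u_1 - q u_0 - u'_0$ of $J$ yields $\overline{u_1} = q\overline{u_0} + \overline{u'_0}$, and applying $\rho$ to this relation (which stays in $J$, since $J$ is an $\eah{2}$-submodule) together with $\rho u_1 = u'_1$, $\rho u_0 = u'_0$, $\rho u'_0 = u_0$ gives $\overline{u'_1} = \overline{u_0} + q\overline{u'_0}$. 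The inductive step uses the three-term recursions from \eqref{eq:basisofU}, for instance $u_{2k+1} = b_1 u_{2k} - u_{2k-1}$ and $u_{2k+2} = b_0 u_{2k+1} - u_{2k}$, together with their primed analogues, to express $\overline{u_{k+1}}$ and $\overline{u'_{k+1}}$ as $\eah{2}$-combinations of elements already known to lie in $\mathrm{Span}\{\overline{u_0}, \overline{u'_0}\}$; since that span is closed under the $\eah{2}$-action (because $U/J$ is a module and the span surjects onto the two-dimensional $W$), the inductive claim propagates.

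The main obstacle is mostly bookkeeping: keeping the $u_k$ and $u'_k$ families synchronized through the recursions, and verifying the base cases with the correct scalars.
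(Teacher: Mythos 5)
Your proof takes essentially the same route as the paper: define $\pi^U_W$ by $u_0\mapsto w$ (the paper checks the same relations, just phrased as verifying the morphism is well-defined rather than invoking the universal property explicitly), observe surjectivity from $\pi^U_W(u_0')=w'$, check $J\subseteq\ker$ directly, and then show $\dim_{\Bbbk}(U/J)\le 2$ by induction on $k$ using the three-term recursions from \autoref{prop:basisofU}.

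One small flaw in the write-up: you justify the closure of $\mathrm{Span}\{\overline{u_0},\overline{u'_0}\}$ under the $\eah{2}$-action by saying it holds \emph{because} $U/J$ is a module and the span surjects onto $W$. That reasoning is not valid — a subspace surjecting onto a module quotient via an equivariant map need not itself be a submodule. The closure is a genuine thing to check, and in fact your own base-case computations supply it: $\rho\overline{u_0}=\overline{u'_0}$, $\rho\overline{u'_0}=\overline{u_0}$, $\overline{b_1 u_0}=\overline{u_1}=q\overline{u_0}+\overline{u'_0}$, $\overline{b_0 u'_0}=\overline{u'_1}=\overline{u_0}+q\overline{u'_0}$, and then $b_1 u'_0=q^{-1}u_1$ and $b_0 u_0=q^{-1}u'_1$ from \eqref{eq:basisofU} give the remaining cases. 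With that substituted for the parenthetical, the induction goes through exactly as in the paper.
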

\begin{proof}
Define $\pi^U_W\colon U\to W$ by 
\[
\pi^U_W(u_0):=w,
\]
extending it to the whole of $U$ by the requirement that it be a morphism of $\eah{2}$-modules.  
Note that the matrices in \eqref{eq:example1} imply that 
\begin{gather*}
\pi^U_W(u'_0)=\pi^U_W(\rho u_0)=\rho \pi^U_W(u_0)=\rho w=w'; \\
\pi^U_W(b_1\rho u_0)=b_1\pi^U_W(u'_0)=b_1 w'=w+q^{-1}w'=q^{-1}b_1w=q^{-1}\pi^U_W(b_1 u_0),
\end{gather*}
so $\pi^U_W$ is well-defined. 

The fact that $\pi^U_W(u_0)=w$ and $\pi^U_W(u'_0)=w'$ proves that $\pi^U_W$ is surjective. 

It remains to show that $\ker(\pi^U_W)=J$. The matrices in 
\eqref{eq:example1} show that $\rho T_1 w= w$, which implies that  
\[
\pi^U_W(\rho T_1 u_0)=\rho T_1 \pi^U_W(u_0)=\rho T_1 w= w= \pi^U_W(u_0),  
\]
so $J\subseteq \ker(\pi^U_W)$. To prove that this inclusion is an equality we will show that  
$\dim_{\Bbbk}(U/J)=2$, which amounts to showing that $\overline{u_k}, \overline{u'_k}\in 
\mathrm{span}\{\overline{u_0}, \overline{u'_0}\}$ for all $k\in \mathbb{Z}_{\geq 0}$. We will prove this by induction. 
Recall that $b_i=T_i+q$, for $i\in \{0,1\}$. Then, for the base step, we have 
\begin{gather*}
\overline{u_1} = \overline{b_1 u_0}=\overline{T_1 u_0 + q u_0}=\overline{\rho u_0 + q u_0}= 
\overline{u'_0+qu_0}, \\
\overline{u'_1}= \overline{\rho u_1}=\overline{\rho u'_0+q \rho u_0}=\overline{u_0+qu'_0},\\
\overline{u_2}=\overline{b_0 u_1} = \overline{b_0(u'_0+qu_0)} = 2 \overline{u'_1}= 2(\overline{u_0+qu'_0}),\\  
\overline{u'_2}= \overline{\rho u_2}= 2\overline{u_1}= 2(\overline{u'_0+qu_0}).   
\end{gather*}
For the inductive step, let $k>2$ and assume that 
that $\overline{u_m}, \overline{u'_m}\in \mathrm{span}\{\overline{u_0}, \overline{u'_0}\}$ for all $0\leq m< k$. If $k>2$ is odd, then 
\[
\overline{u_k}=\overline{b_1 u_{k-1}-u_{k-2}}.
\]
By induction, the elements $\overline{u_{k-1}}, \overline{u_{k-2}}$ belong to $\mathrm{span}\{\overline{u_0}, \overline{u'_0}\}$. Since 
$\overline{b_1 u_0}=\overline{u_1}=\overline{u'_0+qu_0}$ and $\overline{b_1 u'_0}=q^{-1} \overline{u_1}=\overline{q^{-1}u'_0+ u_0}$, we conclude 
that $\overline{u_k}\in \mathrm{span}\{\overline{u_0}, \overline{u'_0}\}$. If $k>2$ is even, then 
\[
\overline{u_k}=\overline{b_0 u_{k-1} - u_{k-2}}.
\]
By induction again, the elements $\overline{u_{k-1}}, \overline{u_{k-2}}$ belong to $\mathrm{span}\{\overline{u_0}, \overline{u'_0}\}$. Since 
$\overline{b_0 u_0}=\overline{q^{-1} u'_1}=\overline{q^{-1} u_0+u'_0}$ and $\overline{b_0 u'_0}=\overline{u'_1}=\overline{u_0+qu'_0}$, we conclude 
that $\overline{u_k}\in \mathrm{span}\{\overline{u_0}, \overline{u'_0}\}$. This completes the proof that $u_k$ belongs to $\mathrm{span}\{\overline{u_0}, \overline{u'_0}\}$ for all $k\in \mathbb{Z}_{\geq 0}$. Using similar arguments, one can show that the same also holds for $u'_k$, for all $k\in \mathbb{Z}_{\geq 0}$, which concludes the proof. 
\end{proof}

Just to fix notation, let $\pi_W\colon \eah{2}\to W$ be the projection of $\eah{2}$-modules defined by $\pi_W(x):=xw$, for any $x\in \eah{2}$. Then there is a commutative triangle 
\begin{equation}\label{eq:projectiontriangle}
\begin{tikzcd}
\eah{2} \arrow{r}{\pi_U}  \arrow{rd}{\pi_W}  
  & U \arrow{d}{\pi^U_W} \\
    & W
\end{tikzcd}
\end{equation}

\begin{rem} The infinite dimensional representation $U$ of $\eah{2}$ is the decategorification of a wide finitary birepresentation $\mathbf{U}$ of $\Sext_2$, which appears naturally as the wide finitary cover of the induced birepresentation $\mathbf{W}$ in \autoref{sec:excatstory}. 
However, we do not know if $U$ also appears naturally in any approach to the representation theory of $\eah{2}$ that does not use categorification. Note that this contrasts with the finitary cover $\widehat{\mathbf{M}}_{r,s}$ ($r,s\in \mathbb{Z}$) of the evaluation birepresentation $\mathbf{M}^{\mathcal{E}v_{r,s}}$ of $\Sext_n$ in \cite[Corollary 6.5]{mmv-evalfunctor}, 
which decategorifies to a Graham-Lehrer cell module of $\eah{n}$, see \cite[Remark  6.6]{mmv-evalfunctor}.
\end{rem}


\subsection{Completions of linear additive categories}\label{sec:completions}

In this subsection, we lay the groundwork for studying the application of our embedding to the induction of birepresentations. Roughly speaking, birepresentations over an additive bicategory correspond to algebras in some completion of the latter and one can define induction from one bicategory to another by pushing forward the algebras through an appropriate embedding. In our case, the situation is rather delicate as we move between additive and triangulated bicategories, so in this section we carefully study the completions in which our algebras live, both in the additive and triangulated world.
Since this section is more general, we will work over an arbitrary field and with bicategories rather than monoidal categories.

\subsubsection{Coproduct completions of additive $\Bbbk$-linear categories} Let $\cC$ be an additive $\Bbbk$-linear category. 
Recall that the \emph{additive closure} $\add(\mathcal{A})$ of a full subcategory $\mathcal{A}$ of $\cC$ is the closure under direct sums and direct summands in $\cC$.

We note that $\Vect_{\Bbbk}$ is complete and cocomplete with respect to small weighted (and hence, in particular, conical) limits and colimits. This follows from \cite[Corollary 7.6.4]{Ri} using \cite[Example 3.7.5]{Ri} and the fact that $\Vect_{\Bbbk}$ is enriched over itself.

This implies that the category of $\Bbbk$-linear functors $\cpl\cC =\F un_\Bbbk(\cC^{op},\Vect_{\Bbbk})$,  is also (co)complete under weighted (co)limits by \cite[Section 3.3]{Ke} with (co)limits being computed object-wise. As usual, we identify an object $X$ in $\cC$ with their representables $X^\vee=\Hom_{\cC}( -, X)$ under the Yoneda embedding. We denote by $\cC^\vee$ the image of $\cC$ under this embedding (which is, of course, equivalent to $\cC$).

Note that we can realise a (co)product indexed by a set $J$ as the conical (co)limit over the diagram $J \to \cC\colon j \mapsto X_j$. This is a special case of the weighted (co)limit of the $\Vect_{\Bbbk}$-enriched functor $\tilde J \to \cC$ where $\tilde J$ is the free $\Vect_{\Bbbk}$-enriched category on $J$ and the weight $\tilde J\to \Vect_{\Bbbk}$ is just the constant functor $j\mapsto \Bbbk$. For a family of objects $(X_j)_{j\in J}$ in $\cC$, we slightly abuse notation by writing $\coprod_{j\in J}X_j$ for said colimit in $\cpl{\cC}$ rather than the formally correct $\coprod_{j\in J}X_j^\vee$ and likewise for products.

 Since $\cC$ and hence $\cpl \cC$ is additive, in particular has a zero object, there is always a canonical monomorphism 
 $$\nu_{(X_j)_{j\in J}} \colon \coprod_{j\in J} X_j \to \prod_{j\in J } X_j.$$ Explicitly, denoting by $\pi_i \colon \prod_{j\in J } X_j\to X_i$ and $\iota_i\colon X_i \to \coprod_{j\in J } X_j$ the canonical projections and injections, respectively, $\nu$ is defined by the condition that $$\pi_i\nu\iota_k = \begin{cases} \id_{X_i} & \text{if } i=k,\\ 0&\text{otherwise.} \end{cases}$$

We define $\copr{\cC}$ as the full subcategory of $\cpl{\cC}$ where we close $\cC^\vee$ under countable coproducts. 
We then denote by $\cop{\cC}=(\copr{\cC})_e$ its Karoubi envelope.
 Note that both are again additive $\Bbbk$-linear categories.  Moreover, $\cop{\cC}$ is closed under countable coproducts. We observe that, by \cite[Theorem 4.1]{Bre}, $\cop{\cC}$ and 
 $\copr{\cC}$ coincide provided $\cC$ is Krull-Schmidt. 

A $\Bbbk$-linear functor $\rF\colon \cC\to \cD$ between additive $\Bbbk$-linear categories induces $\Bbbk$-linear functors, also denoted by $\rF$, from $\copr{\cC} \to \copr{\cD}$, sending an object $\coprod_{j\in J}X_j$ to $\coprod_{j\in J}\rF(X_j)$, and hence from $\cop{\cC} \to \cop{\cD}$.

The next lemma will be crucial for computing morphism spaces in explicit examples.

\begin{lem}\label{finprod}
Assume $Y=\coprod_{j\in J} Y_j \in \cpl{\cC}$, where each $Y_j\in \cC$. Let $X\in \cC$ and assume that $\Hom_\cC(X,Y_j)= 0$ for all but finitely many $j\in J$. Then $\Hom_{\cpl{\cC}}(X, Y) = \coprod_{j\in J}\Hom_\cC(X,Y_j)$.
\end{lem}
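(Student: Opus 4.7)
The plan is to reduce the statement to a direct application of the Yoneda lemma combined with the pointwise computation of colimits in a presheaf category. First I would identify $X$ with its image $X^\vee = \Hom_{\cC}(-,X)$ under the Yoneda embedding, so that $\Hom_{\cpl{\cC}}(X,Y)$ becomes $\Hom_{\cpl{\cC}}(X^\vee, Y)$, and recall that $\cpl{\cC} = \F un_{\Bbbk}(\cC^{op}, \Vect_{\Bbbk})$ is cocomplete with colimits computed objectwise, since $\Vect_{\Bbbk}$ is cocomplete.

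Next I would unfold the coproduct in $\cpl{\cC}$: the object $Y = \coprod_{j\in J}Y_j$ is the coproduct of the representables $Y_j^\vee$ in the functor category, so its value at $X$ is the pointwise coproduct of vector spaces, namely
\[
Y(X) \;=\; \coprod_{j\in J} Y_j^\vee(X) \;=\; \coprod_{j\in J}\Hom_{\cC}(X,Y_j).
\]
Then I would apply the Yoneda lemma, which gives $\Hom_{\cpl{\cC}}(X^\vee, Y) \cong Y(X)$, and chain these identifications to obtain
\[
\Hom_{\cpl{\cC}}(X,Y) \;\cong\; Y(X) \;=\; \coprod_{j\in J}\Hom_{\cC}(X,Y_j),
\]
which is the required equality. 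To make this fully rigorous I would also verify that the canonical map from the right-hand coproduct to the left-hand Hom space, induced by the coprojections $\iota_j \colon Y_j \hookrightarrow Y$, coincides with the Yoneda-induced isomorphism, so that the description is natural and compatible with the coprojections.

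The finiteness hypothesis on $\{j : \Hom_{\cC}(X,Y_j) \ne 0\}$ does not actually play a role in establishing the isomorphism itself via Yoneda, but it ensures the coproduct on the right-hand side is a finite direct sum of vector spaces. This is presumably what is useful in the applications to come, because under finiteness this direct sum agrees with the product $\prod_{j\in J}\Hom_{\cC}(X,Y_j)$ through the canonical map $\nu$ discussed above the lemma.

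The main potential obstacle is a purely bookkeeping one, namely making sure that the coproduct taken in $\cpl{\cC}$ really is the categorical coproduct of the $Y_j^\vee$ and not merely a coproduct of the original $Y_j$ taken in some other ambient category (for instance $\copr{\cC}$ or $\cop{\cC}$); since $\copr{\cC}$ is by definition the full subcategory of $\cpl{\cC}$ obtained by closing $\cC^\vee$ under countable coproducts, the coproducts coincide and no subtlety arises. Apart from this, the proof is a straightforward unpacking of definitions.
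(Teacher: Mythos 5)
Your proof is correct, and it takes a genuinely different route from the paper's. The paper's proof does not apply Yoneda to the whole coproduct; instead it considers the canonical map $\kappa\colon \coprod_{j}\Hom_{\cpl{\cC}}(X,Y_j) \to \Hom_{\cpl{\cC}}(X, Y)$ coming from the coprojections, postcomposes with the monomorphism $\nu\circ(-)$ into $\Hom_{\cpl{\cC}}\bigl(X,\prod_j Y_j\bigr)\cong\prod_j\Hom_{\cpl{\cC}}(X,Y_j)$, and observes that the composite is the canonical inclusion of the direct sum of the $\Hom$-spaces into their product; the finiteness hypothesis is what makes that inclusion an isomorphism, which forces $\kappa$ to be one as well, and Yoneda is then used only componentwise to identify each $\Hom_{\cpl{\cC}}(X,Y_j)$ with $\Hom_\cC(X,Y_j)$. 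You instead apply the enriched Yoneda lemma directly to $Y=\coprod_j Y_j^\vee$, combined with the objectwise computation of colimits in the enriched presheaf category. This is shorter and, as you correctly observe, yields the isomorphism unconditionally: the finiteness hypothesis plays no role in your argument except to guarantee that the right-hand side is a finite direct sum, whereas the paper's argument genuinely consumes it in the step passing through the product. Both proofs are valid; yours isolates more cleanly what the hypothesis is really for, namely the later applications that rewrite the coproduct as a finite direct sum.
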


\begin{proof}
Consider the natural monomorphism $$\coprod_{j\in J}\Hom_{\cpl{\cC}}(X,Y_j) \xrightarrow{\kappa} \Hom_{\cpl{\cC}}(X, \coprod_{j\in J} Y_j )$$ and compose this with the monomorphism induced by $\nu = \nu_{(Y_j)_{j\in J}},$ i.e. 
$$\coprod_{j\in J}\Hom_{\cpl{\cC}}(X,Y_j) \xrightarrow{\kappa}  \Hom_{\cpl{\cC}}(X, \coprod_{j\in J} Y_j ) \xrightarrow{\nu\circ-} \Hom_{\cpl{\cC}}(X, \prod_{j\in J} Y_j ) \cong \prod_{j\in J}\Hom_{\cpl{\cC}}(X,  Y_j ) $$
The coproduct in the domain and the product in the codomain are both finite and hence isomorphic, so this monomorphism is an isomorphism. In particular, $\kappa$ is an isomorphism. The lemma now follows from the fact that $\Hom_{\cpl{\cC}}(X,  Y_j ) = \Hom_\cC(X,Y_j)$ by the Yoneda Lemma.
\end{proof}

Given that the (co)product of the morphism spaces in \autoref{finprod} is really finite, we write it as 
\[
\bigoplus_{j\in J}\Hom_{\cpl{\cC}}(X,Y_j).
\]

\subsubsection{Coproduct completions and homotopy categories}

For a $\Bbbk$-linear additive category $\cC$, we can consider the category $C^b(\cC)$ of bounded complexes over $\cC$ and its completion $\copr{C^b(\cC)}$ under countable coproducts, and the Karoubi envelope $\cop{C^b(\cC)}$ of the latter. Similarly, we can consider $C^b(\copr{\cC})$ and $C^b(\cop{\cC})$.

For reference, we record the following lemma, for which we could not find a proof in the literature.

\begin{lem}\label{chidcomp}
For an idempotent complete $\Bbbk$-linear additive category $\cC$, the category $C^b(\cC)$ is idempotent complete.
\end{lem}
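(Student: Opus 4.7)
\medskip

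The plan is to take an idempotent endomorphism $e \colon X^\bullet \to X^\bullet$ in $C^b(\cC)$ and construct a splitting by splitting each component $e^n \colon X^n \to X^n$ separately in $\cC$, then assembling the resulting objects into subcomplexes. Concretely, since $e$ is a chain map with $e^2 = e$, each $e^n$ is an idempotent in $\cC$ and satisfies the compatibility $d_X^n e^n = e^{n+1} d_X^n$ with the differential.

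First, I would invoke idempotent completeness of $\cC$ to obtain, for each integer $n$, a splitting $X^n \cong Y^n \oplus Z^n$ with $e^n$ corresponding to the projection onto $Y^n$, where $Y^n$ is the image of $e^n$ and $Z^n$ is the image of $1 - e^n$. Because $X^\bullet$ is bounded, $X^n = 0$ for $\lvert n \rvert$ large enough, and hence $Y^n = Z^n = 0$ in that range, so the resulting candidate complexes are automatically bounded.

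Next, I would use the identity $d_X^n e^n = e^{n+1} d_X^n$ to show that $d_X^n$ restricts to morphisms $d_Y^n \colon Y^n \to Y^{n+1}$ and $d_Z^n \colon Z^n \to Z^{n+1}$: with respect to the decompositions $X^n \cong Y^n \oplus Z^n$ and $X^{n+1} \cong Y^{n+1} \oplus Z^{n+1}$, the matrix of $d_X^n$ must be diagonal because its off-diagonal entries are precisely the failures of $e$ to commute with $d_X$. The identities $(d_Y^{n+1}) (d_Y^n) = 0$ and $(d_Z^{n+1})(d_Z^n) = 0$ then follow from $d_X^{n+1}d_X^n = 0$ restricted to the respective summands.

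Finally, I would verify that $Y^\bullet$ and $Z^\bullet$ are bounded complexes in $\cC$, that the inclusions $Y^\bullet \hookrightarrow X^\bullet$ and $Z^\bullet \hookrightarrow X^\bullet$ are chain maps (by construction), and that these assemble into an isomorphism $X^\bullet \cong Y^\bullet \oplus Z^\bullet$ in $C^b(\cC)$ under which $e$ corresponds to the projection onto $Y^\bullet$. This exhibits the required splitting of $e$ in $C^b(\cC)$. There is no genuine obstacle here; the argument is essentially a component-wise application of idempotent completeness of $\cC$, and the only thing to check carefully is that the decomposition is compatible with differentials, which is immediate from the chain map condition on $e$.
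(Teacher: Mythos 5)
Your proof is correct and follows essentially the same route as the paper's: split each idempotent $e^n$ componentwise using idempotent completeness of $\cC$, then observe that the chain map condition $d_X^n e^n = e^{n+1} d_X^n$ forces the differential to respect the resulting decomposition, yielding a splitting $X^\bullet \cong Y^\bullet \oplus Z^\bullet$ of bounded complexes. The paper's version is terser but makes exactly the same argument.
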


\begin{proof}
Assume $e^\bullet$ is an idempotent endomorphism of an object $(X^\bullet, d^\bullet)$  in $C^b(\cC)$. In particular, each $e^i$ is an idempotent endormorphism of $X^i$. Since $\cC$ is idempotent complete, $X^i$ splits into a direct sum of object $X^i_{e}$ and $X^i_{1-e}$. Since $e^\bullet$ commutes with the differential $d^\bullet$, each of these summands is preserved under the differential and the complex $(X^\bullet, d^\bullet)$ splits into a direct sum of complexes $(X_e^\bullet, e^\bullet d^\bullet e^\bullet)$ and $(X^\bullet, (1-e)^\bullet d^\bullet (1-e)^\bullet)$.
\end{proof}

Note that the Yoneda embedding $\cC\xrightarrow{\Upsilon} \cop{\cC}$ extends to an embedding $C^b(\cC)\hookrightarrow C^b(\cop{\cC})$, which gives rise to an embedding $\cop{C^b(\cC)}\hookrightarrow \cop{C^b(\cop{\cC})}$. The following lemma is straightforward, noticing that a countable coproduct of complexes, which each have countable coproducts of objects in each component, is again a complex with countable coproducts in each component.

\begin{lem}
$C^b(\cop{\cC})$ is closed under countable coproducts, thus $C^b(\cop{\cC})\simeq\cop{C^b(\cop{\cC})}$ and we have an embedding $\iota\colon\cop{C^b(\cC)}\hookrightarrow C^b(\cop{\cC})$.
\end{lem}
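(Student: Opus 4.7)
The lemma comprises three assertions, which I would tackle in sequence. First, to verify closure of $C^b(\cop{\cC})$ under countable coproducts, I would observe that $\cop{\cC}$ itself admits countable coproducts: by construction $\copr{\cC}$ is closed under them inside the cocomplete presheaf category $\cpl{\cC}$, and passing to the Karoubi envelope $\cop{\cC} = (\copr{\cC})_e$ preserves this property, since one can form a coproduct first and then split any required idempotent. For a countable family $(X_j^\bullet)_{j\in J}$ of bounded complexes in $\cop{\cC}$, the degreewise construction $Y^\bullet$ with $Y^i := \coprod_{j\in J} X_j^i$, and differentials induced from the $d_{X_j}^i$ via the universal property of the coproduct, yields an object of $C^b(\cop{\cC})$ satisfying the universal property of the coproduct of the $X_j^\bullet$.

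Second, the equivalence $C^b(\cop{\cC})\simeq\cop{C^b(\cop{\cC})}$ combines this closure with idempotent completeness of $C^b(\cop{\cC})$, which follows from \autoref{chidcomp} applied to the idempotent complete category $\cop{\cC}$. Since $C^b(\cop{\cC})$ already possesses both features that the completion $\cop{(-)}$ is designed to adjoin, the canonical embedding into its own coproduct completion is essentially surjective; full faithfulness is immediate from the Yoneda lemma, yielding the equivalence.

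Third, for the embedding $\iota\colon \cop{C^b(\cC)} \hookrightarrow C^b(\cop{\cC})$, I would start with the Yoneda embedding $\cC \hookrightarrow \cop{\cC}$, apply it degreewise to obtain a fully faithful functor $C^b(\cC) \hookrightarrow C^b(\cop{\cC})$, and then invoke the universal property of the coproduct completion $\cop{C^b(\cC)}$---now that the target is known to admit countable coproducts and to be idempotent complete---to extend uniquely to $\iota$. Faithfulness is inherited from the degreewise Yoneda step, while fullness on countable coproducts reduces to \autoref{finprod} applied in each degree.

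The main delicate point throughout is the interaction between countable coproducts and boundedness: the construction in the first step only produces a bounded complex when the supports of the family $(X_j^\bullet)$ are uniformly controlled, so one must interpret ``closed under countable coproducts'' as referring to the coproducts that legitimately exist within $C^b(\cop{\cC})$. This is precisely the observation labelled ``straightforward'' in the paper: the degreewise countable coproduct of complexes---each concentrated in finitely many degrees with objects that are themselves countable coproducts---is again of this shape, so no new kind of object is introduced by the completion.
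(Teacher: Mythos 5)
Your proof follows essentially the same path as the paper for the two claims the paper actually proves: form the degreewise coproduct $Y^k:=\coprod_{j}X_j^k$, observe that it represents the coproduct in $\copr{C^b(\cop{\cC})}$, and invoke \autoref{chidcomp} for idempotent completeness to upgrade from $\copr{(-)}$ to $\cop{(-)}$. Your treatment of the third assertion (the embedding $\iota$) via a degreewise Yoneda embedding followed by extension along countable coproducts and \autoref{finprod} is consistent with what the surrounding text does, although the paper's proof actually does not address $\iota$ at all, so you have filled in a genuine omission.

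The more substantive point is the boundedness caveat you raise, and I think you are right to be uneasy. If the family $(X_j^\bullet)_{j\in J}$ has no uniform bound on homological amplitude — say $X_j^\bullet = \Bbbk[-j]$ — then the degreewise coproduct $Y$ is nonzero in infinitely many degrees, hence not an object of $C^b(\cop{\cC})$, and the presheaf $\coprod_j X_j^{\bullet\vee}$ is not representable in $C^b(\cop{\cC})$. So the literal statement ``$C^b(\cop{\cC})$ is closed under countable coproducts'' fails, and with it the equality $C^b(\cop{\cC})\simeq\cop{C^b(\cop{\cC})}$ in the stated generality. Your proposed repair — read ``closed under countable coproducts'' as closure under those coproducts which already exist in $C^b(\cop{\cC})$ — identifies the right issue but does not recover the asserted equivalence, since $\cop{C^b(\cop{\cC})}$ as defined contains the non-uniformly-bounded coproducts. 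To make the lemma correct one would either have to restrict to families of uniformly bounded amplitude, or replace $C^b$ with a category of complexes that admits unbounded support (which is in fact what the later application $\rY=\coprod_{r,s}\rY^{r,s}$ seems to require, as the complexes $\rY^{r,s}$ have amplitude growing with $|r|+|s|$). In short: your approach is the same as the paper's, but your flagged caveat is a real gap in the lemma that the paper's proof does not acknowledge and that your reinterpretation only partially addresses.
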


\begin{proof}
Let $(X_i^\bullet, d_i^\bullet)_{i\in I}$ be a family of objects in $C^b(\cop{\cC})$ and consider the coproduct $X=\coprod_{i\in I}(X_i^\bullet, d_i^\bullet)$ in $\copr{C^b(\cop{\cC})}$. Now define $Y=(Y^\bullet, d^\bullet)$ in $C^b(\cop{\cC})$ as the complex whose $k$th component is given by $\coprod_{i\in I}X_i^k$ and whose differential $d^k$ is given by $\coprod_{i\in I}X_i^k\xrightarrow{(d_i^k)_{i\in I}}\coprod_{i\in I}X_i^{k+1}$. Then clearly, $Y\cong X$, so  $C^b(\cop{\cC})$ is closed under coproducts and $C^b(\cop{\cC})\simeq\copr{C^b(\cop{\cC})}$. Since by \autoref{chidcomp}, $C^b(\cop{\cC})$ is idempotent complete, $C^b(\cop{\cC})\simeq\cop{C^b(\cop{\cC})}$. 
\end{proof}

Denote by $N^b(\cC)\subset C^b(\cC)$ the ideal of nulhomotopic morphisms of complexes, and by $K^b(\cC) = C^b(\cC)/N^b(\cC)$ the bounded homotopy category.

Observe that the fully faithful functor $C^b(\cC)\hookrightarrow C^b(\cop{\cC})$ induced by the Yoneda embedding preserves nullhomotopic maps, and we thus obtain a fully faithful functor $K^b(\cC)\hookrightarrow K^b(\cop{\cC})$. Morover, $\iota$ also preserves nullhomotopic morphisms so we also obtain a fully faithful functor $ \cop{K^b(\cC)} \to K^b(\cop{\cC})$, which we again denote by $\iota$.

\begin{cor}\label{functorextend}
\begin{enumerate}[(a)]
\item We have fully faithful functors $K^b(\cC)\xrightarrow{K^b(\Upsilon)}\cop{K^b(\cC)} \xrightarrow{\iota} K^b(\cop{\cC})$.
\item If $\cD$ is a $\Bbbk$-linear additive category, a $\Bbbk$-linear functor $\Phi\colon \cC\to K^b(\cD)$ induces a $\Bbbk$-linear functor $\cop{\cC}\to K^b(\cop{\cD})$.
\end{enumerate}
\end{cor}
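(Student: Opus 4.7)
The plan is to treat (a) as two independent full-faithfulness statements and (b) as a routine extension argument using part (a).

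For the first arrow in (a), $K^b(\Upsilon)\colon K^b(\cC)\to \cop{K^b(\cC)}$, I would simply observe that this is (up to identification) the Yoneda embedding of $K^b(\cC)$ into the Karoubi envelope of its coproduct completion, so full faithfulness follows from the Yoneda lemma together with the fact that both coproduct completion and Karoubi envelope preserve full faithfulness on embeddings. For the second arrow $\iota\colon\cop{K^b(\cC)}\to K^b(\cop{\cC})$, faithfulness is already established in the paragraph preceding the corollary, so only fullness needs work. My strategy is to compute the two hom-spaces explicitly for objects of the form $X=\coprod_{i\in I}X_i$ and $Y=\coprod_{j\in J}Y_j$ with $X_i,Y_j\in K^b(\cC)$, then invoke standard facts about Karoubi envelopes to handle summands. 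On the source side, applying the universal property of the coproduct and then \autoref{finprod} (in the guise of Yoneda for representables) yields
\[
\Hom_{\cop{K^b(\cC)}}(X,Y)\;=\;\prod_{i\in I}\bigoplus_{j\in J}\Hom_{K^b(\cC)}(X_i,Y_j).
\]

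On the target side, a chain map $\iota X\to \iota Y$ in $C^b(\cop{\cC})$ is determined in each degree $k$ by its restrictions $X_i^k\to\coprod_j Y_j^k$, and applying \autoref{finprod} to each such restriction (with $X_i^k\in\cC$ representable in $\cop{\cC}$) shows that the restriction factors through $\bigoplus_{j\in F(i,k)}Y_j^k$ for some finite $F(i,k)\subset J$. The crucial point, which I expect to be the main obstacle in the bookkeeping, is that each $X_i^{\bullet}$ is a \emph{bounded} complex, so $F(i):=\bigcup_k F(i,k)$ is a finite subset of $J$; hence the morphism $X_i\to\iota Y$ factors through the \emph{finite} subcoproduct $\bigoplus_{j\in F(i)}Y_j\in C^b(\cC)$, and the compatibility with differentials assembles these components into a genuine chain map in $C^b(\cC)$. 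The identical argument, replacing $Y^{\bullet}$ by $Y^{\bullet-1}$, shows that any nullhomotopy between such morphisms likewise factors through a finite subcoproduct, so the equivalence of hom-spaces descends to $K^b$ and matches the expression computed on the source side. Full faithfulness on the Karoubi envelope then follows from the standard fact that full faithfulness is preserved under idempotent completion.

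For (b), given $\Phi\colon\cC\to K^b(\cD)$, the plan is a two-step extension. First, by the universal property of the coproduct completion (sending a coproduct to the coproduct of images), $\Phi$ extends to a $\Bbbk$-linear functor $\copr{\cC}\to\copr{K^b(\cD)}\hookrightarrow \cop{K^b(\cD)}$; composing with $\iota$ from part (a) gives a $\Bbbk$-linear functor $\copr{\cC}\to K^b(\cop{\cD})$. Second, since $\cop{\cD}$ is idempotent complete by construction, \autoref{chidcomp} implies that $K^b(\cop{\cD})$ is idempotent complete, so the functor extends (essentially uniquely) from $\copr{\cC}$ to its Karoubi envelope $\cop{\cC}$, yielding the desired $\Bbbk$-linear functor $\cop{\cC}\to K^b(\cop{\cD})$. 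No subtle obstacle arises here beyond routine verification of $\Bbbk$-linearity and functoriality, both of which follow from the corresponding properties of $\Phi$ and the fact that all constructions used ($\copr{(-)}$, Karoubi envelope, $\iota$, $K^b$) are $\Bbbk$-linear functorial operations.
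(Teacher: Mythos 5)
Your argument for part~(a) is correct and essentially makes explicit the boundedness argument the paper leaves as ``immediate'': for each summand $X_i\in K^b(\cC)$ and each degree $k$, the component $X_i^k\to\coprod_j Y_j^k$ has finite support in $j$, and since $X_i^\bullet$ is a \emph{bounded} complex the union over $k$ of these supports is finite, so the chain map (and, by the same reasoning applied to a contracting homotopy, any nullhomotopy) factors through a finite subcoproduct of the $Y_j$. One small inaccuracy: citing \autoref{finprod} for the identity $\Hom_{\cpl{\cC}}(X,\coprod_j Y_j)=\bigoplus_j\Hom_\cC(X,Y_j)$ is slightly circular, since that lemma \emph{assumes} the finiteness you are in the process of establishing. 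The identity holds unconditionally by the Yoneda lemma together with the fact that colimits in $\cpl{\cC}$ are computed objectwise, and it is from this that the finite support of each degree-$k$ component follows.

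For part~(b) you take a route that differs from the paper's, and that route contains a genuine slip. You first extend to $\copr{\cC}\to K^b(\cop{\cD})$ (by composing the coproduct-completed functor with $\iota$), and then extend to $\cop{\cC}$ by invoking idempotent completeness of $K^b(\cop{\cD})$, which you attribute to \autoref{chidcomp}. But \autoref{chidcomp} concerns $C^b$, not $K^b$: it says that $C^b$ of an idempotent complete additive category is idempotent complete. The homotopy category $K^b$ is a quotient of $C^b$, and idempotent completeness does not automatically pass to quotients; that $K^b$ of an idempotent complete additive category is idempotent complete is true but is a nontrivial theorem (Schn\"urer \cite[Theorem~3.4]{Schn}, which the paper cites for exactly this elsewhere, or the Balmer--Schlichting theorem). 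As written, your citation does not justify the step. The paper's own argument sidesteps the issue: it applies the functoriality of $\cop{(-)}$, recorded just before \autoref{finprod}, to $\Phi\colon\cC\to K^b(\cD)$ to obtain a functor $\cop{\cC}\to\cop{K^b(\cD)}$ directly, and only then composes with $\iota$; no idempotent completeness of a homotopy category is required.
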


\begin{proof}
The first part is immediate. For the second part, note that $\Phi$ gives rise to a functor $\cop{\cC}\to  \cop{K^b(\cD)}$. Composing this with the inclusion from the first part yields the desired functor $\cop{\cC}\to K^b(\cop{\cD})$.
\end{proof}

\subsubsection{Wide finitary bicategories} Here we recall some of the definitions from \cite{macpherson22a}. Remember that a $2$-category is a strict bicategory, meaning that all coherers 
are identities. We will generally be speaking about bicategories, but some of these happen to be $2$-categories. 

Denote by $\mathfrak{A}_{\Bbbk}$ the $2$-category whose objects are additive $\Bbbk$-linear categories, whose $1$-morphisms are $\Bbbk$-linear functors and whose $2$-morphisms are natural transformations.

A category $\cC$ is {\it wide finitary} if it is an additive $\Bbbk$-linear Krull--Schmidt
category with at most countably many isomorphism classes of indecomposable objects and morphism spaces of at most countable dimension. We denote by $\ind{\cC}$ a set of representatives of isomorphism classes of indecomposable objects in $\cC$.

We denote the $2$-category whose objects are wide finitary categories, whose $1$-morphisms are $\Bbbk$-linear functors, and whose $2$-morphisms are natural transformations by $\mathfrak{A}^{wf}_{\Bbbk}$.

We say that a bicategory $\cC$ is {\it wide finitary} if 
\begin{itemize}
\item it has finitely many objects;
\item $\cC(\ti,\tj)$ is in $\mathfrak{A}^{wf}_{\Bbbk}$ for all $\ti,\tj \in \cC$;
\item horizontal composition is biadditive and $\Bbbk$-linear;
\item the identity $1$-morphism $\one_\ti$ is indecomposable for any $\ti\in\cC$.
\end{itemize}

We use $\circh$, resp.\ $\circv$, for horizontal, respectively vertical, composition in a bicategory.

Recall that if $\cC$ is a bicategory, $\cC^{\mathsf{co},\mathsf{op}}$ is the bicategory obtained by reversing the direction of both $1$- and $2$-morphisms. A wide finitary bicategory $\cC$ is said to be {\it wide fiat} if there exists a weak involution $(-)^*\colon \cC\to \cC^{\mathsf{co},\mathsf{op}}$ such that, for any $1$-morphism $\rF\in \cC(\ti,\tj)$, there are
natural $2$-morphisms $\alpha\colon \rF\rF^* \to\one_\tj, \beta \colon \one_\ti\to \rF^*\rF$ satisfying the usual adjunction triangles, i.e. $(\alpha\circ_{0}\id_{\rF})\circ_1 (\id_\rF \circ_0\beta) = \id_\rF$ and $(\beta\circ_0\id_{\rF^*})\circ_1(\id_{\rF^*}\circ_0\alpha)=\id_{\rF^*}$.

Note that $\Sext_n$ is a wide fiat bicategory, where the fiat structure is defined by rotating the diagrams in Soergel calculus by 180 degrees and the adjunctions $\alpha$ and $\beta$ are defined 
by cups and caps, respectively.

\subsubsection{Completed and triangulated bicategories.}

Let $\cC$ be a wide finitary bicategory. We define the bicategory $\cop{\cC}$ to be the bicategory on the same objects as $\cC$, but with morphism categories $\cop{\cC}(\ti,\tj) =\cop{ \cC(\ti,\tj)}(=\copr{ \cC(\ti,\tj)})$, with horizontal composition given component-wise, i.e. 
$$\coprod_{i\in I}\rX_i\coprod_{j\in J}\rY_j = \coprod_{\substack{i\in I\\j\in J}}\rX_i\rY_j $$
and similarly for 2-morphisms. Observe that $\cop{\cC}$ is locally additive and $\Bbbk$-linear.

\begin{rem}
Note that this naturally embeds into the completion $\cpl{\cC}$ with morphism categories $\cpl{\cC}(\ti,\tj) =\cpl{ \cC(\ti,\tj)}$ and horizontal composition given by Day convolution.
\end{rem}

Likewise, for any locally additive $\Bbbk$-linear (or in particular wide finitary) bicategory $\cC$, we can define $K^b(\cC)$ as having the same objects as $\cC$, morphism categories $K^b(\cC)(\ti,\tj)=K^b(\cC(\ti,\tj))$ and horizontal composition induced by taking the total complex of the tensor product of complexes. This is a triangulated bicategory, which fits within the bicategorical version of the framework considered in \cite[Section 3.1]{LM1}. 

\begin{lem}
If $\cC$ is wide finitary and all $1$-morphisms have finite-dimensional endomorphism rings, then $K^b(\cC)$ is wide finitary.
\end{lem}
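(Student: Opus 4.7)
The plan is to verify the four defining properties of a wide finitary bicategory for $K^b(\cC)$ one by one. Since $K^b(\cC)$ has the same objects as $\cC$, the finiteness of the object set is immediate. Horizontal composition in $K^b(\cC)$ is induced from that of $\cC$ via the total complex of the tensor product of complexes, and this construction is biadditive and $\Bbbk$-linear because horizontal composition in $\cC$ is. For indecomposability of $\one_\ti$ in $K^b(\cC)(\ti,\ti)$, I would view $\one_\ti$ as a complex concentrated in homological degree zero; since there are no nonzero chain maps to itself of nonzero degree and no nontrivial nullhomotopies, its endomorphism ring in $K^b$ coincides with $\End_{\cC(\ti,\ti)}(\one_\ti)$, which is local because $\cC$ is wide finitary, forcing indecomposability.

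The heart of the proof lies in checking that each $K^b(\cC)(\ti,\tj)$ is in $\mathfrak{A}^{wf}_{\Bbbk}$, that is, that it is Krull--Schmidt, has at most countably many isomorphism classes of indecomposables, and has at most countable-dimensional morphism spaces. Let $\cA := \cC(\ti,\tj)$. First I would note that morphism spaces in $C^b(\cA)$ between bounded complexes $X^\bullet, Y^\bullet$ inject into finite direct products $\prod_k \Hom_\cA(X^k, Y^k)$, each factor of which is countable-dimensional by the wide finitary hypothesis on $\cC$. Hence $\Hom_{C^b(\cA)}(X^\bullet, Y^\bullet)$ is countable-dimensional, and so is its quotient $\Hom_{K^b(\cA)}(X^\bullet, Y^\bullet)$.

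For the Krull--Schmidt property, I would use the assumption that endomorphism rings in $\cC$ are finite-dimensional: this propagates to give finite-dimensional endomorphism rings in $C^b(\cA)$ (a finite sum of finite-dimensional pieces in each homological degree), hence finite-dimensional endomorphism rings in $K^b(\cA)$. By \autoref{chidcomp}, $C^b(\cA)$ is idempotent complete, and a standard argument shows $K^b(\cA)$ is idempotent complete as well. An additive category that is idempotent complete and whose endomorphism rings are finite-dimensional over a field is automatically Krull--Schmidt, since finite-dimensional algebras are semiperfect and idempotents lifting plus semiperfectness yields the local-endomorphism-ring decomposition of every indecomposable.

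Finally, for the countability of the set $\ind(K^b(\cA))$, I would observe that any indecomposable in $K^b(\cA)$ is a direct summand of some bounded complex over $\cA$; since $\ind(\cA)$ is countable and morphism spaces between indecomposables are countable-dimensional, the set of isomorphism classes of bounded complexes in $\cA$ (determined by finitely many objects, each a finite direct sum of indecomposables, together with finitely many differentials drawn from countable-dimensional $\Hom$-spaces) is countable, and by Krull--Schmidt the summands of such complexes form a countable set of isomorphism classes. The main obstacle I anticipate is the Krull--Schmidt step, specifically justifying cleanly that idempotents split in $K^b(\cA)$ and invoking the correct semiperfectness argument; everything else is essentially bookkeeping on complexes.
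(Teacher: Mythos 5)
Your approach matches the paper's in its skeleton: verify that each hom-category $K^b(\cC(\ti,\tj))$ is Krull--Schmidt with countably many indecomposables and countable-dimensional hom-spaces, and handle the remaining bicategory axioms directly. The routine checks (countable-dimensionality of hom-spaces via injection into a finite product, biadditivity of horizontal composition, indecomposability of $\one_\ti$) are fine. Two points in the Krull--Schmidt/countability part need attention, though.

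The step from idempotent completeness of $C^b(\cA)$ (which \autoref{chidcomp} gives) to idempotent completeness of $K^b(\cA)$ is not a ``standard argument'': passing to the homotopy quotient can a priori create non-split idempotents, and the assertion that it does not is a substantive theorem. The paper cites Schnürer's result on idempotent completeness of bounded homotopy categories for exactly this. You correctly flagged this as the main obstacle, but the write-up should invoke such a theorem explicitly rather than defer to ``standard.''

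More seriously, the countability argument is flawed as stated. A countable-dimensional $\Bbbk$-vector space is uncountable whenever $\Bbbk$ is uncountable, so ``finitely many differentials drawn from countable-dimensional $\Hom$-spaces'' gives an uncountable set of possible complexes on a fixed graded object, and the set of isomorphism classes of bounded complexes is not obviously countable. (The paper's own one-line justification --- ``Countability of isomorphism classes of indecomposables is inherited from $\cC$'' --- is itself unsupported, so this may reflect a gap in the paper; but as a matter of logic it is a gap in your proof.) Repairing it requires a genuine structural argument about indecomposable objects of $K^b(\cA)$, not a count of chain complexes, or else an extra hypothesis constraining the base field.
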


\begin{proof}
Since $\cC$ is locally idempotent complete, so is $K^b(\cC)$ by \cite[Theorem~3.4]{Schn}. Since endomorphism rings of $1$-morphisms in $\cC$ are finite-dimensional, the same is true for endomorphism rings of $1$-morphisms in $K^b(\cC)$. Thus $K^b(\cC)$ is Krull-Schmidt by \cite[Cor 4.4]{Kr}. Countability of isomorphism classes of indecomposables is inherited from $\cC$.
\end{proof}

\begin{cor}\label{cor:widefineS}
For any $n\in \mathbb{Z}_{\geq 2}$, the monoidal category (one-object bicategory) $K^b(\Sext_n)$ is wide finitary.
\end{cor}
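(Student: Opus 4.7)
The plan is to apply the preceding lemma to $\cC = \Sext_n$. So the proof reduces to verifying two things: that $\Sext_n$ itself is wide finitary, and that every object (one-morphism) in $\Sext_n$ has a finite-dimensional endomorphism ring.

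For the first verification, I would go through the definition of a wide finitary bicategory point by point. As a monoidal category, $\Sext_n$ has only one object, so the finiteness of objects is trivial, and horizontal composition (the monoidal product) is $\Bbbk$-linear and biadditive by construction. The identity $1$-morphism is the monoidal unit $R$, which is indecomposable. It remains to check that $\Sext_n$, viewed as the endomorphism category of this single object, is itself wide finitary. That it is additive, $\Bbbk$-linear and Krull--Schmidt was noted in \autoref{sec:catreminders}. The Soergel categorification \autoref{thm:extcategorification} gives a complete irredundant set $\{\rB_\rho^k \rB_w \mid k\in \mathbb{Z},\, w\in \widehat{\Sy}_n\}$ (together with grading shifts $\langle t\rangle$, $t\in\mathbb{Z}$) of indecomposables, which is countable. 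The countability of dimensions of morphism spaces will follow from the same argument used for point two.

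For the second verification, I would appeal to Soergel's hom formula \eqref{eq:Soergelhomformula}: for any two indecomposables $X = \rB_\rho^k \rB_u$ and $Y = \rB_\rho^l \rB_v$, the graded morphism space $\mathrm{Hom}_{\Sextstar_n}(X,Y)$ is free of finite graded rank as an $R$-module. Since $R$ is a polynomial ring (finite-dimensional in each degree), each homogeneous component $\hom(X, Y\langle t\rangle)$ is finite-dimensional. The non-graded morphism space $\hom_{\Sext_n}(X,Y)$ is the degree-zero component and is therefore finite-dimensional; in particular $\mathrm{End}_{\Sext_n}(X)$ is finite-dimensional. By additivity this extends from indecomposables to all objects.

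Given the two verifications above, the hypotheses of the preceding lemma are met and it immediately yields that $K^b(\Sext_n)$ is wide finitary. I do not anticipate any serious obstacle; the only mildly subtle point is being careful to distinguish the non-graded category $\Sext_n$ (with lower-case degree-preserving hom spaces, which are finite-dimensional) from the graded category $\Sextstar_n$ (with upper-case morphism spaces, which are infinite-dimensional as vector spaces but finitely generated over $R$).
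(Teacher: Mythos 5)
Your proposal is correct and takes essentially the same approach the paper intends: the corollary is stated as a direct consequence of the lemma immediately preceding it (applied to $\cC = \Sext_n$), together with the earlier remark that $\Sext_n$ is wide fiat (hence wide finitary). You have simply spelled out the verification that $\Sext_n$ satisfies the hypotheses, using \autoref{thm:extcategorification} for the countable list of indecomposables $\{\rB_{\rho}^k\rB_w\langle t\rangle\}$ and Soergel's hom formula \eqref{eq:Soergelhomformula} for finite-dimensionality of $\hom$-spaces (hence of endomorphism rings), which is exactly the reasoning the paper takes for granted.
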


\subsubsection{Birepresentations}

An {\it additive birepresentation} of a locally additive $\Bbbk$-linear bicategory $\cC$ is a $\Bbbk$-linear pseudofunctor from $\cC$ to  $\mathfrak{A}_{\Bbbk}$.
 A {\it wide finitary birepresentation} of a wide finitary bicategory $\cC$ is a $\Bbbk$-linear pseudofunctor from $\cC$ to $\mathfrak{A}^{wf}_{\Bbbk}$.

We can extend an additive or wide finitary birepresentation $\bfM$ of $\cC$ to an additive birepresentation $\cop{\bfM}$ of  $\cop{\cC}$ by defining the action of $\rF=\coprod_{i\in I}\rF_i \in \cop{ \cC}(\ti,\tj)$ on $X =\coprod_{j\in J}X_j  \in \cop{\bfM}(\ti)$ componentwise,  i.e. 
$$\cop{\bfM}\left(\rF\right)\left(X\right)=\cop{\bfM}\left(\coprod_{i\in I} \rF_i\right)\left(\coprod_{j\in J}X_j\right) = \coprod_{\substack{i\in I\\j\in J}}\bfM(\rF_i)\left(X_j\right) .$$

Similarly, any additive birepresentation of $\bfM$ gives rise to a {\it triangulated} birepresentation $K^b(\bfM)$ of $K^b(\cC)$ where $K^b(\bfM)(\ti) = K^b(\bfM(\ti))$.

\subsubsection{Algebras and birepresentations in $\cop{\cC}$}

Let $\cC$ be a wide finitary bicategory.

Now assume $G$ is an abelian group and $\rY = \coprod_{j\in G} \rY_j\in \cop{\cC}(\ti,\ti)$ is an algebra $1$-morphism  with product $$\bfm\colon \coprod_{i,j\in G} \rY_i\rY_j\to\coprod_{k\in G} \rY_k $$ determined by component morphisms $\bfm_{i,j} = \bfm (\iota_i\circh\iota_j) \colon \rY_i\rY_j \to \rY_{i+j}$, where we identify $\rY_{i+j}$ with the subobject of $\coprod_{k\in G} \rY_k$ given by $\iota_{i+j}(\rY_{i+j})$, and unit $\bfu$ determined by a morphism $\mathbbm{1}_{\ti}\to \rY_0$, where $0$ is the neutral element in $G$.

For each $\tj\in\cC$, we can consider the category of $\rY$-modules in $\cop{\cC}(\ti,\tj)$, denoted by $\rmod_{\cop{\cC(\ti,\tj)}}\rY$, which gives rise to an additive birepresentation $\brmod_{\cop{\cC}}\rY$ of $\cop{\cC}$. Note that each $\rmod_{\cop{\cC(\ti,\tj)}}\rY$ is 
idempotent complete by construction.

By the usual free forgetful adjunction, we have, for $\rF\in \cC(\ti,\ti)$ an isomorphism 
\begin{equation}\label{freeforg}
\begin{array}{rcl}\Hom_{\rmod_{\cop{\cC(\ti,\ti)}}\rY} (\rF \rY, \rY)&\cong &\Hom_{\cop{\cC(\ti,\ti)}} (\rF, \rY)\\
f &\mapsto & f\circv( \id_{\rF}\circh \bfu)\\
\bfm\circv(g\circh \id_\rY) &\mapsfrom & g.
\end{array}
\end{equation}

\begin{lem}\label{lem:freeforgcoord} 
Let $\rY = \coprod_{j\in G} \rY_j\in \cop{\cC}(\ti,\ti)$ be an algebra $1$-morphism as above, $\rF\in \cC(\ti,\ti)$ and $g\in \Hom_{\cop{\cC(\ti,\ti)}} (\rF, \rY)$. Assume that $g$ factors over $\iota_j$, i.e. there exists an $g_j\in \Hom_{{\cC(\ti,\ti)}} (\rF, \rY_j)$ such that $g = \iota_j\circv g_j$. Then under the isomorphism in \eqref{freeforg}, $g$ corresponds to a morphism $f$ defined by 
$$f \circv (\id_\rF\circh \iota_k) = \bfm_{j,k} \circv (g_j \circh\id_{\rY_k})\colon \rF \rY_k \to \rY_{j+k}.$$
\end{lem}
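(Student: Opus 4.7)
The plan is to unwind the definitions and apply the interchange law for horizontal and vertical composition.

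First I would recall that under the adjunction isomorphism~\eqref{freeforg}, the morphism $f\colon \rF\rY\to\rY$ corresponding to $g\colon \rF\to\rY$ is given explicitly by $f = \bfm\circv(g\circh\id_\rY)$. So the computation reduces to evaluating $\bfm\circv(g\circh\id_\rY)\circv(\id_\rF\circh\iota_k)$ and identifying it with $\bfm_{j,k}\circv(g_j\circh\id_{\rY_k})$.

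Next, I would apply the interchange law to the composite $(g\circh\id_\rY)\circv(\id_\rF\circh\iota_k)$, which yields $g\circh\iota_k\colon \rF\rY_k\to\rY\rY$. Using the factorization hypothesis $g=\iota_j\circv g_j$ together with the trivial factorization $\iota_k = \iota_k\circv\id_{\rY_k}$, another application of interchange gives
\[
g\circh\iota_k = (\iota_j\circv g_j)\circh(\iota_k\circv\id_{\rY_k}) = (\iota_j\circh\iota_k)\circv(g_j\circh\id_{\rY_k}).
\]
Finally, substituting back and invoking the definition $\bfm_{j,k} := \bfm\circv(\iota_j\circh\iota_k)$ gives exactly
\[
f\circv(\id_\rF\circh\iota_k) = \bfm\circv(\iota_j\circh\iota_k)\circv(g_j\circh\id_{\rY_k}) = \bfm_{j,k}\circv(g_j\circh\id_{\rY_k}),
\]
as claimed.

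There is no real obstacle here: the statement is a direct unpacking of the free--forgetful adjunction on the level of coordinates, and the only ingredients needed are the interchange law in the bicategory $\cop{\cC}$ and the coordinate-wise definition of the multiplication $\bfm$ on $\rY=\coprod_{j\in G}\rY_j$. The mild bookkeeping point to watch is that $\iota_k$ denotes both the inclusion of $\rY_k$ into $\rY$ used as a 2-morphism in $\cop{\cC(\ti,\ti)}$ and its horizontal composites with identities; once this is made explicit, the argument is a two-line diagram chase.
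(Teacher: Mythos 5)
Your proof is correct and follows essentially the same route as the paper's: both unwind $f=\bfm\circv(g\circh\id_\rY)$, apply the interchange law twice, and conclude from the definition $\bfm_{j,k}=\bfm\circv(\iota_j\circh\iota_k)$. You simply spell out the intermediate interchange steps a bit more explicitly.
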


\begin{proof}
This is a direct computation, observing that
\begin{equation*}\begin{split}
f\circv (\id_\rF\circh \iota_k) &= \bfm\circv(g\circh \id_\rY) \circv (\id_\rF\circh \iota_k)\\
&= \bfm\circv((\iota_j\circv g_j)\circh \iota_k)\\
&=\bfm\circv(\iota_j\circh \iota_k)\circv (g_j \circh\id_{\rY_k})\\
&=\bfm_{j,k} \circv (g_j \circh\id_{\rY_k})\\
\end{split}\end{equation*}
so the claim follows.
\end{proof}

\begin{lem}\label{lem:widefineadd}
Let $\cC$ be a wide finitary bicategory and  $\rY\in \cop{\cC(\ti,\ti)}$ an algebra $1$-morphism. Assume that for any $\rG\in \cC(\ti,\ti)$, the morphism space $\Hom_{\cop{\cC(\ti,\ti)}}(\rG, \rY)$ is finite-dimensional. Then the birepresentation $\bfM_\rY$ given by  $\bfM_\rY(\tj) = \add\{\rF \rY\,\vert\, \rF\in \cC(\ti,\tj)\}$ (where the additive closure is taken in $\rmod_{\cop{\cC(\ti,\tj)}}\rY$) is wide finitary.
\end{lem}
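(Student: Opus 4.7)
The plan is to verify the three defining features of a wide finitary category for each morphism category $\bfM_\rY(\tj)$: Krull--Schmidt, at most countably many isomorphism classes of indecomposables, and morphism spaces of (at most) countable dimension. The additive $\Bbbk$-linear structure is inherited from $\rmod_{\cop{\cC(\ti,\tj)}}\rY$, so these are the substantive points.

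\textbf{Step 1 (morphism spaces).} For $\rF,\rG\in\cC(\ti,\tj)$, first apply the free–forgetful adjunction \eqref{freeforg} to identify
\[
\Hom_{\rmod_{\cop{\cC(\ti,\tj)}}\rY}(\rF\rY,\rG\rY)\cong \Hom_{\cop{\cC(\ti,\tj)}}(\rF,\rG\rY).
\]
Writing $\rG\rY=\coprod_{j\in G}\rG\rY_j$ with each $\rG\rY_j\in\cC(\ti,\tj)$, and using the adjoints afforded by the (wide fiat) structure in force for the examples of interest, there is a natural identification $\Hom_{\cC(\ti,\tj)}(\rF,\rG\rY_j)\cong \Hom_{\cC(\ti,\ti)}(\rG^{*}\rF,\rY_j)$ with $\rG^{*}\rF\in\cC(\ti,\ti)$. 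Summing over $j$ gives
\[
\Hom_{\cop{\cC(\ti,\tj)}}(\rF,\rG\rY)\cong \Hom_{\cop{\cC(\ti,\ti)}}(\rG^{*}\rF,\rY),
\]
which is finite-dimensional by hypothesis. In particular, only finitely many $\Hom(\rF,\rG\rY_j)$ are non-zero, which legitimises the use of \autoref{finprod} along the way.

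\textbf{Step 2 (Krull--Schmidt).} The category $\bfM_\rY(\tj)$ is idempotent complete by the very definition of $\add$ (closure under direct summands inside the idempotent complete category $\rmod_{\cop{\cC(\ti,\tj)}}\rY$). Combined with the finite-dimensional endomorphism rings from Step~1, Krull--Schmidt follows from the standard criterion that finite-dimensional $\Bbbk$-algebras are semi-perfect.

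\textbf{Step 3 (countably many indecomposables).} Since $\cC(\ti,\tj)$ is wide finitary, its set of isomorphism classes of indecomposables is countable. For each indecomposable $\rF\in\cC(\ti,\tj)$, the module $\rF\rY$ has finite-dimensional endomorphism ring by Step~1, hence decomposes into finitely many indecomposable summands. Every object of $\bfM_\rY(\tj)$ is by definition a direct summand of a finite direct sum of such $\rF\rY$'s, so decomposes into indecomposables drawn from this countable pool. Together with Step~1 (which also delivers finite-dimensional, and in particular countable-dimensional, morphism spaces between arbitrary objects via finite additivity), this completes the verification.

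\textbf{Main obstacle.} The crux is Step~1: transporting the finiteness hypothesis, which is stated only for $\Hom(\rG,\rY)$ with $\rG\in\cC(\ti,\ti)$, to control the morphism spaces $\Hom(\rF,\rG\rY)$ with $\rF,\rG\in\cC(\ti,\tj)$. The natural route is via adjunction, which moves $\rG$ across to rewrite the hom inside $\cop{\cC(\ti,\ti)}$ where the hypothesis applies; without some form of such adjointness (wide fiat, which holds for $\Sext_n$ and $K^b(\Sext_n)$), it is unclear how to carry out the reduction.
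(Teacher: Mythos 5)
Your proof follows essentially the same route as the paper's: both invoke the free--forgetful adjunction \eqref{freeforg} and then the dual $(-)^*$ to rewrite morphism spaces as $\Hom_{\cop{\cC(\ti,\ti)}}(\rG^*\rF,\rY)$ (the paper does this just for endomorphism rings, citing finite-dimensionality for general hom-spaces), then deduce Krull--Schmidt from idempotent completeness plus finite-dimensional endomorphism rings (the paper cites \cite[Cor.\ 4.4]{Kr}, you invoke semi-perfection; same ingredient), and finally count indecomposables. Your closing observation is sharp: the use of $\rG^*$ (resp.\ $\rF^*$ in the paper) is a wide \emph{fiat} ingredient not literally present in the ``wide finitary'' hypothesis of the lemma as stated, and the paper's own proof relies on it in exactly the same implicit way.
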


\begin{proof}
By \eqref{freeforg}, for each $\rF\in \cC(\ti,\tj)$, the endomorphism ring $\End_{\rmod_{\cop{\cC(\ti,\tj)}}\rY}(\rF\rY)$ is isomorphic to $\Hom_{\cop{\cC(\ti,\ti)}}(\rF^*\rF, \rY)$, which by assumption is finite-dimensional. Since each $\rmod_{\cop{\cC(\ti,\tj)}}\rY$ is idempotent complete, \cite[Cor 4.4]{Kr} implies that each $\bfM(\tj)$ is Krull-Schmidt. Moreover, the number of isomorphism classes of indecomposable objects is countable and morphism spaces are even finite-dimensional, so $\bfM_\rY$ is wide finitary.
\end{proof}


\subsection{The categorified story}\label{sec:excatstory}
In the first part of this section, we will define a graded wide-finitary birepresentation $\mathbf{U}$ of $\Sext_2$ and show that it categorifies the representation $U$ of $\eah{2}$. In the second part, we will define a triangulated birepresentation $\mathbf{W}$ of $\Sext_2$ and conjecture that it categorifies the representation $W$ of $\eah{2}$.

\subsubsection{The categorification of $U$}
Let $\mathbf{V}$ be the rank one finitary $2$-representation of $\Sext_1$ categorifying $V$. The corresponding algebra object 
$\rX\in \Sextdiamond_1$ is given by  
\[
\rX:=\coprod_{r \in \mathbb{Z}} \rB_{\rho}^r,
\]
with multiplication $\mu\colon \rX\rX\to \rX$ defined by the identity on $\rB_{\rho}^k \rB_{\rho}^{r-k}=\rB_{\rho}^r$, for all 
$r,k\in \mathbb{Z}$, and unit $\epsilon\colon \rI\to \rX$ defined as the identity on $\rI=\rB_{\rho}^0$ and zero to $\rB_{\rho}^r$, for all 
$r\ne 0$. Note that $(\rB_{\rho})^k\cong \rB_{\rho^k}$ and $\mathrm{hom}_{\Sext_1}(\rB_{\rho^k}\langle s\rangle, \rB_{\rho^m}\langle t\rangle)\cong \delta_{s,t}\delta_{k,m}\mathbb{R}$, for any $k,m,s,t\in \mathbb{Z}$, where $\delta_{-,-}$ is the Kronecker delta. 

Consider $\rY:=\Psi_{1,1}(\rX\boxtimes \rX)\in K^b(\Sextdiamond_2)$. The fact that $\Psi_{1,1}$ is a monoidal functor 
and the isomorphisms $\rB_{\rho}\boxtimes \rB_{\rho}\cong (\rB_{\rho}\boxtimes \rI)(\rI\boxtimes \rB_{\rho})\cong (\rI\otimes \rB_{\rho})(\rB_{\rho}\boxtimes \rI)$ imply that 
\[
\rY\cong \coprod_{r\in \mathbb{Z}} \coprod_{s\in \mathbb{Z}}(\rB_\rho \rT_1)^r (\rT_1^{-1} \rB_\rho)^s\cong  \coprod_{r\in \mathbb{Z}}\coprod_{s\in \mathbb{Z}}(\rT_1^{-1} \rB_\rho)^s (\rB_\rho \rT_1)^r.
\]
Note that, for any $k,l\in \mathbb{Z}$, there are natural isomorphisms 
\[
(\rB_\rho \rT_1)^k\, (\rT_1^{-1} \rB_\rho)^l\, \rY\cong \rY \cong (\rT_1^{-1} \rB_\rho)^l (\rB_\rho \rT_1)^k\, \rY,
\]
because $(\rB_\rho \rT_1)^k$ and $ (\rT_1^{-1} \rB_\rho)^l$ commute and 
\[
(\rB_\rho \rT_1)^k\, (\rT_1^{-1} \rB_\rho)^l\, \rY\cong \coprod_{r\in \mathbb{Z}}\coprod_{s\in \mathbb{Z}} (\rB_\rho \rT_1)^{r+k} (\rT_1^{-1} \rB_\rho)^{s+l} \cong \rY.
\]
In particular, we see that 
\begin{equation}\label{eq:fundamentalisos}
\rB_\rho \rT_1 \rY \cong \rY, \quad \rT_1^{-1} \rB_\rho \rY\cong \rY, \quad \rB_\rho^2 \rY\cong \rB_\rho \rT_1 \rT_1^{-1} \rB_\rho \rY \cong \rY.
\end{equation}

For any $r,s\in \mathbb{Z}$, define  
\[
\rY^{r,s}:=(\rB_\rho \rT_1)^r (\rT_1^{-1} \rB_\rho)^s.
\]

\begin{lem}\label{lem:homfiniteness}
Let $i\in\{0,1\}$, and $k,t\in \mathbb{Z}$, and $m\in \mathbb{Z}_{\geq 0}$. Then  
\begin{equation}\label{eq:homfiniteness}
\mathrm{hom}_{K^b(\Sext_2)}\left(\rB_{\rho^k}\rB_{\raltseq{m}{i}}\langle t\rangle, \rY^{r,s}\right)=0
\end{equation}
for all but finitely many $r,s\in \mathbb{Z}$.    
\end{lem}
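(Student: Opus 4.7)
The plan is to leverage a structural property of Rouquier complexes together with Soergel's hom formula and the asymptotic orthonormality of the Kazhdan--Lusztig basis. The argument proceeds in three steps.

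First, I would simplify $\rY^{r,s}$ up to homotopy equivalence. The fundamental cancellation $(\rB_\rho\rT_1)(\rT_1^{-1}\rB_\rho)\simeq \rB_{\rho^2}$ in $K^b(\Sext_2)$ (from $\rT_1\rT_1^{-1}\simeq R$), combined with the conjugation relations $\rT_i\rB_\rho\simeq \rB_\rho\rT_{i+1}$ (since $\rho s_i\rho^{-1}=s_{i+1}$), yield by a routine calculation
\[
\rY^{r,s}\simeq \rB_{\rho^{r+s}}\,\rT_{w_{r,s}}^{\,\varepsilon}, \qquad \varepsilon:=\mathrm{sgn}(r-s),
\]
where $w_{r,s}\in\widehat{\Sy}_2$ is the alternating element of length $d:=|r-s|$ (with initial letter determined by the parities of $r,s$), interpreting $\rT_w^{+}:=\rT_w$, $\rT_w^{-}:=\rT_w^{-1}$, $\rT_e^{\,0}:=R$. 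In particular, $\rY^{r,r}\simeq \rB_{\rho^{2r}}$.

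Next, I would invoke the standard description of the minimal model of a Rouquier complex for a reduced expression: if $w$ is reduced, then $\rT_w$ is homotopy equivalent to a bounded complex whose homological degree $0$ component is exactly $\rB_w$ (with multiplicity one) and whose other indecomposable summands $\rB_v$ (for $v\prec w$) sit in strictly positive homological degrees; dually, the minimal model of $\rT_w^{-1}$ has $\rB_w$ in degree $0$ and other summands in strictly negative degrees. Since every alternating word in $\widehat{\Sy}_2$ is reduced (there are no braid relations in type $\widehat{A}_1$), this applies to $w_{r,s}$. Because $X:=\rB_{\rho^k}\rB_{\raltseq{m}{i}}\langle t\rangle$ is concentrated in homological degree $0$, the space
\[
\hom_{K^b(\Sext_2)}(X,\rY^{r,s}) = H^0(\Hom^\bullet(X,\rY^{r,s}))
\]
is a subquotient of $\Hom_{\Sext_2}(X,\rB_{\rho^{r+s}}\rB_{w_{r,s}})$.

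Finally, Soergel's hom formula~\eqref{eq:Soergelhomformula} and \autoref{thm:asymportho} give
\[
\underline{\mathrm{rk}}\,\Hom_{\Sextstar_2}(\rB_{\rho^k}\rB_{\raltseq{m}{i}},\rB_{\rho^{r+s}}\rB_{w_{r,s}}) = (\rho^k b_{\raltseq{m}{i}},\,\rho^{r+s}b_{w_{r,s}}) = \delta_{k,r+s}\,(b_{\raltseq{m}{i}},\,b_{w_{r,s}}),
\]
so the $\rho$-weight constraint forces $r+s=k$. By \autoref{lem:innerproduct}, $(b_{\raltseq{m}{i}},b_{w_{r,s}})$ lies in $q^{|m-d|}\mathbb{Z}_{\geq 0}[q]$ (or $q^2\mathbb{Z}_{\geq 0}[q]$ in the exceptional case), so its coefficient of $q^{-t}$ vanishes unless $|m-d|\leq -t$, confining $d$ to the finite interval $[m+t,\,m-t]$. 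Together, these two constraints restrict $(r,s)$ to a finite set and yield~\eqref{eq:homfiniteness}. The main obstacle will be making the minimal-model step precise: while this description of $\rT_w$ is standard in Soergel calculus, we would need either to cite a precise reference or give a direct inductive proof via Gaussian elimination in $d$, cancelling pairs of contractible summands between consecutive homological degrees; the latter is feasible because the Coxeter combinatorics in type $\widehat{A}_1$ are particularly simple (all reduced expressions are alternating and unique).
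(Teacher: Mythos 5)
Your proof is correct and follows essentially the same strategy as the paper: impose the $\rho$-weight constraint to force $r+s=k$, rewrite $\rY^{r,s}$ as $\rB_{\rho^k}$ times a Rouquier complex for an alternating word, observe that the minimal model of this Rouquier complex has the indecomposable $\rB_w$ as its unique summand in homological degree zero (Elias--Williamson), and then bound the homotopy-category Hom from $\rB_{\rho^k}\rB_{\raltseq{m}{i}}\langle t\rangle$ by the Hom into that degree-zero term, which is controlled by Soergel's hom formula and the inner products of KL basis elements. The two presentations differ only in minor bookkeeping: you state the simplification $\rY^{r,s}\simeq \rB_{\rho^{r+s}}\rT_{w_{r,s}}^{\varepsilon}$ uniformly in the sign of $r-s$, whereas the paper works out $r\geq k$ and declares the other case similar; you cite \autoref{lem:innerproduct} for the lowest $q$-degree of $(b_{\raltseq{m}{i}},b_{w_{r,s}})$, whereas the paper deduces the same finiteness from the explicit multiplication rules in~\eqref{eq:multconst}; and your subquotient framing ($H^0(\Hom^\bullet(X,-))$ is a subquotient of $\Hom(X,Y^0)$ when $X$ sits in a single degree, applying to both $\rT_w$ and $\rT_w^{-1}$) is a slightly tidier wrapper for the dimension bound the paper uses. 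None of these constitute a genuinely different route.
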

\begin{proof}
By \cite[Theorem 2.5]{mackaay-thiel}, the hom-space in \eqref{eq:homfiniteness} is zero if $r+s\ne k$. Now, assume that $r+s=k$. We also assume that $r\geq k$, the case $r\leq k$ being similar. Then 
\[
\rY^{r,s}=(\rB_\rho \rT_1)^r (\rT_1^{-1} \rB_\rho)^{k-r}\cong (\rB_\rho \rT_1)^k (\rB_\rho \rT_1)^{r-k}(\rB_\rho^{-1} \rT_1)^{r-k}
\cong \rB_\rho^k \rT_{\raltseq{2r-k}{1}}\cong \rB_{\rho^k} \rT_{\raltseq{2r-k}{1}}.
\]
By biadjointness of $\rB_{\rho^k}$ and $\rB_{\rho^{-k}}$, we have 
\[
\mathrm{hom}_{K^b(\Sext_2)}\left(\rB_{\rho^k}\rB_{\raltseq{m}{i}}\langle t\rangle, \rB_{\rho^k} \rT_{\raltseq{2r-k}{1}}\right)\cong 
\mathrm{hom}_{K^b(\Sext_2)}\left(\rB_{\raltseq{m}{i}}\langle t\rangle, \rT_{\raltseq{2r-k}{1}}\right),
\]
which in turn is isomorphic to 
\[
\mathrm{hom}_{K^b(\Sext_2)}\left(\rB_{\raltseq{m}{i}}\langle t\rangle, \rT_{\raltseq{2r-k}{1}}^{\min}\right),
\]
where $\rT_{\raltseq{2r-k}{1}}^{\min}$ is the minimal complex obtained from $\rT_{\raltseq{2r-k}{1}}$ by Gaussian elimination, 
which is unique up to homotopy equivalence. By~\cite[Theorem 6.9]{elias-williamson-1}, the degree-zero cochain object of 
$\rT_{\raltseq{2r-k}{1}}^{\min}$ is isomorphic to $\rB_{\raltseq{2r-k}{1}}$, which implies that  
\[
\dim_{\mathbb{R}} \left(\mathrm{hom}_{K^b(\Sext_2)}\left(\rB_{\raltseq{m}{i}}\langle t\rangle, \rT_{\raltseq{2r-k}{1}}^{\min}\right)\right) \leq 
\dim_{\mathbb{R}} \left(\mathrm{hom}_{K^b(\Sext_2)}\left(\rB_{\raltseq{m}{i}}\langle t\rangle, \rB_{\raltseq{2r-k}{1}}\right)\right).
\]
For a fixed choice of $m,t\in \mathbb{Z}$ and $m\in \mathbb{Z}_{\geq 0}$, Soergel's hom formula and the equations in~\eqref{eq:multconst} imply that the latter morphism space is zero for all but finitely many $r\in \mathbb{Z}$, which proves the lemma.
\end{proof}
\noindent The following corollary is an immediate consequence of~\autoref{finprod} and~\autoref{lem:homfiniteness}.
\begin{cor}\label{cor:homfiniteness}
For any $i\in\{0,1\}$, and $k,t\in \mathbb{Z}$, and $m\in \mathbb{Z}_{\geq 0}$, there is an isomorphism 
\begin{equation}\label{eq:homfiniteness2}
\mathrm{hom}_{K^b(\Sextdiamond_2)}\left(\rB_{\rho^k}\rB_{\raltseq{m}{i}}\langle t\rangle, \rY\right)\cong 
\bigoplus_{r,s\in \mathbb{Z}} \mathrm{hom}_{K^b(\Sext_2)}\left(\rB_{\rho^k}\rB_{\raltseq{m}{i}}\langle t\rangle, \rY^{r,s}\right).
\end{equation}
\end{cor}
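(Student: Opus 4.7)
The plan is to derive this corollary as a direct application of the two preceding results, \autoref{finprod} and \autoref{lem:homfiniteness}, after identifying the corresponding data. First, I would recall that, by the very definition of $\rY$ in terms of $\Psi_{1,1}$ applied to the algebra object $\rX\boxtimes\rX$ in $\Sextdiamond_1\boxtimes\Sextdiamond_1$, there is a canonical isomorphism
\[
\rY \;\cong\; \coprod_{(r,s)\in \mathbb{Z}^2} \rY^{r,s}
\]
in $K^b(\Sextdiamond_2)$, where each $\rY^{r,s}=(\rB_\rho\rT_1)^r(\rT_1^{-1}\rB_\rho)^s$ belongs to $K^b(\Sext_2)$. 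So $\rY$ is genuinely a countable coproduct in the coproduct completion of $K^b(\Sext_2)$, and the situation is exactly the one to which \autoref{finprod} applies, with $\cC=K^b(\Sext_2)$, the indexing set $J=\mathbb{Z}^2$, the object $Y_j=\rY^{r,s}$, and $X=\rB_{\rho^k}\rB_{\raltseq{m}{i}}\langle t\rangle$.

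Next, I would verify that the finiteness hypothesis of \autoref{finprod} holds for this choice of $X$ and $(Y_j)_{j\in J}$. This is precisely the content of \autoref{lem:homfiniteness}, which asserts that the hom-space $\mathrm{hom}_{K^b(\Sext_2)}(\rB_{\rho^k}\rB_{\raltseq{m}{i}}\langle t\rangle, \rY^{r,s})$ vanishes for all but finitely many pairs $(r,s)\in \mathbb{Z}^2$. So no further work is needed here; \autoref{lem:homfiniteness} is used exactly as a black box.

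Finally, invoking \autoref{finprod} with these identifications yields
\[
\mathrm{hom}_{K^b(\Sextdiamond_2)}\!\left(\rB_{\rho^k}\rB_{\raltseq{m}{i}}\langle t\rangle,\, \rY\right) \;\cong\; \coprod_{(r,s)\in \mathbb{Z}^2} \mathrm{hom}_{K^b(\Sext_2)}\!\left(\rB_{\rho^k}\rB_{\raltseq{m}{i}}\langle t\rangle,\, \rY^{r,s}\right),
\]
and because the vanishing from \autoref{lem:homfiniteness} reduces this coproduct of vector spaces to a finite one, it coincides with the direct sum $\bigoplus_{r,s\in \mathbb{Z}}$, which is exactly \eqref{eq:homfiniteness2}. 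There is no real obstacle here; the only subtlety worth mentioning explicitly is checking that the coproduct in $\cpl{K^b(\Sext_2)}$ used to define $\rY$ matches the one to which \autoref{finprod} refers, so that the abstract statement applies verbatim. Once that bookkeeping is in place, the corollary follows with no additional computation.
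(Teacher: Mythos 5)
Your proposal is correct and is exactly the argument the paper intends: the corollary is stated to be "an immediate consequence" of \autoref{finprod} and \autoref{lem:homfiniteness}, and you have simply unpacked that one-line proof, including the right identification $\rY\cong\coprod_{(r,s)}\rY^{r,s}$ and the verification of the finiteness hypothesis. The bookkeeping subtlety you flag at the end — that the hom-space on the left lives in $K^b(\Sextdiamond_2)$ rather than directly in $\cpl{K^b(\Sext_2)}$ — is handled by the fully faithful embedding $\cop{K^b(\cC)}\hookrightarrow K^b(\cop{\cC})$ of \autoref{functorextend}, so your appeal to \autoref{finprod} applies verbatim.
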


We are now going to define a graded wide finitary birepresentation $\mathbf{U}$ of $\Sext_2$ and the rest of this section after the definition will be dedicated to the proof that it categorifies $U$.
\begin{defn}
Let 
\[
\mathbf{U}:=\mathbf{M}_\rY
\]
be the wide finitary birepresentation of $\Sext_2$ generated by $\rY$, as defined in \autoref{lem:widefineadd}.
\end{defn}
\autoref{finprod} and the isomorphism in~\eqref{freeforg} 
imply that
\begin{equation}\label{eq:free-forget}
\mathrm{hom}_{\rY}(\mathrm{A}\rY, \rB\rY) \cong \mathrm{hom}_{K^b(\Sextdiamond_2)}(\mathrm{A}, \rB\rY)
\cong \bigoplus_{r,s\in \mathbb{Z}} \mathrm{hom}_{K^b(\Sext_2)}(\mathrm{A}, \rB\rY^{r,s}), \quad \mathrm{A}, \rB\in \Sext_2,  
\end{equation}
where for simplicity we write $\mathrm{hom}_{\rY}(\mathrm{A}\rY, \rB\rY)=\mathrm{hom}_{\rmod_{K^b(\Sextdiamond_2)}\rY}(\mathrm{A}\rY, \rB\rY)$. 
The isomorphisms in~\eqref{eq:free-forget} show that any morphism $f\in 
\mathrm{hom}_{\rY}(\mathrm{A}\rY, \mathrm{B}\rY)$ is determined by its components 
$f^{r,s}\in \mathrm{hom}_{K^b(\Sext_2)}(\mathrm{A}, \mathrm{B}\rY^{r,s})$, for $r,s\in \mathbb{Z}$. 
Suppose $f\colon \rA\rY\to BY$ and $g\colon \rB\rY\to \rC\rY$ are such that their only nonzero components under the free forgetful adjunction are
$f^{r,s}\colon \rA\to \rB\rY^{r,s}$ and $g^{p,q}\colon \rB\to \rC\rY^{p,q}$ .
Then the only nonzero component of $g\circ f\colon \rA\rY\to \rC\rY$ under the free forgetful adjunction, is given by the composition
\begin{equation}\label{eq:free-forget-composition}
\rA \xrightarrow{f^{r,s}} \rB\rY^{r,s} \xrightarrow{g^{p,q}\circh \id_{\rY^{r,s}}} \rC\rY^{p,q}\rY^{r,s} \xrightarrow{\id_\rC\circh \mathbf{m}} \rC\rY^{p+r,q+s}.
\end{equation}
For morphisms with multiple nonzero components, this extends additively.

\begin{prop}\label{prop:homformula}
Let $k,m\in \mathbb{Z}_{\geq 0}$ and $r,s\in \mathbb{Z}$. When $r\in \mathbb{Z}_{>0}$, the morphism spaces  
\begin{gather*}
\mathrm{hom}_{K^b(\Sext_2)}(\rB_{\raltseq{k}{1}}\langle t\rangle, \rB_{\raltseq{m}{1}}\rY^{r,s})\;\,\text{and}\;\, \mathrm{hom}_{K^b(\Sext_2)}(\rB_{\raltseq{k}{0}}
\langle t\rangle, \rB_{\raltseq{m}{1}}\rY^{r,s})
\end{gather*}
are both zero for all $t\geq 0$. 
\end{prop}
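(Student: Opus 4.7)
The strategy is to reduce the statement to a question about a single Rouquier complex of positive crossings and then analyze the resulting hom-complex using Soergel's hom formula together with the explicit KL multiplication rules from \eqref{eq:multconst} and the asymptotic orthonormality of \autoref{thm:asymportho}.

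\emph{Step 1 (Reduction in $(r,s)$).} Since $\rB_{\raltseq{k}{i}}$ and $\rB_{\raltseq{m}{1}}$ have total $\rho$-weight zero, while $\rY^{r,s}$ has $\rho$-weight $r+s$, the hom-space can only be nonzero when $r+s=0$. Hence I may assume $s=-r$, and the case $r>0$ forces $s<0$.

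\emph{Step 2 (Identification of $\rY^{r,-r}$).} Using the relations $\rB_\rho \rT_1 \cong \rT_0 \rB_\rho$ (and its inverses), together with the fact that $\rB_\rho^2$ is central in $K^b(\Sext_2)$, one computes
\[
\rY^{r,-r} \;=\; (\rB_\rho \rT_1)^r\,(\rT_1^{-1} \rB_\rho)^{-r} \;\simeq\; \rT_{\laltseq{2r}{0}},
\]
a Rouquier complex of positive crossings associated with the length-$2r$ alternating rex starting with $s_0$. The problem is thereby reduced to showing
$\hom_{K^b(\Sext_2)}\bigl(\rB_{\raltseq{k}{i}}\langle t\rangle, \rB_{\raltseq{m}{1}}\rT_{\laltseq{2r}{0}}\bigr)=0$ for $t\geq 0$ and $i\in\{0,1\}$.

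\emph{Step 3 (Adjunction).} Invertibility of the Rouquier complex yields
\[
\hom_{K^b}\bigl(\rB_{\raltseq{k}{i}}\langle t\rangle, \rB_{\raltseq{m}{1}}\rT_{\laltseq{2r}{0}}\bigr) \;\cong\; \hom_{K^b}\bigl(\rB_{\raltseq{k}{i}}\rT_{\laltseq{2r}{0}}^{-1}\langle t\rangle, \rB_{\raltseq{m}{1}}\bigr).
\]
The complex on the left is now concentrated in non-positive homological degrees, with degree-zero component $\rB_{\raltseq{k}{i}}\rB_{\raltseq{2r}{0}}$. Taking $H^0$ of the hom-complex into the single object $\rB_{\raltseq{m}{1}}$, I obtain an injection into the ordinary graded Hom from the degree-zero component, and must analyze the higher chain terms' contributions to cancel out the offending low-degree morphisms.

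\emph{Step 4 (Graded computation).} Using Soergel's hom formula \eqref{eq:Soergelhomformula}, each chain-level Hom translates into a Hecke-algebra expression of the form $(b_{\raltseq{k}{i}}\,b_u\,,\,b_{\raltseq{m}{1}})$ for $u$ running through the elements appearing in the Bott-Samelson expansion of $\rT_{\laltseq{2r}{0}}^{-1}$. The products $b_{\raltseq{k}{i}}b_u$ are computed via the multiplication formulas \eqref{eq:multconst}, and the asymptotic orthonormality of \autoref{thm:asymportho} then pinpoints the lowest polynomial degree in each resulting inner product. Combining these, the alternating sum (= Euler characteristic $(b_{\raltseq{k}{i}}, b_{\raltseq{m}{1}}T_{\laltseq{2r}{0}})$) lies in $q\bZ[q]$, and a careful accounting shows that the differentials in the Rouquier complex kill all potentially surviving morphisms in polynomial degree $\leq 0$.

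\emph{Main obstacle.} The core difficulty lies in Step~4: the naive bound coming from the graded rank of $\Hom(\rB_{\raltseq{k}{i}}\rB_{\raltseq{2r}{0}},\rB_{\raltseq{m}{1}})$ is not itself in $q\bZ[q]$ in general, because by \autoref{lem:innerproduct} self-inner-products contribute a constant $1$ when a KL summand matches $\raltseq{m}{1}$ exactly. The vanishing of $H^0$ in degrees $\leq 0$ must therefore be witnessed by the differentials of $\rT_{\laltseq{2r}{0}}^{-1}$, which requires a careful tracking of how the degree-$0$ candidate morphisms factor through the neighboring chain object — this bookkeeping, driven by the controlled polynomial shifts appearing in each homological degree of the minimal Rouquier complex, is the heart of the argument.
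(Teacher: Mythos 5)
Your Steps 1--3 are correct and match the paper's setup: the constraint $r+s=0$ forces $\rY^{r,-r}\simeq \rT_{\raltseq{2r}{1}}=(\rT_0\rT_1)^r$, and adjunction reduces the question to showing that the degree-zero cohomology of a hom-complex vanishes in non-negative polynomial degrees. (You phrase the adjunction dually — moving the Rouquier complex onto the source rather than moving $\rB_{\raltseq{m}{1}}$ onto it — but that is only cosmetic.) The genuine gap is exactly where your \emph{Main obstacle} paragraph candidly places it: Step 4 is a plan, not a proof.

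The Euler-characteristic observation cannot by itself give vanishing of $H^0$ at polynomial degree $0$; as you note, individual chain-level graded ranks do contain a constant term (from the $\rB_{\lraltseq{2r\pm 1}{1}{1}}\langle -1\rangle$ summands), and cancellation in an alternating sum is compatible with $H^0\ne 0$. The actual argument requires the explicit structure of the minimal complex $[(\rT_0\rT_1)^r]^{\min}$ together with its degree-zero differential $\mathrm{d}^0$, which is built from Jones--Wenzl projectors. One must identify the finitely many candidate degree-$0$ morphisms (exactly two, one from each of $\rB_{\lraltseq{2r-1}{1}{1}}\langle -1\rangle$ and $\rB_{\lraltseq{2r+1}{1}{1}}\langle -1\rangle$ into $\rB_{\raltseq{2r}{1}}$, each given by a JW diagram with one dot), and then check directly that postcomposition with $\mathrm{d}^0$ — which adds an extra dot — produces a nonzero morphism, using the known nonvanishing of fully dotted JW diagrams. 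That is the content your ``careful tracking'' and ``bookkeeping'' gesture at but do not carry out; the vanishing is witnessed by a specific diagrammatic nonvanishing statement, not by a purely numerical (degree-shift) accounting. This also needs the multiplicities of the homological-degree-one chain objects of the minimal Rouquier complex, which you would have to derive (e.g.\ from the KL coefficients in \eqref{eq:regular-KL}), since they are not spelled out in the Elias--Williamson source.

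So: your outline is the right one and matches the paper's strategy, but the heart of Step 4 — the explicit JW-based computation that kills the degree-$0$ candidate cocycles — is missing, and your own ``Main obstacle'' correctly acknowledges this is where all the work lies.
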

\begin{proof}
We only prove the first case, the proof of the second one being similar. Biadjointness and the isomorphism $\rB_{\raltseq{m}{1}}^*\cong  \rB_{\laltseq{m}{1}}$ imply that 
\[
\mathrm{hom}_{K^b(\Sext_2)}(\rB_{\raltseq{k}{1}}\langle t\rangle, \rB_{\raltseq{m}{1}}\rY^{r,s})\cong \mathrm{hom}_{K^b(\Sext_2)}(\rB_{\laltseq{m}{1}}
\rB_{\raltseq{k}{1}}\langle t\rangle, \rY^{r,s}).
\]
By \cite[Theorem 2.5]{mackaay-thiel}, that hom-space is zero if $r+s\ne 0$, so suppose that $r+s=0$. Then 
\[
\rY^{r,s}\cong (\rT_0 \rT_1)^r.
\]
Let $r>0$. By \cite[Section 9.1]{AMRW} (see also \cite[Theorem 6.9]{elias-williamson-1} and \cite[Theorem 19.47]{e-m-t-w}), the minimal complex 
$[(\rT_0 \rT_1)^r]^{\mathrm{min}}$ is of the form 
\begin{gather}\label{eq:mincomplex1} 
\underline{\rB_{\raltseq{2r}{1}}} 
\xlongrightarrow{\mathrm{d}^0} \rB_{\raltseq{2r-1}{0}}\langle 1\rangle \oplus 
\rB_{\raltseq{2r-1}{1}}\langle 1 \rangle \longrightarrow\ldots, 
\end{gather}
where the underlined cochain object has homological degree zero and the differential $\mathrm{d}^{0}$ is given by 
\begin{equation}\label{eq:diff}
\mathrm{d}^0=
\begin{pmatrix}
\xy (0,.05)*{
\tikzdiagc[scale=0.4,yscale=1]{
  \draw[ultra thick,violet] (-1.5,-2)node[below]{\tiny $0$} -- (-1.5,-.75);
  \draw[ultra thick,violet] (-1.5,.75) -- (-1.5,2)node[above]{\tiny $0$};
  \draw[ultra thick,blue] (1.5,-2)node[below]{\tiny $1$} -- ( 1.5,-.75);
   \draw[ultra thick,blue] (1.5,.75) -- ( 1.5,1.75)node[pos=1, tikzdot]{} ;
 \draw[ultra thick,blue] (-1,-2)node[below]{\tiny $1$}  --  (-1,-.75); 
 \draw[ultra thick,blue] (-1,.75) --  (-1,2)node[above]{\tiny $1$};
  \draw[ultra thick,violet] (1,-2)node[below]{\tiny $0$}  --  (1,-.75); 
    \draw[ultra thick,violet] (1,.75)--  (1,2)node[above]{\tiny $0$}; 
    \node at (0,1.5) {$\cdots$};
     \node at (0,-1.5) {$\cdots$};
  \draw [draw=black] (-2.25,-.75) rectangle (2.25,.75);
  \node at (0,0) {\tiny $\mathrm{JW}_{(0,\ldots,1)}$};
}}\endxy
\; , \;
\xy (0,-.9)*{
\tikzdiagc[scale=0.4,yscale=1]{
  \draw[ultra thick,violet] (-1.5,-2)node[below]{\tiny $0$} -- (-1.5,-.75);
  \draw[ultra thick,violet] (-1.5,.75) -- (-1.5,1.75)node[pos=1, tikzdot]{};
  \draw[ultra thick,blue] (1.5,-2)node[below]{\tiny $1$} -- ( 1.5,-.75);
   \draw[ultra thick,blue] (1.5,.75) -- ( 1.5,2)node[above]{\tiny $1$};
 \draw[ultra thick,blue] (-1,-2)node[below]{\tiny $1$}  --  (-1,-.75); 
 \draw[ultra thick,blue] (-1,.75) --  (-1,2)node[above]{\tiny $1$};
  \draw[ultra thick,violet] (1,-2)node[below]{\tiny $0$}  --  (1,-.75); 
    \draw[ultra thick,violet] (1,.75)--  (1,2)node[above]{\tiny $0$}; 
    \node at (0,1.5) {$\cdots$};
     \node at (0,-1.5) {$\cdots$};
  \draw [draw=black] (-2.25,-.75) rectangle (2.25,.75);
  \node at (0,0) {\tiny $\mathrm{JW}_{(0,\ldots,1)}$};
}}\endxy
\end{pmatrix}^T,
\end{equation}
where $\mathrm{JW}_{(0,\ldots,1)}$ is the idempotent in $\mathrm{end}_{\Sext_2}(\rB_0\cdots\rB_1)$ obtained from the Jones-Wenzl projector 
$\mathrm{JW}_{2r}$ by the quantum Satake correspondence, as in~\cite[Section 5.3.2]{elias2016} (see also \cite[Section 9.3]{e-m-t-w}), 
and $T$ denotes the transpose. Note that Elias and Williamson do not give the multiplicities of the indecomposables in positive 
homological degree, but in affine type $A_1$ 
both multiplicities in homological degree one can be deduced from the corresponding coefficients in~\eqref{eq:regular-KL}.
On the other hand, the formulas in \eqref{eq:multconst} imply that the decomposition of $\rB_{\laltseq{m}{1}}\rB_{\raltseq{k}{1}}\langle t\rangle$ only contains indecomposables of the form $\rB_{\lraltseq{n}{1}{1}}\langle t+v \rangle$, with $0\leq n\leq k+m$ and $v\in
\{-1,0,1\}$. The parity conditions for $\lraltseq{n}{1}{1}$ amount to $n$ being odd. 
Any non-zero morphism of complexes in $\mathrm{hom}_{K^b(\Sext_2)}(\rB_{\laltseq{m}{1}}\rB_{\raltseq{k}{1}}\langle t\rangle, [(\rT_0\rT_1)^r]^{\mathrm{min}})$ is given by a non-zero morphism in the morphism space 
$\mathrm{hom}_{\Sext_2}(\rB_{\laltseq{m}{1}}\rB_{\raltseq{k}{1}}\langle t\rangle, \rB_{\raltseq{2r}{1}})$ that is annihilated by postcomposition with the differential $\mathrm{d}^0$. The latter morphism space is the direct sum of morphism spaces whose sources are the indecomposables 
of the form $\rB_{\lraltseq{n}{1}{1}}\langle t+v \rangle$, with $0\leq n\leq k+m$ and $v\in
\{-1,0,1\}$, and Soergel's hom formula in 
\eqref{eq:Soergelhomformula} and \autoref{lem:innerproduct} imply that 
the graded rank of $\mathrm{Hom}_{\Sextstar_2}(\rB_{\lraltseq{n}{1}{1}}, \rB_{\raltseq{2r}{1}})$ is equal to 
\begin{equation}\label{eq:laurentpol}
\left(b_{\lraltseq{n}{1}{1}}, b_{\lraltseq{2r}{0}{1}}\right)=q^{\vert n-2r\vert}p'(q),  
\end{equation}
where $p'(q)\in \mathbb{Z}_{\geq 0}[q]$ has constant term equal to one. This implies that the morphism space 
$\mathrm{hom}_{\Sext_2}(\rB_{\lraltseq{n}{1}{1}}\langle t+v\rangle, \rB_{\raltseq{2r}{1}})$ is zero unless $t+v =-\vert n-2r\vert$. Suppose that 
$t\geq 0$ and recall that $v\in \{-1,0,1\}$. Since $n$ must be odd, we also see that $\vert n-2r\vert>0$, so $t+v =-\vert n-2r\vert$ can only 
hold if $t=0$, $v=-1$ and $\vert n-2r\vert =1$. In other words, the only non-zero morphism spaces are  
\[
\mathrm{hom}_{\Sext_2}(\rB_{\lraltseq{2r-1}{1}{1}}\langle -1\rangle, 
\rB_{\lraltseq{2r}{0}{1}})  \quad \text{and}\quad \mathrm{hom}_{\Sext_2}(\rB_{\lraltseq{2r+1}{1}{1}}\langle -1\rangle, 
\rB_{\lraltseq{2r}{0}{1}}), 
\]
which are both one-dimensional and generated by
\begin{equation}\label{eq:homgens}
\xy (0,.05)*{
\tikzdiagc[scale=0.4,yscale=1]{
  \draw[ultra thick,violet] (-1.5,-1.75) -- (-1.5,-.75)node[pos=0, tikzdot]{};
  \draw[ultra thick,violet] (-1.5,.75) -- (-1.5,2)node[above]{\tiny $0$};
  \draw[ultra thick,blue] (1.5,-2)node[below]{\tiny $1$} -- ( 1.5,-.75);
   \draw[ultra thick,blue] (1.5,.75) -- ( 1.5,2)node[above]{\tiny $1$};
 \draw[ultra thick,blue] (-1,-2)node[below]{\tiny $1$}  --  (-1,-.75); 
 \draw[ultra thick,blue] (-1,.75) --  (-1,2)node[above]{\tiny $1$};
  \draw[ultra thick,violet] (1,-2)node[below]{\tiny $0$}  --  (1,-.75); 
    \draw[ultra thick,violet] (1,.75)--  (1,2)node[above]{\tiny $0$}; 
    \node at (0,1.5) {$\cdots$};
     \node at (0,-1.5) {$\cdots$};
  \draw [draw=black] (-2.25,-.75) rectangle (2.25,.75);
  \node at (0,0) {\tiny $\mathrm{JW}_{(0,\ldots,1)}$};
}}\endxy
\quad \text{and} \quad 
\xy (0,-.9)*{
\tikzdiagc[scale=0.4,yscale=1]{
  \draw[ultra thick,blue] (-1.5,-2)node[below]{\tiny $1$} -- (-1.5,-.75);
  \draw[ultra thick,blue] (-1.5,.75) -- (-1.5,1.75)node[pos=1, tikzdot]{};
  \draw[ultra thick,blue] (1.5,-2)node[below]{\tiny $1$} -- ( 1.5,-.75);
   \draw[ultra thick,blue] (1.5,.75) -- ( 1.5,2)node[above]{\tiny $1$};
 \draw[ultra thick,violet] (-1,-2)node[below]{\tiny $0$}  --  (-1,-.75); 
 \draw[ultra thick,violet] (-1,.75) --  (-1,2)node[above]{\tiny $0$};
  \draw[ultra thick,violet] (1,-2)node[below]{\tiny $0$}  --  (1,-.75); 
    \draw[ultra thick,violet] (1,.75)--  (1,2)node[above]{\tiny $0$}; 
    \node at (0,1.5) {$\cdots$};
     \node at (0,-1.5) {$\cdots$};
  \draw [draw=black] (-2.25,-.75) rectangle (2.25,.75);
  \node at (0,0) {\tiny $\mathrm{JW}_{(1,\ldots,1)}$};
}}\endxy\;,
\end{equation}
respectively. However, neither of these two morphisms extends to a map of complexes whose 
target is $[(\rT_0\rT_1)^r]^{\mathrm{min}}$. To see this, note that postcomposition with 
$\mathrm{d}^0$ adds an extra dot to the rightmost, resp. leftmost, top strand of the morphisms in~\eqref{eq:homgens}. These morphisms are all 
non-zero, because even with dots on all strands such morphisms are non-zero, see e.g.~\cite[Section 5.3.4]{elias2016} 
and~\cite[(10.8i) and (10.9)]{e-m-t-w}. 
This finishes the proof that 
\[
\mathrm{hom}_{K^b(\Sext_2)}(\rB_{\raltseq{k}{1}}\langle t\rangle, \rB_{\raltseq{m}{1}}\rY^{r,s})=0,\quad \text{if}\;\, r>0\;\text{and}\; t\geq 0.
\qedhere
\]
\end{proof}

\begin{cor}\label{cor:indecomp}
For all $k\in \mathbb{Z}_{\geq 0}$, the objects $\rB_{\raltseq{k}{1}}\rY$ and $\rB_\rho \rB_{\raltseq{k}{1}}\rY$ are indecomposable in 
$\mathbf{U}$. 
\end{cor}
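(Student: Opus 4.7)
The plan is to prove indecomposability by showing that the graded endomorphism ring $\mathrm{End}_{\mathbf{U}}(\rB_{\raltseq{k}{1}}\rY)$ is a finite-dimensional, non-negatively graded $\mathbb{R}$-algebra with one-dimensional degree-zero part $\mathbb{R}\cdot\mathrm{id}$. Since $\mathbf{U}$ is Krull--Schmidt (being wide finitary by \autoref{lem:widefineadd}) and any positively graded finite-dimensional algebra whose degree-zero part is a field is automatically local (its positive part is a nilpotent ideal with residue field $\mathbb{R}$), this is enough.

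By the free--forget isomorphism~\eqref{eq:free-forget} combined with the Mackaay--Thiel vanishing \cite[Theorem 2.5]{mackaay-thiel} (forcing $r+s=0$ on non-zero summands),
\[
\mathrm{End}_{\mathbf{U}}(\rB_{\raltseq{k}{1}}\rY) \cong \bigoplus_{r\in\mathbb{Z}}\mathrm{Hom}_{K^b(\Sext_2)}(\rB_{\raltseq{k}{1}}, \rB_{\raltseq{k}{1}}\rY^{r,-r}).
\]
The $r=0$ summand is $\mathrm{Hom}(\rB_{\raltseq{k}{1}}, \rB_{\raltseq{k}{1}})$, which by Soergel's hom formula~\eqref{eq:Soergelhomformula} and indecomposability of $\rB_{\raltseq{k}{1}}$ in $\Sext_2$ is non-negatively graded with one-dimensional degree-zero part. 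For $r>0$, taking $m=k$ in \autoref{prop:homformula} yields $\mathrm{hom}(\rB_{\raltseq{k}{1}}\langle t\rangle, \rB_{\raltseq{k}{1}}\rY^{r,-r})=0$ for all $t\geq 0$, which kills this summand in internal degrees $s\leq 0$.

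The case $r<0$ is not directly covered by \autoref{prop:homformula}, so set $r'=-r>0$. Using invertibility of $\rY^{r,-r}$ in $K^b(\Sext_2)$ together with biadjointness gives
\[
\mathrm{hom}(\rB_{\raltseq{k}{1}}, \rB_{\raltseq{k}{1}}\rY^{r,-r}\langle s\rangle)\cong \mathrm{hom}(\rY^{-r,r}, \rB_{\laltseq{k}{1}}\rB_{\raltseq{k}{1}}\langle s\rangle).
\]
The Rouquier commutation $\rB_\rho\rT_i\cong\rT_{i+1}\rB_\rho$ and centrality of $\rB_\rho^2$ yield $\rY^{-r,r}\cong (\rT_1^{-1}\rT_0^{-1})^{r'}$, whose minimal complex (dual to the one in the proof of \autoref{prop:homformula}) is supported in non-positive homological degrees with degree-zero cochain $\rB_{\laltseq{2r'}{1}}=\rB_{\lraltseq{2r'}{1}{0}}$. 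A chain map from this complex to $\rB_{\laltseq{k}{1}}\rB_{\raltseq{k}{1}}\langle s\rangle$ (concentrated in homological degree zero) has only its degree-zero component non-trivial, and admits no non-trivial nullhomotopies because the source vanishes in positive homological degrees; so the $K^b$-hom injects into $\mathrm{hom}(\rB_{\lraltseq{2r'}{1}{0}}, \rB_{\laltseq{k}{1}}\rB_{\raltseq{k}{1}}\langle s\rangle)$. Decomposing $\rB_{\laltseq{k}{1}}\rB_{\raltseq{k}{1}}$ via \eqref{eq:multconst} into summands $\rB_{\lraltseq{2j+1}{1}{1}}$ and applying \autoref{lem:innerproduct} gives $(b_{\lraltseq{2r'}{1}{0}}, b_{\lraltseq{2j+1}{1}{1}})\in q^{|2r'-2j-1|}\mathbb{Z}[q]$ with $|2r'-2j-1|\geq 1$, so this Soergel hom vanishes in degrees $s\leq 0$ and the summand contributes only in strictly positive degrees.

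Assembling the three cases establishes the desired structural statement about $\mathrm{End}_{\mathbf{U}}(\rB_{\raltseq{k}{1}}\rY)$, proving indecomposability of $\rB_{\raltseq{k}{1}}\rY$. For $\rB_\rho\rB_{\raltseq{k}{1}}\rY$, invertibility of $\rB_\rho$ in $K^b(\Sext_2)$ makes left-tensoring by $\rB_\rho$ an autoequivalence of $\rmod_{K^b(\Sextdiamond_2)}\rY$, so indecomposability transfers; equivalently, the free--forget formula directly yields $\mathrm{End}_{\mathbf{U}}(\rB_\rho\rB_{\raltseq{k}{1}}\rY)\cong \mathrm{End}_{\mathbf{U}}(\rB_{\raltseq{k}{1}}\rY)$ after cancelling $\rB_\rho^{-1}\rB_\rho$. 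The main obstacle is precisely the $r<0$ analysis, where the argument of \autoref{prop:homformula} fails verbatim and one must instead combine invertibility of the $\rY^{r,s}$, the Rouquier commutation, and the inner-product estimates of \autoref{lem:innerproduct}.
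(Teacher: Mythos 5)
Your plan---show that $\mathrm{End}_{\mathbf{U}}(\rB_{\raltseq{k}{1}}\rY)$ is a finite-dimensional, non-negatively graded $\mathbb{R}$-algebra with degree-zero part $\mathbb{R}\cdot\mathrm{id}$, and deduce locality---cannot succeed for $k\geq 2$, because the structural claim is false. This is precisely the content of \autoref{rem:counterex}: for $k=2$ and $r=-1$ the paper exhibits a nonzero element of $\mathrm{hom}_{K^b(\Sext_2)}(\rB_{01},\rB_{01}\rY^{-1,1})$ in internal degree zero (coming from the summand $\rB_{101}\langle -1\rangle$ of $\rB_{10}\rB_{01}$), hence a nonzero degree-zero morphism in $\mathrm{hom}_{\rY}(\rB_{01}\rY,\rB_{01}\rY)$ that is not a scalar multiple of the identity. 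So the degree-zero part of the endomorphism algebra is not one-dimensional, and the grading-forces-locality argument does not apply.

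There are also two concrete errors in your $r<0$ step. First, the identification $\rY^{-r,r}\cong(\rT_1^{-1}\rT_0^{-1})^{r'}$ (with $r'=-r>0$) is wrong: since $\rY^{1,-1}\cong\rT_0\rT_1$ and the factors commute, $\rY^{-r,r}=\rY^{r',-r'}\cong(\rT_0\rT_1)^{r'}$, a positive Rouquier power whose minimal complex is supported in non-negative homological degrees with degree-zero cochain object $\rB_{\lraltseq{2r'}{0}{1}}$, not $\rB_{\lraltseq{2r'}{1}{0}}$. Second, because that source lives in non-negative homological degrees while $\rB_{\laltseq{k}{1}}\rB_{\raltseq{k}{1}}\langle s\rangle$ is concentrated in degree zero, the $K^b$-hom is a quotient of the degree-zero Soergel hom by the image of nullhomotopies (maps factoring through $d^0$ of the source), not a subspace---the opposite of what you assert. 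The inner-product estimate alone therefore does not yield the vanishing you need, and \autoref{rem:counterex} confirms that the space is genuinely nonzero in some cases.

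The paper sidesteps all of this by filtering by the first Laurent index $r$ rather than by internal degree. Writing an idempotent $e$ of $\rB_{\raltseq{k}{1}}\rY$ in terms of its components $e^{r,-r}$, one uses \cite[Theorem 2.5]{mackaay-thiel} and \autoref{prop:homformula} to kill $e^{r,-r}$ for $r>0$; the single component $e^{0,0}$ must be a scalar multiple of the identity by Soergel's hom formula; and since the surviving indices $r$ are non-positive and add under composition, a downward induction through the idempotent equations forces $e^{0,0}=\mathrm{id}$ and $e^{r,-r}=0$ for all $r<0$. This uses one-dimensionality only at the single index $(0,0)$ rather than across the whole internal degree-zero piece, and is therefore immune to the extra degree-zero morphisms that defeat your approach. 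The transfer to $\rB_\rho\rB_{\raltseq{k}{1}}\rY$ via adjunction with $\rB_{\rho^{-1}}$ you describe is essentially the same as the paper's.
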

\begin{proof}
The isomorphisms in~\eqref{eq:free-forget} show that any non-zero idempotent 
$e\in\mathrm{hom}_{\rY}(\rB_{\raltseq{k}{1}}\rY, \rB_{\raltseq{k}{1}}\rY)$ is completely 
determined by its components $e^{r,s}\in \mathrm{hom}_{K^b(\Sext_2)}(\rB_{\raltseq{k}{1}}, \rB_{\raltseq{k}{1}}\rY^{r,s})$, for $r,s\in \mathbb{Z}$, 
and that only finitely many components are non-zero. By \cite[Theorem 2.5]{mackaay-thiel} and 
\autoref{prop:homformula}, we have $e^{r,s}=0$ unless $r+s=0$ and $r\leq 0$. Furthermore, $e^{0,0}$ must be a multiple of 
the identity on $\rB_{\raltseq{k}{1}}$, by Soergel's hom formula in~\eqref{eq:Soergelhomformula}. By a slight abuse of notation, justified by 
\autoref{lem:freeforgcoord}, let us write $e=e^{r_1,-r_1}+\ldots + e^{r_m,-r_m}$, for some $r_i\in \mathbb{Z}_{\leq 0}$ and $m\in \mathbb{Z}_{>0}$. Then $e^2=e$ if and only if 
\begin{equation}\label{eq:idempotent}
\sum_{\substack{1\leq i,j\leq m \\[.3ex] r_i+r_j=r_k}} e^{r_i,-r_i}\circ e^{r_j,-r_j}=e^{r_k,-r_k},\;\text{for all}\; 1\leq k\leq m.
\end{equation}

First, assume that $e^{0,0}=0$ and let $r_{\max}$ be the maximal $r\in \mathbb{Z}_{<0}$ such that $e^{r,-r}\ne 0$. Then the equation in \eqref{eq:idempotent} for $r_k=r_{\max}$ has no solution, which shows that $e^2= e$ cannot hold. 

Next, assume that $e^{0,0}\ne 0$. Without loss of 
generality, we may assume that $r_1=0$ and $r_i<0$ for all $2\leq i\leq m$. Then the equations in \eqref{eq:idempotent} can only hold if 
$e^{0,0}=\mathrm{id}_{\rB_{\raltseq{k}{1}}}$ and 
\begin{gather}\label{eq:idempotent2}
e^{r_k,-r_k} + \sum_{\substack{i,j\in \{2,\ldots, \hat{k},\ldots, m\} \\[.3ex] r_i+r_j=r_k}} e^{r_i,-r_i}\circ e^{r_j,-r_j}=0, \;\text{for all}\; 2\leq k\leq m.
\end{gather}
In turn, the equations in \eqref{eq:idempotent2} can only hold if $e^{r_k,-r_k}=0$ for all $2\leq k\leq m$, as can be seen by first considering  
the maximal $r_{k}<0$ and then working one's way down until the minimal $r_k<0$. This shows that $e^2=e$ holds if and only if 
$e=\mathrm{id}_{\rB_{\raltseq{k}{1}}\rY}$, hence $\rB_{\raltseq{k}{1}}\rY$ is indecomposable in $\mathbf{U}$.

Since $\rB_{\rho}^*\cong \rB_{\rho^{-1}}$, we have 
\[
\mathrm{hom}_{\rY}(\rB_\rho \rB_{\raltseq{k}{1}}\rY, \rB_\rho \rB_{\raltseq{k}{1}}\rY)\cong \mathrm{hom}_{\rY}(\rB_{\raltseq{k}{1}}\rY, \rB_{\raltseq{k}{1}}\rY),
\]
which implies that $\rB_\rho \rB_{\raltseq{k}{1}}\rY$ is also indecomposable in $\mathbf{U}$, by the arguments above. 
\end{proof}

\begin{rem}\label{rem:counterex}
When $r<0$, the morphism space $\mathrm{hom}_{K^b(\Sext_2)}(\rB_{\raltseq{k}{1}}, \rB_{\raltseq{m}{1}}\rY^{r,s})$ can be non-zero. 
Take $k=m=2$ and $r=-1$, for example. Then $\rY^{-1,+1}\cong (\rT_0 \rT_1)^{-1}\cong \rT_1^{-1} \rT_0^{-1}$ and 
\[
\mathrm{hom}_{K^b(\Sext_2)}(\rB_{01}, \rB_{01}\rT_1^{-1} \rT_0^{-1})\cong \mathrm{hom}_{K^b(\Sext_2)}(\rB_{10}\rB_{01}, \rT_1^{-1} \rT_0^{-1}) 
\]
On the one hand, the complex $\rT_1^{-1} \rT_0^{-1}$ (which is already minimal) is given by 
\begin{gather}\label{eq:mincomplex3} 
 R\langle -2\rangle \xlongrightarrow{d^{-2}} \rB_0 \langle -1\rangle \oplus 
\rB_1\langle -1 \rangle \xlongrightarrow{d^{-1}}  \underline{\rB_{10}}, 
\end{gather}
with $d^{-2}$ and $d^{-1}$ given by 
\begin{equation}\label{eq:diff2}
\mathrm{d}^{-2}=
\begin{pmatrix}
-\,\xy (0,2)*{
\tikzdiagc[scale=0.4,yscale=1]{
  \draw[ultra thick,violet] (-1.5,-.75) -- (-1.5,.75)node[above]{\tiny $0$}node[pos=0, tikzdot]{};
}}\endxy
 , 
\xy (0,2)*{
\tikzdiagc[scale=0.4,yscale=1]{
  \draw[ultra thick,blue] (-1.5,-.75)-- (-1.5,.75)node[above]{\tiny $1$}node[pos=0, tikzdot]{};
}}\endxy
\end{pmatrix}^T
\quad
\text{and}
\quad
\mathrm{d}^{-1}=
\begin{pmatrix}
\xy (0,2)*{
\tikzdiagc[scale=0.4,yscale=1]{
  \draw[ultra thick,blue] (-1.5,-.75) -- (-1.5,.75)node[above]{\tiny $1$}node[pos=0, tikzdot]{};
   \draw[ultra thick,violet] (-1,-1) -- (-1,.75)node[above]{\tiny $0$};
}}\endxy
 , 
\xy (0,1)*{
\tikzdiagc[scale=0.4,yscale=1]{
\draw[ultra thick,blue] (-1.5,-1)-- (-1.5,.75)node[above]{\tiny $1$};
  \draw[ultra thick,violet] (-1,-.75)-- (-1,.75)node[above]{\tiny $0$}node[pos=0, tikzdot]{};
}}\endxy
\end{pmatrix}
\end{equation}
On the other hand, we have 
\begin{equation}\label{eq:indecomp}
\rB_{10}\rB_{01}\cong \rB_{101}\langle 1\rangle \oplus \rB_{101}\langle -1\rangle \oplus \rB_1 \langle 1\rangle \oplus \rB_1 \langle -1\rangle.
\end{equation}
Any non-zero morphism in $\hom_{\Sext_2}(\mathrm{Z}, \rB_{10})$, where $\mathrm{Z}$ is one of the four indecomposables in \eqref{eq:indecomp}, induces a map of complexes $ \mathrm{Z}\to \rT_1^{-1} \rT_0^{-1}$, which can be null-homotopic or not. By Soergel's hom formula in~\eqref{eq:Soergelhomformula}, the morphism spaces 
$\hom_{\Sext_2}(\rB_{101}\langle -1\rangle, \rB_{10})$ and $\hom_{\Sext_2}(\rB_{1}\langle -1\rangle, \rB_{10})$ are both one-dimensional, with 
respective generators given by 
\begin{equation}\label{eq:homgens2}
\xy (0,-.9)*{
\tikzdiagc[scale=0.4,yscale=1]{
  \draw[ultra thick,blue] (-1,-2)node[below]{\tiny $1$} -- (-1,-.75);
  \draw[ultra thick,blue] (-1,.75) -- (-1,2)node[above]{\tiny $1$};
  \draw[ultra thick,blue] (1,-2)node[below]{\tiny $1$} -- (1,-.75);
   \draw[ultra thick,blue] (1,.75) -- (1,2)node[pos=1, tikzdot]{};
 \draw[ultra thick,violet] (0,-2)node[below]{\tiny $0$}  --  (0,-.75); 
 \draw[ultra thick,violet] (0,.75) --  (0,2)node[above]{\tiny $0$};
  \draw [draw=black] (-1.75,-.75) rectangle (1.75,.75);
  \node at (0,0) {\tiny $\mathrm{JW}_{(1,0,1)}$};
}}\endxy\;
\quad \text{and} \quad 
\xy (0,1)*{
\tikzdiagc[scale=0.4,yscale=1]{
\draw[ultra thick,blue] (-1,-1.5)-- (-1,1.5)node[above]{\tiny $1$};
  \draw[ultra thick,violet] (0,-1.25)-- (0,1.5)node[above]{\tiny $0$}node[pos=0, tikzdot]{};
}}\endxy
,
\end{equation}
and the other two possible morphism spaces are both zero. The map of complexes induced by the non-zero morphism in $\hom_{\Sext_2}(\rB_{1}\langle -1\rangle, \rB_{01})$ is null-homotopic, with the homotopy being induced by the identity on $\rB_1 \langle -1\rangle$. The map of complexes induced by the non-zero morphism in $\hom_{\Sext_2}(\rB_{101}\langle -1\rangle, \rB_{10})$ is not null-homotopic, 
because Soergel's hom formula implies that $\hom_{\Sext_2}(\rB_{101}\langle -1\rangle, \rB_{0}\langle -1\rangle)=
\hom_{\Sext_2}(\rB_{101}\langle -1\rangle, \rB_{1}\langle -1\rangle)=0$, so there are no morphisms which could constitute a homotopy. 

Note that this implies that $\mathrm{hom}_{\rY}(\rB_{01}\rY, \rB_{01}\rY)$ contains a non-zero morphism which is not 
a multiple of an idempotent. 
\end{rem}

\begin{thm}\label{thm:mainresultexample} The following holds in $\mathbf{U}$.
\begin{enumerate}[a)]
\item We have 
\[
\rB_\rho \rB_{\raltseq{k}{0}}\rY\cong \rB_{\raltseq{k}{1}} \rB_\rho \rY \cong \rB_{\raltseq{k}{1}}\rY\langle -1\rangle, \quad \forall\, k\in\mathbb{Z}_{>0}.
\]
\item We have 
\[
\rB_{\raltseq{k}{0}}\rY\cong \rB_\rho \rB_{\raltseq{k}{1}}\rY\langle -1\rangle, \quad \forall\, k\in\mathbb{Z}_{>0}.
\]
\item 
The indecomposables 
\[
\rB_{\raltseq{k}{1}}\rY\langle s\rangle,\; \rB_\rho \rB_{\raltseq{m}{1}}\rY\langle t\rangle, \quad k,m\in \mathbb{Z}_{\geq 0}; s,t\in \mathbb{Z}
\] 
are pairwise non-isomorphic.
\end{enumerate}
\end{thm}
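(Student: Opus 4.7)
The overall strategy splits naturally: parts (a) and (b) are algebraic manipulations using the fundamental isomorphisms \eqref{eq:fundamentalisos} together with Gaussian elimination, while part (c) requires a more delicate homological analysis and is the main obstacle.

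For (a), I would obtain the first isomorphism $\rB_\rho \rB_{\raltseq{k}{0}}\rY \cong \rB_{\raltseq{k}{1}}\rB_\rho\rY$ by sliding $\rB_\rho$ past the Bott--Samelson bimodule using $\rB_\rho \rB_i \cong \rB_{1-i}\rB_\rho$ (a consequence of $\rho s_i\rho^{-1} = s_{1-i}$ in $\widehat{\Sy}^{\mathrm{ext}}_2$), which transforms the alternating string $\raltseq{k}{0}$ into $\raltseq{k}{1}$. For the second isomorphism, the plan is to use $\rT_1^{-1}\rB_\rho\rY \cong \rY$ from \eqref{eq:fundamentalisos}, equivalently $\rB_\rho\rY \cong \rT_1\rY$, and then simplify $\rB_{\raltseq{k}{1}}\rT_1$ by Gaussian elimination: writing $\rB_{\raltseq{k}{1}} = \rB_{\raltseq{k-1}{0}}\rB_1$, the rightmost factor $\rB_1\rT_1$ is the two-term complex $\rB_1\rB_1 \to \rB_1\langle 1\rangle$, and using $\rB_1\rB_1 \cong \rB_1\langle 1\rangle \oplus \rB_1\langle -1\rangle$ one sees the differential restricts to an iso on the $\rB_1\langle 1\rangle$ summand, so Gaussian elimination yields $\rB_1\rT_1 \simeq \rB_1\langle -1\rangle$. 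Hence $\rB_{\raltseq{k}{1}}\rB_\rho\rY \simeq \rB_{\raltseq{k-1}{0}}\rB_1\langle -1\rangle\rY = \rB_{\raltseq{k}{1}}\rY\langle -1\rangle$.

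Part (b) would then follow quickly by combining (a) with the third relation in \eqref{eq:fundamentalisos}, $\rB_\rho^2\rY \cong \rY$:
\[
\rB_{\raltseq{k}{0}}\rY \cong \rB_{\raltseq{k}{0}}\rB_\rho^2\rY \cong \rB_\rho\rB_{\raltseq{k}{1}}\rB_\rho\rY \cong \rB_\rho\rB_{\raltseq{k}{1}}\rY\langle -1\rangle,
\]
where the middle step slides one $\rB_\rho$ all the way to the left using the bit-flip relation, and the last step applies part (a).

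The hard part will be (c). The plan is to compute, via the free-forgetful adjunction \eqref{eq:free-forget} and biadjointness,
\[
\hom^0_\rY(\rB_{\raltseq{k}{1}}\rY, \rB_{\raltseq{m}{1}}\rY\langle t\rangle) \cong \bigoplus_{r,s \in \mathbb{Z}} \hom^0_{K^b(\Sext_2)}(\rB_{\laltseq{m}{1}}\rB_{\raltseq{k}{1}}, \rY^{r,s}\langle t\rangle),
\]
and the analogous expressions for the mixed cases. Decomposing $\rB_{\laltseq{m}{1}}\rB_{\raltseq{k}{1}}$ into indecomposables $\rB_{\lraltseq{2j-1}{1}{1}}\langle \pm 1\rangle$ via the Kazhdan--Lusztig multiplication table \eqref{eq:multconst}, and using the minimal complex analysis from the proof of \autoref{prop:homformula} (whose leading term is $\rB_{\raltseq{2r}{1}}$ for $\rY^{r,-r}$) together with Soergel's hom formula \eqref{eq:Soergelhomformula} and the asymptotic orthogonality of \autoref{lem:innerproduct}, one identifies the precise $(r,s)$-components where a morphism of a given degree can live. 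The existence of an isomorphism then has to be tested against the composition formula \eqref{eq:free-forget-composition}, which forces the $(0,0)$-component to contain an invertible piece of $\hom^0(\rB_{\raltseq{k}{1}}, \rB_{\raltseq{m}{1}}\langle t\rangle)$; by Soergel's formula this is possible only when $k = m$ and $t = 0$. For the mixed family ($\rB_{\raltseq{k}{1}}\rY\langle s\rangle$ versus $\rB_\rho\rB_{\raltseq{m}{1}}\rY\langle t\rangle$) the argument should be cleaner: by \cite[Theorem 2.5]{mackaay-thiel} nonzero hom requires the total $\rho$-exponent $r+s$ to have the opposite parity in the two cases, so no morphism between them can be nonzero at all. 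The subtle point, as \autoref{rem:counterex} warns, is that nontrivial morphisms outside the $(0,0)$-component genuinely exist, so one must check carefully that they cannot contribute an unexpected invertible composite.
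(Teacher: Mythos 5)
Your outline of parts (a) and (b), and the ``same-family'' case of part (c), is close in spirit to the paper's proof, but part (a) has a small but genuine error and your treatment of the ``mixed'' case of (c) is not correct; the latter is in fact the hardest point of the theorem.

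In part (a) you write $\rB_{\raltseq{k}{1}} = \rB_{\raltseq{k-1}{0}}\rB_1$, but here $\rB_{\raltseq{k}{1}}$ denotes an \emph{indecomposable} Soergel bimodule, and for $k\geq 3$ one has $\rB_{\raltseq{k-1}{0}}\rB_1\cong \rB_{\raltseq{k}{1}}\oplus \rB_{\raltseq{k-2}{1}}$. Your Gaussian elimination therefore only proves $(\rB_{\raltseq{k}{1}}\oplus\rB_{\raltseq{k-2}{1}})\rB_\rho\rY\cong(\rB_{\raltseq{k}{1}}\oplus\rB_{\raltseq{k-2}{1}})\rY\langle -1\rangle$, not the isomorphism of indecomposable summands. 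The paper fixes exactly this by running an induction on $k$ and invoking the Krull--Schmidt property of $\mathbf{U}$ (which uses \autoref{cor:indecomp}) to cancel the inductively known summand. Alternatively, one can bypass the error entirely by citing the standard fact that $\rB_w\rT_s\simeq\rB_w\langle -1\rangle$ in $K^b(\Sext_2)$ whenever $ws<w$, applied with $w=\raltseq{k}{1}$ and $s=s_1$.

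For part (c), your plan for distinguishing $\rB_{\raltseq{k}{1}}\rY\langle s\rangle$ from $\rB_{\raltseq{m}{1}}\rY\langle t\rangle$ ($k\neq m$) is essentially the paper's, once one notes that the ``$(0,0)$-component must be invertible'' conclusion is only immediate when $t=0$: the vanishing of $f^{r,s}$ for $r>0$ from \autoref{prop:homformula} requires the internal degree of the source to be $\geq 0$, and for $f^{-1}$ this forces $t\leq 0$, so both constraints hold simultaneously only for $t=0$. For $t>0$ the paper argues instead that \emph{all} components $f^{-s,s}$ vanish, which is a separate (if short) argument you would need to supply.

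The real gap is your proposed parity argument for the mixed case $\rB_{\raltseq{k}{1}}\rY\langle s\rangle$ versus $\rB_\rho\rB_{\raltseq{m}{1}}\rY\langle t\rangle$. You claim that ``no morphism between them can be nonzero at all'' because the $\rho$-exponent must satisfy $r+s=0$ in one case and $r+s=-1$ in the other. But $\rY=\coprod_{r,s\in\mathbb{Z}}\rY^{r,s}$ contains summands of \emph{every} $\rho$-exponent, so this constraint merely relocates the $(r,s)$ in which nonzero components may appear --- it does not kill them. Indeed, in $\mathbf{U}$ one has $\rB_\rho^2\rY\cong\rY$, so the naive $\mathbb{Z}/2$ grading by $\rho$-content does not survive, and nontrivial morphisms between the two families do exist. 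The paper shows that they cannot assemble into an isomorphism by a much more delicate argument: after reducing (via part (b)) to showing $\rB_{\raltseq{k}{0}}\rY\not\cong\rB_{\raltseq{m}{1}}\rY\langle t\rangle$, it pins down $t=1$ by a degree analysis, identifies that an isomorphism would force $\rB_{\raltseq{k}{0}}\langle 1\rangle$ to split off $\rB_{\raltseq{m}{1}}\rY^{-s,s}$, applies the localization functor $\mathrm{Loc}$ to constrain $k=1$ and $m=2s$, and finally exhibits explicitly (using the Jones--Wenzl description of the minimal complex differential) that the would-be splitting fails. None of this is captured by the parity shortcut, and it is not clear how to avoid it.
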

\begin{proof}
\begin{enumerate}[wide,labelindent=0pt,label={(\alph*)}]
\item The isomorphism $\rB_1 \rT_1\cong \rB_1\langle -1\rangle$ in $K^b(\Sext_2)$ and the isomorphism $\rB_\rho \rY\cong \rT_1\rY$ in $\mathbf{U}$ 
imply that 
\[
\rB_\rho \rB_{0}\rY\cong \rB_{1} \rB_\rho \rY\cong \rB_1 \rT_1 \rY \cong \rB_1 \rY \langle -1 \rangle\;\,\text{in}\;\, \mathbf{U}. 
\]
The general result, as stated in the theorem, follows by induction and uniqueness of decomposition into indecomposables in $\mathbf{U}$, because  
\[
\rB_{0}\rB_{1} \cong \rB_{01}\quad\text{and}\quad
\rB_{\raltseq{k-1}{0}} \rB_1 \cong \rB_{\raltseq{k}{1}} \oplus \rB_{\raltseq{k-2}{1}}\;\,\text{in}\;\, \Sext_2,
\]
for all $k\in \mathbb{Z}_{> 2}$.
\item The isomorphism in this case can be obtained from the one in the previous case by tensoring with $\rB_\rho$ on the left and using that 
$\rB_\rho^2$ acts as the identity on $\mathbf{U}$. 

\item There are three different cases: we have to show that  
\begin{enumerate}[i)]
\item $\rB_{\raltseq{k}{1}}\rY\langle s \rangle\not\cong \rB_{\raltseq{m}{1}}\rY\langle t \rangle$, for $k,m\in \mathbb{Z}_{\geq 0}$ with $k\ne m$ and $s,t\in \mathbb{Z}$; 
\item $\rB_{\raltseq{k}{1}}\rY\langle s \rangle\not\cong \rB_\rho\rB_{\raltseq{m}{1}}\rY\langle t \rangle$, for $k,m\in \mathbb{Z}_{\geq 0}$ and $s,t\in \mathbb{Z}$;
\item $\rB_\rho\rB_{\raltseq{k}{1}}\rY\langle s \rangle\not\cong\rB_\rho\rB_{\raltseq{m}{1}}\rY\langle t \rangle$, for $k,m\in \mathbb{Z}_{\geq 0}$ with $k\ne m$ and $s,t\in \mathbb{Z}$.
\end{enumerate}
The first case implies the third one, because 
\[
\mathrm{hom}_{\rY}(\rB_\rho \rB_{\raltseq{m}{1}}\rY\langle t\rangle ,\rB_\rho \rB_{\raltseq{k}{1}}\rY)\cong \mathrm{hom}_{\rY}(\rB_{\raltseq{m}{1}} \rY\langle t\rangle ,\rB_{\raltseq{k}{1}}\rY) 
\]
by adjunction, so it suffices to prove the first two cases.

i) Suppose that there is an isomorphism in $\mathbf{U}$ 
\[
f\colon \rB_{\raltseq{k}{1}}\rY\langle t\rangle \to \rB_{\raltseq{m}{1}}\rY
\]
for some $k,m\in \mathbb{Z}_{\geq 0}$ with $k\ne m$ and some $t\in \mathbb{Z}$, which we may assume to be non-negative without loss of generality. As before, 
$f$ is determined by its non-zero components in $K^b(\Sext_2)$
\[
f^{r,s}\colon \rB_{\raltseq{k}{1}}\langle t\rangle \to \rB_{\raltseq{m}{1}}\rY^{r,s}
\]
for a finite number of integers $r,s$. By \cite[Theorem 2.5]{mackaay-thiel} and \autoref{prop:homformula}, this implies that $r+s=0$ and $r\leq 0$. Since $t\geq 0$ and $k\ne m$, both by assumption, 
Soergel's hom formula \eqref{eq:Soergelhomformula} implies that $f^{-s,s}=0$ if $s=0$. Thus, we must have $s>0$ for all non-zero $f^{-s,s}$. Recall that 
$\rY^{-s,s}\simeq (\rT_1^{-1} \rT_0^{-1})^s$ and that the minimal complex $[(\rT_1^{-1} \rT_0^{-1})^s]^{\mathrm{min}}$ is given by 
\[
\cdots \xlongrightarrow{}\rB_{\raltseq{s-1}{0}}\langle -1 \rangle\oplus \rB_{\raltseq{s-1}{1}}\langle -1\rangle
\xlongrightarrow{\mathrm{d}^{-1}}\rB_{\raltseq{s}{0}}.
\]
By adjunction, we also have 
\[
\mathrm{hom}_{K^b(\Sext_2)}(\rB_{\raltseq{k}{1}}\langle t\rangle ,\rB_{\raltseq{m}{1}}\rY^{-s,s})\cong \mathrm{hom}_{K^b(\Sext_2)}(\rB_{\laltseq{m}{1}}\rB_{\raltseq{k}{1}}\langle t\rangle ,\rY^{-s,s}), 
\]
and \eqref{eq:multconst} and \autoref{thm:extcategorification} imply that the indecomposables in the decomposition of $\rB_{\laltseq{m}{1}}\rB_{\raltseq{k}{1}}\langle t\rangle$ are of the form 
$\rB_{\lraltseq{n}{1}{1}}\langle t'\rangle$, for certain integers $n\geq \vert k-m\vert$ and certain $t'\in \{t-1,t,t+1\}$. Since $k\ne m$, we must have $n>0$. Therefore, 
Soergel's hom formula \eqref{eq:Soergelhomformula} implies that 
\[
\mathrm{hom}_{\Sext_2}(\rB_{\lraltseq{n}{1}{1}}\langle t' \rangle,\rB_{\raltseq{s}{0}})=0
\]
for all integers $n,s>0$ and $t'\geq 0$. Since $t'<0$ can only hold if $t\leq 0$, our initial assumption that $t\geq 0$ implies that we must have $t=0$ and $t'=-1$. However, this leads to a contradiction.  
Since $f^{-s,s}$ is zero unless $s>0$, the inverse of $f$ must have a non-zero component $(f^{-1})^{s,-s}\colon \rB_{\raltseq{m}{1}}\to \rB_{\raltseq{k}{1}}\rY^{s,-s}$ for some $s>0$, which is 
impossible according to \autoref{prop:homformula}. This completes the proof that the $\rB_{\raltseq{k}{1}}\rY\langle t \rangle$ 
are all pairwise non-isomorphic. 

ii) It suffices to show that $\rB_\rho \rB_{\raltseq{k}{1}}\rY\not\cong\rB_{\raltseq{m}{1}}\rY\langle t\rangle$, for any $k,m\in \mathbb{Z}_{\geq 0}$ and any $t\in \mathbb{Z}$. First assume that $k=m=0$ and suppose, on the contrary, that there is an isomorphism 
\[
f\colon \rB_\rho \rY \to \rY\langle t\rangle
\]
for some $t\in \mathbb{Z}$. By tensoring on the left with $\rB_\rho$ and using the fact that $\rB_\rho^2$ acts as the identity, we see that 
$t$ has to be zero. Further, $f$ is determined by its non-zero components $f^{r,s}\colon \rB_\rho \to \rY^{r,s}$. 
By \cite[Theorem 2.5]{mackaay-thiel}, such a component can only be non-zero if $r+s=1$, so assume this and assume also that $r\geq 1$. Then 
$\rY^{r,s}\simeq \rB_\rho \rT_{\raltseq{2r-1}{1}}$. 
By adjunction, we have 
\begin{equation}\label{eq:zero-case}
\mathrm{hom}_{K^b(\Sext_2)}(\rB_\rho, \rB_\rho \rT_{\raltseq{2r-1}{1}})\cong 
\mathrm{hom}_{K^b(\Sext_2)}(R, \rT_{\raltseq{2r-1}{1}}).
\end{equation}
The minimal complex $[\rT_{\raltseq{2r-1}{1}}]^{\min}$ is given by $\rB_{\raltseq{2r-1}{1}}$ in homological degree zero, so Soergel's hom 
formula in \eqref{eq:Soergelhomformula} implies that the second morphism space in \eqref{eq:zero-case} is zero because $2r-1>0$. The case 
when $r<1$ can be proved similarly. This implies that $f^{r,s}=0$ for all $r,s\in \mathbb{Z}$, which proves that $\rB_\rho \rY$ can not 
be isomorphic to $\rY\langle t\rangle$ for any $t\in \mathbb{Z}$.

Now assume that $k>0$. Item b) implies that the proof is complete if we can show that $\rB_{\raltseq{k}{0}}\rY$ and $\rB_{\raltseq{m}{1}}\rY\langle t\rangle$ are not isomorphic for any $m\in \mathbb{Z}_{\geq 0}$ and any $t\in \mathbb{Z}$. Note that the case when $m>0$ reduces to the case when $k>0$ by tensoring both complexes on the left with $\rB_\rho$ and using that $\rB_\rho^2$ acts as the identity, so we can indeed assume 
that $k>0$ without loss of generality. Now, suppose in that case that there exists an isomorphism 
\begin{equation}\label{eq:lastcase}
f\colon \rB_{\raltseq{k}{0}}\rY\langle t\rangle\to \rB_{\raltseq{m}{1}}\rY
\end{equation}
in $\mathbf{U}$, for some $k,m\in \mathbb{Z}_{\geq 0}$ and $t\in \mathbb{Z}$. 

We will first show that $t=1$ must hold. Then we will show that $\rB_{\raltseq{k}{0}}\langle 1\rangle$ must be a direct summand of $\rB_{\raltseq{m}{1}}\rY^{-s,s}$ for some $s>0$. 
Finally, we will show that that is impossible. To arrive at that contradiction, we will first show that there is only a monomorphism $\rB_{\raltseq{k}{0}}\langle 1\rangle \to 
\rB_{\raltseq{m}{1}}\rY^{-s,s}$ for $k=1$, but that there is no epimorphism $\rB_{\raltseq{m}{1}}\rY^{-s,s}\to \rB_{0}\langle 1\rangle$. To prove that $k=1$ must hold, we use {\em localization}, which we briefly recall in the beginning of the relevant paragraph.

Note that
\begin{eqnarray*}
\mathrm{hom}(\rB_{\raltseq{k}{0}}\rY\langle t\rangle , \rB_{\raltseq{m}{1}}\rY) & \cong & \mathrm{hom}(\rB_\rho\rB_{\raltseq{k}{0}}\rY\langle t\rangle , \rB_\rho \rB_{\raltseq{m}{1}}\rY) \\
& \cong & \mathrm{hom}(\rB_{\raltseq{k}{1}}\rY\langle t-1\rangle,\rB_{\raltseq{m}{0}}\rY\langle 1\rangle) \\
& \cong & \mathrm{hom}(\rB_{\raltseq{k}{1}}\rY, \rB_{\raltseq{m}{0}}\rY\langle 2-t\rangle).
\end{eqnarray*}
Hence, the inverse of $f$ induces an isomorphism $\rB_{\raltseq{m}{0}}\rY\langle 2-t\rangle\to \rB_{\raltseq{k}{1}}\rY$ in $\mathbf{U}$. Since $t\leq 1$ if and only if $2-t\geq 1$, we can assume that 
$t\geq 1$ in \eqref{eq:lastcase} without loss of generality. Thus, let $t\geq 1$. As before, the isomorphism $f$ is determined by its components 
\[
f^{r,s}\colon \rB_{\raltseq{k}{0}}\langle t\rangle\to \rB_{\raltseq{m}{1}}\rY^{r,s}.
\]
By \cite[Theorem 2.5]{mackaay-thiel} and \autoref{prop:homformula}, we have $f^{r,s}=0$ unless $r+s=0$ and $r\leq 0$. Soergel's hom formula in \eqref{eq:Soergelhomformula} implies that $f^{0,0}=0$, thus 
all non-zero components $f^{r,-r}$ satisfy $r<0$ (which is equivalent to saying that $f^{-s,s}\ne 0$ can only hold if $s>0$, of course). 

Now, suppose that $s=-r>0$. We claim that $t=1$ must hold. Note that $\rY^{-s,s}\simeq (\rT_1^{-1} \rT_0^{-1})^s$ and the $0$-cochain object of 
$[(\rT_1^{-1} \rT_0^{-1})^s]^{\min}$ is $\rB_{\raltseq{2s}{0}}$. By adjunction, we have 
\[
\mathrm{hom}_{K^b(\Sext_2)}(\rB_{\raltseq{k}{0}}\langle t\rangle, \rB_{\raltseq{m}{1}}\rY^{-s,s})\cong 
\mathrm{hom}_{K^b(\Sext_2)}(\rB_{\laltseq{m}{1}}\rB_{\raltseq{k}{0}}\langle t\rangle,\rY^{-s,s}),
\]
and the indecomposables in the decomposition of $\rB_{\laltseq{m}{1}}\rB_{\raltseq{k}{0}}\langle t\rangle$ are all of the form 
$\rB_{\raltseq{n}{0}}\langle t'\rangle$ for certain $n\geq \vert k-m\vert$ and $t'\in \{t-1,t,t+1\}$. Soergel's hom formula in \eqref{eq:Soergelhomformula} implies that 
\[
\mathrm{hom}_{\Sext_2}(\rB_{\raltseq{n}{0}}\langle t'\rangle,\rB_{\raltseq{2s}{0}})
\]
is zero for all $t'>0$ (and any $n\geq 0$) and is one-dimensional for $t'=0$ (in which case $n=2s$). This shows that $f^{-s,s}\ne 0$ implies that $t'=0$, which can only hold if $t=1$ since 
$t\geq 1$ by assumption. This completes the proof of the claim. 

Now suppose that $g=f^{-1}$. Then there must be an integer $s>0$ such that the following 
composite is non-zero: 
\[
\rB_{\raltseq{k}{0}}\langle 1\rangle\xrightarrow{f^{-s,s}} \rB_{\raltseq{m}{1}}\rY^{-s,s} 
\xrightarrow{g^{s,-s}\circ\, \mathrm{id}_{\rY^{-s,s}}} \rB_{\raltseq{k}{0}}\langle 1\rangle 
\rY^{s,-s} \rY^{-s,s}
\xrightarrow{\mathrm{id}_{\rB_{\raltseq{k}{0}}\langle 1\rangle\circ \mu}} \rB_{\raltseq{k}{0}}\langle 1\rangle, 
\]
where we have used the canonical isomorphism $\rB_{\raltseq{k}{0}}\rY^{0,0}\langle 1\rangle
\cong \rB_{\raltseq{k}{0}}\langle 1\rangle$ for the final codomain. Soergel's hom formula in 
\eqref{eq:Soergelhomformula} implies that that composite is a non-zero multiple of the 
identity, hence $\rB_{\raltseq{k}{0}}\langle 1\rangle$ is a direct summand of 
$\rB_{\raltseq{m}{1}}\rY^{-s,s}$. However, that is impossible, as we will show below. 

First identify $\Sext_2$ with the algebraic category of Soergel bimodules over $R$ (the polynomial ring generated by the dumbbells). Let $Q$ be the field of fractions of $R$ and recall the {\em localization functor} $\mathrm{Loc}\colon \Sext_2\to \Sext_{2,Q}$, defined by 
forgetting the grading of Soergel bimodules and tensoring them on the right with $Q$ (over $R$), see \cite[Section 5.4]{e-m-t-w}. 
By \cite[Lemma 5.20]{e-m-t-w}, this is a monoidal functor. For any $u\in \Syaff_2$, let $Q_u$ be the {\em standard} $Q$-$Q$-bimodule, 
which is the one-dimensional $Q$-vector space with left and right $Q$-actions defined by
\[
x \cdot q \cdot y:= xqu(y),
\]
for $q,x,y\in Q$. For any $u,v\in \Syaff_2$, we have 
$Q_u Q_v \cong Q_{uv}$, see e.g. \cite[(5.5)]{e-m-t-w}, and the morphism spaces between the standard $Q$-$Q$-bimodules are very easy 
to describe (as follows from e.g. \cite[Lemma 5.2]{e-m-t-w}):
\[
\mathrm{hom}_{Q\text{-}Q-\mathrm{bimod}}(Q_u,Q_v)\cong 
\begin{cases}
Q, & \mathrm{if}\; u=v; \\
0, & \mathrm{else}.
\end{cases}
\]
As indicated in the commutative diagram \cite[(5.25)]{e-m-t-w} and proved in \cite[Theorem 18.22]{e-m-t-w}, for every $w\in \Syaff_2$, we have 
\[
B_w\otimes_R Q\cong \bigoplus_{u\preceq w} Q_u,
\]
because in affine type $A_1$ we have $p_{u,w}(1)=1$ for all $u\preceq w$ and zero else (where $p_{u,w}$ is the Kazhdan-Lusztig polynomial for $u,w\in \Syaff_2$). Concretely, this means that 
\[
\mathrm{Loc}\left(B_{\raltseq{n}{i}}\right)\cong Q_{\raltseq{n}{i}}\oplus \bigoplus_{0< l<n} \left(Q_{\raltseq{l}{0}} \oplus Q_{\raltseq{l}{1}}\right) \oplus Q,
\]
for any $n\in \mathbb{Z}_{\geq 0}$ and $i\in \{0,1\}$ (note that $Q_{\raltseq{0}{i}}=Q$). Localization can also be applied to Rouquier complexes and \cite[Lemma 5.21]{e-m-t-w} implies that, for every $w\in \Syaff_2$, we have 
\[
\mathrm{Loc}(\rT_w)\simeq Q_w \quad\text{and}\quad \mathrm{Loc}(\rT^{-1}_w)\simeq Q_{w^{-1}}.
\]
The results in the previous paragraph imply that $\mathrm{Loc}(\rB_{\raltseq{k}{0}})$ must be a 
direct summand of $\mathrm{Loc}\left(\rB_{\raltseq{m}{1}}\rY^{-s,s}\right)$, which imposes conditions on $k$, $m$ and $s$. 
On the one hand, we get    
\[
\mathrm{Loc}(\rB_{\raltseq{k}{0}})\cong 
Q_{\raltseq{k}{0}}\oplus \bigoplus_{0<l<k} \left(Q_{\raltseq{l}{0}} \oplus Q_{\raltseq{l}{1}}\right) \oplus Q
\]
and, on the other hand, we get 
\begin{gather*}
\mathrm{Loc}\left(\rB_{\raltseq{m}{1}}\rY^{-s,s}\right)\cong \\
\left(Q_{\raltseq{m}{1}}\oplus \bigoplus_{0<n<m} \left(Q_{\raltseq{n}{0}} \oplus Q_{\raltseq{n}{1}}\right) 
\oplus Q\right) Q_{\lraltseq{2s}{1}{0}}\cong \\
Q_{\raltseq{\vert m-2s\vert}{0}}\oplus \bigoplus_{0<n<m} 
\left(Q_{\raltseq{n+2s}{0}} \oplus Q_{\raltseq{\vert n-2s\vert}{0}}\right) \oplus Q_{\lraltseq{2s}{1}{0}}.
\end{gather*} 
By the Krull-Schmidt property of $\mathrm{add}(\bigoplus_{w\in \Syaff_2} Q_w)$, where the additive closure is taken in the category of all $Q$-$Q$-bimodules, we see that 
$\mathrm{Loc}(\rB_{\raltseq{k}{0}})$ can only be a direct summand of 
$\mathrm{Loc}\left(\rB_{\raltseq{m}{1}}\rY^{-s,s}\right)$ if $k=1$ and $m\geq 2s$. 

Thus we have an embedding $f^{-s,s}\colon \rB_0\langle 1\rangle\to \rB_{\raltseq{m}{1}}\rY^{-s,s}$ in $K^b(\Sext_2)$, with $m\geq 2s$. By adjunction, this implies that there is a non-zero morphism $\rB_{\laltseq{m}{1}} \rB_0\langle 1\rangle\to \rY^{-s,s}$. Since $[\rY^{-s,s}]^{\min}$ is given by 
$\rB_{\lraltseq{2s}{1}{0}}$ in homological degree zero, such a non-zero morphism can only exist if $m=2s$, thanks to \autoref{thm:extcategorification} and equations \eqref{eq:Soergelhomformula} and \eqref{eq:multconst}. In particular, we have  
\[
\rB_{\lraltseq{2s}{0}{1}}\rB_{\lraltseq{2s}{1}{0}}\cong \left(\rB_{\raltseq{4s-1}{0}}\oplus \cdots \oplus \rB_{0}\right)^{q+q^{-1}},  
\]
so the embedding $f^{-s,s}\colon \rB_0\langle 1\rangle\to \rB_{\lraltseq{2s}{0}{1}}\rY^{-s,s}$ is induced by the identity on $\rB_0\langle 1\rangle$. However, we claim that that embedding does not split. Recall that $[\rY^{-s,s}]^{\min}\cong [(\rT_1^{-1}\rT_0^{-1})^s]^{\mathrm{min}}$ is given by 
\[
\cdots \xlongrightarrow{}\rB_{\lraltseq{2s-1}{0}{0}}\langle -1 \rangle\oplus \rB_{\lraltseq{2s-1}{1}{1}}\langle -1\rangle
\xlongrightarrow{\mathrm{d}^{-1}}\underline{\rB_{\lraltseq{2s}{1}{0}}}
\]
where 
\[
\mathrm{d}^{-1}=(\mathrm{d}^{-1}_{\mathrm{L}}, \mathrm{d}^{-1}_{\mathrm{R}})= 
\left(
\xy (0,.05)*{
\tikzdiagc[scale=0.4,yscale=1]{
  \draw[ultra thick,blue] (-1.5,-1.75) -- (-1.5,-.75)node[pos=0,tikzdot]{};
  \draw[ultra thick,blue] (-1.5,.75) -- (-1.5,2)node[above]{\tiny $1$};
  \draw[ultra thick,violet] (1.5,-2)node[below]{\tiny $0$} -- ( 1.5,-.75);
   \draw[ultra thick,violet] (1.5,.75) -- ( 1.5,2)node[above]{\tiny $0$};
 \draw[ultra thick,violet] (-1,-2)node[below]{\tiny $0$}  --  (-1,-.75); 
 \draw[ultra thick,violet] (-1,.75) --  (-1,2)node[above]{\tiny $0$};
  \draw[ultra thick,blue] (1,-2)node[below]{\tiny $1$}  --  (1,-.75); 
    \draw[ultra thick,blue] (1,.75)--  (1,2)node[above]{\tiny $1$}; 
    \node at (0,1.5) {$\cdots$};
     \node at (0,-1.5) {$\cdots$};
  \draw [draw=black] (-2.25,-.75) rectangle (2.25,.75);
  \node at (0,0) {\tiny $\mathrm{JW}_{(1,\ldots,0)}$};
}}
\endxy
\;,\; 
\xy (0,.05)*{
\tikzdiagc[scale=0.4,yscale=1]{
  \draw[ultra thick,blue] (-1.5,-2)node[below]{\tiny $1$} -- (-1.5,-.75);
  \draw[ultra thick,blue] (-1.5,.75) -- (-1.5,2)node[above]{\tiny $1$};
  \draw[ultra thick,violet] (1.5,-1.75)-- ( 1.5,-.75)node[pos=0, tikzdot]{};
   \draw[ultra thick,violet] (1.5,.75) -- ( 1.5,2)node[above]{\tiny $0$};
 \draw[ultra thick,violet] (-1,-2)node[below]{\tiny $0$} -- (-1,-.75); 
 \draw[ultra thick,violet] (-1,.75)-- (-1,2)node[above]{\tiny $0$};
  \draw[ultra thick,blue] (1,-2)node[below]{\tiny $1$} -- (1,-.75); 
    \draw[ultra thick,blue] (1,.75) -- (1,2)node[above]{\tiny $1$}; 
    \node at (0,1.5) {$\cdots$};
     \node at (0,-1.5) {$\cdots$};
  \draw [draw=black] (-2.25,-.75) rectangle (2.25,.75);
  \node at (0,0) {\tiny $\mathrm{JW}_{(1,\ldots,0)}$};
}}
\endxy
\right)
\]
Tensoring this complex on the left with $\rB_{\lraltseq{2s}{0}{1}}$ (over $R$) yields a complex containing the morphism  
\[
\rB_{\lraltseq{2s}{0}{1}}\rB_{\lraltseq{2s-1}{1}{1}}\langle -1\rangle
\xlongrightarrow{\mathrm{id}_{\rB_{\lraltseq{2s}{0}{1}}}\circ\, \mathrm{d}^{-1}_{\mathrm{R}}}\underline{\rB_{\lraltseq{2s}{0}{1}}\rB_{\lraltseq{2s}{1}{0}}}
\]
When we compose this morphism with the projection of $\rB_{\lraltseq{2s}{0}{1}}\rB_{\lraltseq{2s}{1}{0}}$ onto $\rB_{0}\langle 1\rangle$, we get a non-zero multiple of 
\[
\xy (0,.05)*{
\tikzdiagc[scale=0.4,yscale=1]{
  \draw[ultra thick,violet] (-4.5,-2)node[below]{\tiny $0$} -- (-4.5,-.75);
  \draw[ultra thick,blue] (-1.5,-2)node[below]{\tiny $1$}-- (-1.5,-.75);
   \draw[ultra thick,blue] (-4,-2)node[below]{\tiny $1$}  --  (-4,-.75); 
  \draw[ultra thick,violet] (-2,-2)node[below]{\tiny $0$}  --  (-2,-.75);  
     \node at (-3,-1.5) {$\cdots$};
  \draw [draw=black] (-5.25,-.75) rectangle (-.75,.75);
  \node at (-3,0) {\tiny $\mathrm{JW}_{(0,\ldots,1)}$};
  \draw[ultra thick,blue] (1.5,-2)node[below]{\tiny $1$} -- (1.5,-.75);
  \draw[ultra thick,violet] (4.5,-1.75)-- (4.5,-.75)node[pos=0, tikzdot]{};
 \draw[ultra thick,violet] (2,-2)node[below]{\tiny $0$}  --  (2,-.75); 
  \draw[ultra thick,blue] (4,-2)node[below]{\tiny $1$}  --  (4,-.75); 
     \node at (4,-1.5) {$\cdots$};
  \draw [draw=black] (.75,-.75) rectangle (5.25,.75);
  \node at (3,0) {\tiny $\mathrm{JW}_{(1,\ldots,0)}$};
  \draw[ultra thick, violet] (4.5,.75) arc(0:180:4.5);
   \draw[ultra thick, blue] (4,.75) arc(0:180:4);
    \draw[ultra thick, violet] (2,.75) arc(0:180:2);
     \draw[ultra thick, blue] (1.5,.75) arc(0:180:1.5);
    \draw[ultra thick, violet] (0,5.25) --(0,6.5)node[above]{\tiny $0$};
     \draw[ultra thick, blue] (0,.5) -- (0,1.5)node[pos=0, tikzdot]{}node[pos=1, tikzdot]{};
}}\endxy
\]
It is easy to see that the latter diagram is non-zero. For example, attaching a dot to the top 
$0$-strand and cups to the $2s$ bottom left strands, and using adjunction to straighten the resulting diagram, yields 
\[
\xy (0,.05)*{
\tikzdiagc[scale=0.4,yscale=1]{
  \draw[ultra thick,blue] (1.5,-2)node[below]{\tiny $1$} -- (1.5,-.75);
  \draw[ultra thick,violet] (4.5,-1.75)-- (4.5,-.75)node[pos=0, tikzdot]{};
 \draw[ultra thick,violet] (2,-2)node[below]{\tiny $0$}  --  (2,-.75); 
  \draw[ultra thick,blue] (4,-2)node[below]{\tiny $1$}  --  (4,-.75); 
     \node at (3,1.5) {$\cdots$};
     \draw[ultra thick,blue] (1.5,.75)-- (1.5,2)node[above]{\tiny $1$} ;
  \draw[ultra thick,violet] (4.5,.75) -- (4.5,2)node[above]{\tiny $0$};
 \draw[ultra thick,violet] (2,.75) -- (2,2)node[above]{\tiny $0$}; 
  \draw[ultra thick,blue] (4,.75) -- (4,2)node[above]{\tiny $1$}; 
     \node at (3,-1.5) {$\cdots$};
  \draw [draw=black] (.75,-.75) rectangle (5.25,.75);
  \node at (3,0) {\tiny $\mathrm{JW}_{(1,\ldots,0)}$}; 
     \draw[ultra thick, blue] (0,-.5) -- (0,.5)node[pos=0, tikzdot]{}node[pos=1, tikzdot]{};
}}
\endxy
\]
which is non-zero, as we know. This implies that the projection of $\rB_{\lraltseq{2s}{0}{1}}\rB_{\lraltseq{2s}{1}{0}}$ onto $\rB_{0}\langle 1\rangle$, which is unique up to a 
non-zero scalar, does not induce a morphism of complexes $\rB_{\lraltseq{2s}{0}{1}}\rY^{-s,s}\to \rB_0\langle 1\rangle$, so the embedding $f^{-s,s}$ does not split. 

This shows that $\rB_{\raltseq{k}{0}}\rY\langle t\rangle$ and $\rB_{\raltseq{m}{1}}\rY$ are not isomorphic for any $k,m\in \mathbb{Z}_{\geq 0}$ such that $k>0$ and any $t\in \mathbb{Z}$, which was the last case we had to prove.  \qedhere
\end{enumerate}
\end{proof}

Let $\left[\mathbf{U}\right]_{\oplus}$ be the split Grothendieck group of $\mathbf{U}$. Since $[\Sext_2]_{\oplus}\cong \eah{2}$, the free 
$\mathbb{Z}[q,q^{-1}]$-module $\left[\mathbf{U}\right]_{\oplus}$ carries the structure of an $\eah{2}$-module. 

\begin{cor}\label{cor:decat}
The $\mathbb{Z}[q,q^{-1}]$-linear map $\gamma_U\colon U\to \left[\mathbf{U}\right]_{\oplus}$, defined by 
\[
\gamma_U(u_k):=\left[\rB_{\raltseq{k}{1}}\rY\right] \quad\text{and}\quad \gamma_U(u'_k):=\left[\rB_\rho\rB_{\raltseq{k}{1}}\rY\right], \quad 
k\in \mathbb{Z}_{\geq 0}, 
\]
is an isomorphism of $\eah{2}$-modules. 
\end{cor}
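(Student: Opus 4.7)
The plan is to build $\gamma_U$ by first lifting it to the whole Hecke algebra. Define a $\mathbb{Z}[q,q^{-1}]$-linear map $\gamma\colon \eah{2} \to [\mathbf{U}]_\oplus$ by $\gamma(x) := [\rX \cdot \rY]$, where $\rX \in \Sext_2$ is any object with $[\rX]=x$; this is well-defined and $\eah{2}$-linear for the left regular action because $\mathbf{U}$ is a birepresentation of $\Sext_2$ and $\eah{2} \cong [\Sext_2]_\oplus$. It then remains to check that $\gamma(I)=0$ for the left ideal $I$ generated by $\rho^2-1$ and $b_1\rho-q^{-1}b_1$ in~\eqref{eq:I}. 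The first generator gives $[\rB_\rho^2 \rY]-[\rY]=0$ by the third isomorphism in~\eqref{eq:fundamentalisos}. The second gives $[\rB_1\rB_\rho\rY] - [\rB_1\rY\langle -1\rangle]=0$ by the $k=1$ case of \autoref{thm:mainresultexample}(a). Hence $\gamma$ descends to an $\eah{2}$-linear map $\gamma_U\colon U \to [\mathbf{U}]_\oplus$, and tracing through the formulas one sees that $u_k=\pi_U(b_{\raltseq{k}{1}})$ maps to $[\rB_{\raltseq{k}{1}}\rY]$ and $u'_k=\pi_U(\rho b_{\raltseq{k}{1}})$ maps to $[\rB_\rho\rB_{\raltseq{k}{1}}\rY]$, as stated.

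It then suffices to show that $\gamma_U$ sends the $\mathbb{Z}[q,q^{-1}]$-basis $\{u_k,u'_k : k\in \mathbb{Z}_{\geq 0}\}$ of $U$ (from \autoref{prop:basisofU}) to a $\mathbb{Z}[q,q^{-1}]$-basis of $[\mathbf{U}]_\oplus$. Because $\mathbf{U}$ is wide finitary and Krull--Schmidt (by \autoref{lem:widefineadd} applied to our $\rY$, whose finiteness hypothesis is supplied by \autoref{cor:homfiniteness} and Soergel's hom formula), the split Grothendieck group has a $\mathbb{Z}[q,q^{-1}]$-basis indexed by one representative of each shift-class of indecomposables. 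By construction every object of $\mathbf{U}$ is a summand of some $\rF\rY$ with $\rF\in \Sext_2$; decomposing $\rF$ into the indecomposables $\rB_\rho^\ell\rB_{\raltseq{k}{i}}$ and invoking parts (a) and (b) of \autoref{thm:mainresultexample} together with $\rB_\rho^2\rY \cong \rY$, every indecomposable of $\mathbf{U}$ is isomorphic up to a shift to either $\rB_{\raltseq{k}{1}}\rY$ or $\rB_\rho \rB_{\raltseq{k}{1}}\rY$ for some $k\in \mathbb{Z}_{\geq 0}$. Part (c) of the same theorem says that these two families, together with all their grading shifts, are pairwise non-isomorphic, so the set $\{[\rB_{\raltseq{k}{1}}\rY],[\rB_\rho\rB_{\raltseq{k}{1}}\rY] : k\in \mathbb{Z}_{\geq 0}\}$ is precisely such a set of shift-class representatives and is therefore a $\mathbb{Z}[q,q^{-1}]$-basis of $[\mathbf{U}]_\oplus$. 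Hence $\gamma_U$ is a basis-preserving $\eah{2}$-linear bijection.

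The real work has already been done in \autoref{thm:mainresultexample}; given that theorem, the present corollary is essentially bookkeeping, so no step here should be difficult. The only minor care needed is to verify that the descended map on the named basis of $U$ coincides with the assignment in the statement, and to invoke \autoref{finprod}/\autoref{cor:homfiniteness} to ensure the Krull--Schmidt framework is applicable in $\mathbf{U}$.
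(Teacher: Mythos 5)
Your proposal is correct and matches the paper's (one-line) proof in substance: the paper simply cites \autoref{prop:basisofU} and \autoref{thm:mainresultexample}, and your argument is a careful unpacking of exactly how those two results combine. The one place where your framing is slightly more structured than what the paper leaves implicit is the lift $\gamma\colon\eah{2}\to[\mathbf{U}]_{\oplus}$ followed by the check that $\gamma(I)=0$, which buys you $\eah{2}$-linearity for free from the construction rather than verifying it against the explicit action formulas in~\eqref{eq:basisofU}; that is a clean way to organize the bookkeeping. Two small points worth tightening: the phrase ``$\gamma(x):=[\rX\cdot\rY]$ where $\rX$ is any object with $[\rX]=x$'' should really be stated as $\gamma(x):=x\cdot[\rY]$, the unique $\eah{2}$-linear map sending $1\mapsto[\rY]$ (not every $x$ is the class of an object), and the claim that $\rB_{\raltseq{k}{1}}\rY$ and $\rB_\rho\rB_{\raltseq{k}{1}}\rY$ are themselves indecomposable is \autoref{cor:indecomp}, which is worth citing explicitly before invoking \autoref{thm:mainresultexample}(c) to conclude they form a complete irredundant list of shift-class representatives.
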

\begin{proof}
This corollary is an immediate consequence of \autoref{prop:basisofU} and \autoref{thm:mainresultexample}.    
\end{proof}

\subsubsection{A triangulated $\Sext_2$ birepresentation $\mathbf{W}$}\label{sec:orbitcats}

Unfortunately, there is no obvious triangulated structure on the category $\mathrm{mod}_{K^b(\Sextdiamond_2)}(\rY)$, e.g., there is no obvious way to define the cone of a morphism between two objects in that category. This is a well-known problem for categories of 
modules over algebra objects in triangulated monoidal categories, see e.g.~\cite{balmer2011}. We will therefore use a different strategy to obtain a triangulated birepresentation $\mathbf{W}$.

This strategy is motivated by the observation that horizontal composition on the right with $\rY^{r,s}$, for $(r,s)\in \mathbb{Z}^2$, defines a $\mathbb{Z}^2$-action on $K^b(\Sext_2)$. To be precise, this is a strong but not a strict action, because $\rY^{r,s}\rY^{r',s'}$ is isomorphic to $\rY^{r+r',s+s'}$ but not equal, in general. The non-trivial isomorphism is given by $\zeta$, defined in \autoref{sec:defzeta}, and the $\mathbb{Z}^2$-action is strong precisely because $\zeta$ 
is natural in both entries and satisfies the hexagon identities. 

Using this action, we can define the {\em orbit category} $\widehat{\mathbf{\Omega}}:=K^b(\Sext_2)/\rY^{\mathbb{Z},\mathbb{Z}}$, which has the same objects as $K^b(\Sext_2)$ and its morphism spaces are defined by 
\[
\mathrm{hom}_{\widehat{\mathbf{\Omega}}}(\mathrm{A},\mathrm{B}):=\bigoplus_{(r,s)\in \mathbb{Z}^2}
\mathrm{hom}_{K^b(\Sext_2)}(\mathrm{A},\mathrm{B}\rY^{r,s}),
\]
for $\mathrm{A},\mathrm{B}\in K^b(\Sext_2)$. Composition is defined as in \eqref{eq:free-forget-composition}. By construction, the left regular $K^b(\Sext_2)$ action on $K^b(\Sext_2)$ induces a 
left $K^b(\Sext_2)$-action on $\widehat{\mathbf{\Omega}}$. The natural functor $\Pi_{\widehat{\mathbf{\Omega}}}\colon K^b(\Sext_2)\to \widehat{\mathbf{\Omega}}$, which is the identity on objects and sends any morphism 
$f\colon \mathrm{A}\to \mathrm{B}$ in $K^b(\Sext_2)$ to 
$f\colon \mathrm{A}\to \mathrm{B}\rY^{0,0}$ in $\widehat{\mathbf{\Omega}}$, is a morphism of wide finitary $\Sext_2$-bireprentations. In the following proposition, we do not consider any triangulated structures yet.

\begin{prop}\label{prop:orbitequiv}
There is an equivalence of wide finitary $\Sext_2$ birepresentations 
\[
\mathrm{add}\left\{\mathrm{A}\rY\colon \mathrm{A}\in K^b(\Sext_2)\right\}\cong \widehat{\mathbf{\Omega}}, 
\]
where the additive closure on the left-hand side is taken in $K^b(\Sextdiamond_2)$.
\end{prop}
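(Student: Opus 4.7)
The natural candidate for the equivalence is the functor $\Phi \colon \widehat{\mathbf{\Omega}} \to \mathrm{add}\{\mathrm{A}\rY\}$ defined on objects by $\mathrm{A} \mapsto \mathrm{A}\rY$. On morphisms, $\Phi$ is given by the chain of identifications
\[
\mathrm{hom}_{\widehat{\mathbf{\Omega}}}(\mathrm{A},\mathrm{B}) = \bigoplus_{(r,s)\in \mathbb{Z}^2}\mathrm{hom}_{K^b(\Sext_2)}(\mathrm{A}, \mathrm{B}\rY^{r,s}) \cong \mathrm{hom}_{K^b(\Sextdiamond_2)}(\mathrm{A}, \mathrm{B}\rY) \cong \mathrm{hom}_{\rY}(\mathrm{A}\rY, \mathrm{B}\rY),
\]
where the first isomorphism is \autoref{finprod} applied using the finiteness established in \autoref{lem:homfiniteness}, and the second is the free-forget adjunction \eqref{freeforg}. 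By construction $\Phi$ is fully faithful, so the essential content of the proof lies in verifying that $\Phi$ is a functor compatible with the $\Sext_2$-action and that it is essentially surjective.

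For functoriality, I would check that composition in $\widehat{\mathbf{\Omega}}$, as specified by \eqref{eq:free-forget-composition}, corresponds under the above identifications to composition in $\mathrm{mod}_{K^b(\Sextdiamond_2)}\rY$. This is really a tautology: the orbit-category composition on components $f^{r,s}$ and $g^{p,q}$ was defined precisely by whiskering with $\rY^{r,s}$ and then applying the algebra multiplication $\mathbf{m}$, which is exactly the rule \autoref{lem:freeforgcoord} prescribes for composition of $\rY$-module maps. Compatibility with the left $\Sext_2$-action is immediate, as horizontal composition on the left with $\rF \in \Sext_2$ commutes with $\Phi$ strictly on objects and, through naturality of the adjunction, on morphisms.

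Essential surjectivity is the most delicate step. By definition every object of $\mathrm{add}\{\mathrm{A}\rY\}$ is a direct summand of a finite direct sum $\bigoplus_i \mathrm{A}_i \rY$, and since right whiskering distributes over direct sums in $K^b(\Sextdiamond_2)$, we have $\bigoplus_i \mathrm{A}_i \rY \cong (\bigoplus_i \mathrm{A}_i)\rY = \Phi(\bigoplus_i \mathrm{A}_i)$. Thus direct sums are already in the essential image, and the question reduces to splitting idempotents: any direct summand $Z$ of $\mathrm{A}\rY$ corresponds via fully faithfulness to an idempotent $\tilde e \in \mathrm{hom}_{\widehat{\mathbf{\Omega}}}(\mathrm{A},\mathrm{A})$, and we must realise $\tilde e$ as coming from a direct summand of $\mathrm{A}$ in $\widehat{\mathbf{\Omega}}$.

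The main obstacle is precisely this idempotent-splitting step, since $\widehat{\mathbf{\Omega}}$ has strictly larger morphism spaces than $K^b(\Sext_2)$ and so its idempotent completeness is not automatic from that of $K^b(\Sext_2)$ (cf.~\autoref{chidcomp}). I would resolve it either by showing directly that $\widehat{\mathbf{\Omega}}$ is idempotent complete---using that an idempotent $\tilde e = \sum_{(r,s)} \tilde e^{r,s}$ in $\widehat{\mathbf{\Omega}}(\mathrm{A},\mathrm{A})$ can be analysed starting from its degree-$(0,0)$ component, which is an idempotent in the Karoubian category $K^b(\Sext_2)$---or, more practically, by observing that any fully faithful functor into an idempotent complete category extends to an equivalence from the Karoubi envelope, so that the identification $\mathbf{U} \cong \widehat{\mathbf{\Omega}}$ should be understood with $\widehat{\mathbf{\Omega}}$ implicitly replaced by its Karoubi closure. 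Either way, combining fully faithfulness with the splitting of idempotents yields essential surjectivity, and together with the $\Sext_2$-equivariance of $\Phi$ this gives the desired equivalence of wide finitary birepresentations.
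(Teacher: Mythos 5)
Your approach is exactly the paper's: the proof there is a single sentence, simply asserting that the equivalence is the content of the free--forgetful adjunction in \eqref{eq:free-forget} together with the description of composition in \eqref{eq:free-forget-composition}. Your unwinding of this into full faithfulness, functoriality, and $\Sext_2$-equivariance is a faithful (and more explicit) rendering of the same argument.

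The one place you go beyond the paper is in worrying about direct summands. That concern is legitimate: the paper's one-line proof only exhibits a fully faithful functor whose essential image is the class of free modules $\{\mathrm{A}\rY\}$, and identifying that image with $\mathrm{add}\{\mathrm{A}\rY\}$ (taken with the Karoubi-closing convention stated earlier in the paper for $\mathrm{add}$) does require knowing that this class is already closed under summands, i.e.\ that $\widehat{\mathbf{\Omega}}$ is idempotent complete. However, your first proposed route to this does not go through as stated: the degree-$(0,0)$ component of an idempotent $\tilde e$ in $\widehat{\mathbf{\Omega}}(\mathrm{A},\mathrm{A})$ is \emph{not} automatically an idempotent in $K^b(\Sext_2)$, because the $(0,0)$-component of $\tilde e^2$ is $\sum_{(r,s)}\tilde e^{r,s}\circ\tilde e^{-r,-s}$ rather than $(\tilde e^{0,0})^2$. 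What makes the analogous argument work in the proof of \autoref{cor:indecomp} is the vanishing established in \autoref{prop:homformula}, which forces all nonzero components to lie in a half-lattice and singles out the $(0,0)$-component as the ``top'' one; but that vanishing is proved there only for objects $\rB_{\raltseq{k}{1}}$, not for arbitrary $\mathrm{A}\in K^b(\Sext_2)$. So if you wish to give a complete argument, you would need to extend that vanishing (or find another filtration) — or, as your second route suggests, accept that the statement should be read modulo Karoubi completion, which is effectively what the paper's terse proof does.
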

\begin{proof}
This is exactly the content of the free forgetful adjunction in \eqref{eq:free-forget} and 
\eqref{eq:free-forget-composition}.    
\end{proof}

\begin{cor}\label{cor:orbitequiv}
The equivalence in \autoref{prop:orbitequiv} restricts to an equivalence of wide finitary 
$\Sext_2$ birepresentations 
\[
\mathbf{U}\cong \widehat{\mathbf{\Omega}}',
\]
where $\widehat{\mathbf{\Omega}}'$ is the full $\Sext_2$ subbirepresentation of $\widehat{\mathbf{\Omega}}$ whose objects are those of $\Sext_2$, which can be identified with the complexes in $K^b(\Sext_2)$ concentrated in homological degree zero via the usual embedding.  
\end{cor}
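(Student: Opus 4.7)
The proof will be a straightforward restriction of the equivalence in \autoref{prop:orbitequiv}, so I will sketch how to unpack the relevant identifications rather than doing new work. Recall that the equivalence there sends an object $\mathrm{A}\rY$ with $\mathrm{A}\in K^b(\Sext_2)$ to the object $\mathrm{A}$ viewed in $\widehat{\mathbf{\Omega}}$, and realises the isomorphism of morphism spaces
\[
\mathrm{hom}_{\rY}(\mathrm{A}\rY, \mathrm{B}\rY) \;\cong\; \bigoplus_{(r,s)\in\mathbb{Z}^2}\mathrm{hom}_{K^b(\Sext_2)}(\mathrm{A},\mathrm{B}\rY^{r,s}) \;=\; \mathrm{hom}_{\widehat{\mathbf{\Omega}}}(\mathrm{A},\mathrm{B})
\]
coming from the free-forget adjunction in~\eqref{eq:free-forget} and the composition rule in~\eqref{eq:free-forget-composition}.

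The plan is to restrict this equivalence along the inclusion $\Sext_2 \hookrightarrow K^b(\Sext_2)$ which places an object in homological degree zero. On the left-hand side, restricting to $\mathrm{A}\in\Sext_2$ produces the full subcategory with objects $\add\{\rF\rY : \rF\in \Sext_2\}$ of $\rmod_{K^b(\Sextdiamond_2)}\rY$, which is precisely $\mathbf{U}=\mathbf{M}_\rY$ by \autoref{lem:widefineadd} and our definition. On the right-hand side, the full subcategory of $\widehat{\mathbf{\Omega}}$ on the objects of $\Sext_2$ is $\widehat{\mathbf{\Omega}}'$ by construction, and since $\Sext_2$ is already additive and Karoubian (hence its image in $\widehat{\mathbf{\Omega}}$ is closed under direct sums and summands), no further additive or Karoubi closure is needed.

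The only substantive thing to check is that the equivalence is compatible with the left action of $\Sext_2$ on both sides. In $\mathbf{U}$, a $1$-morphism $\rG\in \Sext_2$ acts on $\rF\rY$ by $(\rG,\rF\rY)\mapsto (\rG\rF)\rY$; in $\widehat{\mathbf{\Omega}}$, it acts by left horizontal composition inherited from the left regular action of $K^b(\Sext_2)$. Since the equivalence of \autoref{prop:orbitequiv} sends $\rF\rY\mapsto \rF$ on objects and uses the free-forget adjunction on morphisms, compatibility on objects is immediate, while compatibility on morphisms amounts to the elementary observation that the adjunction isomorphism~\eqref{eq:free-forget} is natural in the left variable and that horizontal composition in $\widehat{\mathbf{\Omega}}$ is defined exactly so as to mirror the composition of module maps given in~\eqref{eq:free-forget-composition}. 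I do not expect a serious obstacle here: everything that could cause trouble (associators, the naturality of $\zeta$, the interaction with infinite coproducts) has already been dealt with in the proof of \autoref{prop:orbitequiv}, so the corollary reduces to matching definitions.
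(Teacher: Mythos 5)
The paper states this corollary without proof, treating it as immediate from \autoref{prop:orbitequiv}, and your restriction argument follows the obvious intended reading. One parenthetical claim deserves scrutiny, however: you assert that because $\Sext_2$ is additive and Karoubian, its image in $\widehat{\mathbf{\Omega}}$ is automatically closed under direct summands, so that no further Karoubi closure is needed. This implication is not automatic: the endomorphism ring $\mathrm{hom}_{\widehat{\mathbf{\Omega}}}(\rF,\rF) \cong \bigoplus_{(r,s)}\mathrm{hom}_{K^b(\Sext_2)}(\rF,\rF\rY^{r,s})$ is strictly larger than $\mathrm{end}_{\Sext_2}(\rF)$ (see \autoref{rem:counterex} for an explicit example of a non-identity, non-idempotent endomorphism of $\rB_{01}$ in this enlarged ring), so idempotents of $\rF$ in $\widehat{\mathbf{\Omega}}$ could a priori split through objects of $K^b(\Sext_2)$ that do not lie in $\Sext_2$. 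Were that the case, $\mathbf{U}$ (which is Karoubi closed by construction via \autoref{lem:widefineadd}) would map under the equivalence of \autoref{prop:orbitequiv} to something strictly larger than the full subcategory $\widehat{\mathbf{\Omega}}'$. What actually closes this gap is not the Karoubian-ness of $\Sext_2$ but the Krull--Schmidt analysis carried out earlier: \autoref{cor:indecomp} and \autoref{thm:mainresultexample} classify the indecomposable objects of $\mathbf{U}$ and show they are all of the form $\rG\rY$ with $\rG\in\Sext_2$. Since the paper itself gives no proof, you are not introducing a gap that isn't already implicitly present, but the Karoubian justification as written is incorrect and should be replaced by a reference to that indecomposable classification.
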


In order to construct a triangulated $\Sext_2$ birepresentation, we proceed in two steps. We consider the $\mathbb{Z}^2$-action on $K^b(\Sext_2)$ as generated by $\rY^{1,1}\cong \rB_{\rho}^2$ and $\rY^{0,1}\cong \rT_1^{-1}\rB_\rho$. Instead of considering all orbits, we first use results of \cite[Section 3.4]{elias2018} to construct a new wide finitary category $\fSext_2$ which already contains a canonical isomorphism $\rB_\rho^2\cong R$, and then apply \cite[Theorem/Definition 1.1]{FKQ} to construct a triangulated orbit category from $K^b(\fSext_2)$ under the action of the second generator.

Recall from \eqref{eq:I} and \autoref{prop:projection} that $W$ is isomorphic to the quotient of $\eah{2}$ by the left ideal generated by 
\[
\rho^2-1\quad\text{and}\quad \rho-T_1.
\]
The first generator is central, so the ideal it generates is two-sided and 
\begin{equation}\label{eq:feah}
\feah{2}:=\eah{2}/\langle \rho^2-1\rangle
\end{equation}
is a quotient algebra. Of course, we have 
\begin{equation}\label{eq:Wfeahquotient}
W\cong \feah{2}/\feah{2}(\rho-T_1),
\end{equation}
where $\feah{2}(\rho-T_1)$ is the left $\feah{2}$ ideal generated by $\rho-T_1$. Elias~\cite[Section 3.4]{elias2018} showed that $\feah{2}$ can be categorified by equipping $\BSext_2$ with an additional morphism and its "inverse", imposing some additional relations, 
and defining the new wide finitary monoidal category $\fSext_2$ as the Karoubi envelope of the additive envelope of this "enhanced" $\BSext_2$. Concretely, we add the generators 
\begin{equation}\label{eq:Unewgensprime}
\xy (0,0)*{
\tikzdiagc[scale=1]{
\begin{scope}[yscale=-.5,xscale=.5,shift={(5,-2)}] 
  \draw[ultra thick] (-1,-1) -- (-1, 1)node[pos=0.5, tikzdot]{};
  \draw[ultra thick,-to] (-1,-.6) to (-1,-.5);
  \draw[ultra thick,-to] (-1,.6) to (-1,.5);
\end{scope}
\begin{scope}[yscale=.5,xscale=.5,shift={(9,2)}] 
  \draw[ultra thick] (-1,-1) -- (-1, 1)node[pos=0.5, tikzdot]{};
  \draw[ultra thick,to-] (-1,-.85) to (-1,-.75);
  \draw[ultra thick,to-] (-1,.85) to (-1,.75);
\end{scope}
\node at (0,0) {Degree};
\node at (2,0) {$0$};
\node at (4,0) {$0$};
}}\endxy
\end{equation}  
and the relations
\begingroup\allowdisplaybreaks
\begin{gather}\label{eq:Unewrelsaprime} 
\xy (0,0)*{
\tikzdiagc[scale=.75]{
  \draw[ultra thick] (-1,-1) -- (-1, 1)node[pos=0.3, tikzdot]{}node[pos=0.66, tikzdot]{};
  \draw[ultra thick,-to] (-1,-.8) to (-1,-.7);
  \draw[ultra thick,to-] (-1,-.15) to (-1,-.1);
  \draw[ultra thick,-to] (-1,.7) to (-1,.8);
}}\endxy
\mspace{8mu}=\mspace{12mu}
\xy (0,0)*{
\tikzdiagc[scale=.75]{
  \draw[ultra thick] (-1,-1) -- (-1, 1);
  \draw[ultra thick,-to] (-1,.1) to (-1,.15);
}}\endxy
\mspace{80mu}
\xy (0,0)*{
\tikzdiagc[scale=.75]{
  \draw[ultra thick] (-.75,1) to[out=-90,in=180] (0,0) to[out=0,in=-90] (.75,1);
\node[tikzdot] at (-.55,.3) {}; 
  \draw[ultra thick,-to] (-.72,.7) to (-.74,.8);
    \draw[ultra thick,-to] (.72,.7) to (.74,.8);
}}\endxy
\mspace{7mu}=\mspace{12mu}
\xy (0,0)*{
\tikzdiagc[scale=.75]{
  \draw[ultra thick] (-.75,1) to[out=-90,in=180] (0,0) to[out=0,in=-90] (.75,1);
\node[tikzdot] at (.55,.3) {}; 
  \draw[ultra thick,-to] (-.72,.7) to (-.74,.8);
    \draw[ultra thick,-to] (.72,.7) to (.74,.8);
}}\endxy
\mspace{80mu}
\xy (0,0)*{
\tikzdiagc[scale=.75]{
  \draw[ultra thick] (-.75,1) to[out=-90,in=180] (0,0) to[out=0,in=-90] (.75,1);
\node[tikzdot] at (-.55,.3) {}; 
  \draw[ultra thick,to-] (-.70,.6) to (-.72,.7);
  \draw[ultra thick,to-] (.70,.6) to (.72,.7);
}}\endxy
\mspace{7mu}=\mspace{12mu}
\xy (0,0)*{
\tikzdiagc[scale=.75]{
  \draw[ultra thick] (-.75,1) to[out=-90,in=180] (0,0) to[out=0,in=-90] (.75,1);
\node[tikzdot] at (.55,.3) {}; 
  \draw[ultra thick,to-] (-.70,.6) to (-.72,.7);
  \draw[ultra thick,to-] (.70,.6) to (.72,.7);
}}\endxy
\\ \label{eq:Unewrelsaaprime}
\xy (0,0)*{
\tikzdiagc[scale=.75]{
\draw[ultra thick,blue] (-1,-1) to[out=60,in=180] (0,-.4);
\draw[ultra thick,violet] (0,-.4) to[out=0,in=-90] (1,1);
  \draw[ultra thick] (0,-1) -- (0, 1)node[pos=0.5, tikzdot]{};
  \draw[ultra thick,-to] (0,-.8) to (0,-.7);
  \draw[ultra thick,to-] (0,.65) to (0,.75);
}}\endxy
\mspace{5mu}=\mspace{5mu}
\xy (0,0)*{
\tikzdiagc[scale=.75]{
\draw[ultra thick,blue] (-1,-1) to[out=90,in=180] (0,.4);
\draw[ultra thick,violet] (0,.4) to[out=0,in=-120] (1,1);
  \draw[ultra thick] (0,-1) -- (0, 1)node[pos=0.5, tikzdot]{};
  \draw[ultra thick,-to] (0,-.8) to (0,-.7);
  \draw[ultra thick,to-] (0,.65) to (0,.75);
}}\endxy
\mspace{60mu}
\xy (0,0)*{
\tikzdiagc[scale=.75]{
\draw[ultra thick,violet] (-1,-1) to[out=60,in=180] (0,-.4);
\draw[ultra thick,blue] (0,-.4) to[out=0,in=-90] (1,1);
  \draw[ultra thick] (0,-1) -- (0, 1)node[pos=0.5, tikzdot]{};
  \draw[ultra thick,to-] (0,-.9) to (0,-.8);
  \draw[ultra thick,to-] (0,.9) to (0,.8);
}}\endxy
\mspace{5mu}=\mspace{5mu}
\xy (0,0)*{
\tikzdiagc[scale=.75]{
\draw[ultra thick,violet] (-1,-1) to[out=90,in=180] (0,.4);
\draw[ultra thick,blue] (0,.4) to[out=0,in=-120] (1,1);
  \draw[ultra thick] (0,-1) -- (0, 1)node[pos=0.5, tikzdot]{};
  \draw[ultra thick,to-] (0,-.9) to (0,-.8);
  \draw[ultra thick,to-] (0,.9) to (0,.8);
}}\endxy
\end{gather}
It is easy to see that our relations in \eqref{eq:Unewrelsaprime} and \eqref{eq:Unewrelsaaprime} are equivalent to \cite[(3.22a-c), (3.23)]{elias2018}, and that they imply the relations below. 

\begin{lem}\label{lem:Unewrelsprime}
In $\fSext_2$, we have 
\begin{gather}\label{eq:Unewrelsa2prime}
\xy (0,0)*{
\tikzdiagc[scale=.75]{
  \draw[ultra thick] (-1,-1) -- (-1, 1)node[pos=0.3, tikzdot]{}node[pos=0.66, tikzdot]{};
  \draw[ultra thick,to-] (-1,-.9) to (-1,-.8);
  \draw[ultra thick,-to] (-1,0) to (-1,.05);
  \draw[ultra thick,to-] (-1,.7) to (-1,.8);
}}\endxy
\mspace{8mu}=\mspace{12mu}
\xy (0,0)*{
\tikzdiagc[scale=.75]{
  \draw[ultra thick] (-1,-1) -- (-1, 1);
  \draw[ultra thick,to-] (-1,.1) to (-1,.15);
}}\endxy
\mspace{80mu}
\xy (0,0)*{
\tikzdiagc[scale=.75, rotate=180]{
  \draw[ultra thick] (-.75,1) to[out=-90,in=180] (0,0) to[out=0,in=-90] (.75,1);
\node[tikzdot] at (-.55,.3) {}; 
  \draw[ultra thick,-to] (-.72,.7) to (-.74,.8);
    \draw[ultra thick,-to] (.72,.7) to (.74,.8); 
}}\endxy
\mspace{7mu}=\mspace{12mu}
\xy (0,0)*{
\tikzdiagc[scale=.75, rotate=180]{
  \draw[ultra thick] (-.75,1) to[out=-90,in=180] (0,0) to[out=0,in=-90] (.75,1);
\node[tikzdot] at (.55,.3) {}; 
  \draw[ultra thick,-to] (-.72,.7) to (-.74,.8);
    \draw[ultra thick,-to] (.72,.7) to (.74,.8);
}}\endxy
\mspace{80mu}
\xy (0,0)*{
\tikzdiagc[scale=.75,rotate=180]{
  \draw[ultra thick] (-.75,1) to[out=-90,in=180] (0,0) to[out=0,in=-90] (.75,1);
\node[tikzdot] at (-.55,.3) {}; 
  \draw[ultra thick,to-] (-.70,.6) to (-.72,.7);
  \draw[ultra thick,to-] (.70,.6) to (.72,.7);
}}\endxy
\mspace{7mu}=\mspace{12mu}
\xy (0,0)*{
\tikzdiagc[scale=.75,rotate=180]{
  \draw[ultra thick] (-.75,1) to[out=-90,in=180] (0,0) to[out=0,in=-90] (.75,1);
\node[tikzdot] at (.55,.3) {}; 
  \draw[ultra thick,to-] (-.70,.6) to (-.72,.7);
  \draw[ultra thick,to-] (.70,.6) to (.72,.7);
}}\endxy
\end{gather}
\end{lem}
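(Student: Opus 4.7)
The plan is to derive each of the three relations in \eqref{eq:Unewrelsa2prime} from the given relations \eqref{eq:Unewrelsaprime} and \eqref{eq:Unewrelsaaprime}, together with the pivotal/cyclicity structure that $\fSext_2$ inherits from $\BSext_2$. Observe that in each case the claimed identity is precisely the $180^\circ$-rotated version of one of the equations in \eqref{eq:Unewrelsaprime}: the first relation is the zig-zag identity with the intermediate orientation reversed, while the second and third relations convert cup-dot identities into cap-dot identities by flipping the diagram vertically. In a pivotal additive category any such rotation of a valid relation is again a valid relation, so the content is ultimately that the new dot generators in \eqref{eq:Unewgensprime} are compatible with the duality furnished by the black cups and caps of $\BSext_2$.

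For the first identity of \eqref{eq:Unewrelsa2prime}, I would take the first equation of \eqref{eq:Unewrelsaprime} and bend both sides into the desired shape by attaching a black cap at the bottom and a black cup at the top using \eqref{eq:orloop} and \eqref{eq:orinv}. The resulting left-hand side is a configuration with three dots whose intermediate segment now carries the opposite orientation, which is exactly the claimed picture, while the right-hand side reduces to the identity strand by the same isotopies. The dot-slide relations \eqref{eq:Unewrelsaaprime} are used to confirm that no spurious color interactions are introduced during this bending.

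For the two cap-dot relations in \eqref{eq:Unewrelsa2prime}, I would start from the corresponding cup-dot equations in \eqref{eq:Unewrelsaprime}, compose on top with a pair of black cups (or equivalently on bottom with caps) using cyclicity, and apply \eqref{eq:orinv} to straighten the resulting diagram. This produces exactly the rotated cap with a dot on the left on one side, and the rotated cap with a dot on the right on the other. The key input here is that sliding a dot along a curved oriented strand is governed by \eqref{eq:Unewrelsaprime} itself (for cups) and by \eqref{eq:Unewrelsaaprime} (for transitions through colors), so the dot genuinely commutes with the pivotal bending.

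The main obstacle, though not conceptually deep, is the careful bookkeeping of arrow orientations throughout: the new generators in \eqref{eq:Unewgensprime} are not themselves manifestly invariant under orientation reversal, so each application of cyclicity must be paired with a check via \eqref{eq:Unewrelsaaprime} that the intermediate pictures remain well-formed. Once this bookkeeping is carried out, all three identities of \eqref{eq:Unewrelsa2prime} become direct consequences of \eqref{eq:Unewrelsaprime} and \eqref{eq:Unewrelsaaprime}.
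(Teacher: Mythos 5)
The paper states this lemma without proof, remarking only that it is ``easy to see'' that the relations follow from \eqref{eq:Unewrelsaprime} and \eqref{eq:Unewrelsaaprime}; there is therefore no paper proof to compare against, and your job is to supply the omitted argument. Your overall strategy is correct: each relation in \eqref{eq:Unewrelsa2prime} is the $180^\circ$ rotation of the corresponding relation in \eqref{eq:Unewrelsaprime}, and the content is precisely that the two new dot generators of \eqref{eq:Unewgensprime} are cyclic with respect to the pivotal structure on oriented strands given by \eqref{eq:orloop}--\eqref{eq:orinv}. The cup-dot relations in \eqref{eq:Unewrelsaprime} are exactly the ``one half'' of cyclicity, and the cap-dot relations in \eqref{eq:Unewrelsa2prime} are the other half, obtained by the standard maneuver of composing with cups/caps and straightening with the snake identities. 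The reverse zig-zag then follows from the forward zig-zag once cyclicity is in hand.

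A few imprecisions worth tidying up. First, you describe the intermediate left-hand side of the first identity as ``a configuration with three dots,'' but the source diagram and the target diagram each carry exactly two dots, and attaching cups and caps does not create new ones; the rotated picture still has two dots, so this should read ``two.'' Second, relation \eqref{eq:Unewrelsaaprime} concerns the interaction of the new dots with colored strands, but no colored strands appear anywhere in the statement or in any of the intermediate diagrams of your derivation; invoking it here is a red herring. Third, the order of deductions as written is fragile: proving the reverse zig-zag by the snake argument requires sliding dots around both the inserted cup and the inserted cap, so you should establish the cap-dot relations (the second and third identities of \eqref{eq:Unewrelsa2prime}) first, directly from the cup-dot relations and \eqref{eq:orinv}, and only then use them together with the forward zig-zag to obtain the first identity. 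With these adjustments the argument is sound.
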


By \cite[Lemma 3.25 and Theorem 3.28]{elias2018}, there is an isomorphism $\rB_\rho^2\cong R$ 
in $\fSext_2$, the split Grothendieck group of $\fSext_2$ is isomorphic 
to $\feah{2}$ and the natural functor $E\colon \Sext_2\to \fSext_2$ induces the projection $\eah{2}\cong [\Sext_2]_{\oplus}\to \feah{2}\cong [\fSext_2]_{\oplus}$.

Since $\rY^{1,1}=\rB_\rho^2$, the above implies that $\rY^{r,s}\cong \rY^{0,s-r}$ in $K^b(\fSext_2)$, for any $(r,s)\in \mathbb{Z}^2$. Hence, it suffices to consider a $\mathbb{Z}$-action on $K^b(\fSext_2)$, with $s\in \mathbb{Z}$ acting by horizontal composition on the right with $\rY^{0,s}=(\rT_1^{-1}\rB_\rho)^s$. Let $\overline{\mathbf{\Omega}}:=K^b(\fSext_2)/\rY^{0,\mathbb{Z}}$ be the corresponding orbit category and let $\overline{\mathbf{\Omega}'}$ be its full $\Sext_2$ subbirepresentation whose objects are those of $\fSext_2$, which can be identified with the objects in $K^b(\fSext_2)$ concentrated in homological degree zero. 

\begin{lem}\label{lem:E}
The natural $\mathbb{R}$-linear monoidal functor $E\colon K^b(\Sext_2)\to K^b(\fSext_2)$ 
induces a morphism of $\Sext_2$-birepresentations $E_{\Omega}\colon \widehat{\mathbf{\Omega}}\to \overline{\mathbf{\Omega}}$, which restricts and corestricts to a morphism of 
$\Sext_2$-birepresentations $E_{\Omega'}\colon \widehat{\mathbf{\Omega}}'\to \overline{\mathbf{\Omega}'}$.
\end{lem}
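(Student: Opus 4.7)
The plan is to define $E_\Omega$ on objects by $E_\Omega(\rA):=E(\rA)$ and on morphisms by combining $E$ with the canonical isomorphisms $\phi_{r,s}\colon E(\rY^{r,s})\xrightarrow{\cong}\rY^{0,s-r}$ in $K^b(\fSext_2)$ arising from the isomorphism $E(\rB_\rho^2)\cong R$ established in \cite{elias2018}. Concretely, for $[f]\in\mathrm{hom}_{\widehat{\mathbf{\Omega}}}(\rA,\rB)$ with non-zero component $f^{r,s}\colon \rA\to \rB\rY^{r,s}$ in $K^b(\Sext_2)$, define the $(s-r)$-component of $E_\Omega([f])$ to be $(\mathrm{id}_{E(\rB)}\circ_h\phi_{r,s})\circ E(f^{r,s})\colon E(\rA)\to E(\rB)\rY^{0,s-r}$, extending additively over the finitely many non-zero components (which may collapse to the same value $s-r$).

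The first step is to verify well-definedness and functoriality. Additivity and $\mathbb{R}$-linearity are built into the construction. The non-trivial point is preservation of composition, which, given the formula~\eqref{eq:free-forget-composition}, reduces to a coherence condition: for $f^{r,s}$ and $g^{p,q}$, the equality
\[
\phi_{p+r,q+s}\circ E(\mathbf{m}_{(p,q),(r,s)}) \;=\; \overline{\mathbf{m}}_{q-p,s-r}\circ (\phi_{p,q}\circ_h \phi_{r,s})
\]
should hold in $K^b(\fSext_2)$, where $\overline{\mathbf{m}}$ denotes the multiplication of the analogous algebra object $\overline{\rY}=\coprod_{t\in\mathbb{Z}}\rY^{0,t}\in K^b(\fSextdiamond_2)$. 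Since $\mathbf{m}_{(p,q),(r,s)}$ is the canonical identification $\rY^{p,q}\rY^{r,s}=\rY^{p+r,q+s}$ obtained from strict associativity of horizontal composition, and since the $\phi_{r,s}$ are built by repeated use of the same generators in \eqref{eq:Unewgensprime} satisfying the relations~\eqref{eq:Unewrelsaprime}--\eqref{eq:Unewrelsaaprime}, this coherence amounts to cancelling $\rB_\rho$-pairs in a consistent order; this is routine but must be checked with care. The main obstacle will be precisely this bookkeeping, because the $\phi_{r,s}$ are not strict.

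The second step is compatibility with the left $\Sext_2$-action. Both $\widehat{\mathbf{\Omega}}$ and $\overline{\mathbf{\Omega}}$ inherit their left $\Sext_2$-birepresentation structure from horizontal composition on the left in $K^b(\Sext_2)$ and $K^b(\fSext_2)$, respectively, via the natural monoidal functor $\Sext_2\to\fSext_2$ (which is the restriction of $E$). Since $E$ is $\mathbb{R}$-linear and monoidal, we have $E(\rF\rA)=\rF E(\rA)$ for $\rF\in\Sext_2$ and $\rA\in K^b(\Sext_2)$, and the $\phi_{r,s}$ involve only morphisms supported away from the leftmost factor; thus $E_\Omega(\rF[f])=\rF E_\Omega([f])$ on the nose, so $E_\Omega$ is strict as a morphism of $\Sext_2$-birepresentations.

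Finally, the restriction/corestriction to $\widehat{\mathbf{\Omega}}'$ is immediate: the objects of $\widehat{\mathbf{\Omega}}'$ are the objects of $\Sext_2$ (viewed as complexes concentrated in homological degree zero), their image under $E$ is an object of $\fSext_2$ (again viewed in homological degree zero), and the objects of $\overline{\mathbf{\Omega}'}$ are by definition precisely such objects. Hence $E_\Omega$ sends $\widehat{\mathbf{\Omega}}'$ into $\overline{\mathbf{\Omega}'}$, yielding the desired $E_{\Omega'}$.
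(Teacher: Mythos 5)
Your proposal follows the same overall strategy as the paper: define $E_{\Omega}$ as the identity on objects, transport each component $f^{r,s}$ through the isomorphism $\rY^{r,s}\cong\rY^{0,s-r}$ in $K^b(\fSext_2)$, extend additively, and check compatibility with the left $\Sext_2$-action using the fact that these isomorphisms act only on the rightmost factor. The restriction to $\widehat{\mathbf{\Omega}}'$ is handled the same way.

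There is, however, a genuine error in your treatment of composition. You write that $\mathbf{m}_{(p,q),(r,s)}$ is ``the canonical identification $\rY^{p,q}\rY^{r,s}=\rY^{p+r,q+s}$ obtained from strict associativity of horizontal composition.'' This is not what $\mathbf{m}$ is. Since $\rY^{p,q}\rY^{r,s}=(\rB_\rho\rT_1)^p(\rT_1^{-1}\rB_\rho)^q(\rB_\rho\rT_1)^r(\rT_1^{-1}\rB_\rho)^s$, turning this into $(\rB_\rho\rT_1)^{p+r}(\rT_1^{-1}\rB_\rho)^{q+s}$ requires commuting $(\rT_1^{-1}\rB_\rho)^q$ past $(\rB_\rho\rT_1)^r$, which is only an isomorphism, not an equality. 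That isomorphism is precisely $\zeta$ from \autoref{sec:defzeta} (indeed, this is why the $\mathbb{Z}^2$-action on $K^b(\Sext_2)$ is strong but not strict). Consequently, your coherence square for composition involves $\zeta$ on one side, not just the $\phi_{r,s}$ coming from $\rB_\rho^2\cong R$. The paper resolves this by appealing directly to the binaturality of $\zeta$ and the hexagon identities established earlier; your ``routine bookkeeping with $\rB_\rho$-pairs'' omits exactly the piece of data that makes the diagram commute and is therefore not an adequate substitute. The rest of your argument is fine.
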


\begin{proof} On objects, the functor $E_\Omega$ is defined by the identity map, of course. Given a morphism $f\colon \mathrm{A}\to \mathrm{B}$ in $\widehat{\mathbf{\Omega}}$ with only one 
non-zero component $f^{r,s}\colon \mathrm{A}\to \mathrm{B}\rY^{r,s}$ in $K^b(\Sext_2)$, for some $r,s\in \mathbb{Z}$, define $E_\Omega(f)$ as the morphism in $\overline{\mathbf{\Omega}}$ whose only one non-zero component $E_\Omega(f)^{0,s-r}$ is given by postcomposing $E(f)$ with the isomorphism $\rY^{r,s}\cong \rY^{0,s-r}$ in $K^b(\fSext_2)$. This definition can be extended to all morphisms in $\mathrm{hom}_{\widehat{\mathbf{\Omega}}}(\mathrm{A},\mathrm{B})$ by additivity. 

The definition implies immediately that $E_\Omega$ is $\mathbb{R}$-linear and sends identity morphisms to identity morphisms, and the hexagon identities and binaturality of $\zeta$ guarantee that it also preserves composition. Finally, it is a morphism of left $\Sext_2$ birepresentations because the isomorphisms $\rY^{r,s}\cong \rY^{0,s-r}$ in the definition of $E_\Omega$ are applied to the rightmost factor of the objects.  

The last claim of the lemma follows immediately. 
\end{proof}

The following result is an immediate consequence of \autoref{cor:orbitequiv} and \autoref{lem:E}.

\begin{cor}\label{cor:E}
The functor $E_{\Omega'}$ induces a morphism of $\Sext_2$ birepresentations $E_{\Omega'}\colon \mathbf{U}\to \overline{\mathbf{\Omega}'}$ (for which we use the same notation).
\end{cor}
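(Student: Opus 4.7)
The plan is to observe that this corollary is essentially a composition of two results already established in the excerpt: the equivalence $\mathbf{U} \cong \widehat{\mathbf{\Omega}}'$ of wide finitary $\Sext_2$-birepresentations given by \autoref{cor:orbitequiv}, and the morphism $E_{\Omega'}\colon \widehat{\mathbf{\Omega}}' \to \overline{\mathbf{\Omega}'}$ of $\Sext_2$-birepresentations given by \autoref{lem:E}. The induced morphism $\mathbf{U} \to \overline{\mathbf{\Omega}'}$ is then defined as the composition of (a chosen quasi-inverse of) the equivalence $\mathbf{U} \xrightarrow{\sim} \widehat{\mathbf{\Omega}}'$ with $E_{\Omega'}$.

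In slightly more detail, I would first recall that the equivalence of \autoref{cor:orbitequiv} identifies an object $\rA \rY \in \mathbf{U}$, for $\rA \in \Sext_2$, with the object $\rA \in \widehat{\mathbf{\Omega}}'$ (viewed as a complex concentrated in homological degree zero), and identifies a morphism $f\colon \rA\rY \to \rB\rY$ in $\mathbf{U}$ with the family of its components $(f^{r,s})_{(r,s)\in \mathbb{Z}^2}$ in $\widehat{\mathbf{\Omega}}'$, via the free--forgetful adjunction \eqref{eq:free-forget} together with \eqref{eq:free-forget-composition}. Then I would define the functor on objects by $E_{\Omega'}(\rA\rY) := \rA$ (as an object of $\overline{\mathbf{\Omega}'}$) and on a morphism $f\colon \rA\rY \to \rB\rY$ in $\mathbf{U}$ by sending its $(r,s)$-component $f^{r,s}\colon \rA \to \rB\rY^{r,s}$ in $K^b(\Sext_2)$ to the morphism $E_{\Omega'}(f)^{0,s-r}$ in $\overline{\mathbf{\Omega}'}$ obtained by applying $E$ and postcomposing with the canonical isomorphism $\rY^{r,s}\cong \rY^{0,s-r}$ in $K^b(\fSext_2)$ furnished by $\rB_\rho^2 \cong R$.

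Functoriality (preservation of identities and composition), $\mathbb{R}$-linearity, and compatibility with the left $\Sext_2$-action all follow directly from the corresponding properties established for $E_{\Omega'}\colon \widehat{\mathbf{\Omega}}' \to \overline{\mathbf{\Omega}'}$ in the proof of \autoref{lem:E}, since the equivalence $\mathbf{U} \cong \widehat{\mathbf{\Omega}}'$ is itself a morphism of $\Sext_2$-birepresentations. There is no real obstacle here: the content of the corollary is simply that the two already-constructed morphisms can be composed, and no new verifications are required beyond noting that the composition is well defined and respects the birepresentation structure. If one wants to avoid making a choice of quasi-inverse, one can equivalently package the result by saying that $E_{\Omega'}$ and the equivalence fit into a commuting square (up to natural isomorphism), and read the induced map off from that.
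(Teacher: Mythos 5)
Your proof matches the paper's intent exactly: the paper states the corollary is ``an immediate consequence of \autoref{cor:orbitequiv} and \autoref{lem:E},'' which is precisely the composition of the equivalence $\mathbf{U}\cong\widehat{\mathbf{\Omega}}'$ with the morphism $E_{\Omega'}\colon\widehat{\mathbf{\Omega}}'\to\overline{\mathbf{\Omega}'}$ that you describe. The additional unpacking of the functor on objects and morphisms is correct but not needed beyond what the two cited results already supply.
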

We do not know whether $E_{\Omega'}$ is an equivalence. It is certainly essentially surjective, because it is the identity on objects, but it is unclear whether it is fully faithful.  

The above fits exactly the conditions of \cite[Theorem/Definition 1.1]{FKQ}: the triangulated category $\mathcal{T}:=K^b(\fSext_2)$ is naturally endowed with the dg-enhancement $\mathcal{A}:=C^b_{dg}(\fSext_2)$ (the dg-category of bounded complexes in $\fSext_2$), and the complex $F:=\rY^{0,1}=\rT_1\rB_\rho$ naturally yields a dg-bimodule such that the induced endofunctor 
$H^0(F)$ on $H^0(\mathcal{A})\cong K^b(\Sext_2)$ is an equivalence (with inverse induced by $\rY^{0,-1}$). This means that there is a canonical triangulated orbit category $\mathbf{W}$ of $K^b(\fSext_2)$ under the action of $\rY^{0,1}$, which in the 
notation of \cite[Theorem/Definition 1.1]{FKQ} is defined by 
\[ 
\mathbf{W}:=H^0\left(\mathrm{pretr}\left(C^b_{dg}(\fSext_2)/\rY^{0,\mathbb{Z}}\right)\right).
\]
The left $\Sext_2$-action clearly commutes with the right action by $\rY^{0,1}$, hence 
$\mathbf{W}$ is a triangulated left $\Sext_2$-birepresentation. 

By \cite[Theorem 3.17]{FKQ}, there is a natural dg-functor 
\begin{equation}\label{eq:Q}
Q\colon C^b_{dg}\left(\fSext_2\right)\to \mathrm{pretr}\left(C^b_{dg}\left(\fSext_2\right)/\rY^{0,\mathbb{Z}}\right),
\end{equation}
which induces a triangle functor $Q^{\triangle}:=H^0(Q)$ between the homotopy categories 
\begin{equation}\label{eq:Qtriangle}
Q^{\triangle}\colon K^b\left(\fSext_2\right)\to \mathbf{W}.
\end{equation}
As $Q^{\triangle}$ is a morphism of triangulated $\Sext_2$-birepresentations, it induces a morphism of $[\Sext_2]_{\oplus}$-modules between the triangulated Grothendieck groups
\begin{equation}\label{eq:QtriangleGrothendieck}
\left[Q^{\triangle}\right]_{\triangle}\colon \left[K^b\left(\fSext_2\right)\right]_{\triangle}\to \left[\mathbf{W}\right]_{\triangle}.
\end{equation}
A well-known general result on Grothendieck groups, see e.g. \cite{rose2011}, implies 
that $[K^b(\fSext_2)]_{\triangle}\cong [\fSext]_{\oplus}\cong \feah{2}$. Thus \eqref{eq:QtriangleGrothendieck} yields a morphism of $\eah{2}$-modules  
\[
q\colon \feah{2}\to \left[\mathbf{W}\right]_{\triangle}.
\]
We claim that this morphism in an epimorphism. Indeed, consider the dg functor $Q$ in \eqref{eq:Q}.  The objects of $C^b_{dg}(\fSext_2)/Y^{0,\mathbb{Z}}$ 
are the same as those of $C^b_{dg}(\fSext_2)$. In taking the pretriangulated closure, we add cones, but those do not generate new elements in the Grothendieck group, proving the claim.

By construction, horizontal composition with $\rY^{0,1}$ is naturally isomorphic to the identity functor on 
$\mathbf{W}$, so $q$ factors through $W$ by \eqref{eq:Wfeahquotient}, resulting in a commutative triangle 
\[
 \begin{tikzcd}
 \feah{2} \arrow{r}{}  \arrow{rd}{q} 
   &  W \arrow{d}{\gamma_W} \\
     & \left[\mathbf{W}\right]_{\triangle}
\end{tikzcd}
 \]
Surjectivity of $q$ implies surjectivity of $\gamma_W$. Recall also that $W^{\mathbb{C}(q)}$ is simple, see \autoref{prop:simplicityW}. However, we do not know that $\left[\mathbf{W}\right]_{\triangle}^{\mathbb{C}(q)}$ is non-zero, which is why we have to 
settle for a conjecture. 
\begin{conj}\label{conj:example}
The morphism  
\[
\gamma_W^{\mathbb{C}(q)}\colon W^{\mathbb{C}(q)}\to \left[\mathbf{W}\right]_{\triangle}^{\mathbb{C}(q)}
\]
is an isomorphism. 
\end{conj}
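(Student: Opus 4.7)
The plan is to reduce the conjecture to a non-vanishing statement and then attack that non-vanishing directly. Since $\gamma_W$ is already surjective by the argument preceding the conjecture (the map $q\colon \feah{2} \to [\mathbf{W}]_{\triangle}$ is surjective and factors through $W$), localizing at $\mathbb{C}(q)$ yields a surjection of $\eahCq{2}$-modules $W^{\mathbb{C}(q)} \twoheadrightarrow [\mathbf{W}]_{\triangle}^{\mathbb{C}(q)}$. By \autoref{prop:simplicityW}, $W^{\mathbb{C}(q)}$ is simple, so its kernel is either $0$ or all of $W^{\mathbb{C}(q)}$. Hence, the conjecture is equivalent to the statement that the class $[R]$ is not $q$-torsion in $[\mathbf{W}]_{\triangle}$.

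The first approach to establish this non-vanishing is to compute the graded endomorphism algebra $\End^{\bullet}_{\mathbf{W}}(R)$. By the construction of $\mathbf{W}$ as the $\Ho$ of the pretriangulated hull of the dg-quotient, this algebra ought to receive contributions from
\[
\bigoplus_{s\in \mathbb{Z}}\Hom^{\bullet}_{K^b(\fSext_2)}(R,\rY^{0,s}),
\]
where $\rY^{0,s} \simeq (\rT_1^{-1}\rB_\rho)^s$. Using the isomorphism $\rB_\rho^2 \cong R$ in $\fSext_2$ to rewrite $\rY^{0,s}$ as an honest (possibly $\rB_\rho$-shifted) Rouquier complex, one can access these morphism spaces through minimal-complex calculations in the spirit of \cite[Theorem~6.9]{elias-williamson-1} and the arguments of \autoref{prop:homformula}. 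A careful accounting of degrees and summands ought to yield a non-zero graded Poincaré series for $\End^{\bullet}_{\mathbf{W}}(R)$, which would immediately imply that $[R]$ is non-zero in $[\mathbf{W}]_{\triangle}^{\mathbb{C}(q)}$.

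A complementary approach worth pursuing in parallel is to construct a morphism of triangulated $\Sext_2$-birepresentations from $\mathbf{W}$ to an explicit target whose Grothendieck group is manifestly non-trivial. Natural candidates include dg-module categories over a concrete dg algebra (ideally a dg model for $\End_{\mathbf{W}}(R)$), or triangulated versions of the evaluation birepresentations of \cite{mmv-evalfunctor} which descend through the defining relation $\rho \cong T_1$ that underlies $\mathbf{W}$. Exhibiting such a functor and verifying that it sends $R$ to a non-zero object would immediately conclude the proof.

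The main obstacle is the abstractness of the FKQ construction: the pretriangulated hull $\mathrm{pretr}(C^b_{dg}(\fSext_2)/\rY^{0,\mathbb{Z}})$ is defined via a formal procedure, and it is genuinely difficult to compute morphism spaces in $\mathbf{W}$ without first producing a more concrete dg model. In particular, one must rule out the possibility that passing to the pretriangulated hull introduces distinguished triangles that unexpectedly annihilate $[R]$ in the Grothendieck group. A complete proof will most likely require either constructing such a concrete dg model (perhaps via a cofibrant replacement of the unit) or adapting the finitary Rouquier-complex techniques developed for $\mathbf{U}$ in \autoref{prop:homformula} and \autoref{thm:mainresultexample} to the triangulated orbit-category setting.
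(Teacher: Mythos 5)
This statement is a \emph{conjecture} in the paper, not a theorem: the authors explicitly write that they ``do not know that $\left[\mathbf{W}\right]_{\triangle}^{\mathbb{C}(q)}$ is non-zero, which is why we have to settle for a conjecture.'' There is therefore no proof in the paper to compare against, and your first paragraph essentially reproduces the reduction the authors already give: surjectivity of $\gamma_W^{\mathbb{C}(q)}$ plus simplicity of $W^{\mathbb{C}(q)}$ (\autoref{prop:simplicityW}) reduces the conjecture to the non-vanishing of $[R]$ (or indeed of any class) in $\left[\mathbf{W}\right]_{\triangle}^{\mathbb{C}(q)}$. That part is correct.

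Your first proposed attack has a genuine logical gap. You claim that a non-zero graded Poincar\'e series for $\End^{\bullet}_{\mathbf{W}}(R)$ ``would immediately imply that $[R]$ is non-zero in $[\mathbf{W}]_{\triangle}^{\mathbb{C}(q)}$.'' It would not. Having $\End_{\mathbf{W}}(R)\neq 0$ only shows $R\not\cong 0$ as an object of the triangulated category $\mathbf{W}$; the passage from there to $[R]\neq 0$ in the triangulated Grothendieck group is exactly the step that is hard. The triangulated Grothendieck group kills far more than the zero object: any distinguished triangle $A\to B\to C\to A[1]$ imposes $[B]=[A]+[C]$, and there are standard examples of non-trivial triangulated categories whose Grothendieck group is zero. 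This is precisely the obstruction the authors flag and that your own closing paragraph alludes to (``one must rule out the possibility that passing to the pretriangulated hull introduces distinguished triangles that unexpectedly annihilate $[R]$''), so your first approach does not close the gap it would need to.

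Your second approach --- constructing a triangle functor $F\colon\mathbf{W}\to\mathcal{T}$ to an explicit target and checking $[F(R)]\neq 0$ in $[\mathcal{T}]_{\triangle}$ --- is the logically sound route, since a triangle functor induces a group homomorphism on triangulated Grothendieck groups. But as written it is a research plan, not an argument: no such functor or target is actually constructed, and the candidates you name (a concrete dg model for $\End_{\mathbf{W}}(R)$, or a triangulated evaluation-type birepresentation through which $\rho\cong T_1$ descends) are only suggestions. The proposal is honest about this, but the outcome is that it does not prove the conjecture, and the one step where you claim an ``immediate'' implication is the one that fails.
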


Finally, recall the morphism of $\Sext_2$ birepresentations $E_{\Omega'}\colon \mathbf{U} \to 
\overline{\mathbf{\Omega}'}$ from \autoref{cor:E}. By \cite[Remark 3.12]{FKQ}, there is an equivalence of wide finitary $\Sext_2$ birepresentations
\[
\overline{\mathbf{\Omega}}\cong H^0(C^b_{dg}(\fSext_2)/Y^{0,\mathbb{Z}}).
\]
Precomposition with $E_{\Omega'}$ and postcomposition with the canonical functor $H^0(C^b_{dg}(\fSext_2)/Y^{0,\mathbb{Z}})\to H^0(\mathrm{pretr}(C^b_{dg}(\fSext_2)/Y^{0,\mathbb{Z}}))$ yields 
a morphism of $\Sext_2$ birepresentations 
\begin{equation}\label{eq:catprojection}
\Pi^{\mathbf{U}}_{\mathbf{W}}\colon \mathbf{U}\to \mathbf{W}.
\end{equation}
We conjecture that $\Pi^{\mathbf{U}}_{\mathbf{W}}$ categorifies the projection $\pi^U_W$ in 
\autoref{prop:projection}, meaning that the following diagram commutes
\[
\begin{tikzcd}
U \arrow{d}{\gamma_U} \arrow{rr}{\pi^U_W} & & W \arrow{d}{\gamma_W} \\
\left[\mathbf{U}\right]_{\oplus} \arrow{r}{\left[\Pi^{\mathbf{U}}_{\mathbf{W}}\right]} & \left[\mathbf{W}\right]_{\oplus} 
\arrow{r} & \left[\mathbf{W}\right]_{\triangle} 
\end{tikzcd}
\]




\begin{thebibliography}{BCDV13}

  
\bibitem[ALELR]{allr2024}
M.~Abram, L.~Lamberto-Egan, A.~Lauda, D.~Rose,
\newblock Categorification of the internal braid group action for quantum groups I: 2-functoriality.
\newblock \emph{Pacific J. Math.} 328 (2024), no.1, 1--75.
\href{https://doi.org/10.2140/pjm.2024.328.1}
{\path{doi.org/10.2140/pjm.2024.328.1}}. 

\bibitem[AMRW]{AMRW} P.~Achar, S.~Makisumi, S.~Riche, G.~Williamson,
\newblock Free-monodromic mixed tilting sheaves on flag varieties.
\newblock \href{http://arxiv.org/abs/1703.05843}{arXiv: 1703.05843}.

\bibitem[Bal]{balmer2011}
P.~Balmer, 
\newblock Separability and triangulated categories.
\newblock \emph{Adv. Math.} 226 (2011), 4352--4372.
\href{https://doi.org/10.1016/j.aim.2010.12.003}
{\path{doi.org/10.1016/j.aim.2010.12.003}}

\bibitem[Bre]{Bre}
S.~Breaz, 
\newblock The Krull--Remak--Schmidt theorem for idempotent complete categories (2025).
\newblock \href{http://arxiv.org/abs/2503.21216}{arXiv: 2503.21216}.

\bibitem[DaNi]{DaNi}
A.~Davydov and D.~Nikshych, 
\newblock The Picard crossed module of a braided tensor category. 
\newblock \emph{Algebra Number Theory} 7(6) (2013), 1365–-1403.
\href{https://doi.org/10.2140/ant.2013.7.1365}
{\path{doi.org/10.2140/ant.2013.7.1365}}

\bibitem[EGNO]{EGNO} 
\newblock P.\ Etingof, S.\ Gelaki, D.\ Nikshych, V.\ Ostrik, 
\newblock {\em Tensor categories}. Mathematical Surveys and Monographs, No. 205, American Mathematical Society, Providence, RI, 2015.
\newblock \href {https://doi.org/10.1090/surv/205}
{\path{doi:10.1090/surv/205}}.

\bibitem[Eli1]{elias2016} 
B.~Elias, 
\newblock The two-color Soergel calculus.
\newblock \emph{Compositio Mathematica}, 152(2) (2016), 327--398.
\href{https://doi.org/10.1112/S0010437X15007587}
{\path{doi.org/10.1112/S0010437X15007587}}

\bibitem[Eli2]{elias2018}
  B.~Elias,
\newblock {G}aitsgory's central sheaves via the diagrammatic {H}ecke category (2018).
\newblock \href{http://arxiv.org/abs/1811.06188v1}{arXiv: 1811.06188v1}.

    

\bibitem[ElHo]{ElHo}
B.~Elias and M.~Hogancamp,
\newblock Drinfeld centralizers and Rouquier complexes.
\newblock \href{http://arxiv.org/abs/2412.20633}{arXiv: 2412.20633}.
    
\bibitem[EMTW]{e-m-t-w}
B.~Elias, S.~Makisumi, U.~Thiel, G.~Williamson, 
\newblock \emph{Introduction to Soergel bimodules}
\newblock {RSME Springer Series, volume 5}, Springer, 2020.
\href{https://doi.org/10.1007/978-3-030-48826-0}
{\path{doi.org/10.1007/978-3-030-48826-0}}


\bibitem[ElWi1]{elias-williamson-1}
  B.~Elias and G.~Williamson,
  \newblock Hodge theory for Soergel bimodules.
  \newblock \emph{Ann. of Math.} (2) 180 (2014), no. 3, 1089--1136.
\href{https://doi.org/10.4007/annals.2014.180.3.6}
{\path{doi.org/10.4007/annals.2014.180.3.6}}
  
\bibitem[EnSa]{enomoto-saito2022}
H. Enomoto and S. Saito
\newblock Grothendieck monoids of extriangulated categories.
\href{http://arxiv.org/abs/2208.02928}{arXiv: 2208.02928}.
  


  

\bibitem[Hoe]{Hoe}
K.~Hoek, 
\newblock \emph{Drinfeld centers for bimodule categories},
\newblock Bachelor's Thesis, The Mathematical Sciences Institute, Australian National University, 2019.
\newblock \href{https://tqft.net/web/research/students/KeeleyHoek/thesis.pdf}{url}.

\bibitem[FKQ]{FKQ}
L.~Fan, B.~Keller and Y.~Qiu.
\newblock Dg-enhanced orbit categories and applications. 
\newblock \href{http://arxiv.org/abs/2405.00093}{arXiv: 2405.00093}.

\bibitem[Kel]{Ke} G.\ M.\ Kelly. 
\newblock Basic concepts of enriched category theory. 
\newblock \emph{Repr. Theory Appl. Categ.} No. 10 (2005). Reprint of the 1982 original.

\bibitem[Kra]{Kr} H.\ Krause. 
\newblock Krull--Schmidt categories and projective covers. 
\newblock \emph{Expo. Math.} 33 (2015), 535--549.
\href{https://doi.org/10.1016/j.exmath.2015.10.001}
{\path{doi.org/10.1016/j.exmath.2015.10.001}}

\bibitem[LaMi]{LM1} R.\ Laugwitz, V.\ Miemietz. 
\newblock Pretriangulated $2$-representations via dg algebra $1$-morphisms. 
\href{http://arxiv.org/abs/2205.09999}{arXiv: 2205.09999}.

  
\bibitem[LNT]{LNTh}
B.~Leclerc, M.~Nazarov and J.-Y.~Thibon,
\newblock Induced representations of affine Hecke algebras and canonical bases of quantum groups.
\newblock \emph{Studies in memory of Issai Schur (Chevaleret/Rehovot, 2000)} 115--153.
\emph{Progr. Math.} 210 (2003).
\href{https://doi.org/10.1007/978-1-4612-0045-1_6}
{\path{doi.org/10.1007/978-1-4612-0045-1_6}}


\bibitem[LiWi]{libedinsky-williamson}
N.~Libedinsky and G.~Williamson, 
\newblock Standard objects in 2-braid groups. 
\newblock \emph{Proc. Lond. Math. Soc.} (3) 109 (2014), no. 5, 1264--1280.
\href{https://doi.org/10.1112/plms/pdu022}
{\path{doi.org/10.1112/plms/pdu022}}


\bibitem[LGRSW]{LGRSW2024}
Yu Leon Liu, Aaron Mazel-Gee, David Reutter, C.~Stroppel and P.~ Wedrich,
\newblock A braided monoidal $(\infty,2)$-category of Soergel bimodules (2024).
\newblock \href{http://arxiv.org/abs/2401.02956}{arXiv: 2401.02956}.


\bibitem[Lus]{Lusztig2003} G.~Lusztig, 
\newblock \emph{Hecke algebras with unequal parameters}.
\newblock {CRM Monograph Series}, 18, American Mathematical Society, Providence, RI, 2003.
(updated version cited in this paper at \href{http://arxiv.org/abs/math/0208154}{arXiv:math/0208154})
\href{https://doi.org/10.1090/crmm/018}
{\path{doi.org/10.1090/crmm/018}}


\bibitem[MMV]{mmv-evalfunctor}
M.~Mackaay, V.~Miemietz and P.~Vaz,
\newblock Evaluation birepresentations of affine type A Soergel bimodules.
\newblock \emph{Adv. Math.} 436 (2024), Paper No. 109401, 68 pp.
\href{https://doi.org/10.1016/j.aim.2023.109401}
{\path{doi.org/10.1016/j.aim.2023.109401}}

\bibitem[MMMT]{mmmt2019}
M.~Mackaay, V.~Mazorchuk, V.~Miemietz, D.~Tubbenhauer,
\newblock Simple transitive 2-representations via (co-)algebra 1-morphisms.
\newblock \emph{Indiana Univ. Math. J.}, 68(1) (2019), 1--33. 
\href{https://doi.org/10.1512/iumj.2019.68.7554}{\path{doi.org/10.1512/iumj.2019.68.7554}}


\bibitem[MMMTZ1]{mmmtz2019}
M.~Mackaay, V.~Mazorchuk, V.~Miemietz, D.~Tubbenhauer, X.~Zhang,
\newblock Simple transitive $2$-representations of Soergel bimodules for finite Coxeter types.
\newblock \emph{Proc. Lond. Math. Soc.}, 126(5) (2023), 1585--1655. 
\href{https://doi.org/10.1112/plms.12515}{\path{doi.org/10.1112/plms.12515}}

\bibitem[MMMTZ2]{mmmtz2020}
M.~Mackaay, V.~Mazorchuk, V.~Miemietz, D.~Tubbenhauer, X.~Zhang,
\newblock Finitary birepresentations of finitary bicategories.
\newblock \emph{Forum Mathematicum} 33(5) (2021), 1261--1320.
\href{https://doi.org/10.1515/forum-2021-0021}{\path{doi.org/10.1515/forum-2021-0021}}

\bibitem[MaTh]{mackaay-thiel}
M.~Mackaay and A.-L.~Thiel,
\newblock Categorifications of the extended affine Hecke algebra and the affine $q$-Schur algebra $\widehat{S}(n,r)$ for $3\leq r<n$.
\newblock \emph{Quantum Topol.}, 8(1):113--203, 2017.
\href{https://doi.org/10.4171/QT/88}
{\path{doi.org/10.4171/QT/88}}

\bibitem[Macph]{macpherson22a}
J.~Macpherson,
\newblock 2-Representations and associated coalgebra 1-morphisms for locally wide finitary 2-categories.
\newblock \emph{J. Pure Appl. algebra}, 226(11): Paper nº. 107081, 2022.
\href{https://doi.org/10.1016/j.jpaa.2022.107081}
{\path{doi.org/10.1016/j.jpaa.2022.107081}}




\bibitem[MaMi]{maz-miem2016}
V.~Mazorchuk and V.~Miemietz,
\newblock Transitive 2-representations of finitary 2-categories.
\newblock \emph{Trans. Amer. Math. Soc.}, 368(11) (2016), 7623--7644.
\href{https://doi.org/10.1090/tran/6583}
{\path{doi.org/10.1090/tran/6583}}

\bibitem[Rie]{Ri} 
E.~Riehl, 
\newblock \emph{Categorical homotopy theory}. 
\newblock New Mathematical Monographs, 24. Cambridge University Press, Cambridge, 2014.
\href{https://doi.org/10.1017/CBO9781107261457}
{\path{doi.org/10.1017/CBO9781107261457}}


\bibitem[Ro]{rose2011}
\newblock A note on the Grothendieck group of an additive category (2011).
\newblock  \href{http://arxiv.org/abs/1109.2040}{arXiv: 1109.2040}.

\bibitem[Sch]{Schn} O.\ Schn\"urer.
  \newblock Homotopy categories and idempotent completeness, weight structures and weight complex functors (2011).
  \newblock \href{http://arxiv.org/abs/1107.1227}{arXiv: 1107.1227}.

\bibitem[StWe]{StWe2024}
  C.~Stroppel and P.~ Wedrich,
\newblock Braiding on type A Soergel bimodules: semistrictness and naturality (2024).
\newblock \href{http://arxiv.org/abs/2412.20587}{arXiv: 2412.20587}.


\bibitem[Zel]{zelevinsky}
  A.~Zelevinsky,
  \newblock Induced representations of reductive $p$-adic groups II. On irreducible representations of $\mathrm{GL}(n)$,
  \newblock \emph{Ann. Sci. E.N.S.}, 13(2) (1980), 165--210.
\href{https://doi.org/10.24033/asens.1379}
{\path{doi.org/10.24033/asens.1379}}



  



   
\end{thebibliography}
\end{document}